\numberwithin{equation}{section}
\def\thanks#1{\protected@xdef\@thanks{\@thanks
        \protect\footnotetext{#1}}}
\theoremstyle{plain}
\newtheorem{Thm}{Theorem}[section]
\newtheorem*{Thm*}{Theorem}
\newtheorem{Lem}[Thm]{Lemma}
\newtheorem{Prop}[Thm]{Proposition}
\theoremstyle{definition}
\newtheorem{Rem}[Thm]{Remark}
\newtheorem{?}[Thm]{Problem}
\newcommand{\Do}{\mathbf{D}_{0}}
\newcommand{\p}{\partial}
\newcommand{\R}{\mathbb{R}}
\newcommand{\rhom}{\mathring{\rho}}
\newcommand{\phim}{\mathring{\phi}}
\newcommand{\psim}{\mathring{\psi}}
\newcommand{\varphim}{\mathring{\varphi}}
\newcommand{\Torus}{\mathbb{T}}
\newcommand{\dv}{\text{div}}
\newcommand{\lap}{\triangle}
\newcommand{\lapt}{\tilde{\triangle}}
\newcommand{\nablat }{\tilde{\nabla}}
\newcommand{\uv}{\mathbf{u}}
\newcommand{\pt}{\tilde{\partial}}
\newcommand{\psib}{\mathring{\psi} }
\newcommand{\B}{\mathcal{B}}
\newcommand{\ra}{\rangle}
\newcommand{\la}{\langle}
\newcommand{\Ylap}{\pt_Y{\lapt}^{-1}}
\newcommand{\Xlap}{\p_X{\lapt}^{-1}}
\newcommand{\XYlap}{\pt_{XY}{\lapt}^{-1}}
\newcommand{\XXlap}{\p_{X}^2{\lapt}^{-1}}
\newcommand{\abs}[1]{\left\lvert#1\right\rvert}
\newcommand{\norm}[1]{\left\lVert#1\right\rVert}
\begin{document}
	
	\begin{titlepage}
		\title{Nonlinear stability threshold for compressible Couette flow}

		\author{Feimin Huang$^{1,2}$}
		\author{Rui Li$^{1,2}$
		}
		\author{Lingda Xu$^{3}$
		}
		\affil{\begin{flushleft}
				\footnotesize\qquad\quad $ ^1 $ Academy of Mathematics and Systems Science, Chinese Academy of Sciences, Beijing 100190, China.\\
				\vspace{0.07cm}
				\qquad \quad$^2  $ School of Mathematical Sciences, University of Chinese Academy of Sciences, Beijing 100049, China. \\
				\vspace{0.07cm}
				\qquad\quad	 $ ^3 $ Department of Applied Mathematics, The Hong Kong Polytechnic University, Hong
				Kong, China.  \\
				\thanks{Emails:\ fhuang@amt.ac.cn (F. Huang), lirui202@mails.ucas.ac.cn (R. Li), lingda.xu@polyu.edu.hk (L. Xu)}
		\end{flushleft}}
		\date{}

	\end{titlepage}
	
	\maketitle

	\begin{abstract}
		This paper concerns the Couette flow for 2-D compressible Navier-Stokes equations (N-S) in an infinitely long flat torus $\Torus\times\R$. Compared to the incompressible flow, the compressible Couette flow has a stronger lift-up effect and weaker dissipation. To the best of our knowledge, there has been no work on the nonlinear stability in the cases of high Reynolds number until now and only linear stability was known in \cite{ADM2021,ZZZ2022}. 
		In this paper, we study the nonlinear stability of 2-D compressible Couette flow in Sobolev space at high Reynolds numbers. Moreover, we also show the enhanced dissipation phenomenon and stability threshold for the compressible Couette flow. 
		
		First, We decompose the perturbation into zero and non-zero modes and obtain two systems for these components, respectively. Different from  \cite{ADM2021,ZZZ2022},  we use the anti-derivative technique to study the zero-mode system. We introduce a kind of diffusion wave to remove 
		the excessive mass of the zero-modes and construct coupled diffusion waves along characteristics to improve the resulting time decay rates of error terms, and derive a new integrated system \cref{anti}. Secondly, we observe a cancellation with the new system \cref{anti} so that the lift-up effect is weakened. Thirdly, the large time behavior of the zero-modes is obtained by the weighted energy method and a weighted inequality on the heat kernel \cite{HLM2010}.
		In addition, with the help of the Fourier multipliers method, we can show the enhanced dissipation phenomenon for the non-zero modes by commutator estimates to avoid loss of derivatives. Finally, we complete the higher-order derivative estimates to close the a priori assumptions by the energy method and show the stability threshold.

	\end{abstract}
	
	\
	
	{\bf Key words:} Compressible Navier-Stokes equations, Couette flow, Stability threshold, Enhanced dissipation, High Reynolds number
	%\tableofcontents

	\

	\textbf{Mathematics Subject Classification.} Primary 35Q30, 76L05, 35B35
	%%%%%%%%%%%%%%%%%%%%%%%%%%%%%%%%%%%%%%%%%%%
	%% Introduction
	%%%%%%%%%%%%%%%%%%%%%%%%%%%%%%%%%%%%%%%%%%%
	
	\section{Introduction}
	
	We consider the isentropic Navier-Stokes system
	\begin{align}
		& \partial_t {\rho}+\operatorname{div} \boldsymbol{m}=0, \quad \text { for }(x, y) \in \mathbb{T} \times \mathbb{R}, t \geq 0,\label{equ-density} \\
		& \partial_t \boldsymbol{m}+\operatorname{div}(\boldsymbol{m} \otimes \boldsymbol{u})+ \frac{1}{M^2}\nabla P({\rho})=\mu\Delta \boldsymbol{u}+(\lambda +\mu)\nabla \operatorname{div}\boldsymbol{u}\label{equ-mom}, 
	\end{align}
	in an infinitely long flat torus with $\mathbb{T}=\mathbb{R} / \mathbb{Z}$ with the initial data $(\rho,\boldsymbol{m})(x,y,0)$. Here ${\rho}$ is the density of the fluid, $\boldsymbol{m}=\rho\boldsymbol{u}$ is the momentum with the velocity $\boldsymbol{u}$, $P({\rho})$ is the pressure, $M>0$ is the Mach number and $\mu, \mu+\lambda \geq 0$ are the shear and bulk viscosity coefficients, respectively. The shear viscosity is proportional to the inverse of the Reynolds number. 
	There has a special solution $\rho_E=1$, $u_E=(y,0)$ to \cref{equ-density}-\cref{equ-mom}, which is called Couette flow. There also has Couette flow with the same formula for the incompressible Navier-Stokes system. The Couette flow can be regarded as a monotonic lamilar shear flow. 
	
	The stability of lamilar shear flow at high Reynolds number 
	is a fundamental problem in the mathematical topics of the fluid mechanics for both the compressible and incompressible flows since Reynolds' early experiments \cite{Reynolds}.  In 1973, Romanov \cite{Romanov1973} firstly proved that Couette flow is 
	spectrally stable for all Reynolds numbers.
	However, experimental evidence \cite{Reynolds} hinted that shear flow is unstable and transitions to turbulence at high Reynolds numbers. Therefore the nonlinear term should play an important role in the stability issue and it is of great interest to quantify transition thresholds \cite{TTRD1993,BGM2019BAMS}. In order to suppress the generation of turbulence, how small does initial perturbation need to be with respect to the viscosity parameter?
	Recently, Bedrossian, Germain, and Masmoudi \cite{BGM2017ann} first studied the stability threshold in Sobolev space with an important index $\frac{3}{2}$, see also \cite{BGM2019BAMS} for a survey. In the Sobolev space with lower regularity, Wei and Zhang \cite{WZ2021} obtained similar works with index 1.  We refer to  \cite{BGM2020AMS, BH2020, BMV2016, CLWZ2020,CWZ2020,CWZ2019} for the related works on Gevrey space, inviscid limit, the boundary layer effects, and inhomogeneous flows. 
	Moreover, all of the above works involved enhanced dissipation, where the perturbation around shear flow decays to zero at a faster rate than that generated only by the viscous dissipation.  We refer to \cite{CKRZ2008, DH, RY 83, W2021} for the background of enhanced dissipation, and \cite{BM2015,IJ2020,IJ2023,MZ2019,WZZ2018} for the inviscid damping, where the perturbed velocity strongly converges to zero in $L^2$ with polynomial rates. 
	
	However, there is significantly less literature concerning the mathematical study of the stability of compressible Couette flow. Compared with the incompressible flow, the compressible flow has stronger lift-up effects. Indeed, a transient growth mechanism was numerically investigated by Hanifi et al \cite{HMRW1985}, see also \cite{FI2000}.  In 2011, Kagei \cite{Kagei2011 JFMF} proved the asymptotic stability of plane Couette flow in an infinite layer when Reynolds number and Mach number are small, see also \cite{kagei2011 JDE, Kagei2012} for the other shear flows. Later,  Li and Zhang \cite{LZ2017} studied the case of the Navier-slip boundary condition. Recently, Antonelli, Dolce, and Marcati \cite{ADM2021} showed linear stability and enhanced dissipation for 2-d Couette flow on $\mathbb{T} \times\mathbb{R} $ at high Reynolds number, and Zeng, Zhang, and Zi \cite{ZZZ2022} obtained 
	similar results in $\mathbb{T} \times\mathbb{R} \times\mathbb{T} $. 
	
	In this paper, we focus on the nonlinear stability threshold of Couette flow for the compressible Navier-Stokes system \eqref{equ-density} and \eqref{equ-mom} at high Reynolds number. That is, we consider a perturbation around the Couette flow by
	\begin{align}\label{per-va}
		{\rho}=1+\phi, \quad \boldsymbol{u}=u_E+\psi, \quad \boldsymbol{m}=u_E+\varphi, \quad \psi=(\psi_1,\psi_2), \quad \varphi=(\varphi_1,\varphi_2),
	\end{align}
	for $\rho$, $\uv$, $\boldsymbol{m}$, satisfying \cref{equ-density,equ-mom}. 
	The main difficulty comes from the so-called lift-up effect,  which means that the presence of $\psi_2$ in the equation of $\psi_1$ might make the $L^2$-{norm} of $\psi_1$ grow in time, see \cref{perturbation1}$_2$ below.  For the incompressible flow, the lift-up effect only occurs in 3-D, and the divergence-free essentially guarantees the uniform boundness of $\|\psi_1\|_{L^2}$, cf. \cite{BGM2017ann,BGM2020AMS,WZ2021}. However, for the compressible flow, the lift-up effect is very strong, and even for the linear case, the best work so far is that the $L^2$-norm of $\psi_1$ grows at a rate of $(1+t)^{3/4}$, cf. \cite{ZZZ2022}. 
	
	In this paper, we introduce a new energy method, which is based on the anti-derivative technique for
	the zero-modes and the construction of two kinds of diffusion waves, to prove the nonlinear stability of  Couette flow at high Reynolds number. Moreover, we also show the enhanced dissipation phenomenon and stability threshold with index $\frac{11}{3}$. The precise statement is stated in Theorem \ref{MT} and \ref{MT2} below.
	
	The strategy is as follows. 	We firstly decompose the perturbation $(\phi,\psi)$ into two parts: zero modes $(\phim,\psim_1,\psim_2)$ and non-zero modes $(\phi_\neq,\psi_{1\neq},\psi_{2\neq})$, and obtain two systems for $(\phim,\psim_1,\psim_2)$ and $(\phi_\neq,\psi_{1\neq},\psi_{2\neq})$ respectively. It is noted that the equations for $(\phim,\psim_2)$ and $\psim_1$ are decoupled, we can first consider the system \eqref{equ-rhou2} of $(\phim,\psim_2)$, and then solve the equation \eqref{psi1} for $\psim_1$.
	Secondly, we use the anti-derivative technique \cite{Goodman,HXXY2023,MN1985,YQ} to study the system \eqref{equ-rhou2} of $(\phim,\psim_2)$. Motivated by \cite{HW2007,Koike2023, Lliu1985, LZ1997}, we 
	introduce two kinds of diffusion waves to derive a new integrated system \cref{anti}, and capture
	the large time behavior of $(\phim, \psim_2)$. Thirdly, we obtain decay rates of the $L^2$ norms of zero modes $(\phim,\psim_2)$ by a weighted energy method and a weighted inequality on the heat kernel \cite{HLM2010}. 
	It remains to solve the equation \eqref{psi1} for $\psim_1$, and the key point is to control the lift-up effect generated by $\psim_2$. Fortunately, we observe a 
	cancellation in a combination of the equations \eqref{anti} and \eqref{psi1} 
	so that $\psim_2$ does not appear in the equation \cref{equ-A} of  a new quantity $\mathcal{A}$. That is, the lift-up effect generated by $\psim_2$ is weakened. 
	Thus, we can prove  $\psim_1$ is uniformly bounded in $L^\infty$ norm, and obtain the decay rates
	of $\partial_y \psim_1$ in $L^2$ norm, which was not obtained before even for the linearized cases.
	
	For the non-zero modes $(\phi_\neq,\psi_{1\neq},\psi_{2\neq})$, motivated by   and 
	\cite{ADM2021,BGM2017ann,ZZZ2022}, we design Fourier multipliers in terms of the structure of the compressible Navier-Stokes equation and obtain estimates of the commutator for the multipliers to avoid the loss of derivatives. With the help of the decay estimates obtained above for the zero modes $(\phim,\psim_1,\psim_2)$, we can show the enhanced dissipation for the non-zero modes $(\phi_\neq,\psi_{1\neq},\psi_{2\neq})$. Moreover, 
	we can also obtain the stability threshold with index $\frac{11}{3}$.
	
	\
	
	\noindent
	{{\bf{Notations:}} Throughout this paper, we use the following notations}
	
	\begin{itemize}
		
		\item Defining the zero and non-zero modes for an integrable function $f\in \Torus\times\R$,
		\begin{align*}
			\Do f:=\mathring{f}:=\int_{\Torus}fdx,\qquad {f}_{\neq}:=f-\Do f.
		\end{align*}
		\item $\delta>0$ denotes a small constant independent of $\mu$, $\lambda+\mu$ and $t$. 
		\item Given two quantities $A$ and $B$, we denote $A\lesssim B$ and $A \approx B$ if there exists a positive constant $C$ such that $A\leq CB$ and $C^{-1}B \leq A \leq CB$ respectively. The constant might dependent on $M$, but not on $\mu$, $\lambda+\mu$, $t$ and $\delta$.
		\item  For a vector $x$, we denote $\la x \ra:=(1+\abs{x}^2)^{\frac{1}{2}}$.
		\item The Fourier transform $\hat{f}(k,\eta)$ of a function $f(x,y)$ is defined by
		\begin{align*}
			\hat{f}(k,\eta)=\frac{1}{2\pi}\int \int_{\Torus\times \R} f(x,y)e^{-i(kx+\eta y)}dxdy.
		\end{align*}
		Then
		\begin{align*}
			f(x,y)=\frac{1}{2\pi} \sum\limits_{k\in \mathbb{Z}} \int_{\R} \hat{f}(k,\eta) e^{i(kx+\eta y)} d\eta.
		\end{align*}
		\item The Sobolev space $H^N(\Torus \times \R)$, $N\geq0$ is given by the norm 
		$$ \norm{f}_{H^N}^2=\norm{\la D \ra^N f}_{L^2}^2=\sum_{k\in \mathbb{Z}} \int \la k,\eta \ra^{2N} \abs{\hat{f}}^2 (k,\eta)d\eta. $$
		
		\item We simply write $\norm{f}:=\norm{f}_{L^2(\Torus\times\R)}$. For $f,g \in L^2$, we use $\la f, g \ra$ to denote the $L^2$ inner product of $f$ and $g$.
		
		\item $*$ denotes the convolution, i.e.,  $(f*g)(x):=\int_\Omega f(x-y)g(y)dy.$
	\end{itemize}
	\
	
	The rest of the present paper is organized as follows. In Section 2, we derive a new integrated system for the zero modes, introduce a new quantity $\mathcal{A}$ to weaken the lift-up effect, and state the main theorem. In Section 3, we study the system for the zero modes, and in Section 4, we study the system of non-zero modes and obtain the enhanced dissipation. Finally, in Section 5, we obtain the stability threshold of the Couette flow for the compressible Navier-Stokes equations. 
	
	\section{Ansatz and main results}
	\subsection{The perturbation system}
	
	The perturbation systems for $(\phi,\psi) $ around the homogeneous Couette flow read as follows
	\begin{align}\label{perturbation1}
		\left\{\begin{aligned}
			&\p_t \phi +y\p_x \phi + \dv \psi=-\dv(\phi\psi), \qquad \qquad \text { for }(x, y) \in \mathbb{T} \times \mathbb{R}, t \geq 0,\\
			&\p_t \psi+y\p_x\psi+\psi_{2}\mathbf{e}_1-\mu\lap\psi-(\lambda+\mu) \nabla \dv  \psi + \frac{1}{M^2}\nabla\phi\\
			&=-\psi\cdot\nabla\psi-\frac{\phi}{\phi+1}\big(\mu\lap\psi+(\lambda+\mu)\nabla\dv\psi\big)+\frac{\nabla\phi}{M^2}\frac{\phi}{\phi+1}-\frac{1}{M^2}\frac{(P'(\rho)-1)\nabla\phi}{\phi+1},
		\end{aligned}\right.
	\end{align}
with the initial data $(\phi,\psi)(x,y,0)$, where $P'(1)=1$, $\mathbf{e}_1:=(1,0)^{T}$.
	Applying $\Do$ to \cref{perturbation1}$_2$, one has
	\begin{align}\label{psi1}
		\p_t \psim_1-\mu\p_y^2\psim_1=-\psim_{2}-\psim_2\p_y\psim_1-\frac{\phim}{\phim+1}\mu\p_y^2\psim_1+F_1,
	\end{align}
	where
	\begin{align}\label{F1}
		\begin{aligned}
			F_1:=&-\Do\left[\frac{\phi}{\phi+1}\big(\mu\lap\psi_1+(\lambda+\mu)\p_x\dv\psi\big)\right]-\Do(\psi\cdot\nabla\psi_1)+\psim_2\p_y\psim_1\\
			&+\frac{\phim}{\phim+1}\mu\p_y^2\psim_1 +\Do\left[\frac{\p_x\phi}{M^2}\frac{\phi}{\phi+1}-\frac{1}{M^2}\frac{(P'(\rho)-1)\p_x\phi}{\phi+1}\right].
		\end{aligned}
	\end{align}
	
Note that the system \eqref{perturbation1} is not conserved, we instead study the conserved system for $(\phi,\varphi_2)$. Taking $\Do$ on the system \cref{equ-density} and \cref{equ-mom}$_2$ respectively, we obtain a new system for the zero mode $(\phim,\varphim_2)$ as follows, 
	\begin{align}
		\begin{cases}\label{equ-rhou2}
			\p_t \phim + \p_y \mathring{\varphi} _2 = 0, \\
			\p_t \mathring{\varphi} _2 + \frac{1}{M^2} \p_y\phim -( 2\mu +\lambda)\p_y^2 \mathring{\varphi} _2 = E_{y},
		\end{cases}
	\end{align}
	where
	\begin{align}\label{E}
		\begin{aligned}
			E=&\left[F_2-(2\mu+\lambda)\p_y\left(\frac{\varphim_2}{\rhom}-\int_\Torus\frac{\varphi_2}{\rho}dx\right)\right]-(2\mu+\lambda)\p_y\left(\mathring{\varphi} _{2}-\frac{\mathring{\varphi} _2}{\rhom}\right)\\
			&-\left[\frac{1}{M^2}\bigg(P(\rhom)-P(1)-P'(1)\phim\bigg)+ \frac{\mathring{\varphi} _2^2}{\rhom}\right]-\left(\psim_2-\frac{\varphim_2}{\rhom}\right)\mathring{\varphi} _2\\
			:=&E_1+E_2+E_3+E_4,\\
			F_2:=&\psim_2  \varphim_2 +\frac{P(\rhom)}{M^2}-\big(\int_{\Torus} \psi_2 \varphi_2 d x + \int_{\Torus} \frac{P(\rho)}{M^2} d x\big ),
		\end{aligned}
	\end{align}
with the initial data $(\phim,\varphim_2)(y,0)$.
After $(\phim,\varphim_2)$ is solved by \cref{equ-rhou2}, $\psim_1$ can be  obtained through the equation \eqref{psi1} with the initial value $\psim_1(y,0)$.
	\subsection{Construction of ansatz}
	
	We rewrite \cref{equ-rhou2} as
	%\bibliographystyle{amsplain}
	%\bibliography{bibli}
	\begin{align}\label{equ-green}
		w_t+\tilde{A}w_y=\tilde{B} w_{yy}+E_y \mathbf{e}_2,
	\end{align}
	where $\mathbf{e}_2:=(0,1)^{T}$ and
	\begin{align}
		w=\left(\begin{array}{c}\phim\\ \varphim_2 \end{array}\right), \quad \tilde{A}=\left(\begin{array}{cc}0 & 1 \\ {1}/{M^2} & 0\end{array}\right), \quad \tilde{B}=\left(\begin{array}{cc}0 & 0 \\ 0 & \bar{\mu}\end{array}\right),\quad \bar{\mu}:=2\mu+\lambda,\quad \bar{c}=1/M.
	\end{align}
	The eigenvalues of $\tilde{A}$ are $\sigma_1=-\bar{c}$, $\sigma_2=\bar{c}$, and the corresponding left and right eigenvectors are
	\begin{align}
		(l_1,l_2)=\frac{1}{2a}\left(\begin{array}{cc}-1&\frac{1}{\bar{c}}\\
			1& \frac{1}{\bar{c}}
		\end{array}\right), \quad	(r_1,r_2)=a\left(\begin{array}{cc}-1&1\\ \bar{c}&\bar{c}\end{array}\right),\qquad a:=\frac{2}{P''(1)\bar{c}+2\bar{c}},
	\end{align} 
	respectively. We distribute the solution $w$ in the right eigenvector directions, that is 
	\begin{align}\label{phi-psi}
		v=Lw, \quad \text{i.e.} \quad w=R v,
	\end{align}
	where $L=(l_1,l_2)^{t}$, $R=(r_1,r_2).$ Then we diagonalize \cref{equ-rhou2} as
	\begin{align}
		\begin{cases}
			v_{1t}-\bar{c}v_{1y}=\frac{\bar{\mu}}{2}v_{1yy}+\frac{\bar{\mu}}{2}v_{2yy}+\frac{1}{2a\bar{c}}E_y,\\
			v_{2t}+\bar{c}v_{2y}=\frac{\bar{\mu}}{2}v_{2yy}+\frac{\bar{\mu}}{2}v_{1yy}+\frac{1}{2a\bar{c}}E_y,
		\end{cases}
	\end{align}
with the initial data $(v_1,v_2)(y,0)$.
	
Next, we will try to find a suitable ansatz to capture the large time behavior of $(v_1,v_2)$. For this, we construct the following diffusion wave
	\begin{align}\label{equ-theta}
		\begin{cases}
			\theta_{1t}-\bar{c}\theta_{1y}+(\frac{\theta_1^2}{2})_y=\frac{\bar{\mu}}{2}\theta_{1yy},& y \in \mathbb{R}, t>0,\\
			\theta_{2t}+\bar{c}\theta_{2y}+(\frac{\theta_2^2}{2})_y=\frac{\bar{\mu}}{2}\theta_{2yy},& y\in \mathbb{R}, t>0,\\
			\int_{\R}\theta_i(y,0)dy=l_i\int_{\R}w(y,0)dy,\quad i=1,2,& y \in \mathbb{R}.
		\end{cases}
	\end{align}
	One can use the Hopf-Cole transformation to obtain the explicit expressions for $\theta_1$ and $\theta_2$
	\begin{align}\label{theta}
		\begin{aligned}
			&\theta_1(x_1, t)=\frac{\bar{\mu}^{\frac{1}{2}}}{\sqrt{2(1+t)}} \Gamma_1\left(\frac{x+\bar{c}(1+t)}{\sqrt{2\bar{\mu}(1+t)}}\right),\qquad\qquad\theta_2(x_1, t)=\frac{\bar{\mu}^{\frac{1}{2}}}{\sqrt{2(1+t)}} \Gamma_2\left(\frac{x-\bar{c}(1+t)}{\sqrt{2\bar{\mu}(1+t)}}\right),\\
			&\Gamma_i(y)=\frac{\left( e^ \frac{\eta_i}{\bar{\mu}}-1\right) \exp \left(-y^2\right)}{\sqrt{\pi}+\left(e^ \frac{\eta_i}{\bar{\mu}}-1\right) \int_y^{+\infty} \exp \left(-\xi^2\right) d \xi},\qquad \eta_i:=l_i\int_{\R}w(y,0)dy,\quad i=1,2.
		\end{aligned}
	\end{align} 
	
	Setting $\bar{v}_{i}:=v_{i}-\theta_i$, $i=1,2$, one has
	\begin{align}\label{barv}
		\begin{cases}
			\bar{v}_{1t}-\bar{c}\bar{v}_{1y}=\frac{\bar{\mu}}{2}\bar{v}_{1yy}+\frac{\bar{\mu}}{2}\bar{v}_{2yy}+\frac{1}{2a\bar{c}}{E}_y+(\frac{\theta_1^2}{2})_y+\frac{\bar{\mu}}{2}\theta_{2yy},\\
			\bar{v}_{2t}+\bar{c}\bar{v}_{2y}=\frac{\bar{\mu}}{2}\bar{v}_{2yy}+\frac{\bar{\mu}}{2}\bar{v}_{1yy}+\frac{1}{2a\bar{c}}{E}_y+(\frac{\theta_2^2}{2})_y+\frac{\bar{\mu}}{2}\theta_{1yy},\\
			\int_{\R}\bar{v}_i(y,0)dy=0,\quad i=1,2,
		\end{cases}& y \in \mathbb{R},\quad t>0.
	\end{align}
	In view of  \cref{E},  the leading term of ${E}_y$ is 
	\begin{align}\label{tildeE}
	\tilde{E}_{3y}:=-\frac{P''(1)}{2M^2}\left(\phim^2\right)_y-\left( \varphim_2^2 \right)_y.
    \end{align}
	Since $\phim=a\left(v_2-v_1\right)$ and $\varphim_2=a\bar{c}\left(v_1+v_2\right)$, the leading part of $\frac{1}{2a\bar{c}}\tilde{E}_{3y}+\left(\frac{\theta_i^2}{2}\right)_y$ is $-(\frac{\theta_{3-i}^2}{2})_y$, which is bounded by $(1+t)^{-\frac{3}{2}}e^{-\frac{(y\pm\bar{c}t)^2}{1+t}}$. This may not decay fast enough for the later estimates. To eliminate the bad term $-\left(\frac{\theta_{3-i}^2}{2}\right)_{y}$, we shall construct a coupled diffusion wave $\Xi_i$ to capture the leading term of $\bar{v}_i$. 
	Setting $\chi_i=\bar{v}_i-\Xi_i$, one has
	\begin{align}\label{chi11}
	\left\{\begin{aligned}
   &\chi_{1t}-\bar{c}\chi_{1y}-\frac{\bar{\mu}}{2}\chi_{1yy}-\frac{\bar{\mu}}{2}\chi_{2yy}-\frac{1}{2a\bar{c}}\left(E-\tilde{E}_3\right)_y=-\bigg\{\Xi_{1t}-\bar{c}\Xi_{1y}\\
   &\qquad\qquad-\frac{1}{2a\bar{c}}\tilde{E}_{3y}-\left(\frac{\theta_1^2}{2}\right)_y-\frac{\bar{\mu}}{2}\Xi_{1yy}\bigg\}+\frac{\bar{\mu}}{2} \bigg\{\Xi_{2yy} +\theta_{2yy}\bigg\}\\
   &\chi_{2t}+\bar{c}\chi_{2y}-\frac{\bar{\mu}}{2}\chi_{2yy}-\frac{\bar{\mu}}{2}\chi_{1yy}-\frac{1}{2a\bar{c}}\left(E-\tilde{E}_3\right)_y=-\bigg\{\Xi_{2t}+\bar{c}\Xi_{2y}\\
   &\qquad\qquad-\frac{1}{2a\bar{c}}\tilde{E}_{3y}-\left(\frac{\theta_2^2}{2}\right)_y-\frac{\bar{\mu}}{2} \Xi_{2yy}\bigg\}+ \frac{\bar{\mu}}{2}\bigg\{\Xi_{1yy} +\theta_{1yy}\bigg\}.
    \end{aligned}\right.
	\end{align}
	Have $v_i=\chi_i+\theta_i+\Xi_i$ in hand, we calculate  $\frac{1}{2a\bar{c}}\tilde{E}_{3y}+\left(\frac{\theta_i^2}{2}\right)_y$ again and find its leading error term is 
	$-\left[\frac{\theta_{3-i}^2}{2}+\theta_1\Xi_1+\theta_2 \Xi_2+\frac{2-P''(1)}{2+P''(1)}(\theta_1\Xi_2+\theta_2\Xi_1) \right]_y$.
	Therefore $\Xi_i$ is constructed as a solution of the following systems,
	\begin{align}\label{equ-xi}
		\begin{cases}
			\Xi_{1t}-\bar{c}  \Xi_{1y}+\left[\theta_2^2 / 2+\theta_1\Xi_1+\theta_2 \Xi_2+\frac{2-P''(1)}{2+P''(1)}(\theta_1\Xi_2+\theta_2\Xi_1)\right]_y-\frac{\bar{\mu}}{2} \Xi_{1yy}=0, & y \in \mathbb{R}, t>0, \\ 
			\Xi_{2t}+\bar{c} \Xi_{2y}+\left[\theta_1^2 / 2+\theta_1\Xi_1+\theta_2 \Xi_2+\frac{2-P''(1)}{2+P''(1)}(\theta_1\Xi_2+\theta_2\Xi_1)\right]_y-\frac{\bar{\mu}}{2}  \Xi_{2yy}=0, & y \in \mathbb{R}, t>0, \\ \Xi_1(y, 0)=\Xi_2(y, 0)=0, & y \in \mathbb{R}.\end{cases}
	\end{align}
%	\red{\begin{Rem}
%			Note that by \cref{theta}, after considering $\theta_1$, $\theta_2$, the error terms of integrated system are bounded by $(1+t)^{-1}e^{-\frac{(x\pm\bar{c}t)^2}{1+t}}$ which cause the increase of the energy of anti-derivatives. Thus, such terms with poor decay rate are considered as sources in \cref{equ-xi} so that some cancellations are achieved, see \cref{tildeK,K2+K1,K2-K1}. One can see the decay rate of errors can be improved, .
%	\end{Rem}}
	One can estimate $\Xi_i$ by Duhamel's principle, and  obtain the decay rate  $(1+t)^{-\frac{1}{2}}$ for $\norm{\Xi_i}_{L^2}$, which is indeed better than $(1+t)^{-\frac14}$ for $\norm{\theta_i}_{L^2}$,  see \cref{estimateontheta} and \ref{estimateonxi} below for details. 
	%We further consider ther higher order corrections $\Xi_i$,
	%\begin{align}\label{equ-xi}
	%	\begin{cases}
	%		\Xi_{1t}-\bar{c}  \Xi_{1y}+\left[\theta_2^2 / 2+\theta_1\Xi_1+\theta_2 \Xi_2+\frac{2-P''(1)}{2+P''(1)}(\theta_1\Xi_2+\theta_2\Xi_1)\right]_y=\frac{\bar{\mu}}{2}  \Xi_{1yy}, & y \in \mathbb{R}, t>0, \\ 
	%		\Xi_{2t}+\bar{c} \Xi_{2y}+\left[\theta_1^2 / 2+\theta_1\Xi_1+\theta_2 \Xi_2+\frac{2-P''(1)}{2+P''(1)}(\theta_1\Xi_2+\theta_2\Xi_1)\right]_y=\frac{\bar{\mu}}{2}  \Xi_{2yy}, & y \in \mathbb{R}, t>0, \\ \Xi_1(y, 0)=\Xi_2(y, 0)=0, & y \in \mathbb{R}.\end{cases}
	%\end{align}
     In order to eliminate another bad term $\frac{\bar{\mu}}{2}\left(\theta_{3-i}+\Xi_{3-i}\right)_{yy}$ in \cref{chi11}, set
	\begin{align}\label{tildev}
		&\tilde{v}_i:=\chi_i-\tilde{\mathcal{C}}_i=v_i-\mathcal{C}_i,\qquad i'=3-i,\\
        &\tilde{\mathcal{C}}_i:=(-1)^{i}\frac{\bar{\mu}}{4\bar{c}}\theta_{i'y}+(-1)^{i}\frac{\bar{\mu}}{4\bar{c}}\Xi_{i'y},\quad \mathcal{C}_i = \theta_i +\Xi_i+\tilde{\mathcal{C}}_i.\nonumber
	\end{align}
Then we have
	\begin{align}\label{equ-tildev}
		\begin{cases}
			\tilde{v}_{1t}-\bar{c}\tilde{v}_{1y}=\frac{\bar{\mu}}{2}\tilde{v}_{1yy}+\frac{\bar{\mu}}{2}\tilde{v}_{2yy}+\tilde{K}_{1y},\\
			\tilde{v}_{2t}+\bar{c}\tilde{v}_{2y}=\frac{\bar{\mu}}{2}\tilde{v}_{2yy}+\frac{\bar{\mu}}{2}\tilde{v}_{1yy}+\tilde{K}_{2y},
		\end{cases}
	\end{align}
	where
	\begin{align}\label{tildeK}
			\tilde{K}_{i}&:=\tilde{E}_{i}+\tilde{N},\\ \nonumber
			\tilde{E}_{i}&:=\frac{1}{2a\bar{c}}{E}_{2}+\left(\frac{1}{2a\bar{c}}{E}_{3}+\theta_1^2 / 2+\theta_2^2 / 2+\theta_1\Xi_1+\theta_2 \Xi_2+\frac{2-P''(1)}{2+P''(1)}(\theta_1\Xi_2+\theta_2\Xi_1)\right)\\
			&+(-1)^{i}\frac{\bar{\mu}}{4\bar{c}}\left(\theta_1^2 / 2+\theta_2^2 / 2+\theta_1\Xi_1+\theta_2 \Xi_2+\frac{2-P''(1)}{2+P''(1)}(\theta_1\Xi_2+\theta_2\Xi_1)-\frac{\bar{\mu}}{2}\theta_{iy}-\frac{\bar{\mu}}{2}\Xi_{iy}\right)_{y},\\ \nonumber
			\tilde{N}&:=\frac{1}{2a\bar{c}}(E_{1}+E_{4}).\nonumber
	\end{align}
	It should be noted that the decay rate of  $\tilde{K}_{iy}$ in \cref{equ-tildev} is improved to $(1+t)^{-2}e^{-\frac{(x\pm\bar{c}t)^2}{(1+t)}}+\p_y(\Xi^2)$ due to the cancellations in $\frac{1}{2a\bar{c}}{E}_{3}+\theta_1^2 / 2+\theta_2^2 / 2+\theta_1\Xi_1+\theta_2 \Xi_2+\frac{2-P''(1)}{2+P''(1)}(\theta_1\Xi_2+\theta_2\Xi_1)$. 
	%	And by \cref{estimateonxi}, $\norm{\p^k_y\Xi}_{L^p}$ have good enough decay rate of $(1+t)^{-\frac{3}{4}+\frac{1}{2p}-\frac{k}{2}}$, $k\geq 1$, $p\geq 1$.
	%We define the ansatz $	(\tilde{\rho},
	%\tilde{m}_{2})$ as
	%\begin{align}
	%	\left(\begin{array}{c}
	%		\tilde{\rho}\\
	%		\tilde{m}_{2}
	%	\end{array}\right)=\left(\begin{array}{c}
	%		1\\
	%		0
	%	\end{array}\right)-R \mathcal{C}=\left(\begin{array}{c}
	%		1+a\mathcal{C}_1-a\mathcal{C}_2\\
	%		-a\bar{c}\ \mathcal{C}_1-a\bar{c}\ \mathcal{C}_2
	%	\end{array}\right).
	%\end{align}

	In view of \eqref{equ-theta}-\eqref{equ-tildev}, one has $$\int_{-\infty}^\infty \tilde{v}_i(y,t)dy=0, \quad \int_{-\infty}^\infty \Xi_i(y,t)dy=0, \quad i=1,2.$$ 
	Motivated by \cite{Goodman, HXXY2023, MN1985}, we can define the anti-derivatives of $(\tilde{v}_1,\tilde{v}_2)$ as 
	\begin{align}
		\begin{array}{lll}
			\mathbf{\tilde{V}}:=\left(\tilde{V}_1,\tilde{V}_2\right):=&\int_{-\infty}^y\left(\tilde{v}_1(z,t),\tilde{v}_2(z,t)\right)dz,\quad\mathbf{\tilde{\Xi}}:=\left(\tilde{\Xi}_1,\tilde{\Xi}_2\right):=\int_{-\infty}^y\left({\Xi}_1(z,t),{\Xi}_2(z,t)\right)dz.\label{equ--anti}
		\end{array}
	\end{align}
	Then the systems \cref{equ-tildev} and \cref{equ-xi} become,  respectively,
	\begin{align}\label{anti}
		\mathbf{\tilde{V}}_t+A\mathbf{\tilde{V}}_y=B\mathbf{\tilde{V}}_{yy}+\mathbf{\tilde{K}},
	\end{align}
	where
	\begin{align}
		A=\left(\begin{array}{cc}-\bar{c}& 0 \\ 0 & \bar{c}\end{array}\right), \quad {B}=\left(\begin{array}{cc}\frac{\bar{\mu}}{2} & \frac{\bar{\mu}}{2} \\ \frac{\bar{\mu}}{2} & \frac{\bar{\mu}}{2}\end{array}\right),\quad \mathbf{\tilde{K}}=\left(\begin{array}{c}\tilde{K}_1\\ \tilde{K}_2\end{array}\right),
	\end{align}
	and
	\begin{align}\label{equ-Xi}
		\begin{cases}
			\tilde{\Xi}_{1t}-\bar{c}  \tilde{\Xi}_{1y}+{\left[\theta_2^2 / 2+\theta_1\Xi_1+\theta_2 \Xi_2+\frac{2-P''(1)}{2+P''(1)}(\theta_1\Xi_2+\theta_2\Xi_1)\right]}=\frac{\bar{\mu}}{2}  \tilde{\Xi}_{1yy}, & y \in \mathbb{R}, t>0, \\ 
			\tilde{\Xi}_{2t}+\bar{c} \tilde{\Xi}_{2y}+\left[\theta_1^2 / 2+\theta_1\Xi_1+\theta_2 \Xi_2+\frac{2-P''(1)}{2+P''(1)}(\theta_1\Xi_2+\theta_2\Xi_1)\right]=\frac{\bar{\mu}}{2}  \tilde{\Xi}_{2yy}, & y \in \mathbb{R}, t>0, \\ \tilde{\Xi}_1(y, 0)=\tilde{\Xi}_2(y, 0)=0, & y \in \mathbb{R}.\end{cases}
	\end{align}

	Next, we will introduce a new quantity to weaken the lift-up effect of the equation for $\psim_1$, which reads
	\begin{align}\label{equ-psi1}
		\p_t \psim_1-\mu\p_y^2\psim_1+\mathbf{\psim}_{2}=-\psim_2\p_y\psim_1-\frac{\phim}{\phim+1}\mu\p_y^2\psim_1+F_1,
	\end{align}
	and  the leading term of \cref{equ-psi1} is $\psib_2$. Note that
	\begin{align}\label{phi2}
		\psim_2+\phim\varphim_2-\frac{\phim^2\varphim_2}{\rhom}+\Do(\phi_\neq \psi_{2\neq})+\phim \Do \left(\frac{\varphi_2}{\rho}-\frac{\varphim_2}{\rhom}\right)= \varphim_2=a\bar{c}\left(\tilde{v}_1+\tilde{v}_2+\mathcal{C}_1+\mathcal{C}_2\right),
	\end{align} and
	\begin{align}\label{00000000}
		\bar{c}\left(\tilde{v}_1+\tilde{v}_2+\Xi_1+\Xi_2\right)=-\p_t(\tilde{V}_{2}-\tilde{V}_{1}+\tilde{\Xi}_2-\tilde{\Xi}_1)+\frac{\bar{\mu}}{2}\left(\Xi_{2y}-\Xi_{1y}\right)+\frac{\theta_2^2}{2}-\frac{\theta_1^2}{2}+\tilde{K}_{2}-\tilde{K}_{1},
	\end{align}
	there has a cancellation between \cref{equ-psi1}-\cref{00000000} so that 
	\begin{align}\label{equ-Ab}
		\begin{aligned}
			\p_t\bar{\mathcal{A}}- \mu \p_y^2\bar{\mathcal{A}}&-a\mu\p_y(\tilde{v}_2-\tilde{v}_1)=-\psim_2\p_y\psim_1-\mu\frac{\phim}{\phim+1}\p_y^2\psim_1-\frac{a\lambda}{2} \p_y(\Xi_2-\Xi_1)\\
			&+F_1+\frac{a}{2}(\theta_1^2-\theta_2^2)-a\bar{c}(\mathcal{C}_1+\mathcal{C}_2-\Xi_1-\Xi_2)-a (\tilde{K}_{2}-\tilde{K}_{1})\\
			&+\phim\varphim_2-\frac{1}{\rhom}\phim^2\varphim_2+\Do(\phi_{\neq}\psi_{2\neq})+\phim\Do\left(\frac{\varphi_2}{\rho}-\frac{\varphim_2}{\rhom}\right),
		\end{aligned}
	\end{align}
and $\psim_2$ is eliminated, where $\bar{\mathcal{A}}:=\psim_1-a(\tilde{V}_2-\tilde{V}_1+\tilde{\Xi}_2-\tilde{\Xi}_1)$, with the price that a new bad term $\mathcal{C}_1+\mathcal{C}_2-\Xi_1-\Xi_2$ bounded by 
$(1+t)^{-\frac{1}{2}}e^{-\frac{(y\pm\bar{c}t)^2}{1+t}}$,  comes out in the right hand side. To eliminate the bad term, let $\theta_A$ be the solution of the following equation 
	\begin{align}\label{thetaA}
		\left\{\begin{aligned}
			&{\p_t\theta_{A}}-\mu\p_y^2\theta_{A}=-a\bar{c}(\mathcal{C}_1+\mathcal{C}_2-\Xi_1-\Xi_2)-\frac{a\lambda}{2}\left(\Xi_2-\Xi_1\right)_y+\frac{a}{2}(\theta_1^2-\theta_2^2)\\
			&\qquad\qquad\qquad-a\bar{c}(\mathcal{C}_1+\mathcal{C}_2)\p_y\theta_A+a^2\bar{c}(\mathcal{C}_1+\mathcal{C}_2)(\mathcal{C}_2-\Xi_2-\mathcal{C}_1+\Xi_1)\\
			&\qquad\qquad\qquad-a\bar{c}(\tilde{v}_1+\tilde{v}_2)\p_y\theta_A+a^2\bar{c}(\tilde{v}_1+\tilde{v}_2)(\mathcal{C}_2-\mathcal{C}_1),\\
			&\theta_A(y,0)=0.
		\end{aligned}\right.
	\end{align}
	We expect $\theta_A$ is  the leading part of $\bar{\mathcal{A}}$. Denote $\mathcal{A}:=\bar{\mathcal{A}}-\theta_A$, then one has that 
	\begin{align}\label{equ-A}
		&\qquad\begin{aligned}
			\p_t{\mathcal{A}}- \mu \p_y^2{\mathcal{A}}-a\mu\p_y(\tilde{v}_2-\tilde{v}_1)=-\psim_{2}\p_y\mathcal{A}-\mu\frac{\phim}{\phim+1}\p_y^2\mathcal{A}+F_{A},
		\end{aligned}\\
		&\begin{aligned}\label{FA}
			{F_{A}:=}&a^2\bar{c}(\tilde{v}_1+\tilde{v}_2)(\Xi_1-\Xi_2)+a\mu\frac{\phim}{\phim+1}\left(\tilde{v}_1-\tilde{v}_2+\Xi_1-\Xi_2\right)_y-a\mu\frac{\phim}{\phim+1}\p_y^2\theta_A\\
			&-\frac{1}{\rhom}\phim^2\varphim_2+\Do(\phi_{\neq}\psi_{2\neq})+\phim\Do\left(\frac{\varphi_2}{\rho}-\frac{\varphim_2}{\rhom}\right)+F_1-a (\tilde{K}_{2}-\tilde{K}_{1})\\
			&+\left\{\frac{\phim \varphim_2}{\rhom}-\Do\left(\frac{\varphi_2}{\rho}-\frac{\varphim_2}{\rhom} \right) \right\}\Big(\p_y \theta_A + a\left(\tilde{v}_2-\tilde{v}_1-\Xi_1+\Xi_2 \right)\Big).
		\end{aligned}
	\end{align}
	\begin{Rem}
		Noted that the term $\psim_2$ in \eqref{equ-psi1} corresponds to  $-a\mu\p_y(\tilde{v}_2-\tilde{v}_1)$ in the new equation \eqref{equ-A}, thus the lift-up effect is weakened in  \eqref{equ-A}.
	\end{Rem}
	{\subsection{Main results}}
	Now we are ready to state the main results.
	\begin{Thm}\label{MT}
		There exists a sufficiently small constant $\delta_0>0$ such that if    $0<\mu\leqslant \delta_0$ and $\lambda+\mu$ is the same order of $\mu$, 
		$0<M\leqslant \frac{1}{50} \min\{(\lambda+2\mu)^{-\frac{1}{2}},\mu^{-\frac{1}{3}}\}$,
		and the initial data satisfies
		\begin{align}\label{initial}
			\begin{aligned}
				(\tilde{V}_1&,\tilde{V}_2)(y,0)\in H^{2}(\R),\qquad  (\phi,\varphi_2)(x,y,0)\in L^1\cap H^{5}(\Torus\times\R),\quad \psi_1(x,y,0)\in H^{5}(\Torus\times\R),\\
				&\norm{(\tilde{V}_1,\tilde{V}_2)(y,0)}_{H^{2}}+\norm{(\phi,\varphi_2)(x,y,0)}_{L^1}+\norm{(\phi,\psi_1,\psi_2)(x,y,0)}_{H^{5}}\leq C \mu^{\alpha},
			\end{aligned}
		\end{align}
		where $\alpha>\frac{11}{3}$, $C>0$ are constants, then the system \cref{perturbation1} admits a unique global  solution $(\phi,\psi_1,\psi_2)$. Moreover, it holds that  
		\begin{align}
			\begin{aligned}
				&\norm{\phi,\psi_2}_{L^{2}}\leq C_M \mu^{\alpha-\frac{1}{2}}(1+t)^{-\frac{1}{4}},\\ &\norm{\nabla\phi,\nabla\psi_2}_{L^{2}}\leq C_M \mu^{\alpha-1}(1+t)^{-\frac{3}{4}},\quad \norm{\psi_1}_{L^\infty}\leq C_M \mu^{\alpha-\frac{2}{3}}.
			\end{aligned}
		\end{align}
		where $C_M>0$ is a general constant depending on $M$ and independent of $\mu$. 
	\end{Thm}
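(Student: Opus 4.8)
The plan is to set up a standard continuation (bootstrap) argument: introduce a priori assumptions on the relevant norms of the zero-mode quantities $(\tilde{V}_1,\tilde{V}_2)$, $(\phim,\varphim_2)$, $\mathcal{A}$ and the non-zero modes $(\phi_\neq,\psi_{1\neq},\psi_{2\neq})$, controlled by $\mu^{\alpha-\frac12}$, $\mu^{\alpha-1}$, $\mu^{\alpha-\frac23}$ (with the various time-weights $(1+t)^{-1/4}$, $(1+t)^{-3/4}$, etc.), then close them by showing the same norms satisfy estimates with strictly better constants, so the set of times on which the assumptions hold is both open and closed. Local well-posedness in $H^5$ is routine for the compressible Navier–Stokes system, so the whole content is in the global-in-time a priori estimate. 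The decomposition into zero and non-zero modes decouples the analysis: first I would treat the zero modes, then feed their decay into the non-zero mode analysis, then come back and absorb the non-zero-mode feedback (the terms $\Do(\phi_\neq\psi_{2\neq})$ etc.) into the zero-mode estimates, using that the non-zero modes have extra smallness and enhanced-dissipation decay.

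For the zero modes, I would proceed in the order the ansatz was built. First, record the explicit estimates for the diffusion waves $\theta_i$, the coupled waves $\Xi_i$ and $\tilde\Xi_i$, and $\theta_A$ via Duhamel on \cref{equ-xi}, \cref{equ-Xi}, \cref{thetaA}: pointwise bounds of heat-kernel type giving $\|\theta_i\|_{L^2}\lesssim(1+t)^{-1/4}$, $\|\Xi_i\|_{L^2},\|\theta_A\|_{L^\infty}\lesssim(1+t)^{-1/2}$ and their derivative analogues (this is \cref{estimateontheta,estimateonxi}, assumed). Second, run a weighted energy estimate on the integrated system \cref{anti} for $\tilde{\V}$: since $\int \tilde v_i\,dy=0$ the anti-derivatives decay, and the key input is that the source $\tilde{\mathbf{K}}$ has the improved rate $(1+t)^{-2}e^{-(y\pm\bar c t)^2/(1+t)}+\p_y(\Xi^2)$ coming from the cancellations noted after \cref{tildeK}; combining a basic $L^2$ estimate on $\tilde{\V}$ with a weighted inequality on the heat kernel (the tool from \cite{HLM2010}) and the diffusive structure $B$ yields decay of $\|\tilde{\V}\|$, hence $\|\phim,\varphim_2\|_{L^2}\lesssim(1+t)^{-1/4}$ and $\|\p_y(\phim,\varphim_2)\|_{L^2}\lesssim(1+t)^{-3/4}$ after recovering $w=Rv$ and $v=\tilde v+\mathcal{C}$. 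Third, estimate $\mathcal{A}$ from \cref{equ-A}: here the crucial point already engineered into the system is that $\psim_2$ has been replaced by $-a\mu\p_y(\tilde v_2-\tilde v_1)$, so the would-be lift-up source is now of size $\mu$ times a decaying quantity; an $L^\infty$ (or $L^2$ plus $\p_y$) estimate on $\mathcal{A}$, using that $F_A$ in \cref{FA} collects only quadratic-in-smallness and fast-decaying terms, gives $\mathcal{A}$ (hence $\psim_1 = \mathcal{A}+\theta_A + a(\tilde V_2-\tilde V_1+\tilde\Xi_2-\tilde\Xi_1)$) uniformly bounded in $L^\infty$ of size $\mu^{\alpha-2/3}$, together with decay of $\p_y\psim_1$ in $L^2$.

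For the non-zero modes I would use the Fourier-multiplier / hypocoercivity machinery adapted to the compressible system: design time-dependent Fourier multipliers (in the spirit of \cite{ADM2021,BGM2017ann,ZZZ2022}) encoding the inviscid-damping-type and enhanced-dissipation-type weights appropriate to $(\phi_\neq,\psi_{1\neq},\psi_{2\neq})$, then perform an energy estimate in the multiplier-weighted norm, handling the transport term $y\p_x$ by the usual commutation-with-the-multiplier trick and controlling the compressible coupling terms and the variable-coefficient terms via commutator estimates (to avoid derivative loss), with the zero-mode quantities entering as coefficients whose decay was just established. This produces enhanced dissipation: $e^{-\delta\nu^{1/3}t}$-type decay for the non-zero modes, at the cost of the threshold exponent, and feeds back the smallness needed to close the zero-mode estimates. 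Finally I would do the plain $H^5$ energy estimate for the full perturbation \cref{perturbation1} to control top-order derivatives (no decay needed there, just boundedness by $\mu^{\alpha}$ times a constant), which closes all a priori assumptions and, via continuation, yields the global solution and the stated bounds.

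The main obstacle is the $\mathcal{A}$-estimate interlocked with the $\tilde{\V}$-estimate: one must simultaneously propagate the sharp decay of $(\phim,\varphim_2)$ and the uniform $L^\infty$ bound of $\psim_1$, and the delicate point is that every error term in $F_1$, $\tilde{\mathbf K}$, $F_A$ that is only $O((1+t)^{-1/2})$ in $L^2$ (the diffusion-wave self-interactions and the $\Xi$-type terms) must be absorbed either by the extra factor $\mu$ sitting in front of the transformed lift-up term or by the improved cancellation structure — getting the bookkeeping of powers of $\mu$ and powers of $(1+t)$ to line up so that $\alpha>\frac{11}{3}$ (and the Mach-number constraint $M\lesssim\min\{(\lambda+2\mu)^{-1/2},\mu^{-1/3}\}$) is exactly what the argument tolerates is where all the tension in the proof lives. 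A secondary difficulty is the commutator estimates in the non-zero-mode step, since the compressible structure couples density and velocity and the multipliers are not simple Fourier cutoffs, so care is needed to keep the energy estimate closed without losing derivatives.
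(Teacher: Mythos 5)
Your proposal follows essentially the same route as the paper: a continuation argument built on the a priori assumptions, the anti-derivative/diffusion-wave treatment of the zero modes with the weighted heat-kernel inequality of \cite{HLM2010}, the cancellation that replaces $\psim_2$ by $-a\mu\p_y(\tilde{v}_2-\tilde{v}_1)$ in the equation for $\mathcal{A}$, Fourier-multiplier energy estimates with commutator bounds giving enhanced dissipation for the non-zero modes, and a final $H^5$ energy estimate to close the bootstrap, after which \cref{MT} follows exactly as in the paper from \cref{MT2} and the wave estimates. One small correction: $\theta_A$ is only shown to be uniformly bounded in $L^\infty$ (\cref{estimateonta}), not decaying like $(1+t)^{-1/2}$ — it carries precisely the non-decaying leading part of $\psim_1$ — but since you only use the uniform bound in your final step, this slip does not affect the argument.
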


	In order to prove  \cref{MT}, we first show
	\begin{Thm}\label{MT2}
		Under the same conditions as \cref{MT}, the system \eqref{anti},  \eqref{thetaA}-\eqref{equ-A} admits a unique global solution. Moreover,  it holds that for the zero modes, 
		\begin{align}\label{decay}
			\begin{aligned}
				&\norm{\tilde{V}_1,\tilde{V}_2}_{L^2}\leq C_M\mu^{\alpha}, \qquad \norm{\p_y^k\tilde{V}_1,\p_y^k\tilde{V}_2}_{L^2}\leq C_M\mu^{\alpha-\frac{k}{2}}(1+t)^{-\frac{k}{2}},\\ 
				&\norm{\theta_A}_{L^{\infty}}\leqslant C_M\mu^{\alpha},\qquad\norm{\mathcal{A}}_{L^2}\leq C_M\mu^{\alpha},	\qquad \norm{\mathcal{A}_y}_{L^2}\leq C_M\mu^{\alpha-\frac{1}{2}}(1+t)^{-\frac{1}{2}},
			\end{aligned}
		\end{align}
		where $k=1,2$. For non-zero modes, there is enhanced dissipation effect such that
		\begin{align}\label{enhanced dissipation}
			\norm{\nabla^{n+1} \phi_\neq}_{L^2}+\norm{\nabla^{n+1} \psi_\neq}_{L^2}\leqslant C_M\mu^{\alpha-\frac{2n+3}{6}} e^{-\frac{1}{128}\mu^{\frac{1}{3}}t}, \quad n=0,1.
		\end{align}	
	\end{Thm}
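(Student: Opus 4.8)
The plan is to prove \cref{MT2} by a continuity (bootstrap) argument on the coupled system consisting of the integrated zero-mode equation \cref{anti}, the auxiliary equations \cref{equ-Xi,thetaA}, the new quantity equation \cref{equ-A}, and the non-zero mode system; the bounds \cref{decay,enhanced dissipation} are the a priori assumptions we close. First I would set up local well-posedness and the a priori hypotheses: assume for $t\in[0,T]$ that $\norm{\tilde{V}_1,\tilde{V}_2}_{L^2}\le 2C_M\mu^{\alpha}$, $\norm{\p_y^k\tilde{V}_i}_{L^2}\le 2C_M\mu^{\alpha-k/2}(1+t)^{-k/2}$ for $k=1,2$, $\norm{\mathcal{A}}_{L^2}\le 2C_M\mu^{\alpha}$, $\norm{\mathcal{A}_y}_{L^2}\le 2C_M\mu^{\alpha-1/2}(1+t)^{-1/2}$, and the enhanced-dissipation decay \cref{enhanced dissipation} with doubled constant; the goal is to recover each bound with the constant $C_M$ using the smallness $\mu\le\delta_0$ and the Mach condition $M\le\frac1{50}\min\{(\lambda+2\mu)^{-1/2},\mu^{-1/3}\}$ to absorb coupling losses.

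Next I would carry out the zero-mode estimates. The diffusion waves $\theta_i$ from \cref{theta} and the coupled waves $\Xi_i$ from \cref{equ-xi} are explicit / Duhamel-constructed, so \cref{estimateontheta,estimateonxi} give $\norm{\theta_i}_{L^2}\lesssim\mu^{\alpha}(1+t)^{-1/4}$, $\norm{\Xi_i}_{L^2}\lesssim\mu^{\alpha}(1+t)^{-1/2}$ with matching derivative and pointwise Gaussian bounds; then $\theta_A$ from \cref{thetaA} is controlled by the heat kernel acting on these faster-decaying sources, yielding $\norm{\theta_A}_{L^\infty}\lesssim\mu^{\alpha}$. For $\mathbf{\tilde V}$ solving \cref{anti}, I would run a weighted energy estimate: the matrix $B$ is degenerate (rank one) but $A$ is hyperbolic with distinct speeds $\pm\bar c$, so after diagonalizing into the $\pm\bar c$ characteristic variables the standard parabolic smoothing along each characteristic applies; pairing with $\la y\ra$-type weights and invoking the weighted heat-kernel inequality from \cite{HLM2010} converts the $L^1\cap L^2$ initial data and the source $\mathbf{\tilde K}$ (whose leading part decays like $(1+t)^{-2}e^{-(y\pm\bar ct)^2/(1+t)}+\p_y(\Xi^2)$ after the cancellations noted below \cref{tildeK}) into the claimed $L^2$ bounds on $\tilde V_i$ and $\p_y^k\tilde V_i$. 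The nonlinear and non-zero-mode feedback terms hidden in $E_1,E_4$ (hence in $\tilde N$, $\mathbf{\tilde K}$) are handled by inserting the a priori enhanced-dissipation decay of $\phi_\neq,\psi_\neq$, which makes those contributions of size $\mu^{2\alpha-\text{loss}}\cdot e^{-c\mu^{1/3}t}$, absorbable since $\alpha>11/3$.

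Then I would estimate $\mathcal{A}$ from \cref{equ-A}. The crucial point is the cancellation already built into the derivation: $\psim_2$ has been replaced by $-a\mu\p_y(\tilde v_2-\tilde v_1)$, which by the $\tilde V$-bounds is of size $\mu\cdot\mu^{\alpha-1/2}(1+t)^{-1/2}=\mu^{\alpha+1/2}(1+t)^{-1/2}$ — small after one $y$-derivative is spent. An $L^2$ energy estimate on \cref{equ-A}: test with $\mathcal{A}$, use $\mu\norm{\mathcal A_y}^2$ as the good term, control $-\psim_2\p_y\mathcal A$ and $-\mu\frac{\phim}{\phim+1}\p_y^2\mathcal A$ by smallness of $\phim$ and $\psim_2$ (from the $\tilde V$, $\theta$ bounds) and Cauchy–Schwarz absorbing into the dissipation, and bound $F_A$ term-by-term using the decay rates of $\tilde v_i,\Xi_i,\theta_A,\phim,\varphim_2$ and of the non-zero modes; this gives $\norm{\mathcal A}_{L^2}\le C_M\mu^\alpha$ uniformly, and testing with $-\mathcal A_{yy}$ plus a time-weight gives $\norm{\mathcal A_y}_{L^2}\le C_M\mu^{\alpha-1/2}(1+t)^{-1/2}$.

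For the non-zero modes I would follow the Fourier-multiplier scheme of \cite{ADM2021,BGM2017ann,ZZZ2022}: design time-dependent multipliers $m(t,k,\eta)$ adapted to the compressible structure (compensating the lift-up coupling $\psi_{2\neq}\mathbf e_1$ and the weak dissipation), form the weighted energy $\sum_{k\ne0}\int m^2(|\hat\phi_\neq|^2+|\hat\psi_\neq|^2)$, and use that on non-zero modes the operator $y\p_x$ forces mixing so the effective dissipation rate is $\mu^{1/3}$ rather than $\mu$; the nonlinear terms and the zero-mode $\times$ non-zero-mode interactions are controlled by commutator estimates (to avoid derivative loss) together with the already-established zero-mode decay. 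Propagating $N$ derivatives ($N=5$ as in \cref{initial}) closes \cref{enhanced dissipation} for $n=0,1$. \emph{The main obstacle} I anticipate is the $\mathcal A$-estimate: even though the explicit $\psim_2$ term is gone, $F_A$ in \cref{FA} still contains $\phim\varphim_2/\rhom$-type and $\Do(\phi_\neq\psi_{2\neq})$-type quadratic terms whose time decay is only borderline ($(1+t)^{-3/4}$ or slower), so keeping $\norm{\mathcal A}_{L^2}$ \emph{uniformly bounded} (no growth) requires carefully exploiting that every such term either carries an extra $\mu$-power or an extra $\p_y$ (spent against the $\mu\norm{\mathcal A_y}^2$ dissipation) or the exponential factor $e^{-c\mu^{1/3}t}$ from enhanced dissipation — balancing these against the threshold $\alpha>11/3$ and the Mach constraint is where the proof is delicate.
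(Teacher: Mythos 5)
Your overall route---a bootstrap on assumptions of the type \cref{aps}, explicit/Duhamel bounds for $\theta_i,\Xi_i,\theta_A$, an energy argument for $\mathbf{\tilde V}$ invoking the weighted inequality of \cite{HLM2010}, an $L^2$ estimate for $\mathcal{A}$ exploiting the built-in cancellation, and the Fourier-multiplier scheme of \cite{ADM2021,ZZZ2022} for the non-zero modes followed by closing $H^5$ energy bounds---is essentially the paper's. But one step of the zero-mode argument, as you state it, would fail. You claim that after passing to the $\pm\bar c$ characteristic variables ``the standard parabolic smoothing along each characteristic applies.'' It does not: in the diagonalized system \cref{anti} the viscosity matrix $B$ has all entries $\bar\mu/2$ and is rank one, so testing with the solution produces dissipation only for the combination $\mathbb{V}_2:=\tilde V_1+\tilde V_2$, and nothing smooths $\mathbb{V}_1:=\tilde V_2-\tilde V_1$ directly. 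The paper recovers $\int_0^T\norm{\p_y\mathbb{V}_1}^2dt$ by a Kawashima-type compensating estimate, pairing \cref{hatv}$_2$ with $\mathbb{V}_{1y}$ and substituting \cref{hatv}$_1$ (see \cref{V1y0}--\cref{V1y1}); without this cross estimate the rates $\norm{\p_y^k\tilde V_i}\lesssim\mu^{\alpha-k/2}(1+t)^{-k/2}$ and the absorption of the quadratic errors in $\tilde K$ cannot be closed. Relatedly, ``$\la y\ra$-type weights'' are not the right weights here: the borderline space--time integrals are of the form $\int_0^T\int[\bar\mu(1+t)]^{-1}e^{-(y\pm\bar c(1+t))^2/(\bar\mu(1+t))}\mathbb{V}_2^2\,dy\,dt$, and they are controlled with Gaussian weights transported along the characteristics ($h$ and $n$ in \cref{q}, \cref{0000000}); the lemma of \cite{HLM2010} (\cref{Ptii}) is needed precisely because $\tilde V_2$ and the weight travel at the same speed $\bar c$, which a static polynomial weight cannot see.

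The same omission resurfaces in your $\mathcal{A}$-step. ``Cauchy--Schwarz absorbing into the dissipation'' for $-\psim_2\p_y\mathcal{A}$ leaves, after absorbing $\delta\mu\norm{\mathcal{A}_y}^2$, a term of the form $\mu^{-1}\int_0^T\int(\theta_j^2+\tilde V_{iy}^2)\mathcal{A}^2\,dy\,dt$; with only the bootstrap decay of $\mathcal{A}$ and $\norm{\theta_j}^2\lesssim\mu^{2\alpha}[\bar\mu(1+t)]^{-1/2}$ this produces a $\log(1+T)$ growth, so the uniform-in-time bound $\norm{\mathcal{A}}\le C_M\mu^{\alpha}$ is lost. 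The paper closes it by running the same characteristic-weight argument of \cref{q}--\cref{new1} on the $\mathcal{A}$-equation, which bounds $\mu^{2\alpha-1}\int_0^T\int[\bar\mu(1+t)]^{-1}e^{-(y\pm\bar c(1+t))^2/(\bar\mu(1+t))}\mathcal{A}^2$ by initial data plus $\mu\int_0^T\norm{\mathcal{A}_y}^2dt$. You correctly identify this as the delicate point, but the missing ingredient is this weighted space--time estimate along the traveling Gaussians, not a finer bookkeeping of $\mu$-powers against $\alpha>\tfrac{11}{3}$.
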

	
	\begin{Rem}
		
		Compared to the previous works for linearized case \cite{ADM2021,ZZZ2022},  the estimates are improved in \eqref{decay}. Indeed, the leading term of $\psim_1$ is $\theta_A$, which is shown uniformly bounded in $L^\infty$, and the time-decay rate of  $\psim_1-\theta_A\approx \mathcal{A}$ in $L^\infty$ is obtained in \eqref{decay}.
	\end{Rem}
	%\begin{Rem}
	%If the initial data additionally satisfies
	%\begin{align}
	%	\int_{\Torus\times\R}(\phi,\varphi_2)dxdy=0,
	%\end{align}
	%then $\theta_A=0$  and the decay rates can be improved as
	%\begin{align}\label{decay2}
	%	&\norm{\p_y^l(\rhom-1,\um_2)}_{L^2}\leq O(\mu^{\alpha})
	%	(1+t)^{-\frac{1+l}{2}},\qquad 
	%	\qquad l=0,1,\\
	%	&\norm{\um_1-y}_{L^{\infty}}\leq O(\mu^{\alpha}) (1+t)^{-\frac{1}{4}},\qquad \norm{\p_y(\um_1-y)}_{L^2}\leq O(\mu^{\alpha}) (1+t)^{-\frac{1}{2}},\nonumber
	%\end{align}
	%And the corresponding stability threshold index can be improved to $23/6$. 
	
	%\end{Rem}
	%begin{Rem}
	%To the best of our knowledge, \cref{MT} is the first nonlinear stability threshold.
	%\end{Rem}
	
%	\begin{Rem}
%		If the initial data additionally satisfies 
%		\begin{align}
%			\int_{\Torus\times\R}(\phi,\varphi_2)(x,y,0)dxdy=0,
%		\end{align}
%		then the diffusion waves disappear and 	the decay rates can be improved to
%		\begin{align}\label{decay2}
%			&\norm{\p_y^l(\phim,\psim_2)}_{L^2}\leq C_M \mu^{\alpha}
%			[\mu(1+t)]^{-\frac{1+l}{2}},\qquad 
%			\qquad l=0,1,\\
%			&\norm{\psim_1}_{L^{\infty}}\leq C_M\mu^{\alpha} [\mu(1+t)]^{-\frac{1}{4}},\qquad \norm{\p_y\psim_1}_{L^2}\leq C_M\mu^{\alpha} [\mu(1+t)]^{-\frac{1}{2}}.\nonumber
%		\end{align}
%		\end{Rem}
%	
	%\begin{Rem}
	%Our method can be also used to study the 3-D compressible Couette flow.
	%\end{Rem}
	
Once we have Theorem \ref{MT2}, we are ready to prove \cref{MT}.	
\begin{proof}[\bf Proof of Theorem \ref{MT}]
	In terms of  \cref{phi-psi,tildev}, one has
	\begin{align*}
		&\norm{(\phi,\psi_2)}_{L^2}\leq\norm{(\phim,\psim_2)}_{L^2}+\norm{(\phi_\neq,\psi_{2\neq})}_{L^2}\\
        &\qquad\qquad\quad\lesssim\norm{(\tilde{v}_1,\tilde{v}_2)}_{L^2}+\norm{\mathcal{C}_i}_{L^2}+\norm{(\nabla\phi_\neq,\nabla\psi_{2\neq})}_{L^2},\\
		&\qquad\qquad\quad\lesssim\norm{(\p_y\tilde{V}_1,\p_y\tilde{V}_2)}_{L^2}+\norm{\theta_i}_{L^2}+\norm{\Xi_i}_{L^2}+\norm{(\nabla\phi_\neq,\nabla\psi_{2\neq})}_{L^2},\\
		&\norm{(\nabla\phi,\nabla\psi_2)}_{L^2}\leq\norm{(\p_y\phim,\p_y\psim_2)}_{L^2}+\norm{(\nabla\phi_\neq,\nabla\psi_{2\neq})}_{L^2}\\
%		&\qquad\qquad\qquad\;\lesssim \norm{(\p_y\tilde{v}_1,\p_y\tilde{v}_2)}_{L^2}+\norm{\p_y\mathcal{C}_i}_{L^2}+\norm{(\nabla\phi_\neq,\nabla\psi_{2\neq})}_{L^2}\\
		&\qquad\qquad\qquad\;\lesssim \norm{(\p_y^2\tilde{V}_1,\p_y^2\tilde{V}_2)}_{L^2}+\norm{\p_y\theta_i}_{L^2}+\norm{\p_y\Xi_i}_{L^2}+\norm{(\nabla\phi_\neq,\nabla\psi_{2\neq})}_{L^2}.
	\end{align*}
Acoording to \cref{MT2} and the estimations for the diffusion waves $\theta_i$ and $\Xi_i$ (see \cref{estimateontheta} and \ref{estimateonxi} below), we have
\begin{align*}
	\norm{\phi,\psi_2}_{L^{2}}\leq C_M \mu^{\alpha-\frac{1}{2}}(1+t)^{-\frac{1}{4}}, \quad \norm{\nabla\phi,\nabla\psi_2}_{L^{2}}\leq C_M \mu^{\alpha-1}(1+t)^{-\frac{3}{4}}.
\end{align*}

On the other hand, we obtain from Theorem \ref{MT2} and the Gagliardo-Nirenberg inequality that
\begin{align*}
&\norm{\psi_1}_{L^\infty}\leq\norm{\psim_1}_{L^\infty}+\norm{\psi_{1\neq}}_{L^\infty}\\
&\quad\lesssim \norm{\theta_A}_{L^\infty}+\norm{\mathcal{A}}_{L^\infty}+\norm{\tilde{V}_i}_{L^\infty}+\norm{\tilde{\Xi}_i}_{L^\infty}+\norm{\psi_{1\neq}}_{L^2}^{\frac{1}{2}}\norm{\nabla^2\psi_{1\neq}}_{L^2}^{\frac{1}{2}}\\
&\quad\lesssim \norm{\theta_A}_{L^\infty}+\norm{\mathcal{A}}_{L^\infty}+\norm{\tilde{V}_i}_{L^\infty}+\norm{\tilde{\Xi}_i}_{L^\infty}+\norm{\nabla\psi_{1\neq}}_{L^2}^{\frac{1}{2}}\norm{\nabla^2\psi_{1\neq}}_{L^2}^{\frac{1}{2}}\\
&\quad \lesssim C_M \mu^{\alpha-\frac{2}{3}}.
\end{align*}
Therefore we arrive at \cref{MT}.
\end{proof}
It remains to prove Theorem \ref{MT2}, which will be proved  by the Duhamel's principle for $\theta_A$ through the heat equation \eqref{thetaA} with source terms, and combining the local existence and a priori estimates in the following solution space
	\begin{align}
			X_{\bar{N}, N}&(T_0, T)=  \left\{(\phi, \psi, \tilde{V},\mathcal{A}):(\phi, \psi) \in C\left([T_0, T] ; H^5\right), \tilde{V} \in C\left([T_0, T] ; H^2\right),\right. \nonumber\\[1.5mm]
			&\mathcal{A}\in C\left([T_0, T] ; H^1\right),\quad \sup_{T_0 \leq t \leq T}\norm{\mathcal{A}}_{H^1}\leq N,\quad \p_y\mathcal{A}\in L^2\left(T_0, T ; H^1\right),\nonumber\\
			& \sup_{T_0 \leq t \leq T}\bigg[\norm{(\tilde{V}_1,\tilde{V}_2)}_{L^{2}}+\norm{(\phi,\psi_1,\psi_2)}_{H^{5}}\bigg]\leq N,\quad \inf _{x, t}(1+\phi) \geq \bar{N}, \\[1.5mm]
			&\p_y(\tilde{V}_2-\tilde{V}_1) ,\nabla^i\phi\in L^2\left(T_0, T ; L^2\right),  \left.\p_y(\tilde{V}_2+\tilde{V}_1),\nabla^i\psi \in L^2\left(T_0, T ; H^1\right),\quad i=1,..5.\right\}\nonumber
	\end{align}
	for $T_0,T, \bar{N}, N>0$. The local existence theorem is given as
	\begin{Thm}[Local existence theorem]\label{LET}
		Assume that
		\begin{align}\label{iniT}
			\norm{(\tilde{V}_1,\tilde{V}_2)(\cdot,T_0)}_{H^{2}}+\norm{(\phi,\psi_1,\psi_2)(\cdot,T_0)}_{H^{5}}\leq N,\quad \inf _{\Torus\times\R }(1+\phi)(\cdot,T_0) \geq \bar{N},
		\end{align}
		there exist positive constants $b$ and $t_0=t_0(\bar{N}, N)$ such that there exists a unique solution $(\phi, \psi, \tilde{V},\mathcal{A}) \in X_{\frac12\bar{N}, b N}\left(T_0, T_0+t_0\right)$ to \cref{perturbation1,anti,equ-A}.
		\end{Thm}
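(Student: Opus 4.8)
The argument follows the classical linearized-iteration construction for hyperbolic--parabolic systems (of Matsumura--Nishida type), extended to carry along the anti-derivative variable $\mathbf{\tilde{V}}$ and the scalar $\mathcal{A}$. The plan is: (a) set up a Picard iteration; (b) prove uniform bounds on the iterates in a ball of $X_{\frac12\bar{N},bN}(T_0,T_0+t_0)$ with $b$ universal and $t_0=t_0(\bar{N},N)$ small; (c) pass to the limit; (d) obtain uniqueness from a difference estimate in a lower-order norm. For the iteration, given the $n$-th iterate I would: solve for $\psi^{n+1}$ the linear, uniformly parabolic system ($\mu>0$, $\lambda+2\mu>0$) obtained from \cref{perturbation1}$_2$ by freezing every coefficient and nonlinear factor at the $n$-th iterate, which is solvable in $C([T_0,T_0+t_0];H^5)\cap L^2_tH^6$; solve for $\phi^{n+1}$ the linear transport equation $\p_t\phi^{n+1}+(u_E+\psi^n)\dnab\phi^{n+1}=-(1+\phi^n)\dv\psi^{n+1}$ along the (globally Lipschitz) flow of $u_E+\psi^n$; define $\theta_A^{n+1}$ by Duhamel's formula for \cref{thetaA} with source evaluated at the $n$-th iterate and the explicit diffusion waves $\theta_i,\Xi_i$; solve for $\mathbf{\tilde{V}}^{n+1}$ the system \cref{anti} with $\mathbf{\tilde{K}}$ frozen at the $n$-th iterate; and solve for $\mathcal{A}^{n+1}$ the equation \cref{equ-A} with coefficients and source frozen --- after moving $-\mu\frac{\phim}{\phim+1}\p_y^2\mathcal{A}$ to the left this is scalar parabolic with principal coefficient $\frac{\mu}{1+\phim^{n}}>0$, which is exactly where the positivity hypothesis is used.

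The core step is the uniform energy estimate on each linear problem. For the $(\phi,\psi)$-block the decisive point is to avoid loss of derivatives: $y\p_x$ is antisymmetric in $L^2$ and the commutator $[\la D\ra^5,y\p_x]$ is only of order $5$, hence harmless, while the top-order part of $\dv\psi$ in the $\phi$-equation cancels, after an integration by parts, against that of $\frac{1}{M^2}\nabla\phi$ in the $\psi$-equation; this gives $\frac{d}{dt}\big(\frac{1}{M^2}\norm{\phi^{n+1}}_{H^5}^2+\norm{\psi^{n+1}}_{H^5}^2\big)+c\mu\norm{\nabla\psi^{n+1}}_{H^5}^2\lesssim P\big(\norm{(\phi^n,\psi^n)}_{H^5},\norm{(\phi^{n+1},\psi^{n+1})}_{H^5}\big)$ for a polynomial $P$, and the $H^5$ transport bound for $\phi^{n+1}$ in addition absorbs $\int_{T_0}^{t}\norm{(1+\phi^n)\dv\psi^{n+1}}_{H^5}\,ds\le t_0^{1/2}\norm{\psi^{n+1}}_{L^2_tH^6}\big(1+\norm{\phi^n}_{L^\infty_tH^5}\big)$. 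For \cref{anti}, the positive semi-definite matrix $B$ yields the dissipation $-\frac{\bar{\mu}}{2}\norm{\p_y(\tilde{V}_1+\tilde{V}_2)}^2\le 0$, which is already enough for an $H^2$ bound over a short interval since $\mathbf{\tilde{K}}$ is controlled by $\norm{(\phi^n,\psi^n)}_{H^5}$ and smooth explicit terms; $\theta_A^{n+1}$ is bounded in $L^\infty$ by the Duhamel estimate, and $\mathcal{A}^{n+1}$ by a routine $H^1$ parabolic estimate. Choosing $b$ universal and $t_0=t_0(\bar{N},N)$ small closes all these bounds in the ball of radius $bN$; moreover the transport representation $(1+\phi^{n+1})(X(t),t)=(1+\phi)(X(T_0),T_0)\exp\!\big(-\int_{T_0}^{t}\dv\psi^{n+1}\,ds\big)$ together with $\int_{T_0}^{T_0+t_0}\norm{\dv\psi^{n+1}}_{L^\infty}\,ds\lesssim t_0^{1/2}N$ gives $\inf_{x,t}(1+\phi^{n+1})\ge\frac12\bar{N}$.

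Then the differences of consecutive iterates satisfy linear systems of the same type, with sources linear in one difference and with coefficients and factors already bounded by the previous step; running the same estimates one order of regularity lower (say in $H^4\times L^2$ for $(\phi,\psi)$, exploiting the parabolic gain of $\psi$) shows, after shrinking $t_0$ if necessary, that the iterates are Cauchy in that norm. Passing to the limit, the solution inherits the $X_{\frac12\bar{N},bN}$ bounds by weak-$*$ lower semicontinuity of the norms and solves \cref{perturbation1,thetaA,anti,equ-A}; uniqueness is the same difference estimate applied to two solutions.

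The hardest part will be the simultaneous closure of all four families of estimates under a single $t_0$ while propagating the lower bound $\inf(1+\phi)\ge\frac12\bar{N}$, and in particular treating the density equation --- which enjoys no smoothing --- without losing a derivative: this is precisely why one needs the cancellation between the $\dv\psi$ term of \cref{perturbation1}$_1$ and the $\nabla\phi$ term of \cref{perturbation1}$_2$, with the remaining mismatch compensated at the linear-iteration level by the $L^2_tH^6$ parabolic gain of $\psi$ and a factor $t_0^{1/2}$. A secondary subtlety is that $\mathbf{\tilde{V}}$ must be evolved via \cref{anti} from the given datum $\mathbf{\tilde{V}}(\cdot,T_0)\in H^2$ rather than reconstructed by integrating $\tilde{v}$ in $y$ --- $H^5$-control of the zero modes does not by itself provide the $L^1$-in-$y$ decay needed for the anti-derivative --- while consistency of $\p_y\mathbf{\tilde{V}}$ with $v-(\mathcal{C}_1,\mathcal{C}_2)$ is automatic, since both solve the same linear evolution with matching data at $T_0$.
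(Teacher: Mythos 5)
The paper offers no proof of this theorem at all --- it simply declares the local existence standard and omits the details --- and your proposal is exactly the standard Matsumura--Nishida-type linearized iteration the paper is invoking, with the right extra care (evolving $\mathbf{\tilde{V}}$ and $\mathcal{A}$ by their own linear equations rather than reconstructing them, and using $1+\mathring{\phi}=\int_{\Torus}(1+\phi)\,dx\geq \bar N$ for the parabolicity of the $\mathcal{A}$-equation), so it is sound in outline. The only blemishes are internal to your scheme and harmless: the skew-symmetric cancellation between $\dv\psi$ and $\frac{1}{M^2}\nabla\phi$ pertains to the coupled system, whereas in your decoupled iteration the pressure term is frozen at $\phi^n$ and the $H^5$ estimate instead closes via the $t_0^{1/2}$/parabolic-gain mechanism you also describe (or with $\mu$-dependent constants, which is acceptable here); likewise the exponential representation for $1+\phi^{n+1}$ along characteristics requires keeping $(1+\phi^{n+1})\dv\psi^{n+1}$ in the linearization rather than $(1+\phi^{n})\dv\psi^{n+1}$, though the frozen variant still yields $\inf(1+\phi^{n+1})\geq\tfrac12\bar N$ for small $t_0$ by a direct integral bound.
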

     The proof of \cref{LET} for small $t_0$ is standard and the details are omitted for easy reading.  Next, the a priori estimate will be carried out under the following a priori assumptions,
	\begin{align}\label{aps}
		\left\{\begin{aligned}
			&\sup_{T_0 \leq t \leq T_0+ t_0}\left\{\norm{\tilde{V}_1,\tilde{V}_2}_{L^2}+\mu^{\frac{1}{2}}(1+t)^{\frac{1}{2}}\norm{\tilde{v}_1,\tilde{v}_2}_{L^2}+\mu(1+t)\norm{\p_y\tilde{v}_1,\p_y\tilde{v}_2}_{L^2}\right\}\leq C_M \mu^{\alpha-\varepsilon},\\ 
			&\sup_{T_0 \leq t \leq T_0+t_0}\left\{\norm{\mathcal{A}}_{L^2}+ \mu^{\frac{1}{2}}(1+t)^{\frac{1}{2}}\norm{\mathcal{A}_y}_{L^2} \right\} \le C_M \mu^{\alpha-\varepsilon},  \\ 
			&\sup_{T_0 \leq t \leq T_0+t_0}e^{\frac{1}{128}\mu^{\frac{1}{3}}t}\bigg(\norm{\nabla^{n+1}\phi_{\neq}}_{L^2}+\norm{\nabla^{n+1}\psi_{\neq}}_{L^2}\bigg)\leq C_M \mu^{\alpha-\varepsilon-\frac{2n+3}{6}},\quad n=0,1,\\ 
			&\sup_{T_0 \leq t \leq T_0+t_0}\norm{\nabla^{k+1}\psi}_{L^2}+\norm{\nabla^{k+1}\phi}_{L^2}\leq 2C_M \mu^{\alpha-\frac{1}{2}-\frac{k}{2}},\quad  1\leq k\leq 4, 
	   \end{aligned}\right.
		\end{align}
	where the positive constant $\varepsilon$ can be arbitrarily small and the constant $C_M$ will be determined later. 
	\begin{Prop}[A priori estimates]\label{ape}
		Assume that $(\tilde{V}_1,\tilde{V}_2,\mathcal{A})$ is the unique solution for \cref{anti,equ-A} with the initial data \cref{iniT} in the interval $[T_0,T_0+t_0]$, satisfying the a priori assumptions \cref{aps}, it holds that
		\begin{align}\label{decaya}
			&\sup_{T_0 \leq t \leq T_0+t_0}\norm{\tilde{V}_1,\tilde{V}_2}_{L^2}\leq C_M \mu^{\alpha},\quad\sup_{T_0 \leq t \leq T_0+t_0}\norm{\p_y^k\tilde{v}_1,\p_y^k\tilde{v}_2}_{L^2}\leq C_M\mu^{\alpha}[\mu(1+t)]^{-\frac{k+1}{2}}, \nonumber\\
			&\sup_{T_0 \leq t \leq T_0+t_0}\norm{\mathcal{A}}_{L^2}\leq C_M\mu^{\alpha},	\qquad\sup_{T_0 \leq t \leq T_0+t_0}\norm{\mathcal{A}_y}_{L^2}\leq C_M\mu^{\alpha}[\mu(1+t)]^{-\frac{1}{2}},
		\end{align}
		where $k=0,1$. In addition,	 there has enhanced dissipation effect for non-zero mode, i.e., 
		\begin{align}\label{enhanced dissipationa}
			\sup_{T_0 \leq t \leq T_0+t_0}\left[\norm{\nabla^{n+1} \phi_\neq}_{L^2}+\norm{\nabla^{n+1} \psi_\neq}_{L^2}\right]\leq C_M\mu^{\alpha-\frac{2n+3}{6}} e^{-\frac{1}{128}\mu^{\frac{1}{3}}t}, \quad n=0,1.
		\end{align}	
		For higher-order derivatives, it holds that
		\begin{align}\label{ho}
			\sup_{T_0 \leq t \leq T_0+t_0}\norm{\nabla^{k+1}\psi}_{L^2}+\norm{\nabla^{k+1}\phi}_{L^2}\leq  (1+\frac{k}{10})C_M\mu^{\alpha-\frac{1}{2}-\frac{k}{2}},\quad  1\leq k\leq 4.
		\end{align}
	\end{Prop}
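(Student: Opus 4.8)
\textbf{Proof proposal for Proposition~\ref{ape}.}

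The plan is to close the a priori assumptions \cref{aps} by a bootstrap argument, treating the zero-mode part (anti-derivative system \cref{anti} together with the $\mathcal A$-equation \cref{equ-A}) and the non-zero-mode part (enhanced dissipation) essentially separately, coupled only through the nonlinear source terms, which are controlled using \cref{aps} itself. I would organize the proof in four blocks, in this order. \textbf{Step 1 (decay of $\tilde{\mathbf V}$).} Apply a weighted energy estimate to \cref{anti}. Since $B=\frac{\bar\mu}{2}\begin{psmallmatrix}1&1\\1&1\end{psmallmatrix}$ is only positive semidefinite, the dissipation controls $\p_y(\tilde V_1+\tilde V_2)$ directly but not $\p_y(\tilde V_2-\tilde V_1)$; for the latter I would use the hyperbolic structure of $A=\mathrm{diag}(-\bar c,\bar c)$ together with a Kawashima-type compensating functional (an $\int \tilde V_{1y}\tilde V_{2y}$-type cross term), exactly as is standard for hyperbolic-parabolic systems with the Shizuta--Kawashima condition. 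Multiplying by suitable weights $(1+t)^{k}$ and using the source estimates on $\tilde{\mathbf K}$ (whose leading part decays like $(1+t)^{-2}e^{-(y\pm\bar ct)^2/(1+t)}+\p_y(\Xi^2)$, as noted after \cref{tildeK}, together with $\tilde N=\frac{1}{2a\bar c}(E_1+E_4)$ which carries the non-zero-mode feedback and the genuinely quadratic zero-mode terms) yields the $L^2$ bounds on $\tilde V_i$ and the $[\mu(1+t)]^{-(k+1)/2}$ decay of $\p_y^k\tilde v_i$, $k=0,1$. Here I would feed in the decay estimates on $\theta_i,\Xi_i,\tilde\Xi_i$ from \cref{estimateontheta,estimateonxi} and the weighted heat-kernel inequality from \cite{HLM2010}.

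\textbf{Step 2 ($\theta_A$ and $\mathcal A$).} First bound $\theta_A$ in $L^\infty$ via Duhamel on the heat equation \cref{thetaA}: the inhomogeneous terms are all products of diffusion waves (hence $L^1_t L^1_y$-integrable after one integration, giving the uniform $L^\infty$ bound $C_M\mu^\alpha$), while the terms $-a\bar c(\mathcal C_1+\mathcal C_2)\p_y\theta_A$ and $-a\bar c(\tilde v_1+\tilde v_2)\p_y\theta_A$ are absorbed by a Gronwall argument since $\|\mathcal C_i\|_{L^\infty},\|\tilde v_i\|_{L^\infty}$ are integrable in time. Then perform a weighted energy estimate on \cref{equ-A} for $\mathcal A$: the crucial point (the Remark before \cref{MT}) is that the lift-up source $\psim_2$ has been replaced by $a\mu\p_y(\tilde v_2-\tilde v_1)$, which by Step 1 is $O(\mu\cdot\mu^{\alpha}[\mu(1+t)]^{-1})=O(\mu^{\alpha}(1+t)^{-1})$ — still not integrable, so I would pair it against $\mathcal A$ and integrate by parts, moving the derivative onto $\mathcal A$ and using the $\mu\|\mathcal A_y\|^2$ dissipation plus Cauchy--Schwarz with the sharp $(1+t)$-weight, which is exactly why the weighted-in-time estimate closes. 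The remaining terms in $F_A$ are either quadratic in already-decaying quantities or carry explicit diffusion-wave decay; the term $\mu\frac{\phim}{\phim+1}\p_y^2\mathcal A$ is handled by the smallness $\|\phim\|_{L^\infty}\leq\bar N$-type bound from the solution space and absorbed into the dissipation.

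\textbf{Step 3 (enhanced dissipation for non-zero modes).} Work in Fourier in $x$ so that $k\neq0$, and use the Fourier-multiplier / Lyapunov-functional method of \cite{ADM2021,BGM2017ann,ZZZ2022} adapted to the compressible structure: introduce time-dependent multipliers (of the general type $e^{c\mu^{1/3}t}$ combined with the ``$\arctan$''-weights that capture the inviscid decay $\langle k t-\eta\rangle$) and build an energy $\mathcal E_\neq$ equivalent to $\|\nabla^{n+1}\phi_\neq\|^2+\|\nabla^{n+1}\psi_\neq\|^2$. Differentiating $\mathcal E_\neq$, the main linear terms give the enhanced-dissipation gain $\mu^{1/3}\mathcal E_\neq$ after using that the multipliers are slowly varying; the transport term $y\p_x$ is handled by the multiplier design; the genuinely dangerous step is the loss of derivatives coming from the variable coefficients $\frac{\phi}{\phi+1}$ etc., which I would control by commutator estimates (as announced in the introduction) so that no net derivative is lost. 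The zero-mode coefficients appearing in the non-zero-mode equations are inserted with the decay from Steps 1--2. Gronwall then gives \cref{enhanced dissipationa}. \textbf{Step 4 (higher-order energy estimates).} Finally, run a standard $H^5$ energy estimate on the full perturbation system \cref{perturbation1}: use the structure (symmetrizer in $\phi$, parabolic dissipation in $\psi$) to get $\frac{d}{dt}\|\nabla^{k+1}(\phi,\psi)\|^2+\mu\|\nabla^{k+2}\psi\|^2\lesssim(\text{lower order})$, where the nonlinear terms are estimated by the Moser/Gagliardo--Nirenberg inequalities and the transport term $y\p_x$ — which produces an unbounded coefficient — is controlled exactly as in the enhanced-dissipation analysis, by commuting and exploiting that on non-zero modes the bad factor $y$ is paired with the strong dissipation. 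Integrating and using the previously obtained bounds closes \cref{ho} with the improved constant $(1+\tfrac{k}{10})C_M$.

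\textbf{Main obstacle.} The hardest part is Step 2, the $\mathcal A$-estimate: even after the cancellation weakening the lift-up effect, the residual source $a\mu\p_y(\tilde v_2-\tilde v_1)$ decays only like $(1+t)^{-1}$, which is borderline non-integrable, so one must use a carefully tuned time weight together with the (weak, $O(\mu)$) dissipation $\mu\p_y^2\mathcal A$ and show the weighted estimate still closes without the weight exponent running away — this is precisely where the threshold exponent $\alpha>\frac{11}{3}$ and the constraint on $M$ enter, balancing the $\mu$-powers lost to the weak dissipation against the $\mu^\alpha$ smallness of the data. A secondary difficulty is keeping the bootstrap constants consistent across the four blocks, since the non-zero-mode estimates feed $\mu^{\alpha-(2n+3)/6}$-sized terms into the zero-mode sources and vice versa; this is what forces the particular arrangement of $\mu$-exponents in \cref{aps} and ultimately the index $\frac{11}{3}$.
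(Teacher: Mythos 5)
Your plan reproduces the paper's architecture almost exactly: zero modes via the anti-derivative system with a compensating cross term of the form $\int\mathbb V_2\,\p_y^{k}\mathbb V_{1y}$ and the weighted inequality of \cite{HLM2010} (the paper's \cref{1or}, \ref{2or}, \ref{Ptii}); then $\theta_A$ by Duhamel and $\mathcal A$ by energy estimates exploiting the cancellation (\cref{estimateonta}, \ref{Aor}); then enhanced dissipation through the multipliers $m_1,m_2,\omega$ with commutator bounds (\cref{com}, \ref{Z_1^n+Z_2^n+Z_3^n}, \ref{damping of Z_1^n}); then the $H^5$ estimates of Section 5. So the route is the same, and most steps would go through as you describe.

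There is, however, one concrete gap: your justification of the uniform bound $\norm{\theta_A}_{L^\infty}\lesssim\mu^\alpha$ in Step 2. The forcing in \cref{thetaA} is \emph{not} purely quadratic in diffusion waves: since $\mathcal C_1+\mathcal C_2-\Xi_1-\Xi_2=\theta_1+\theta_2+O(\bar\mu)\p_y(\theta+\Xi)$, the leading source is the term $-a\bar c(\theta_1+\theta_2)$, which is linear in the waves, has $\|\cdot\|_{L^1_y}\approx\mu^\alpha$ uniformly in time and $\|\cdot\|_{L^\infty}\approx\mu^\alpha[\bar\mu(1+t)]^{-1/2}$. Feeding this into Duhamel with the heat kernel of $\p_t-\mu\p_y^2$ and estimating naively (as "$L^1_tL^1_y$-integrable after one integration") gives $\int_0^t\min\{(\mu(t-\tau))^{-1/2}\mu^\alpha,\ \mu^\alpha[\bar\mu(1+\tau)]^{-1/2}\}\,d\tau\sim\mu^{\alpha-\frac12}\sqrt{t}$, which grows; the Gronwall absorption you invoke does not repair this since the troublesome term is the inhomogeneity, not the $\p_y\theta_A$ terms. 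The missing idea is the one the paper uses in \cref{I9} (the same mechanism as \cref{I11,I12} and the $I_6$ cancellation in \cref{estimateonxi}): the waves $\theta_1,\theta_2$ are centered on the characteristics $y=\mp\bar c(1+\tau)$ while the kernel $H(0,\mu)$ is centered at speed zero, so by \cref{semi-group} the interaction is localized to a bounded window of times $\tau$, which is what yields a time-uniform $O(\mu^\alpha)$ bound. Without this separated-speeds argument the $\theta_A$ step, and hence the $L^\infty$ control of $\psim_1$, does not close. As a secondary remark, the difficulty you single out as the "main obstacle" is in fact benign: after integrating by parts, the source $a\mu\p_y(\tilde v_2-\tilde v_1)$ is handled directly by $\mu\int_0^T\norm{\tilde v_2-\tilde v_1}^2dt\lesssim\mu^{2\alpha}$, which is already contained in the dissipation part of \cref{1or}, so no delicate time weight is needed there; the genuinely delicate points are the wave-interaction estimates above and the same-speed weighted integral $\int\!\!\int h^2\tilde V_2^2$, for which citing \cref{Ptii} as you do is the right move.
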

	\begin{Rem}
		Thanks \cref{decaya,enhanced dissipationa,ho}, one can close the a priori assumptions \cref{aps} as $\mu$ is small.
	\end{Rem}
	In terms of \cref{LET} and \cref{ape},   one can prove \cref{MT2} by the  continuity argument, cf. \cite{BGM2017ann}, together with the estimation for $\theta_A$ (see \cref{estimateonta} below). Thus, the rest of the present paper focuses on the proof of \cref{ape} and \cref{estimateonta}.

\vspace{0.3cm}
\section{Estimates on zero mode}
To estimate the zero mode, we firstly give the estimates of  $\theta_i$, $i=1,2$, which will be used frequently.
\begin{Lem}\label{estimateontheta}
For the initial data \eqref{initial},  it holds that for $i=1,2$, 
	\begin{align}
		\norm{\p_y^k \theta_i}_{L^p}\lesssim \mu^{\alpha}\left[\bar{\mu}(1+t)\right]^{-\frac{1}{2}+\frac{1}{2p}-\frac{k}{2}}, \quad  k\geq0,\ p\geq 1.
	\end{align}
\end{Lem}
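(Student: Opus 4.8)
\noindent\textbf{Proof strategy for \cref{estimateontheta}.}
The plan is to read the estimate directly off the explicit Hopf--Cole formula \cref{theta}. Write $c_0:=e^{\eta_i/\bar{\mu}}-1$ and let $\xi=\xi_i(y,t):=\big(y\pm\bar c(1+t)\big)/\sqrt{2\bar{\mu}(1+t)}$ be the associated self-similar variable ($+$ for $i=1$, $-$ for $i=2$), so that $\theta_i(y,t)=\bar{\mu}^{1/2}(2(1+t))^{-1/2}\,\Gamma_i(\xi)$ and $\partial_y\xi=(2\bar{\mu}(1+t))^{-1/2}$. Differentiating by the chain rule and rescaling the $L^p$-integral ($dy=\sqrt{2\bar{\mu}(1+t)}\,d\xi$) gives
\begin{align*}
\partial_y^k\theta_i(y,t)&=2^{-\frac{k+1}{2}}\,\bar{\mu}^{\frac{1-k}{2}}(1+t)^{-\frac{k+1}{2}}\,\Gamma_i^{(k)}(\xi),\\
\norm{\partial_y^k\theta_i}_{L^p}&=C_{k,p}\,\bar{\mu}^{\frac{1-k}{2}+\frac{1}{2p}}(1+t)^{-\frac{k+1}{2}+\frac{1}{2p}}\,\norm{\Gamma_i^{(k)}}_{L^p(\R)},
\end{align*}
with $C_{k,p}$ depending only on $k,p$ (for $p=\infty$ read $1/(2p)=0$). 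Since $\frac{1-k}{2}-1=-\frac{k+1}{2}=-\frac12-\frac k2$, the assertion follows once we establish
\[
\norm{\Gamma_i^{(k)}}_{L^p(\R)}\lesssim\bar{\mu}^{-1}\mu^{\alpha}
\]
with a constant independent of $\mu,\bar{\mu},t$.

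The right-hand side of the last display is entirely controlled by $c_0$. First, $\eta_i=l_i\int_{\R}w(y,0)\,dy$ with $w(\cdot,0)=(\phim,\varphim_2)(\cdot,0)$, so $|\eta_i|\le|l_i|\,\norm{(\phi,\varphi_2)(\cdot,0)}_{L^1(\Torus\times\R)}\lesssim\mu^{\alpha}$ by \cref{initial}. Since $\bar{\mu}=2\mu+\lambda\approx\mu$ and $\alpha>\frac{11}{3}>1$, we get $|\eta_i/\bar{\mu}|\lesssim\mu^{\alpha-1}$, which is $\le\frac12$ once $\mu$ is small, and hence $|c_0|=|e^{\eta_i/\bar{\mu}}-1|\le e\,|\eta_i/\bar{\mu}|\lesssim\mu^{\alpha-1}\approx\bar{\mu}^{-1}\mu^{\alpha}$. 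Writing $D(\xi):=\sqrt{\pi}+c_0\int_\xi^{\infty}e^{-s^2}\,ds$, so that $\Gamma_i(\xi)=c_0e^{-\xi^2}/D(\xi)$, the bound $|c_0|\le\frac12$ together with $\int_\xi^{\infty}e^{-s^2}\,ds\in[0,\sqrt{\pi}]$ keeps $D(\xi)\ge\sqrt{\pi}/2$ uniformly in $\xi$ and $\mu$. Differentiating this quotient $k$ times (quotient/Leibniz rule), $\Gamma_i^{(k)}$ is a finite sum of terms $P(\xi)\,c_0^{\,j}e^{-j'\xi^2}D(\xi)^{-m}$ with $P$ a polynomial, $1\le j\le k+1$, $j'\ge1$, $0\le m\le k+1$; using $|c_0^{\,j}|\le|c_0|$ (as $|c_0|\le1$), the lower bound on $D$, and $e^{-j'\xi^2}\le e^{-\xi^2}$, we obtain $|\Gamma_i^{(k)}(\xi)|\le C_k\,|c_0|\,\la\xi\ra^{N_k}e^{-\xi^2}$ for suitable $N_k$, so that $\norm{\Gamma_i^{(k)}}_{L^p(\R)}\le C_{k,p}|c_0|\lesssim\bar{\mu}^{-1}\mu^{\alpha}$. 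Combined with the first display, this proves \cref{estimateontheta}.

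This is essentially the classical decay estimate for a viscous-Burgers diffusion wave, so there is no genuine obstacle; the only delicate point is keeping every constant independent of $\mu$, $\bar{\mu}$ and $t$. That is exactly why the relevant small parameter is the ratio $\eta_i/\bar{\mu}$ rather than $\eta_i$ alone: the wave carries small mass $\eta_i\sim\mu^{\alpha}$ but also has small viscosity $\bar{\mu}\sim\mu$, and it is the assumption $\alpha>1$ that forces $\eta_i/\bar{\mu}\to0$, which both yields the gain in the prefactor and keeps the Hopf--Cole denominator $D(\xi)$ non-degenerate.
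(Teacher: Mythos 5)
Your proposal is correct and takes essentially the same route as the paper, which simply observes that $|\eta_i|=O(1)\mu^{\alpha}$ and states that the estimate follows directly from the explicit formulas \cref{equ-theta,theta}, omitting the computation. Your argument is exactly that omitted verification, with the key points handled correctly: the self-similar rescaling producing the factor $\bar{\mu}^{\frac{1-k}{2}+\frac{1}{2p}}(1+t)^{-\frac{k+1}{2}+\frac{1}{2p}}$, the bound $|e^{\eta_i/\bar{\mu}}-1|\lesssim\mu^{\alpha-1}\approx\bar{\mu}^{-1}\mu^{\alpha}$ coming from $\alpha>1$ and $\bar{\mu}\approx\mu$, and the uniform non-degeneracy of the Hopf--Cole denominator.
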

\begin{proof}
Note that $\abs{\eta_i}=O(1)\mu^{\alpha}$, \cref{estimateontheta} can be directly derived by the formulas \cref{equ-theta,theta}. The proof is then omitted. 	
\end{proof}

\subsection{Estimates on coupled diffusion waves.}

It is known that the Green's function of the equation $v_t+\bar{c}v_x=\frac{\mu}{2}v_{xx} $ is  $$H(x,t;\bar{c},\mu)=\frac{1}{\sqrt{2\pi \mu t}}e^{-\frac{(x-\bar{c}t)^2}{2\mu t}}.$$
Let
$H(\bar{c},\mu):=H(y-s,t-\tau;\bar{c},\mu).$ 
By Duhamel's principle, one has from \eqref{equ-Xi} that 
\begin{align}\label{XI}
	\tilde{\Xi}_i(y, t)=-\int_0^t \int_{-\infty}^{\infty} H(\sigma_i,\frac{\bar{\mu}}{2}) \left(\theta_{i'}^2 / 2+\theta_1\Xi_1+\theta_2 \Xi_2+\frac{2-P''(1)}{2+P''(1)}(\theta_1\Xi_2+\theta_2\Xi_1)\right)(s, \tau) d s d \tau,
\end{align}
where $i'=3-i$. By \cref{XI}, we have
%For the sake of convenience, we define
%\begin{align*}
%	&\mathcal{D}_i^{\alpha}(y,\tau):=\mathcal{D}^\alpha(y,\tau,\sigma_i),\qquad\Theta_i^{\alpha}(y,\tau):=\Theta^\alpha(y,\tau,\sigma_i,\mu^*),\\
%	&\tilde{\mathbf{D}}_i(y,\tau):=\left[\theta^{\frac{1}{2}}(\sigma_i,\bar{\mu}^*)+\theta(\sigma_{i'},\bar{\mu}^*)\right]+\mathcal{D}_{i'}^{\frac{1}{2}}\chi,\qquad {\mathbf{D}}_i(y,\tau):=\mathcal{D}_i^{\frac{3}{2}}(y,\tau)+\sum_{i=1}^2\bar{\mathcal{D}}_i^1\mathcal{D}_{i}^{\frac{3}{4}}.
%\end{align*}
\begin{Lem}\label{estimateonxi}
	Under the same conditions of \cref{MT},  it holds that for $i=1,2$, 
	\begin{align}
		\norm{\tilde{\Xi}_i}_{L^\infty}\lesssim \mu^{2\alpha-\frac{1}{2}}(\bar{\mu}(1+t))^{-\frac{1}{4}},\quad \norm{\p_y^{k+1}\tilde{\Xi}_i}_{L^p}\lesssim\mu^{2\alpha}\left(\bar{\mu}(1+t)\right)^{-\frac{3}{4}+\frac{1}{2p}-\frac{k}{2}}, \quad  k\geq0,\ p\geq 1.
	\end{align}
\end{Lem}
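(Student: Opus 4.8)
\textbf{Proof strategy for \cref{estimateonxi}.} The plan is to bound $\tilde\Xi_i$ directly from its integral representation \cref{XI} via Duhamel's principle, treating it as a fixed-point / iterative estimate in the weighted norms. The source term in \cref{XI} splits into a purely quadratic-in-$\theta$ piece, namely $\theta_{i'}^2/2$, and a piece that is linear in the unknown $\Xi_j$ multiplied by $\theta_k$. For the first piece I would use \cref{estimateontheta}: $\theta_{i'}^2$ is of size $\mu^{2\alpha}$ and concentrated on the moving Gaussian of width $\sqrt{\bar\mu(1+\tau)}$ centered at $\mp\bar c(1+\tau)$. The heat kernel $H(\sigma_i,\bar\mu/2)$ propagates along the \emph{opposite} characteristic speed $\sigma_i=\mp\bar c$, so the convolution of $H$ with a source traveling at speed $\pm\bar c$ has the two Gaussians separating linearly in time; this is exactly the mechanism that produces the extra decay. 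I would invoke the standard convolution lemma for $H(y-s,t-\tau;\sigma_i,\mu)$ against $(1+\tau)^{-a}e^{-(s\pm\bar c\tau)^2/(D(1+\tau))}$-type kernels (the kind used repeatedly in the diffusion-wave literature, e.g.\ \cite{Lliu1985,LZ1997,HW2007}), which yields, after the $\tau$-integration, a gain of one full power of $(1+t)$ relative to a naive estimate. Carrying out the $y$-derivatives $\p_y^{k+1}$ passes one derivative onto $H$ (costing $(\bar\mu(t-\tau))^{-1/2}$ locally, integrable in $\tau$) and the rest onto the source; bookkeeping the powers gives the claimed $\mu^{2\alpha}(\bar\mu(1+t))^{-3/4+1/(2p)-k/2}$ for $\p_y^{k+1}\tilde\Xi_i$ in $L^p$, and integrating once more in $y$ (or using that $\tilde\Xi_i=\int_{-\infty}^y\Xi_i$ together with the $L^1\to L^\infty$ smoothing) gives the $L^\infty$ bound $\mu^{2\alpha-1/2}(\bar\mu(1+t))^{-1/4}$; note the loss of $\mu^{-1/2}$ there reflects the missing half-derivative / the $\bar\mu^{-1/2}$ from one kernel integration without decay.

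\textbf{Key steps, in order.} First, record the needed elementary convolution estimates for the Green's function $H(\cdot,\cdot;\sigma_i,\bar\mu/2)$ against Gaussians moving at speed $\pm\bar c$ and against heat-kernel-like profiles centered at the origin, distinguishing the ``same characteristic'' and ``opposite characteristic'' cases; only the same-vs-opposite distinction matters for the rate. Second, plug \cref{estimateontheta} into \cref{XI} to estimate the contribution of $\theta_{i'}^2/2$, obtaining the target rates for this ``linear'' (in the sense of not involving $\Xi$) part. Third, close the estimate for the genuinely $\Xi$-dependent terms $\theta_1\Xi_1+\theta_2\Xi_2+\frac{2-P''(1)}{2+P''(1)}(\theta_1\Xi_2+\theta_2\Xi_1)$ by a Gronwall / bootstrap argument: assume the claimed bounds on $\norm{\tilde\Xi_i}$ (hence, after differentiating the representation once, on $\norm{\Xi_i}_{L^p}$ with rate $(\bar\mu(1+t))^{-3/4+1/(2p)}$), substitute into the right side, and verify the quadratic feedback term $\theta_k\Xi_j$ — which is of size $\mu^{2\alpha-1/2}$ times a product of two decaying Gaussians on generally \emph{different} characteristics — reproduces (indeed, decays faster than) the assumed bound, so the constant can be absorbed for $\mu$ small. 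Fourth, assemble the $L^\infty$ estimate on $\tilde\Xi_i$ from the $L^1$-in-$y$ control of the source and the $L^1\to L^\infty$ action of $H$ over the time integral.

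\textbf{Main obstacle.} The delicate point is the interaction of $\theta_1$ and $\theta_2$, which live on \emph{opposite} characteristics $y=\mp\bar c(1+t)$: in the cross terms $\theta_1\Xi_2$ and $\theta_2\Xi_1$ one is convolving a kernel moving one way against a product of profiles one of which moves the other way, so one must be careful to extract the correct (better) decay from the spatial separation $|y+\bar c t-(y-\bar c t)|\sim \bar c t$ rather than losing it. A related subtlety is keeping all constants independent of $\mu,\lambda,t$ and only ``$M$-dependent'', which forces one to track the $\bar\mu=2\mu+\lambda$ dependence through every kernel integration — each undifferentiated convolution in $y$ without decay costs a factor $\bar\mu^{-1/2}$, and the statement is tuned so that exactly one such factor appears in the $L^\infty$ bound and none in the $\p_y^{k+1}$ bounds. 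I expect steps one and three to absorb essentially all the real work; step two is then routine substitution of \cref{estimateontheta}.
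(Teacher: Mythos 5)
Your overall architecture matches the paper's: Duhamel's formula \cref{XI}, a bootstrap quantity (the paper's $\tilde M(t)$) for the terms that are linear in $\Xi$, the Gaussian-recombination identity \cref{semi-group}, and a split of the time integral with derivatives placed on the kernel for early times and on the source for late times. The genuine gap is in your treatment of the pure source $\theta_{i'}^2/2$ for the derivative bounds $\norm{\p_y^{k+1}\tilde\Xi_i}_{L^p}$. You propose to reach the stated rates by ``the standard convolution lemma'' plus passing one derivative onto $H$ and exploiting that the kernel and the source travel on opposite characteristics. A size-based estimate of this kind does not suffice: if you convolve $\p_yH(\cdot,t-\tau;\sigma_1,\bar\mu/2)$ with $\p_s^{k}(\theta_{i'}^2)$ and track the combined Gaussian centered at $\bar y_1=2\bar c(1+\tau)$, the sweep of this center across the region between the two characteristics contributes, after the $\tau$-integration, an $L^p$ norm of order $\mu^{2\alpha}\bar\mu^{-1/p}\bigl(\bar\mu(1+t)\bigr)^{-1-\frac{k}{2}+\frac1p}$ (up to $M$-factors and logarithms), which for $p=1$ \emph{grows} relative to the claimed $\mu^{2\alpha}\bigl(\bar\mu(1+t)\bigr)^{-\frac34+\frac1{2p}-\frac{k}{2}}$ and already loses a factor $\bar\mu^{-1/2}$ at $p=2$. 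Since the later argument uses precisely the $L^1$ bounds (e.g.\ in \cref{estimateon2-1}), this shortfall is not cosmetic.

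The missing idea is the paper's cancellation in $I_{6b}$ (see \cref{cancellation}): one writes
\begin{align*}
(\sigma_1-\sigma_2)\int\!\!\int H(\sigma_1,\tfrac{\bar\mu}{2})\,\p_s^{k+1}\bigl(\tfrac{\theta_{i'}^2}{2}\bigr)
=\int\!\!\int\Bigl[L_{\sigma_1}H\cdot\p_s^k\bigl(\tfrac{\theta_{i'}^2}{2}\bigr)-H\,\p_s^kL_{\sigma_2}\bigl(\tfrac{\theta_{i'}^2}{2}\bigr)\Bigr]+\text{boundary terms},
\end{align*}
with $L_\sigma=\p_\tau+\sigma\p_s-\frac{\bar\mu}{2}\p_s^2$, and exploits $L_{\sigma_1}H=0$ together with the Burgers equation \cref{equ-theta}, which makes $L_{\sigma_2}(\theta_{i'}^2)=O(1)\bigl(\bar\mu\theta_{i'}(\p_y\theta_{i'})^2+\theta_{i'}\p_y(\theta_{i'}^2)\bigr)$ effectively cubic and hence fast-decaying; only then do the remaining interior and boundary terms add up to the stated $\bigl(\bar\mu(1+t)\bigr)^{-\frac34+\frac1{2p}-\frac{k}{2}}$ rates uniformly in $p\ge1$ and in $\bar\mu$. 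Your appeal to the diffusion-wave literature implicitly presupposes interaction estimates that are themselves proved by exactly this device, so to make the proof self-contained you must add this integration-by-parts-in-$(s,\tau)$ step (or an equivalent use of the sign/oscillation structure of $\p_s^{k+1}(\theta_{i'}^2)$); the rest of your plan (the $\theta\Xi$ feedback closed by smallness of $\mu^{\alpha-1}$, and the $L^\infty$ bound for $\tilde\Xi_i$) is consistent with the paper.
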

\begin{proof}
	We only consider the case $i=1$ since the case $i=2$ can be treated similarly. 
	First, we introduce the following useful formula
	\begin{align}\label{semi-group}
		&-\frac{\left(s-\sigma(1+\tau)\right)^2}{4 \mu_1(1+\tau)}-\frac{\left(y-s-\sigma'(t-\tau)\right)^2}{4 \mu_2(t-\tau)}\nonumber \\
		= & -\frac{\mu_1(1+\tau)+\mu_2(t-\tau)}{4 \mu_1 \mu_2(1+\tau)(t-\tau)}\left[s-\sigma(1+\tau)-\frac{\mu_1(1+\tau)\left(y-\sigma'(t-\tau)-\sigma(1+\tau)\right)}{\mu_1(1+\tau)+\mu_2(t-\tau)}\right]^2\\
		& - \frac{\left[y-\sigma(1+\tau)-\sigma'(t-\tau)\right]^2}{4\left[\mu_1(1+\tau)+\mu_2(t-\tau)\right]},\nonumber
	\end{align}
	for given constants $\sigma,\ \sigma'$ and $\mu_1,\ \mu_2>0$.  Let
	\begin{align}
		\begin{aligned}
			\tilde{M}(t):=\sup _{0 \leq \tau \leq t, i=1,2} \left\{  \left\|\tilde{\Xi}_i(\cdot, \tau)\right\|_{L^{\infty}}\mu^{\frac{1}{2}}\left(\mu(1+t)\right)^{\frac{1}{4}}+\norm{\p_y^{k+1}\tilde{\Xi}_i}_{L^{p}}\left(\mu(1+\tau)\right)^{\frac{k}{2}+\frac{3}{4}-\frac{1}{2p}}\right\}, 
		\end{aligned}
	\end{align}
	then a direct computation on \eqref{XI} and \eqref{semi-group} yields
	\begin{align}\label{I123}
		\begin{aligned}
			\norm{\tilde{\Xi}_1}_{L^\infty}
			=&\norm{\int_0^{t} \int_{-\infty}^{\infty} H(\sigma_1,\frac{\bar{\mu}}{2})\left(\frac{\theta_{2}^2}{2}+\theta_1\Xi_1+\theta_2 \Xi_2+\frac{2-P''(1)}{2+P''(1)}(\theta_1\Xi_2+\theta_2\Xi_1)\right)(s, \tau) d s d \tau}_{L^\infty}
			\\
			\lesssim&\mu^{2\alpha}\norm{\int_0^{t}  \frac{1}{\sqrt{\bar{\mu}^2(1+t)(1+\tau)}}\exp\left\{-\frac{\left[\bar{y}_1-(\sigma_{2}-\sigma_{1})(1+\tau)\right]^2}{\bar{\mu}(1+t)}\right\} d \tau}_{L^{\infty}}\\
			&+\mu^{\alpha}\tilde{M}(t)\norm{\int_0^{t}  \frac{1}{\bar{\mu}^{\frac{5}{4}}(1+t)^{\frac{1}{2}}(1+\tau)^{\frac{3}{4}}}\exp\left\{-\frac{\left[\bar{y}_1-(\sigma_{2}-\sigma_{1})(1+\tau)\right]^2}{\bar{\mu}(1+t)}\right\} d \tau}_{L^{\infty}}\\
			&+\mu^{\alpha}\tilde{M}(t)\norm{\int_0^{t}  \frac{1}{\bar{\mu}^{\frac{5}{4}}(1+t)^{\frac{1}{2}}(1+\tau)^{\frac{3}{4}}}\exp\left\{-\frac{\bar{y}_1^2}{\bar{\mu}(1+t)}\right\} d \tau}_{L^{\infty}}\\
			\lesssim& \mu^{2\alpha}\norm{I_1}_{L^\infty}+\mu^{\alpha}\tilde{M}(t)(\norm{I_2}_{L^\infty}+\norm{I_3}_{L^\infty}),
		\end{aligned}
	\end{align}
	where $\bar{y}_1:=y-\sigma_1(1+t)$ and $\sigma_i=(-1)^i \bar{c}$. When $\abs{\bar{y}_1}\leq \sqrt{\bar{\mu}(1+t)}$, one has
	\begin{align}\label{I11}
		\begin{aligned}
			I_1 \lesssim \int_0^t \frac{1}{\bar{\mu}\sqrt{(1+t)(1+\tau)}} \exp \left\{-\frac{\bar{c}^2(1+\tau)^2}{ C\bar{\mu}(1+t)}\right\} d \tau \lesssim \bar{\mu}^{-\frac{1}{2}}\left[\bar{\mu}(1+t)\right]^{-\frac{1}{4}} \exp\left\{-\frac{\bar{y}_1^2}{\bar{\mu}(1+t)}\right\}.
		\end{aligned}
	\end{align}
We can also get the same estimate when $\bar{y}_1<-\sqrt{\bar{\mu}(1+t)}$. For $\bar{y}_1>\sqrt{\bar{\mu}(1+t)}$, we have
	\begin{align}\label{I12}
			I_1= & \int_0^{\frac{\bar{y}_1}{4 \bar{c}}-1}+\int_{\frac{\bar{y}_1}{4\bar{c}}-1}^t  \frac{1}{\bar{\mu}\sqrt{(1+t)(1+\tau)}} \exp \left\{-\frac{\left(\bar{y}_1-2\bar{c}(1+\tau)\right)^2}{C\bar{\mu}(1+t)}\right\} d \tau \\ \nonumber
			\leq & \int_0^{\frac{\bar{y}_1}{4 \bar{c}}-1} \frac{1}{\bar{\mu}\sqrt{(1+t)(1+\tau)}} \exp \left\{-\frac{\bar{y}_1^2 / 4+\left(\bar{y}_1 / 2-2\bar{c}(1+\tau)\right)^2}{C\bar{\mu}(1+t)}\right\} d \tau \\ \nonumber
			& +\int_{\frac{\bar{y}_1}{4 \bar{c}}-1}^t \frac{1}{\bar{\mu}\sqrt{(1+t)\left(\bar{y}_1 /\left(4\bar{c}\right)\right)}} \exp \left\{-\frac{\left(\bar{y}_1-2\bar{c}(1+\tau)\right)^2}{C\bar{\mu}(1+t)}\right\} d \tau \\ \nonumber
			\lesssim & \frac{1}{\bar{\mu}^{\frac{1}{2}}(\bar{\mu}(1+t))^{1 / 4}} \exp \left\{-\frac{\bar{y}_1^2}{4C \bar{\mu}(1+t)}\right\}+\frac{1}{\sqrt{\bar{\mu}\bar{y}_1}}.\nonumber
	\end{align}
	Thus, $\norm{I_1}_{L^\infty}\lesssim \bar{\mu}^{-\frac{3}{4}}(1+t)^{-\frac{1}{4}}.$ And $I_2,I_3$ can be directly bounded as $\norm{I_2,I_3}_{L^{\infty}}\lesssim \bar{\mu}^{-\frac{5}{4}}(1+t)^{-\frac{1}{4}}$. So we have
	\begin{align}\label{Xi_1}
		\norm{\tilde{\Xi}_1}_{L^\infty}\lesssim \mu^{2\alpha}\bar{\mu}^{-\frac{3}{4}}(1+t)^{-\frac{1}{4}}+\mu^{\alpha}\bar{\mu}^{-\frac{5}{4}}(1+t)^{-\frac{1}{4}}\tilde{M}(t).
	\end{align}
	For the higher order derivatives,  one has that for $k\geq0$,
	\begin{align}\label{pk1xi}
		\begin{aligned}
			\p^{k+1}_y\tilde{\Xi}_1
			=&-\int_0^{\frac{t}{2}} \int_{-\infty}^{\infty} \p^{k+1}_yH(\sigma_1,\frac{\bar{\mu}}{2})\left[\theta_1\Xi_1+\theta_2 \Xi_2+\frac{2-P''(1)}{2+P''(1)}(\theta_1\Xi_2+\theta_2\Xi_1)\right](s, \tau) d s d \tau\\
			&-\int_{\frac{t}{2}}^t \int_{-\infty}^{\infty} H(\sigma_1,\frac{\bar{\mu}}{2}) \p_s^{k+1}\left[\theta_1\Xi_1+\theta_2 \Xi_2+\frac{2-P''(1)}{2+P''(1)}(\theta_1\Xi_2+\theta_2\Xi_1)\right]d s d \tau\\
			&-\int_0^{t} \int_{-\infty}^{\infty} H(\sigma_1,\frac{\bar{\mu}}{2})\p^{k+1}_s\left(\frac{\theta_{2}^2}{2}\right)d s d \tau\\
			:=&I_4+I_5+I_6.
		\end{aligned}
	\end{align}
	Using \cref{theta,XI}, we get
	\begin{align}
		\norm{I_4+I_5}_{L^p}\lesssim \mu^{\alpha} \tilde{M}(t)\bar{\mu}^{-1}\left[\bar{\mu}(1+t)\right]^{-\frac{k}{2}-\frac{3}{4}+\frac{1}{2p}}.
	\end{align}
	The estimate on $I_6$ is a bit complicated.  Assume $t>4$ without loss of generality, then $t/2>\sqrt{t}.$
	\begin{align}
		\int_0^{t} \int_{-\infty}^{\infty} H(\sigma_1,\frac{\bar{\mu}}{2})\p^{k+1}_s\left(\frac{\theta_{2}^2}{2}\right)d s d \tau=\left(\int_0^{\sqrt{t}} +\int_{\sqrt{t}}^t\right) \int_{-\infty}^{\infty} H(\sigma_1,\frac{\bar{\mu}}{2})\p^{k+1}_s\left(\frac{\theta_{2}^2}{2}\right)d s d \tau=I_{6a}+I_{6b}.
	\end{align}
One has
	\begin{align}
		\begin{aligned}
			\norm{I_{6a}}_{L^p}\lesssim&\mu^{2\alpha}(\bar{\mu}(1+t))^{-\frac{k}{2}-1}\norm{\int_0^{\sqrt{t}}(\bar{\mu}(1+\tau))^{-\frac{1}{2}}e^{-\frac{\left[\bar{y}_1-(\sigma_2-\sigma_{1})(1+\tau)\right]^2}{(\bar{\mu}+\varepsilon)(1+t)}}d\tau}_{L^p}\\
			\lesssim&\mu^{2\alpha}\left[\bar{\mu}(1+t)\right]^{-\frac{k}{2}-\frac{3}{4}+\frac{1}{2p}}.
		\end{aligned}
	\end{align}
On the other hand, one can observe a cancellation in $I_{6b}$, i.e., 
	\begin{align}\label{cancellation}
			&\qquad \left(\sigma_1-\sigma_{2}\right)\int_{t^{1 / 2}}^t \int_{-\infty}^{\infty} H(\sigma_1,\frac{\bar{\mu}}{2}) \partial_s^{k+1} \left(\frac{\theta_{2}^2}{2}\right) d s d \tau \\ \nonumber
			& =- \int_{t^{1 / 2}}^t \int_{-\infty}^{\infty} H(\sigma_1,\frac{\bar{\mu}}{2})   \cdot\left[-\partial_\tau+\partial_\tau-\sigma_1 \partial_s+\sigma_{2} \partial_s+\frac{\bar{\mu}}{2} \partial_s^2-\frac{\bar{\mu}}{2} \partial_s^2\right] \p_s^k(\frac{\theta_{2}^2}{2}) d s d \tau \\ \nonumber
			& =\int_{t^{1 / 2}}^t \int_{-\infty}^{\infty} L_{\sigma_1} H(\sigma_1,\frac{\bar{\mu}}{2}) \p_s^k(\frac{\theta_{2}^2}{2}) - H(\sigma_1,\frac{\bar{\mu}}{2})\p_s^k L_{\sigma_2} (\frac{\theta_{2}^2}{2}) d s d\tau  \\ \nonumber
			& \quad- \int_{-\infty}^{\infty} H(y-s, t-t^{1/2},\sigma_1,\frac{\bar{\mu}}{2}) \p_s^k(\frac{\theta_{2}^2}{2})\left(s, t^{1 / 2}\right) d s +  \p_s^k(\theta_{2}^2)(y, t)\\ \nonumber
			&:=I_{6b0}+I_{6b1}+ I_{6b2} + \p_s^k(\theta_{2}^2)(s, t), \nonumber
	\end{align}
	where $L_{\sigma}:=\p_\tau+\sigma\p_s-\frac{\bar{\mu}}{2}\p_s^2$. Since $L_{\sigma_1}H(\sigma_1,\frac{\bar{\mu}}{2})=0$, we have $I_{6b0}^i=0$. By \cref{equ-theta}, one has
	\begin{align}
		L_{\sigma_{2}}(\theta_{2}^2)=O(1)\bar{\mu}\theta_{2}(\p_y\theta_{2})^2+O(1)\theta_{2}\p_y(\theta_{2})^2.
	\end{align}
	A direct calculation yields,
	\begin{align}
		\begin{aligned}
			\norm{I_{6b1}}_{L^p}\lesssim&\left[\bar{\mu}(1+t)\right]^{-\frac{k}{2}}\int_{\frac{t}{2}}^t \norm{L_{\sigma_{2}}(\theta_{2}^2)}_{L^1}\left[\mu(t-\tau)\right]^{-\frac{1}{2}+\frac{1}{2p}} d\tau\\
			&+\mu^{2\alpha}\left[\bar{\mu}(1+t)\right]^{-\frac{k}{2}-\frac{1}{2}}\norm{\int_{\sqrt{t}}^{\frac{t}{2}}\left[\bar{\mu}(\tau+1)\right]^{-\frac{3}{2}} e^{-\frac{(\bar{y}_1
						-(\sigma_{2}-\sigma_{1})(\tau+1))^2}{\bar{\mu}(t+1)}} d \tau}_{L^p}\\
			\lesssim &\mu^{2\alpha}\left[\bar{\mu}(1+t)\right]^{-\frac{k}{2}-\frac{3}{4}+\frac{1}{2p}},
		\end{aligned}
	\end{align}
	where we have used \cref{semi-group}. Also, we have  
	\begin{align}
		\begin{aligned}
			\norm{I_{6b2}}_{L^p} &\lesssim \mu^{2\alpha}\left[\bar{\mu}(t+1)\right]^{-\frac{k+1}{2}-\frac{1}{2}} \norm{\int_{-\infty}^{\infty} e^{-\frac{(y-s-\sigma_1(t-\sqrt{t}))^2}{\bar{\mu}(t-\sqrt{t})}-\frac{(s-\sigma_{2}(\sqrt{t}+1))^2}{\bar{\mu}(\sqrt{t}+1)}} d s}_{L^p} \\
			& \lesssim \mu^{2\alpha}\left[\bar{\mu}(t+1)\right]^{-\frac{k}{2}-\frac{3}{4}+\frac{1}{2p}},
		\end{aligned}
	\end{align}
	and $\norm{\p_s^k(\theta_{2}^2)(s, t)}\lesssim\mu^{2\alpha}\left[\bar{\mu}(t+1)\right]^{-\frac{k}{2}-\frac{3}{4}+\frac{1}{2p}}$. So we obtain that
	\begin{align}\label{Xi_2}
	\norm{\p^{k+1}_y\tilde{\Xi}_1}_{L^p}\lesssim\mu^{\alpha} \tilde{M}(t)\bar{\mu}^{-1}\left[\bar{\mu}(1+t)\right]^{-\frac{k}{2}-\frac{3}{4}+\frac{1}{2p}}+\mu^{2\alpha}\left[\bar{\mu}(t+1)\right]^{-\frac{k}{2}-\frac{3}{4}+\frac{1}{2p}}. 
	\end{align}
   By the same arguments, \eqref{Xi_2} holds for $\Xi_2$. Then combining \cref{Xi_1} and  \cref{Xi_2}, one has $\tilde{M}(t)\lesssim (\mu^{2\alpha}+\mu^{\alpha-1}\tilde{M}(t))$. Note that $\alpha>11/3$, one has $\tilde{M}(t)\lesssim \mu^{2\alpha}$ as $\mu$ is small. Substituting $\tilde{M}(t)\lesssim \mu^{2\alpha}$ into \eqref{Xi_1} and \eqref{Xi_2} yields Lemma \ref{estimateonxi}.
\end{proof}

\subsection{Energy Estimates}
This subsection is devoted to the energy estimates for $(\p_y^k \tilde{V}_1,\p_y^k\tilde{V}_2)$, where $k=0,1,2$. We first provide an estimate for $\norm{\tilde{V}_1(\cdot,t),\tilde{V}_2(\cdot,t)}$.
\begin{Lem}\label{1or}
	Under the same conditions as \cref{MT}, it holds that 
	\begin{align}
		\begin{aligned}
			&\norm{\tilde{V}_1(\cdot,t),\tilde{V}_2(\cdot,t)}^2+\bar{\mu}\int_0^T\norm{\p_y\tilde{V}_1,\p_y\tilde{V}_2}^2dt\lesssim \mu^{2\alpha}.
		\end{aligned}
	\end{align}
\end{Lem}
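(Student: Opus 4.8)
\textbf{Proof proposal for Lemma \ref{1or}.}
The plan is to carry out a weighted $L^2$ energy estimate on the integrated system \cref{anti}, exploiting the parabolic structure carried by the matrix $B$ and the algebraic structure of $A$. First I would multiply \cref{anti} by $\mathbf{\tilde{V}}$ (i.e. take the $L^2$ inner product with $(\tilde{V}_1,\tilde{V}_2)$) and integrate over $\R$. The term $\la A\mathbf{\tilde{V}}_y,\mathbf{\tilde{V}}\ra$ integrates to zero because $A$ is a constant symmetric (indeed diagonal) matrix and $\int_\R \p_y(|\tilde{V}_i|^2)\,dy=0$. The dissipation term $-\la B\mathbf{\tilde{V}}_{yy},\mathbf{\tilde{V}}\ra = \la B\mathbf{\tilde{V}}_y,\mathbf{\tilde{V}}_y\ra$; although $B=\tfrac{\bar\mu}{2}\begin{psmallmatrix}1&1\\1&1\end{psmallmatrix}$ is only positive semidefinite, it contributes $\tfrac{\bar\mu}{2}\|\p_y\tilde V_1+\p_y\tilde V_2\|^2\ge 0$, which is not immediately the full $\bar\mu\|\p_y\tilde V_1,\p_y\tilde V_2\|^2$ claimed. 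To recover control of the antisymmetric combination $\p_y(\tilde V_2-\tilde V_1)=\tilde v_2-\tilde v_1$, I would use the fact that $\tilde v_2-\tilde v_1$ is proportional (up to the correctors $\mathcal C_i$) to $\phim$, which sits in the hyperbolic part of the original $2\times 2$ system \cref{equ-rhou2}; so the missing diffusion in that direction is compensated by a transport cancellation together with the already-established decay of $\phim$ and of the diffusion waves. Concretely, I expect the claimed bound $\bar\mu\int_0^T\|\p_y\tilde V_i\|^2$ to be obtained by combining the $\la B\mathbf{\tilde V}_y,\mathbf{\tilde V}_y\ra$ term with an additional weighted estimate (multiply \cref{anti} by something like $\p_y\mathbf{\tilde V}$ or use the antisymmetrized equation), or simply by noting that $\|\p_y(\tilde V_2-\tilde V_1)\|=\|\tilde v_2-\tilde v_1\|$ is already controlled in $L^2\big(0,T;L^2\big)$ in the solution space $X_{\bar N,N}$ and in the a priori assumptions \cref{aps}.

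Next I would handle the source term $\la \mathbf{\tilde K},\mathbf{\tilde V}\ra$. Since $\mathbf{\tilde K} = \mathbf{\tilde E}+\tilde N\mathbf 1$ with $\tilde E_i$ and $\tilde N$ built from quadratic expressions in $(\phim,\varphim_2,\psim_2)$, the diffusion waves $\theta_i,\Xi_i$, and their $y$-derivatives, the key point is that every summand decays fast. Using \cref{estimateontheta} and \cref{estimateonxi}, together with the a priori assumptions \cref{aps} on $\|\tilde v_i\|$, $\|\p_y\tilde v_i\|$, and the enhanced-dissipation bounds on the non-zero modes appearing in $E_1,E_4$, one checks that $\|\mathbf{\tilde K}(\cdot,t)\|_{L^1\cap L^2}$ is integrable in time with the appropriate power of $\mu$. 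Then I estimate $|\la\mathbf{\tilde K},\mathbf{\tilde V}\ra|\le \|\mathbf{\tilde K}\|_{L^1}\|\mathbf{\tilde V}\|_{L^\infty}$ or $\|\mathbf{\tilde K}\|_{L^2}\|\mathbf{\tilde V}\|_{L^2}$; the former is preferable because $\|\mathbf{\tilde V}\|_{L^\infty}\lesssim \|\mathbf{\tilde V}\|_{L^2}^{1/2}\|\p_y\mathbf{\tilde V}\|_{L^2}^{1/2}$ lets me absorb a factor into the dissipation. Setting $\mathcal E(t):=\|\tilde V_1,\tilde V_2\|^2$, this yields a differential inequality of the form $\tfrac{d}{dt}\mathcal E + \bar\mu\,(\text{dissipation}) \lesssim g(t)\mathcal E^{1/2} + (\text{small})\cdot(\text{dissipation})$ with $g\in L^1(0,\infty)$ and $\int_0^\infty g\lesssim\mu^{2\alpha}$-type smallness, after which Gr\"onwall together with the initial bound $\|\mathbf{\tilde V}(\cdot,0)\|_{H^2}\lesssim\mu^\alpha$ from \cref{initial} closes the estimate and gives both $\mathcal E(t)\lesssim\mu^{2\alpha}$ and $\bar\mu\int_0^T\|\p_y\mathbf{\tilde V}\|^2\lesssim\mu^{2\alpha}$.

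The main obstacle I anticipate is the interplay between the degeneracy of $B$ and the source terms that contain $\p_y$ of quantities at the same order as $\mathbf{\tilde V}$ — in particular the term $\tfrac{1}{2a\bar c}E_{2y}$ hidden inside $\tilde K_{iy}$, which is $\bar\mu\p_y^2$ of a correction and thus formally costs two derivatives of the unknown. Handling it requires integrating by parts to move one derivative onto $\mathbf{\tilde V}$, producing $\bar\mu\la(\cdots)_y,\p_y\mathbf{\tilde V}\ra$, and then using the improved decay rate $(1+t)^{-2}e^{-(y\pm\bar c t)^2/(1+t)}+\p_y(\Xi^2)$ advertised right after \cref{tildeK} to dominate it by a small multiple of the dissipation plus an $L^1_t$ remainder. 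A secondary subtlety is that $B$ being semidefinite means one cannot directly bound $\|\p_y\tilde V_1\|$ and $\|\p_y\tilde V_2\|$ separately from this single estimate; one must either supplement with the transport/cancellation structure linking $\tilde v_2-\tilde v_1$ to $\phim$ as noted above, or defer the separate control of each derivative to the higher-order estimates in the subsequent lemmas, using here only the combination $\p_y(\tilde V_1+\tilde V_2)$ that the parabolic part genuinely controls. I would make this precise by stating the dissipation term as $\bar\mu\|\p_y(\tilde V_1+\tilde V_2)\|^2$ and then invoking the $L^2_tL^2_x$ bound on $\p_y(\tilde V_2-\tilde V_1)$ from the solution space to conclude the full claim.
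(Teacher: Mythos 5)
Your structural diagnosis (the rank-one matrix $B$ only dissipates $\p_y(\tilde V_1+\tilde V_2)$, so the combination $\p_y(\tilde V_2-\tilde V_1)$ needs a separate mechanism) matches the paper, which works in the variables $(\mathbb{V}_1,\mathbb{V}_2)=(\tilde V_2-\tilde V_1,\tilde V_2+\tilde V_1)$ and recovers the missing dissipation by a Kawashima-type cross estimate: multiply the parabolic equation \cref{hatv}$_2$ by $\mathbb{V}_{1y}$, use \cref{hatv}$_1$ to trade the resulting $\mathbb{V}_{1t}$ and $\mathbb{V}_{2yy}$ terms, obtain $\bar{c}\int_0^T\norm{\mathbb{V}_{1y}}^2dt\lesssim \mu^{2\alpha}+\int_0^T\norm{\mathbb{V}_{2y}}^2dt$, and add $\bar\mu$ times this to the basic estimate. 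Your passing suggestion to ``multiply by something like $\p_y\mathbf{\tilde V}$'' points in that direction, but the alternative you actually commit to --- reading off $\bar\mu\int_0^T\norm{\p_y(\tilde V_2-\tilde V_1)}^2dt$ from the solution space $X_{\bar N,N}$ or from the a priori assumptions \cref{aps} --- does not work. The a priori bound $\norm{\tilde v_i}\le C_M\mu^{\alpha-\varepsilon-\frac12}(1+t)^{-\frac12}$ only yields $\bar\mu\int_0^T\norm{\tilde v_i}^2dt\lesssim \mu^{2\alpha-2\varepsilon}\log(1+T)$, which is neither uniform in $T$ nor of size $\mu^{2\alpha}$; and quoting \cref{aps} to prove part of the very estimate that Proposition \ref{ape} must improve upon (from $\mu^{\alpha-\varepsilon}$ to $\mu^{\alpha}$) is circular and would not close the continuity argument. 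This step must be proved, not cited.

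The second, and larger, gap is in your treatment of the source $\mathbf{\tilde K}$. It is not a time-integrable external force to be absorbed by Gr\"onwall: $\tilde K_1+\tilde K_2$ contains, through $E_2$ and $\tilde E^{(2)}$, terms of the form $\tilde V_{iy}\theta_j$ and $\tilde V_{iy}\tilde V_{jy}$, i.e.\ products of derivatives of the unknown with the diffusion waves. Pairing these with $\mathbb{V}_2$ and splitting off the degenerate dissipation leaves the weighted space--time integral $\mu^{2\alpha-1}\int_0^T\int_{\R}\left[\bar\mu(1+t)\right]^{-1}e^{-(y\pm\bar c(1+t))^2/(\bar\mu(1+t))}\mathbb{V}_2^2\,dydt$, whose time factor is not integrable, so neither the $L^1$--$L^\infty$ interpolation nor a Gr\"onwall argument closes it. The paper devotes the bulk of the proof to exactly this object: for the component transported against the moving Gaussian weight it uses the exponential weight $\eta_1=\exp(\int_{-\infty}^yh\,dz)$ built on the heat kernel $h$, and for the component moving with the weight it invokes the Huang--Li--Matsumura weighted inequality (Lemma \ref{Ptii}), leading to \cref{new14} before the estimates can be combined. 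Your proposal contains no substitute for this mechanism, so as written the estimate does not close; the integration-by-parts remark about $E_{2y}$ addresses only the derivative count, not this loss of time integrability.
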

\begin{proof}
	Recalling \cref{tildeK,anti}, $\mathbf{\tilde{V}}$ satisfies
	\begin{align}\label{AA}
		\mathbf{\tilde{V}}_t+A\mathbf{\tilde{V}}_y=B\mathbf{\tilde{V}}_{yy}+\mathbf{\tilde{K}},
	\end{align}
	where
	\begin{align}
		A=\left(\begin{array}{cc}-\bar{c}& 0 \\ 0 & \bar{c}\end{array}\right), \quad {B}=\left(\begin{array}{cc}\frac{\bar{\mu}}{2} & \frac{\bar{\mu}}{2} \\ \frac{\bar{\mu}}{2} & \frac{\bar{\mu}}{2}\end{array}\right),\quad \mathbf{\tilde{K}}=\left(\begin{array}{c}\tilde{K}_1\\ \tilde{K}_2\end{array}\right).
	\end{align}
	To capture the viscous effect, we denote $(\mathbb{V}_1,\mathbb{V}_2):=(\tilde{V}_2-\tilde{V}_1,\tilde{V}_2+\tilde{V}_1)$, then one has
	\begin{align}\label{hatv}
		\begin{cases}
			\mathbb{V}_{1t}+\bar{c}\mathbb{V}_{2y}=\tilde{K}_{2}-\tilde{K}_{1},\\
			\mathbb{V}_{2t}+\bar{c}\mathbb{V}_{1y}=\bar{\mu}\mathbb{V}_{2yy}+\tilde{K}_{1}+\tilde{K}_{2}.
		\end{cases}
	\end{align}
	From \cref{tildeK}, one has
	\begin{align}\label{K2-K1}
			\tilde{K}_2-\tilde{K}_1=&\frac{\bar{\mu}}{2\bar{c}}\p_y\left[\theta_1^2 / 2+\theta_2^2 / 2+\theta_1\Xi_1+\theta_2 \Xi_2+\frac{2-P''(1)}{2+P''(1)}(\theta_1\Xi_2+\theta_2\Xi_1) \right]\\ \nonumber
			&-\frac{\bar{\mu}^2}{8\bar{c}}\p_y^2(\theta_1+\theta_2+\Xi_1+\Xi_2),
		\end{align}
 which implies from \cref{estimateontheta} and \cref{estimateonxi}, that for $k\geq0$, 
	\begin{align}\label{estimateon2-1}
		\norm{\p_y^k\left(\tilde{K}_2-\tilde{K}_1\right)}_{L^1}\lesssim\mu^{\alpha+2}\left[\bar{\mu}(1+t)\right]^{-1-\frac{k}{2}},\quad\norm{\p_y^{k}\left(\tilde{K}_2-\tilde{K}_1\right)}&\lesssim \bar{\mu}^{\alpha+2}\left[\bar{\mu}(1+t)\right]^{-\frac{5}{4}-\frac{k}{2}}.
	\end{align}
	The estimate of $\tilde{K}_1+\tilde{K}_2$ is complicated.  The formulas \cref{tildeK} and \eqref{E}  give that 
	\begin{align}\label{K2+K1}
		\abs{\tilde{K}_1+\tilde{K}_2-\frac{E_2}{a\bar{c}}}=O(1)\abs{\tilde{E}^{(1)}+\tilde{E}^{(2)}+\tilde{N}},
	\end{align}
where 	$E_2=-(2\mu+\lambda)\p_y\left(\frac{\varphim_2\phim}{\rhom}\right)$, 
	\begin{align}\label{E2}
			\frac{\varphim_2\phim}{\rhom}&=a^2 \bar{c}\frac{(\tilde{V}_{1y}+\tilde{V}_{2y}+\mathcal{C}_1+\mathcal{C}_2)(\tilde{V}_{2y}-\tilde{V}_{1y}+\mathcal{C}_2-\mathcal{C}_1)}{\tilde{V}_{2y}-\tilde{V}_{1y}+\mathcal{C}_2-\mathcal{C}_1+1}\\ \nonumber
			&=O(1)\sum_{i,j=1}^2 \left(\abs{\tilde{V}_{iy}\tilde{V}_{jy}}+ \abs{\tilde{V}_{iy}\theta_i}+\abs{\theta_i\theta_j}\right).
	\end{align}
	and
	\begin{align}\label{e1}
    \abs{\tilde{E}^{(1)}}=&O(1)\Bigg|\sum_{i=1}^2\left[\mathcal{C}_i^2-\theta_i^2-2\left(\theta_1\Xi_1+\theta_2 \Xi_2\right)\right.\\\notag
		&\left.\qquad\qquad+\frac{2-P''(1)}{2(2+P''(1))}\mathcal{C}_{i}\mathcal{C}_{i'}-\frac{2-P''(1)}{2+P''(1)}(\theta_1\Xi_2+\theta_2\Xi_1)\right]\\\notag
		&+\frac{\bar{\mu}}{4\bar{c}}\left(\theta_1^2 / 2+\theta_2^2 / 2+(\theta_1 +\theta_2)(\Xi_1+ \Xi_2)+\frac{\bar{\mu}}{2}\theta_{iy}+\frac{\bar{\mu}}{2}\Xi_{iy}\right)_y\Bigg|,\\\label{ee}
		\abs{\tilde{E}^{(2)}}=&O(1)\sum_{i,j=1}^2|\tilde{V}_{iy}\tilde{V}_{jy}+\tilde{V}_{iy}C_i|=O(1)\sum_{i,j=1}^2\left(|\tilde{V}_{iy}\tilde{V}_{jy}|+|\tilde{V}_{iy}\theta_{j}|+|\tilde{V}_{iy}\Xi_j|\right),\\\label{e2}
		\abs{\tilde{N}}=&O(1)\abs{\Do(\phi_\neq^2+\phi_\neq\psi_{2\neq})}+O(1)\abs{\p_y\Do(\phi_\neq^2+\phi_\neq\psi_{2\neq})}.
	\end{align}
    From \eqref{tildev}, the worst terms in $\tilde{E}^{(1)}$ are $ \theta_i \cdot \p_y \theta_i$ and  $\Xi_1 \cdot \Xi_2$.
	By \cref{theta},  \cref{estimateontheta}, \cref{estimateonxi} and the a priori assumption \cref{aps}, one has
	\begin{align}
		&\norm{\p_y^{k}\tilde{E}^{(1)}}_{L^1}\lesssim {\mu}^{2\alpha+1}\left[\bar{\mu}(1+t)\right]^{-1-\frac{k}{2}},\qquad \norm{\p_y^k\tilde{N}}_{L^1}\lesssim {\mu}^{2\alpha-\frac{k}{3}-1-\varepsilon}e^{-\frac{1}{64}\mu^{\frac{1}{3}}t},\label{ENL1}\\
		&\norm{\p_y^k\tilde{E}^{(1)}}\lesssim{\mu}^{2\alpha+1}\left[\bar{\mu}(1+t)\right]^{-\frac{5}{4}-\frac{k}{2}},\qquad\quad \norm{\p_y^k\tilde{N}}\lesssim{\mu}^{2\alpha-\frac{k}{3}-1-\varepsilon}e^{-\frac{1}{64}\mu^{\frac{1}{3}}t}.\label{ENL2}
	\end{align}
	\begin{flushleft}
		\textbf{Basic estimate}
	\end{flushleft}
	Multiplying \cref{hatv}$_1$ by $\mathbb{V}_1$ and  \cref{hatv}$_2$ by $\mathbb{V}_2$, adding them up and  integrating the resulting equations on $(0,t)\times\R$, one has
	\begin{align}
		\begin{aligned}\nonumber
			\norm{\mathbb{V}_1,\mathbb{V}_2}^2+\bar{\mu}	\int_0^T\norm{\mathbb{V}_{2y}}^2dt&\lesssim \norm{\mathbb{V}_1(\cdot,0),\mathbb{V}_2(\cdot,0)}^2+ \abs{\int_0^T\int_{\R}(\tilde{K}_2-\tilde{K}_1)\mathbb{V}_1+(\tilde{K}_1+\tilde{K}_2)\mathbb{V}_2dydt}.
		\end{aligned}
	\end{align} 
By \cref{aps,estimateon2-1}, we have
\begin{align}
&	\int_0^T \int_\R \abs{(\tilde{K}_2-\tilde{K}_1)\mathbb{V}_1} dy dt
	\lesssim\int_0^T\norm{\tilde{K}_2-\tilde{K}_1}_{L^1}\norm{\mathbb{V}_1}^{\frac{1}{2}}\norm{\p_y\mathbb{V}_1}^{\frac{1}{2}}dt\\ \nonumber
	\lesssim& \int_0^T\bar{\mu}^{-1-\varepsilon}\norm{\tilde{K}_2-\tilde{K}_1}^{\frac{4}{3}}_{L^1}\norm{\mathbb{V}_1}^{\frac{2}{3}}+\delta\bar{\mu}\norm{\p_y\mathbb{V}_1}^{2}dt
	 \lesssim \mu^{2\alpha+\frac{1}{3}-\frac53\varepsilon}+\int_0^T\delta\bar{\mu}\norm{\p_y\mathbb{V}_1}^{2}dt.
\end{align}
Recall \cref{K2+K1},  $\abs{(\tilde{K}_1+\tilde{K}_2-\frac{E_2}{a\bar{c}})\mathbb{V}_2}\lesssim\abs{\left(\tilde{E}^{(1)}+\tilde{E}^{(2)}+\tilde{N}\right)\mathbb{V}_2} $. From \eqref{ENL1} and \eqref{ENL2}, we have
	\begin{align}\label{a}
		&	\int_0^T\int_R \abs{(\tilde{E}^{(1)}+\tilde{N})\mathbb{V}_2}dydt \lesssim\int_0^T\norm{\tilde{E}^{(1)}+\tilde{N}}_{L^1}\norm{\mathbb{V}_2}^{\frac{1}{2}}\norm{\p_y\mathbb{V}_2}^{\frac{1}{2}}dt\\ \nonumber
			\lesssim& \int_0^T\bar{\mu}^{-1-\varepsilon}\norm{\tilde{E}^{(1)}+\tilde{N}}^{\frac{4}{3}}_{L^1}\norm{\mathbb{V}_2}^{\frac{2}{3}}+\delta\bar{\mu}\norm{\p_y\mathbb{V}_2}^{2}dt\lesssim \mu^{\frac{10}{3}\alpha-\frac{8}{3}-\varepsilon}+\int_0^T\delta\bar{\mu}\norm{\p_y\mathbb{V}_2}^{2}dt.
	\end{align}
   By the a priori assumption \cref{aps} and \eqref{ee}, we have
    \begin{align}\label{b}
			&\int_0^T\int_{\R}|\tilde{E}^{(2)}\cdot\mathbb{V}_2|dydt\lesssim \int_0^T\delta\bar{\mu} \norm{(\p_y\mathbb{V}_1,\p_y\mathbb{V}_2)}^2dt\\ 
			&\qquad+{\mu}^{2\alpha-1}\int_0^T\int_{\R}\left[\bar{\mu}(1+t)\right]^{-1}e^{\frac{-(y\pm\bar{c}(1+t))^2}{\bar{\mu}(1+t)}}\mathbb{V}_2^2 dydt+{\mu}^{4\alpha-\varepsilon-1}, \nonumber
    \end{align}
where we have used the fact that $$\sum_{i,j=1}^2|\tilde{V}_{iy}\theta_j|\lesssim  \sum_{i,j=1}^2 \mu^{\alpha}(\bar{\mu}(1+t))^{-\frac{1}{2}}e^{-\frac{\abs{y-(-1)^j\bar{c}(1+t)}^2}{\bar{\mu}(1+t)}}\abs{\tilde{V}_{iy}}$$ due to \cref{theta} and $\abs{\eta_i}=O(1)\mu^{\alpha}$.
	Also, we have
	\begin{align}\label{c}
		\begin{aligned}
			\abs{\int_0^T\int_\R E_2\cdot\mathbb{V}_2dydt}\lesssim& \sum_{i,j,k=1,2}\int_0^T\int_{\R}|\mathbb{V}_{iy}\mathbb{V}_{jy}\mathbb{V}_{ky}|+|\theta_{i}\theta_{j}\mathbb{V}_{ky}|+|\theta_{i}\mathbb{V}_{jy}\mathbb{V}_{ky}|dydt\\
			\lesssim& \delta\bar{\mu}\int_0^T\norm{\mathbb{V}_{1y},\mathbb{V}_{2y}}^2dt+\mu^{4\alpha-3-\varepsilon},
		\end{aligned}
	\end{align}
	where we have used the a priori assumption \cref{aps}. 
	Then it holds that 
	\begin{align}\label{v1v2}
		\begin{aligned}
		\norm{\mathbb{V}_1,\mathbb{V}_2}^2+\bar{\mu}	\int_0^T\norm{\mathbb{V}_{2y}}^2dt
		\lesssim& \norm{(\mathbb{V}_1,\mathbb{V}_2)(\cdot,0)}^2+\delta\bar{\mu}\int_0^T \norm{\p_y\mathbb{V}_1}^2dt+\mu^{2\alpha+\frac{1}{3}-\varepsilon}\\
		&+\mu^{2\alpha-1}\int_0^T\int_{\R} \left[\bar{\mu}(1+t)\right]^{-1}e^{\frac{-(y\pm\bar{c}(1+t))^2}{\bar{\mu}(1+t)}}\mathbb{V}_2^2 dydt,
	\end{aligned}
	\end{align} 
since
 $\frac{10}{3}\alpha-\frac{8}{3}>2\alpha+\frac{1}{3}, 4\alpha-3>2\alpha+\frac{1}{3}$ for $\alpha>\frac{11}{3}$.
	\begin{flushleft}
		\textbf{Estimate on $\norm{\mathbb{V}_{1y}}^2$}
	\end{flushleft}

	Multiplying \cref{hatv}$_2$ by $\mathbb{V}_{1y}$, one has
	\begin{align}
	\bar{c}\mathbb{V}_{1y}^2+(\mathbb{V}_2\mathbb{V}_{1y})_t+\mathbb{V}_{2y}\mathbb{V}_{1t}-\bar{\mu}\mathbb{V}_{2yy}\mathbb{V}_{1y}=(\tilde{K}_1+\tilde{K}_2)\mathbb{V}_{1y}+(\cdots)_y,
\end{align}	
which implies from \cref{hatv}$_1$ (i.e. $\mathbb{V}_{1t}=-\bar{c}\mathbb{V}_{2y}+\tilde{K}_2-\tilde{K}_1, \mathbb{V}_{2yy}=-\frac{\mathbb{V}_{1yt}}{\bar{c}}+\frac{1}{\bar{c}}(\tilde{K}_2-\tilde{K}_1)_y$) that 
	\begin{align}\label{V1y0}
&	\bar{c}\mathbb{V}_{1y}^2+(\mathbb{V}_2\mathbb{V}_{1y}+\frac{\bar{\mu}}{2\bar{c}}\mathbb{V}_{1y}^2)_t \notag\\
=&\bar{c}\mathbb{V}_{2y}^2+(\tilde{K}_1-\tilde{K}_2)\mathbb{V}_{2y}+\frac{\bar{\mu}}{\bar{c}}(\tilde{K}_2-\tilde{K}_1)_y\mathbb{V}_{1y}+(\tilde{K}_1+\tilde{K}_2)\mathbb{V}_{1y}+(\cdots)_y.
\end{align}	
Integrating \cref{V1y0} on $\R\times [0,T]$ gives that 
		\begin{align}\label{V1y}
			&\int_{\R}\mathbb{V}_2\mathbb{V}_{1y}+\frac{\bar{\mu}}{2\bar{c}}\mathbb{V}_{1y}^2dy+	\int_0^T\bar{c}\norm{\mathbb{V}_{1y}}^2dt\\ \nonumber
%			=&\int_{0}^T\int_{\R}\left(\tilde{K}_1-\tilde{K}_2\right)\mathbb{V}_{2y}+\left[\frac{\bar{\mu}}{\bar{c}}\left(\tilde{K}_2-\tilde{K}_1\right)_y+\tilde{K}_{1}+\tilde{K}_{2}\right]\mathbb{V}_{1y}dydt+	\int_0^T\bar{c}\norm{\mathbb{V}_{2y}}^2dt\\
			\lesssim&\int_0^T \norm{\tilde{K}_{2}-\tilde{K}_{1}}^2 dt+\bar{\mu}\int_0^T \norm{\tilde{K}_{2y}-\tilde{K}_{1y}}^2 dt+\int_0^T\bar{\mu}\norm{\mathbb{V}_{1y}}^2+\norm{\mathbb{V}_{2y}}^2dt\\\nonumber
			&+\abs{\int_{0}^T\int_{\R}\left(\tilde{K}_{1}+\tilde{K}_{2}\right)\mathbb{V}_{1y}dydt}+\norm{(\tilde{V}_{1y},\tilde{V}_{2})(\cdot,0)}^2.\nonumber
	\end{align}
Using \cref{ENL1}, \cref{ENL2} and the a priori assumption \cref{aps}, we have
	\begin{align}
	&\begin{aligned}
		\int_0^T \norm{\tilde{K}_2-\tilde{K}_1}^2 + \bar{\mu}  \norm{\tilde{K}_{2y}-\tilde{K}_{1y}}^2 dt \lesssim \mu^{2\alpha+\frac{3}{2}},
	\end{aligned}\\
	&\begin{aligned}\label{nmnm}
		\int_0^T \int_\R \abs{(\tilde{E}^{(1)}+\tilde{N})\mathbb{V}_{1y}}dy \lesssim \delta \int_0^T\norm{\mathbb{V}_{1y}}^2 dt + \int_0^T \norm{\tilde{E}^{(1)},\tilde{N}}^2dt \lesssim \delta \int_0^T\norm{\mathbb{V}_{1y}}^2 dt+\mu^{4\alpha-\frac{7}{3}-\varepsilon},
	\end{aligned}\\
	&\begin{aligned}
		\int_0^T\int_{\R}|\tilde{E}^{(2)}\mathbb{V}_{1y}|dydt\lesssim& \sum_{i.j=1,2} \int_0^T\int_{\R}|\mathbb{V}_{iy}\mathbb{V}_{jy}\mathbb{V}_{1y}|+|\mathcal{C}_{i}\mathbb{V}_{jy}\mathbb{V}_{1y}|dydt
		\lesssim \delta\bar{\mu}\int_0^T\norm{\mathbb{V}_{1y},\mathbb{V}_{2y}}^2dt,
	\end{aligned}\\
	&\begin{aligned}
		\bigg|\int_0^T\int_{\R} {E}_2\mathbb{V}_{1y}dydt \bigg|\lesssim&\sum_{i,j=1,2} \int_0^T\int_{\R}(|\mathbb{V}_{iy}\mathbb{V}_{jy}|+\abs{\theta_{i}\theta_{j}}+|\theta_{i}\mathbb{V}_{jy}|)|\mathbb{V}_{1yy}|dydt\\
		\lesssim&\sum_{i,j=1}^2\int_0^T\norm{\mathbb{V}_{iy}}^2\norm{\mathbb{V}_{1yy}}_{L^\infty}+\delta\mu\norm{\mathbb{V}_{iy}}_{L^2}^2+\mu^{-1-\epsilon}\norm{\theta_i}_{L^\infty}^2\norm{\mathbb{V}_{1yy}}_{L^2}^2\\
		&\qquad\quad\;\;+\norm{\theta_i}_{L^\infty}\norm{\theta_j}_{L^2}\norm{\mathbb{V}_{1yy}}_{L^2} dt\\
		\lesssim&\delta\bar{\mu}\int_0^T\norm{\mathbb{V}_{1y},\mathbb{V}_{2y}}^2dt+\mu^{3\alpha-\frac{7}{4}-\varepsilon}.
	\end{aligned}
\end{align}
	Then we arrive at
	\begin{align}\label{V1y1}
		\begin{aligned}
			&\int_{\R}\mathbb{V}_2\mathbb{V}_{1y}+\frac{\bar{\mu}}{2\bar{c}}\mathbb{V}_{1y}^2dy+	\int_0^T\bar{c}\norm{\mathbb{V}_{1y}}^2dt
			\lesssim{\mu}^{2\alpha}+\int_0^T \norm{\mathbb{V}_{2y}}^2 dt.
		\end{aligned}
	\end{align}
	\begin{flushleft}
	\textbf{Estimate on $	\int_0^T\int_{\R}\left[\bar{\mu}(1+t)\right]^{-1}e^{-\frac{(y\pm\bar{c}(1+t))^2}{\bar{\mu}(1+t)}}\mathbb{V}_2^2dydt$}
\end{flushleft}
%	Finally, we should estimate 
%	\begin{align}\label{critical}
%	\int_0^T\int_{\R}\left[\bar{\mu}(1+t)\right]^{-1}e^{-\frac{-(y\pm\bar{c}(1+t))^2}{\bar{\mu}(1+t)}}\mathbb{V}_2^2dydt=O(1)\int_0^T\int_{\R}\left[\bar{\mu}(1+t)\right]^{-1}e^{-\frac{-(y\pm\bar{c}(1+t))^2}{\bar{\mu}(1+t)}}(\tilde{V}_1^2+\tilde{V}_2^2)dydt.
%\end{align}

	We only consider the case of $-\bar{c}$ since the case $\bar{c}$ can be treated similarly. Note that $\mathbb{V}_2=\tilde{V}_1+\tilde{V}_2$, we go back the system \eqref{AA} and estimate $\int_0^T\int_{\R}[\bar{\mu}(1+t)]^{-1}e^{-\frac{(y-\bar{c}(1+t))^2}{\bar{\mu}(1+t)}}(\tilde{V}_1^2+\tilde{V}_2^2)dydt$. From $\eqref{AA}_1$, $\tilde{V}_1$ propagates backward with speed $-\bar{c}$, while the weight function $
	e^{-\frac{(y-\bar{c}(1+t))^2}{\bar{\mu}(1+t)}}$ travels forward with speed $\bar{c}$, we can expect stronger estimate for $\tilde{V}_1$ than $\int_0^T\int_{\R}[\bar{\mu}(1+t)]^{-1}e^{-\frac{(y-\bar{c}(1+t))^2}{\bar{\mu}(1+t)}}\tilde{V}_1^2dydt$. Indeed, 
	set
	\begin{align}\label{q}
		\eta_1=\exp \left(\int_{-\infty}^{y} h(z, t) d z\right), \quad h(t,z)=\frac{1}{\sqrt{2 \pi \bar{\mu}(1+t)}} \exp \left(-\frac{\left(z-\bar{c}(1+t)\right)^2}{2 \bar{\mu}(1+t)}\right),
	\end{align} where $h$ satisfies
	$$
	h_t+\bar{c}h_y=\frac{\bar{\mu}}{2} \p_y^2h,\qquad
	1 \leq \eta_1 \leq e,
	$$
	and
	$$
	\p_t\eta_{1}=\eta_1 \int_{-\infty}^{y} h_t(z, t) d z=\eta_1\left(\frac{\bar{\mu}}{2} \p_yh-\bar{c} h\right), \quad \p_y\eta_{1}=\eta_1 h .
	$$
	Multiplying \cref{AA}$_1$ by $\eta_1 \tilde{V}_1$, we can get
	$$
	\begin{aligned}
		& \p_t\left(\eta_1 \frac{\tilde{V}_1^2}{2}\right)-\left(\p_t\eta_{1}-\bar{c} \p_y\eta_{1}\right) \frac{\tilde{V}_1^2}{2} =\frac{\bar{\mu}}{2}\p_y^2 \tilde{V}_{1} \eta_1 \tilde{V}_1+\frac{\bar{\mu}}{2}\p_y^2\tilde{V}_2\eta_1\tilde{V}_1+\tilde{K}_1 \eta_1 \tilde{V}_1+(\cdots)_y .
	\end{aligned}
	$$
	Note that
	$$
	\begin{aligned}
		\p_t\eta_{1}-\bar{c}\p_y \eta_{1} & =-2\bar{c} \eta_1 h+\eta_1 \frac{\bar{\mu}}{2} \p_yh,
	\end{aligned}
	$$
	we can get
	\begin{align}\label{qq}
		\begin{aligned}
			& \left(\eta_1 \frac{\tilde{V}_1^2}{2}\right)_t+2\bar{c}\eta_1 h \tilde{V}_1^2=\eta_1 \bar{\mu} \p_yh\frac{\tilde{V}_1^2}{4}+\frac{\bar{\mu}}{2}\p_y^2 \tilde{V}_{1} \eta_1 \tilde{V}_1+\frac{\bar{\mu}}{2}\p_y^2\tilde{V}_2\eta_1\tilde{V}_1+\tilde{K}_1 \eta_1 \tilde{V}_1+(\cdots)_y.
		\end{aligned}
	\end{align}
Since $\p_yh\approx (1+t)^{-1}$ and $h\approx (1+t)^{-\frac12}$, $\bar{\mu}\p_yh \tilde{V}_1^2$ could be controlled by $h\tilde{V}_1^2$. 
Also, we have 
	\begin{align}\label{qqq}
		\begin{aligned}
&\abs{\int_\R \eta_1(\p_y^2\tilde{V}_1+\p_y^2\tilde{V}_2)\tilde{V}_1dy}\lesssim \norm{\left(\p_y\tilde{V}_1,\p_y\tilde{V}_2\right)}^2+ \norm{h\tilde{V}_1}^2.
%			& \bar{\mu} \int_0^T \int_{\R}\left|h_y\right| \tilde{V}_1^2 d y d t=\bar{\mu}\int_0^T \int_{\R} \frac{1}{\sqrt{2 \pi \bar{\mu}(1+t)}} \frac{2\left|y-\bar{c}( 1+t)\right|}{2 \bar{\mu}(1+t)} e^{-\frac{\left(y-\bar{c}(1+t)\right)^2}{2\bar{\mu}(1+\tau)}} \tilde{V}_1^2 d y d t \\
%			& \qquad\qquad\lesssim \delta \int_0^T \int_{\R} h \tilde{V}_1^2 d y d \tau+ \bar{\mu}^{\frac{1}{2}-\varepsilon}\int_0^T(1+t)^{-3 / 2}\left\|\tilde{V}_1\right\|^2 d t. 
		\end{aligned}
	\end{align}
Similar to \cref{a}-\cref{c}, one has
	\begin{align}\label{22222}
		\bigg|\int_0^T\int_{\R}\tilde{K}_1\tilde{V}_1\eta_1dydt\bigg|\lesssim &\mu^{2\alpha-1}\int_0^T\int_{\R}\left[\bar{\mu}(1+t)\right]^{-1}e^{-\frac{(y\pm\bar{c}(1+t))^2}{\bar{\mu}(1+t)}}\tilde{V}_1^2 dydt\nonumber\\
		&+\bar{\mu}\int_0^T \bigg(\norm{\p_y\tilde{V}_1}^2+\norm{\p_y\tilde{V}_2}^2\bigg)dt+\mu^{\frac{10}{3}\alpha-\frac{8}{3}-\varepsilon}.
	\end{align}
Thus, integrating \cref{qq} on $\R\times [0,T]$ gives that
	\begin{align}\label{new1}
	\norm{\tilde{V}_1(\cdot,T)}^2+	\int_0^T\int_{\R}h\tilde{V}_1^2dydt\lesssim& \norm{\tilde{V}_1(\cdot,0)}^2+\bar{\mu}\int_0^T\norm{\left(\p_y\tilde{V}_1,\p_y\tilde{V}_2\right)}^2dt\\ \nonumber
        &+\mu^{2\alpha-1}\int_0^T\int_{\R}\left[\bar{\mu}(1+t)\right]^{-1}e^{-\frac{(y+\bar{c}(1+t))^2}{\bar{\mu}(1+t)}}\tilde{V}_1^2 dydt +\mu^{\frac{10}{3}\alpha-\frac{8}{3}-\varepsilon}.
	\end{align}
From the formula \eqref{q} of $h$, we exactly obtain an estimate of $\int_0^T\int_{\R}\left[\bar{\mu}(1+t)\right]^{-\frac12}e^{-\frac{(y-\bar{c}(1+t))^2}{\bar{\mu}(1+t)}}\tilde{V}_1^2dydt$.
%   Then we have
%	\begin{align}\label{h1V}
%		\bar{\mu}\int_0^T&\int_{\R}\left[\bar{\mu}(1+t)\right]^{-1}e^{\frac{-(y-\bar{c}(1+t))^2}{\bar{\mu}(1+t)}}\tilde{V}_1^2dydt\lesssim\bar{\mu}^{\frac{1}{2}}\int_0^T\int_\R\eta_1h\tilde{V}_1^2dydt\\\nonumber
%		\lesssim& \bar{\mu}^{\frac{1}{2}}\norm{\tilde{V}_1(\cdot,0)}^2+C\bar{\mu}^{1-\varepsilon}\sup_{0\leq t\leq T}\norm{\tilde{V}_1(\cdot,t)}^2+\bar{\mu}^{\frac{3}{2}}\int_0^T\norm{\left(\p_y\tilde{V}_1,\p_y\tilde{V}_2\right)}^2dt,\\\nonumber
%	&+\mu^{2\alpha-\frac{1}{2}}\int_0^T\int_{\R}\left[\bar{\mu}(1+t)\right]^{-1}e^{\frac{-(y+\bar{c}(1+t))^2}{\bar{\mu}(1+t)}}\tilde{V}_1^2 dydt+\mu^{-\frac{13}{6}-\varepsilon+\frac{10}{3}\alpha}.
%	\end{align}

The estimate of $\int_0^T\int_{\R}\left[\bar{\mu}(1+t)\right]^{-1}e^{-\frac{(y-\bar{c}(1+t))^2}{\bar{\mu}(1+t)}}\tilde{V}_2^2dydt$ is subtle since both $\tilde{V}_2$ and the weight function $
e^{-\frac{(y-\bar{c}(1+t))^2}{\bar{\mu}(1+t)}}$ propatate forward with the same speed $\bar{c}$. 
Setting
	$$
	n(y, t)=\int_{-\infty}^y h(z, t) d z,
	$$
	it is easy to check that
	\begin{align}\label{0000000}
	\p_tn=\frac{\bar{\mu}}{2} \p_yh-\bar{c} h, \quad 0<n<1 .
	\end{align}
	We will use a weighted inequality in Huang-Li-Matsumura \cite{HLM2010} to estimate the above-weighted integral. Following Lemma 1 in \cite{HLM2010}, we have
	\begin{Lem}\label{Ptii} 
		For any $T>0$, if $V(t, y)$ satisfies
		$$
		V(t,y) \in L^{\infty}\left(0, T ; L^2({\R})\right), \quad \p_yV \in L^2\left(0, T ; L^2({\R})\right), \quad \p_tV \in L^2\left(0, T ; H^{-1}({\R})\right),
		$$
		it holds that 
		$$
		\begin{aligned}
			\frac{1}{2}\int_0^T \int_{\R} h^2 V^2 d y d t \leq  & \frac{1}{\bar{\mu}}\int_{\R} V^2(y, 0) d y+2 \int_0^T\left\|\p_yV\right\|^2 d t \\
			& +\frac{2}{\bar{\mu}}\left(\int_0^T<\p_t V, V n^2>d t-\bar{c} \int_0^T \int_{\R} nhV^2 d y d t\right).
		\end{aligned}
		$$
	\end{Lem}
	\begin{proof}
		From \eqref{0000000}, it is strightforward to check that
		\begin{align*}
			&\frac{2}{\bar{\mu}}\left(\int_0^T<\p_t V, V n^2>d t-\bar{c} \int_0^T \int_{\R} nhV^2  d y d t\right)\\
        =&\frac{1}{\bar{\mu}}\left(\int_\R (Vn)^2(y,T)dy-\int_\R (Vn)^2(y,0)dy\right)-\int_0^T \int_\R \p_yhnV^2 dydt\\
		\geq& -\frac{1}{\bar{\mu}}\int_\R (Vn)^2(y,0)dy +2 \int_0^T \int_\R hnV V_y dydt + \int_0^T \int_\R V^2 h^2 dydt\\
		\geq & -\frac{1}{\bar{\mu}}\int_\R (Vn)^2(y,0)dy - 2 \int_0^T \norm{V_y}^2 dt + \frac{1}{2}\int_0^T \int_\R V^2h^2dydt,
		\end{align*}
which completes the proof of \cref{Ptii}. 
	\end{proof}
	Multiplying \cref{AA}$_2$ by $n^2\tilde{V}_2$, we get
	\begin{align}\label{new11}
	&	\int_0^T<  \p_ t \tilde{V}_{2}, \tilde{V}_2 n^2>d t-\bar{c} \int_0^T \int_{\R} \tilde{V}_2^2 n h d y d t \nonumber\\
		=&\int_0^T \int_{\R}  \frac{\bar{\mu}}{2}\p_y^2\tilde{V}_2\tilde{V}_2n^2+\frac{\bar{\mu}}{2}\p_y^2\tilde{V}_1\tilde{V}_2n^2+\tilde{K}_2\tilde{V}_2n^2d y d t\nonumber\\
		\lesssim&\bar{\mu}\int_0^T\norm{\p_y\tilde{V}_1,\p_y\tilde{V}_2}^2dt+\delta\bar{\mu}\int_0^T\int_{\R} h^2\tilde{V}_2^2dydt+\int_0^T\int_{\R}\tilde{K}_2\tilde{V}_2n^2dydt.
	\end{align}
	Simialr to \cref{a}-\cref{c}, one has
	\begin{align}\label{new12}
		\int_0^T\int_{\R}\tilde{K}_2\tilde{V}_2n^2dydt\lesssim &\mu^{2\alpha-1}\int_0^T\int_{\R}\left[\bar{\mu}(1+t)\right]^{-1}e^{-\frac{(y\pm\bar{c}(1+t))^2}{\bar{\mu}(1+t)}}\tilde{V}_2^2 dydt\nonumber\\
		&+\bar{\mu}\int_0^T \norm{\p_y\tilde{V}_1}^2+\norm{\p_y\tilde{V}_2}^2dt+\mu^{\frac{10}{3}\alpha-\frac{8}{3}-\varepsilon}.
	\end{align}
	Combining  \cref{Ptii}, \eqref{new11} and \eqref{new12}, one has
	\begin{align}\label{h2V}
			\int_0^T \int_{\R} h^2 \tilde{V}^2_{2} d y d t\lesssim& \bar{\mu}^{-1}\norm{\tilde{V}_2(\cdot,0)}^2+\int_0^T\norm{\left(\p_y\tilde{V}_1,\p_y\tilde{V}_2\right)}^2dt+\mu^{\frac{10}{3}\alpha-\frac{11}{3}-\varepsilon}\\\nonumber
		&+\mu^{2\alpha-2}\int_0^T\int_{\R}\left[\bar{\mu}(1+t)\right]^{-1}e^{-\frac{(y+\bar{c}(1+t))^2}{\bar{\mu}(1+t)}} \tilde{V}_2^2 dydt.
	\end{align}
Therefore, from \eqref{new1} and \eqref{h2V}, we can obtain that 
	\begin{align}\label{new13}
		\int_0^T\int_{\R}h\tilde{V}_1^2dydt+\bar{\mu}\int_0^T\int_{\R}h^2\tilde{V}_2^2dydt\lesssim& \norm{(\tilde{V}_1,\tilde{V}_2)(\cdot,0)}^2+\bar{\mu}\int_0^T\norm{\left(\p_y\tilde{V}_1,\p_y\tilde{V}_2\right)}^2dt\\ \nonumber
	&+\mu^{2\alpha-1}\int_0^T\int_{\R}\tilde{h}^2(\tilde{V}_1^2+\tilde{V}_2^2) dydt +\mu^{\frac{10}{3}\alpha-\frac{8}{3}-\varepsilon}.
\end{align}
where $\tilde{h}(t,y)=\frac{1}{\sqrt{2 \pi \bar{\mu}(1+t)}} \exp \left(-\frac{\left(y+\bar{c}(1+t)\right)^2}{2 \bar{\mu}(1+t)}\right)$. We can also have the same estimate for the case $\bar{c}$, that is, 
	\begin{align}\label{33333}
		\int_0^T\int_{\R}\tilde{h}\tilde{V}_2^2dydt+\bar{\mu}\int_0^T\int_{\R}\tilde{h}^2\tilde{V}_1^2dydt\lesssim& \norm{(\tilde{V}_1,\tilde{V}_2)(\cdot,0)}^2+\bar{\mu}\int_0^T\norm{\left(\p_y\tilde{V}_1,\p_y\tilde{V}_2\right)}^2dt\\ \nonumber
		&+\mu^{2\alpha-1}\int_0^T\int_{\R}h^2(\tilde{V}_1^2+\tilde{V}_2^2) dydt +\mu^{\frac{10}{3}\alpha-\frac{8}{3}-\varepsilon}.
	\end{align}
Therefore, we conclude that
	\begin{align}\label{new14}
\bar{\mu}\int_0^T\int_{\R}\left[\bar{\mu}(1+t)\right]^{-1}e^{-\frac{(y\pm\bar{c}(1+t))^2}{\bar{\mu}(1+t)}}\mathbb{V}_2^2dydt\lesssim& \norm{(\mathbb{V}_1,\mathbb{V}_2)(\cdot,0)}^2+\bar{\mu}\int_0^T\norm{\left(\p_y\mathbb{V}_1,\p_y\mathbb{V}_2\right)}^2dt\\ \nonumber
	&+\mu^{\frac{10}{3}\alpha-\frac{8}{3}-\varepsilon}.
\end{align}
Taking $\cref{v1v2}+\cref{V1y1}\times\bar{\mu}$ and using \cref{new14}, one can prove 
	\begin{align}\label{new15}
\norm{\mathbb{V}_1,\mathbb{V}_2}^2+\bar{\mu}^2\norm{\mathbb{V}_{1y}}^2+\bar{\mu}\int_0^T\norm{\p_y\mathbb{V}_1,\p_y\mathbb{V}_2}^2dt+	\bar{\mu}\int_0^T\int_{\R}(h^2+\tilde{h}^2)\mathbb{V}_2^2dydt\lesssim \mu^{2\alpha},
\end{align}
where we have used $\norm{(\mathbb{V}_1,\mathbb{V}_2)(\cdot,0)}\lesssim \mu^\alpha$. This yields \cref{1or}.
\end{proof}
Then we provide an estimate for $\norm{\p_y^k\tilde{V}_1(\cdot,t),\p_y^k\tilde{V}_2(\cdot,t)}$,  $k=1,2$. 
\begin{Lem}\label{2or}
	Under the same assumptions as \cref{MT}, it holds that
	\begin{align*}			&\norm{\p_y^k\tilde{V}_1(\cdot,t),\p_y^k\tilde{V}_2(\cdot,t)}^2\lesssim \mu^{2\alpha}[\mu(1+t)]^{-k},  \quad k=1,2,	\\
			&\norm{\p_y^k\tilde{V}_1(\cdot,t),\p_y^k\tilde{V}_2(\cdot,t)}^2+  \bar{\mu}\int_{0}^T\norm{\p_y^{k+1}\tilde{V}_1,\p_y^{k+1}\tilde{V}_2}
			^2dt\lesssim \mu^{2\alpha+\frac{1-k}{2}}.
	\end{align*}
\end{Lem}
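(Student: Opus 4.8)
The plan is to prove both estimates by a time-weighted energy method applied to the $y$-differentiated system, keeping the hyperbolic/parabolic decomposition $\mathbb{V}_1=\tilde{V}_2-\tilde{V}_1$, $\mathbb{V}_2=\tilde{V}_2+\tilde{V}_1$ from the proof of \cref{1or}, and arguing inductively in $k$ (the case $k=0$ being \cref{1or}). Applying $\p_y^k$ to \cref{hatv} gives, for $k=1,2$,
\begin{align*}
	\p_y^k\mathbb{V}_{1t}+\bar{c}\,\p_y^{k+1}\mathbb{V}_2=\p_y^k(\tilde{K}_2-\tilde{K}_1),\qquad
	\p_y^k\mathbb{V}_{2t}+\bar{c}\,\p_y^{k+1}\mathbb{V}_1=\bar{\mu}\,\p_y^{k+2}\mathbb{V}_2+\p_y^k(\tilde{K}_1+\tilde{K}_2),
\end{align*}
which is structurally identical to \cref{hatv}. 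First I would record the forcing bounds: $\p_y^k(\tilde{K}_2-\tilde{K}_1)$ satisfies the one-more-derivative analogue of \cref{estimateon2-1}, and, via \cref{K2+K1}--\cref{E2} together with \cref{estimateontheta} and \cref{estimateonxi}, $\p_y^k(\tilde{K}_1+\tilde{K}_2)$ splits into the smooth diffusion-wave error $\p_y^k\tilde{E}^{(1)}$ and the exponentially decaying non-zero-mode piece $\p_y^k\tilde{N}$ (bounded as in \cref{ENL1}--\cref{ENL2}), the quadratic terms $\p_y^k(\tilde{V}_{iy}\tilde{V}_{jy})$, the Gaussian-localized cross terms $\p_y^k(\tilde{V}_{iy}\theta_j)$ and $\p_y^k(\tilde{V}_{iy}\Xi_j)$, and the remaining cubic contributions contained in $E_2$; for the quadratic and cubic terms I would transfer the top-order derivative onto the test function by integration by parts and estimate the surviving factors in $L^\infty$ by Gagliardo--Nirenberg against the a priori assumptions \cref{aps}, so that no derivative is lost.

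\textbf{Second estimate (space--time dissipation).} I would first run the unweighted energy identity: multiplying the $\p_y^k$-equations by $\p_y^k\mathbb{V}_1$ and $\p_y^k\mathbb{V}_2$, adding and integrating on $\R\times(0,T)$ makes the transport cross terms cancel and leaves the dissipation $\bar{\mu}\int_0^T\norm{\p_y^{k+1}\mathbb{V}_2}^2\,dt$. The missing $\mathbb{V}_1$-dissipation I would recover exactly as in the derivation of \cref{V1y1} and \cref{new15}: multiply the $\p_y^k$-parabolic equation by $\p_y^{k+1}\mathbb{V}_1$ to extract $\bar{c}\int_0^T\norm{\p_y^{k+1}\mathbb{V}_1}^2\,dt$, rewrite the time-derivative terms by means of \cref{hatv}$_1$, and control the non-sign-definite boundary-in-time term $\int_\R(\p_y^k\mathbb{V}_2\,\p_y^{k+1}\mathbb{V}_1+\tfrac{\bar{\mu}}{2\bar{c}}(\p_y^{k+1}\mathbb{V}_1)^2)\,dy$ after multiplying this identity by $\bar{\mu}$. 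The Gaussian-localized forcing produces, as in \cref{a}--\cref{c} and \cref{22222}, terms of the form $\mu^{2\alpha-1}\int_0^T\int_\R[\bar{\mu}(1+t)]^{-1}e^{-(y\pm\bar{c}(1+t))^2/\bar{\mu}(1+t)}|\p_y^k\mathbb{V}_2|^2\,dy\,dt$, which I would absorb by rerunning the weighted inequality \cref{Ptii} and the $\eta$-weight computation \cref{q}--\cref{qq} at derivative level $k$; together with the induction hypothesis and $\alpha>\tfrac{11}{3}$ this closes $\norm{\p_y^k\mathbb{V}(T)}^2+\bar{\mu}\int_0^T\norm{\p_y^{k+1}\mathbb{V}}^2\,dt\lesssim\mu^{2\alpha+\frac{1-k}{2}}$.

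\textbf{First estimate (time decay).} Next I would rerun the same energy identity with the time weight $(1+t)^k$, which yields
\begin{align*}
	\frac{d}{dt}\Big[(1+t)^k\norm{\p_y^k\mathbb{V}}^2\Big]+2\bar{\mu}(1+t)^k\norm{\p_y^{k+1}\mathbb{V}_2}^2= k(1+t)^{k-1}\norm{\p_y^k\mathbb{V}}^2+(\text{weighted forcing}).
\end{align*}
Integrating in $t$ reduces the bound on $(1+T)^k\norm{\p_y^k\mathbb{V}(T)}^2$ to controlling $\int_0^T(1+t)^{k-1}\norm{\p_y^k\mathbb{V}}^2\,dt$, which is precisely the space--time dissipation produced at step $k-1$: the $\mathbb{V}_2$-part directly, the $\mathbb{V}_1$-part through the cross-term identity above carried along with the extra factor $(1+t)^{k-1}$ (for $k=1$ this is just \cref{1or}). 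Handling the forcing integral as before, this gives $\norm{\p_y^k\mathbb{V}(t)}^2\lesssim\mu^{2\alpha}[\mu(1+t)]^{-k}$ for $k=1,2$, and passing back from $(\mathbb{V}_1,\mathbb{V}_2)$ to $(\tilde{V}_1,\tilde{V}_2)$ completes the proof.

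\textbf{Main obstacle.} The delicate point is the interplay between the time weights needed for the decay and the purely hyperbolic character of $\mathbb{V}_1$: at each derivative level one must recover the $\mathbb{V}_1$-dissipation through the cross-term identity, carry it consistently with the weight $(1+t)^{k}$, and keep the non-sign-definite boundary-in-time terms subordinate to the leading energy, all while the top-order quadratic terms from $E_2$ and the traveling-Gaussian forcing force one to re-invoke the Huang--Li--Matsumura weighted inequality \cref{Ptii} and the $\eta$-weighting at the differentiated level, where the bookkeeping of $\mu$-powers is tight and repeatedly relies on $\alpha>\tfrac{11}{3}$.
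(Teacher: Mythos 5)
Your overall route is the same as the paper's: differentiate the $(\mathbb{V}_1,\mathbb{V}_2)$ system \cref{hatv}, get the $\p_y^{k+1}\mathbb{V}_2$-dissipation from the basic energy identity, recover the $\p_y^{k+1}\mathbb{V}_1$-dissipation through the cross term with $\p_y^{k+1}\mathbb{V}_1$, and then put time weights on the resulting inequality, using the previous derivative level to close. Two remarks, one minor and one substantive. Minor: re-running the Huang--Li--Matsumura inequality \cref{Ptii} and the $\eta_1$-weight at derivative level $k$ is not needed (and is not what the paper does). At $k\geq 1$ the traveling-Gaussian factors multiply $\norm{\p_y^k(\mathbb{V}_1,\mathbb{V}_2)}^2$, not the underived $\mathbb{V}_2$, so after taking $\norm{\theta_j}_{L^\infty}^2\lesssim\mu^{2\alpha}[\bar{\mu}(1+t)]^{-1}$ these terms come with a small, time-decaying coefficient and are absorbed directly using the level-$(k-1)$ space--time dissipation and the smallness of $\mu$ (see the terms $\mu^{2\alpha-\varepsilon-\frac32}[\bar{\mu}(1+t)]^{-1}\norm{\p_y^k(\mathbb{V}_1,\mathbb{V}_2)}^2$ in the paper's estimates leading to \cref{1111}). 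Your detour is probably workable but adds hypotheses (e.g.\ $\p_t\p_y^k\tilde{V}_2\in L^2H^{-1}$) and complexity for no gain.

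The substantive issue is the cross-term step. When you multiply the $\p_y^k$-differentiated parabolic equation by $\p_y^{k+1}\mathbb{V}_1$, the forcing contains $\frac{1}{a\bar{c}}\int_\R \p_y^k E_2\,\p_y^{k+1}\mathbb{V}_1\,dy$ with $E_2=-\bar{\mu}\,\p_y\big(\varphim_2\phim/\rhom\big)$, i.e.\ a term in which $\p_y^{k+2}\mathbb{V}_2$ is paired against $\p_y^{k+1}\mathbb{V}_1$; at that stage neither factor is controlled by the available dissipation, and your generic device of ``transfer the top-order derivative onto the test function'' only converts it into $\p_y^{k+2}\mathbb{V}_1$, which is equally out of reach. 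This is exactly the term the paper singles out (the $\mathcal{H}_2$ piece in \cref{zazaza}): it is handled by substituting $\p_y^{k+2}\mathbb{V}_2=-\frac1{\bar{c}}\big(\p_t\p_y^{k+1}\mathbb{V}_1-\p_y^{k+1}(\tilde{K}_2-\tilde{K}_1)\big)$ from \cref{hatv}$_1$, integrating by parts in time, and absorbing the resulting cubic quantity $\frac{a\bar{\mu}^2}{2\bar{c}}\frac{\mathcal{C}_1-\mathbb{V}_{1y}-\mathcal{C}_2}{\mathbb{V}_{1y}+\mathcal{C}_2-\mathcal{C}_1+1}(\p_y^{k+1}\mathbb{V}_1)^2$ into the modified energy functional $\mathcal{E}_k$, whose coercivity then relies on the a priori smallness. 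Your proposal names the ``top-order quadratic terms from $E_2$'' as an obstacle but offers no mechanism for them, so as written the $\mathbb{V}_1$-dissipation step does not close; with this device added (and the HLM detour dropped), the rest of your plan — unweighted estimate, cross term, then weights $(1+t)$ for $k=1$ and $[\bar{\mu}(1+t)]^2$ for $k=2$ — matches the paper's proof.
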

\begin{proof}
	Applying $\p_y^k$ to \cref{hatv},  and then multiplying the resulting equations by $(\p_y^k\mathbb{V}_1,\p_y^k\mathbb{V}_2)$, one has
	\begin{align}
			&\frac{d}{dt}\norm{\p_y^k\mathbb{V}_1(\cdot,t),\p_y^k\mathbb{V}_2(\cdot,t)}^2+\bar{\mu}\norm{\p_y^{k+1}\mathbb{V}_2}^2\nonumber\\
			\lesssim& \int_{\R}{\p_y^k(\tilde{K}_2-\tilde{K}_1)\p_y^{k}\mathbb{V}_1}+\p_y^k(\tilde{K}_1+\tilde{K}_2)\p_y^{k}\mathbb{V}_2dy.
	\end{align}
	According to \cref{K2+K1}, $\tilde{K}_1+\tilde{K}_2-\frac{E_2}{a\bar{c}}\lesssim\abs{\tilde{E}^{(1)}+\tilde{E}^{(2)}+\tilde{N}}$, it holds that 
	\begin{align}
    &\begin{aligned}\label{999}
     \int_\R &\abs{\p_y^k(\tilde{E}^{(1)}+\tilde{N})\p_y^k\mathbb{V}_2} dy \lesssim \delta\bar{\mu}\norm{\p_y^{k+1}\mathbb{V}_2}^2 + \bar{\mu}^{-1}\norm{\p_y^k(\tilde{E}^{(1)}+\tilde{N})}_{L^1}^{\frac{4}{3}}\norm{\p_y^k \mathbb{V}_2}^{\frac{2}{3}} \\
    &\lesssim \delta\bar{\mu}\norm{\p_y^{k+1}\mathbb{V}_2}^2 + \mu^{\frac{10}{3}\alpha+\frac{1}{3}}[\bar{\mu}(1+t)]^{-k-\frac{4}{3}}+\mu^{\frac{10}{3}\alpha - \frac{4}{9}k -\frac{7}{3}} [\bar{\mu}(1+t)]^{-\frac{k}{3}}e^{-\frac{1}{48}\mu^{\frac{1}{3}}t},
	\end{aligned}\\
		&\begin{aligned}\label{9999}
			&\int_{\R}\abs{\p_y^k\tilde{E}^{(2)}\p_y^{k}\mathbb{V}_{2}}dy\lesssim \sum_{i,j=1,2}\int_{\R}\abs{\p_y^{k}(\mathbb{V}_{iy}\mathbb{V}_{jy}+\mathcal{C}_{i}\mathbb{V}_{jy})\p_y^{k}\mathbb{V}_{2}}dy\\
			&\quad\lesssim  \delta\bar{\mu}\norm{\p_y^{k+1}\mathbb{V}_{2}}^2 +\bar{\mu}^{-1}\sum_{i,j=1,2}\norm{\p_y^k\mathbb{V}_j}^2\norm{\mathbb{V}_{iy}}_{L^\infty}^2+(k-1)\sum_{i,j=1,2}\norm{\p_y^k\mathbb{V}_{i} }\norm{\p_y^k\mathbb{V}_j}_{L^4}^2\\
			&\qquad+\bar{\mu}^{-1}\sum_{i,j=1,2} \norm{\p_y^{k-1}C_i}_{L^\infty}^2\norm{\mathbb{V}_{jy}}^2+\bar{\mu}^{-1}\sum_{i,j=1,2} \norm{C_i}_{L^\infty}^2\norm{\p_y^k \mathbb{V}_j}^2\\
			&\quad\lesssim  \delta\bar{\mu}\norm{\p_y^{k+1}\mathbb{V}_{2}}^2 +\bar{\mu}^{-1}\sum_{i,j=1,2}\left[\norm{\p_y^k\mathbb{V}_j}^2\norm{\mathbb{V}_{iy}}_{L^\infty}^2+(k-1)\norm{\p_y^k\mathbb{V}_{i} }^{\frac{4}{3}}\norm{\p_y^k\mathbb{V}_j}^2\right]\\
			&\qquad+\bar{\mu}^{-1}\sum_{i,j=1,2} \norm{\p_y^{k-1}C_i}_{L^\infty}^2\norm{\mathbb{V}_{jy}}^2+\bar{\mu}^{-1}\sum_{i,j=1,2} \norm{C_i}_{L^\infty}^2\norm{\p_y^k \mathbb{V}_j}^2\\
			&\quad\lesssim\delta\bar{\mu}\norm{\p_y^{k+1}\mathbb{V}_{2}}^2+{\mu}^{2\alpha-\varepsilon-\frac{3}{2}}\left[\bar{\mu}(1+t)\right]^{-1}\norm{\p_y^k(\mathbb{V}_1,\mathbb{V}_2)}^2\\[1mm]
			&\qquad\;+(k-1){\mu}^{2\alpha-1}\left[\bar{\mu}(1+t)\right]^{-2}\norm{\p_y^{k-1}(\mathbb{V}_{1},\mathbb{V}_2)}^2+(k-1)\mu^{\frac{10}{3}\alpha-1}[\mu (1+t)]^{-\frac{10}{3}},
		\end{aligned}\\
		&\begin{aligned}\label{99999}
			\Bigg|\int_{\R}\p_y^k{E}_2&\p_y^{k}\mathbb{V}_{2}dy\Bigg|\lesssim\sum_{i,j=1,2} \sum_{d=0}^{k}\int_{\R}\abs{\left[\p_y^{1+d}\mathbb{V}_{1}\p_y^{k-d+1}\mathbb{V}_{2}+\p_y^{k}\big(\mathcal{C}_{i}\mathcal{C}_{j}\big)+\p_y^d\mathcal{C}_{i}\p_y^{1+k-d}\mathbb{V}_{j}\right]\p_y^{k+1}\mathbb{V}_{2}}dy\\
			&\qquad\lesssim\delta\bar{\mu}\norm{\p_y^{k+1}\mathbb{V}_{1},\p_y^{k+1}\mathbb{V}_{2}}^2+{\mu}^{4\alpha-\frac{3}{2}-\varepsilon}\left[\bar{\mu}(1+t)\right]^{-\frac{3}{2}-k}\\
			&+{\mu}^{2\alpha-2-\varepsilon}\left[\bar{\mu}(1+t)\right]^{-1}\norm{\p_y^{k}(\mathbb{V}_1,\mathbb{V}_2)}^2+(k-1){\mu}^{2\alpha}\left[\bar{\mu}(1+t)\right]^{-2}\norm{\p_y^{k-1}(\mathbb{V}_1,\mathbb{V}_2)}^2,
		\end{aligned}\\
	&\begin{aligned}\label{999999}
		&\abs{\int_{\R}\p_y^k(\tilde{K}_2-\tilde{K}_1)\p_y^{k}\mathbb{V}_1dy}\lesssim \delta\mu\norm{\p_y^{k+1}\mathbb{V}_1}^2+\mu^{2\alpha+3}\left[\bar{\mu}(1+t)\right]^{-\frac{3}{2}-k}.
	\end{aligned}
	\end{align}
	where we have used \cref{estimateon2-1}, \cref{ENL1}, \cref{ENL2} and the a priori assumption \cref{aps}.
	Then we have
	\begin{align}\label{oneone}
			&\frac{d}{dt}\norm{\p^k_y\mathbb{V}_1(\cdot,t),\p^k_y\mathbb{V}_2(\cdot,t)}^2+\bar{\mu}\norm{\p_y^{k+1}\mathbb{V}_2}^2\\ \nonumber
			\lesssim&\delta\bar{\mu}\norm{\p_y^{k+1}\mathbb{V}_{1}}^2 +{\mu}^{2\alpha-2}\left[\bar{\mu}(1+t)\right]^{-1}\norm{\p_y^k(\mathbb{V}_1,\mathbb{V}_2)}^2\\ \nonumber
			&\qquad+(k-1){\mu}^{2\alpha-1}\left[\bar{\mu}(1+t)\right]^{-2}\norm{\p_y^{k-1}(\mathbb{V}_{1},\mathbb{V}_2)}^2+{\mu}^{2\alpha+3}\left[\bar{\mu}(1+t)\right]^{-\frac{3}{2}-k}.
	\end{align}

	Multiplying $\p_y^k$\cref{hatv}$_2$ by $\p_y^{k+1}\mathbb{V}_{1}$, one has
	\begin{align}
	&\bar{c}(\p_y^{k+1}\mathbb{V}_{1})^2+(\p_y^k\mathbb{V}_2\p_y^{k+1}\mathbb{V}_{1})_t+\p_y^{k+1}\mathbb{V}_{2}\p_y^k\mathbb{V}_{1t}-\bar{\mu}\p_y^{k+2}\mathbb{V}_{2}\p_y^{k+1}\mathbb{V}_{1}\\
	&\qquad\qquad=\p_y^k(\tilde{K}_1+\tilde{K}_2)\p_y^{k+1}\mathbb{V}_{1}+(\cdots)_y,\notag
\end{align}	
which implies from \cref{hatv}$_1$$\Big($i.e. $\p_y^k\mathbb{V}_{1t}=-\p_y^{k}(\bar{c}\mathbb{V}_{2y}+\tilde{K}_1-\tilde{K}_2),\; \p_y^{k+2}\mathbb{V}_{2}=-\p_y^{k+1}\left(\frac{\mathbb{V}_{1t}}{\bar{c}}-\frac{1}{\bar{c}}(\tilde{K}_2-\tilde{K}_1)\right)$$\Big)$ that 
	\begin{align}\label{V1y11}
&	\bar{c}(\p_y^{k+1}\mathbb{V}_{1})^2+(\p_y^k\mathbb{V}_2\p_y^{k+1}\mathbb{V}_{1}+\frac{\bar{\mu}}{2\bar{c}}(\p_y^{k+1}\mathbb{V}_{1})^2)_t \notag\\
=&\bar{c}(\p_y^{k+1}\mathbb{V}_{2})^2+\p_y^k(\tilde{K}_1-\tilde{K}_2)\p_y^{k+1}\mathbb{V}_{2}+\frac{\bar{\mu}}{\bar{c}}\p_y^{k+1}(\tilde{K}_2-\tilde{K}_1)\p_y^{k+1}\mathbb{V}_{1}\\
&+\p_y^k(\tilde{K}_1+\tilde{K}_2)\p_y^{k+1}\mathbb{V}_{1}+(\cdots)_y. \notag
\end{align}	
Integrating \cref{V1y11} on $\R$ gives that 	
	\begin{align}\label{V1yy}
			&\p_t\left(\int_{\R}\p_y^{k}\mathbb{V}_2\p_y^{k+1}\mathbb{V}_{1}+\frac{\bar{\mu}}{2\bar{c}}\abs{\p_y^{k+1}\mathbb{V}_{1}}^2dy \right)+	\bar{c}\norm{\p_y^{k+1}\mathbb{V}_{1}}^2\\
			=&\int_{\R}\p^{k}_y\left(\tilde{K}_1-\tilde{K}_2\right)\p_y^{k+1}\mathbb{V}_{2}+\p^{k}_y\left[\frac{\bar{\mu}}{\bar{c}}\left(\tilde{K}_2-\tilde{K}_1\right)_y+\tilde{K}_{1}+\tilde{K}_{2}\right]\p_y^{k+1}\mathbb{V}_{1}dy+\bar{c}\norm{\p_y^{k+1}V_2}^2. \nonumber
	\end{align}
All terms concerning $\tilde{K}_2-\tilde{K}_1$ in \cref{V1yy} can be estimated in a similar way as in \cref{999999}. We only need to estimate the last term concerning $\tilde{K}_2+\tilde{K}_1$. Note from \cref{K2+K1} that $\tilde{K}_1+\tilde{K}_2-\frac{E_2}{a\bar{c}}\lesssim\abs{\tilde{E}^{(1)}+\tilde{E}^{(2)}+\tilde{N}}$, one should take care of $ \frac{1}{a\bar{c}}\int_\R \p_y^{k} E_2 \p_y^{k+1}\mathbb{V}_1 dy$, since it contains the highest order of derivatives $\p_y^{k+2}\mathbb{V}_2$. Indeed,  we have from  \cref{E2} that 
	\begin{align}
		\frac{1}{a\bar{c}}&\int_{\R} \p_y^kE_2\p_y^{k+1}\mathbb{V}_1 dy=-a\bar{\mu}\int_\R \p_y^{k+1}\left[\frac{(\mathbb{V}_{2y}+\mathcal{C}_1+\mathcal{C}_2)(\mathbb{V}_{1y}-C_1+C_2)}{\mathbb{V}_{1y}+\mathcal{C}_2-\mathcal{C}_1+1}\right]\p_y^{k+1}\mathbb{V}_1dy \nonumber\\
		=&-a\bar{\mu}\int_\R \sum_{r=0}^k\left\{\p_y^{k-r}\left[(\mathbb{V}_{1y}+\mathcal{C}_2-\mathcal{C}_1)(\mathbb{V}_{2y}+\mathcal{C}_1+\mathcal{C}_2)\right]\p_y^{r+1}\left(\frac{1}{\mathbb{V}_{1y}+\mathcal{C}_2-\mathcal{C}_1+1}\right)\right\}\p_y^{k+1}\mathbb{V}_1 dy \notag\\
		&-a\bar{\mu}\int_\R \sum_{d=0}^{k+1} \left[\frac{\p_y^{k+1-d}(\mathbb{V}_{2y}+\mathcal{C}_1+\mathcal{C}_2)\p_y^d(\mathbb{V}_{1y}-\mathcal{C}_1+\mathcal{C}_2)}{\mathbb{V}_{1y}+\mathcal{C}_2-\mathcal{C}_1+1}\right]\p_y^{k+1}\mathbb{V}_1 dy =:\mathcal{H}_1 +\mathcal{H}_2. \label{zazaza}
	\end{align}
	For $\mathcal{H}_1$, we only need to consider $\mathcal{H}_{1q}:=-a\bar{\mu}\int_\R \frac{(\mathbb{V}_{1y}+\mathcal{C}_2-\mathcal{C}_1)(\mathbb{V}_{2y}+\mathcal{C}_1+\mathcal{C}_2)\p_y^{k+1}(\mathbb{V}_{1y}+\mathcal{C}_2-\mathcal{C}_1)}{(\mathbb{V}_{1y}+\mathcal{C}_2-\mathcal{C}_1+1)^2}\p_y^{k+1}\mathbb{V}_1 dy $ , since the other terms can be treated similarly.
	By  the a priori assumption \cref{aps}, one has 
	\begin{align*}
	\abs{\mathcal{H}_{1q}}&\lesssim \int_\R \sum_{i,j=1,2}\abs{\p_y^{k+1}\mathbb{V}_1}^2\left(\abs{\p_y^2\mathbb{V}_i}+\abs{\p_y \theta_j} \right)\left(\abs{\p_y\mathbb{V}_{3-i}}+ \abs{\theta_j}\right)+ \sum_{i,j,k=1,2}\abs{\p_y^{k+1}\mathbb{V}_1 \p_y^{k+1}\theta_k}(\mathbb{V}_{iy}+\theta_j)^2 dy\\
     & \lesssim \delta \mu \norm{\p_y^{k+1}\mathbb{V}_1}^2+\mu^{6\alpha-1-\varepsilon}[\mu(1+t)]^{-\frac{5}{2}-k}.
	\end{align*}
	For $\mathcal{H}_2$, the main term is $ -a\bar{\mu}\int_{\R}\frac{(\mathbb{V}_{1y}+\mathcal{C}_2-\mathcal{C}_1)\p_y^{k+2}\mathbb{V}_{2}\p_y^{k+1}\mathbb{V}_1}{\mathbb{V}_{1y}+\mathcal{C}_2-\mathcal{C}_1+1} dy $.  From \cref{hatv}$_1$, we have $\p_t \p_y^{k+1} \mathbb{V}_1 + \bar{c} \p_y^{k+2} \mathbb{V}_2 = \p_y^{k+1}(\tilde{K}_2-\tilde{K}_1)$. Then we get
	\begin{align}
		&{\int_{\R} -a\bar{\mu}\frac{\mathbb{V}_{1y}\p_y^{k+2}\mathbb{V}_{2}\p_y^{k+1}\mathbb{V}_1}{\mathbb{V}_{1y}+\mathcal{C}_2-\mathcal{C}_1+1}-\frac{a\bar{\mu}}{2\bar{c}}\p_t\left[\frac{\mathbb{V}_{1y}(\p_y^{k+1}\mathbb{V}_1)^2}{\mathbb{V}_{1y}+\mathcal{C}_2-\mathcal{C}_1+1}\right]dy}\\\nonumber
		=&\frac{a\bar{\mu}}{\bar{c}}\int_\R \frac{\p_y^{k+1}(\tilde{K}_1-\tilde{K}_2)\mathbb{V}_{1y}\p_y^{k+1}\mathbb{V}_1}{\mathbb{V}_{1y}+\mathcal{C}_2-\mathcal{C}_1+1}dy -\frac{a\bar{\mu}}{2\bar{c}}\int_\R (\p_y^{k+1}\mathbb{V}_1)^2 \p_t \left(\frac{\mathbb{V}_{1y}}{\mathbb{V}_{1y}+\mathcal{C}_2-\mathcal{C}_1+1}\right)dy \nonumber\\
		\lesssim& \mu^2 \norm{\p_y^{k+1}\mathbb{V}_1}^2+{\mu}^{3\alpha+2}\left[\bar{\mu}(1+t)\right]^{-\frac{7}{2}-k}\nonumber.
	\end{align}
	In the same way, we obtain
	\begin{align}\label{cvcvcv}
	&\int_{\R}-a\bar{\mu} \frac{\p_y^{k+2}\mathbb{V}_{2}\left(\mathcal{C}_2-\mathcal{C}_1\right)\p_y^{k+1}\mathbb{V}_1}{\mathbb{V}_{1y}+\mathcal{C}_2-\mathcal{C}_1+1}-\frac{a\bar{\mu}}{2\bar{c}}\p_t\left[\frac{\left(\mathcal{C}_2-\mathcal{C}_1\right)(\p_y^{k+1}\mathbb{V}_1)^2}{\mathbb{V}_{1y}+\mathcal{C}_2-\mathcal{C}_1+1}\right] dy \\ \nonumber
	 \lesssim &\mu^2 \norm{\p_y^{k+1}\mathbb{V}_1}^2+{\mu}^{3\alpha+2}\left[\bar{\mu}(1+t)\right]^{-\frac{5}{2}-k}.
	\end{align}
Obviously, $\int_\R \abs{\p_y^{k}( \tilde{E}^{(1)}+\tilde{N}) \p_y^{k+1}\mathbb{V}_1} dy$ can be treated as in \cref{nmnm} and $\int_\R \abs{\p_y^{k} \tilde{E}^{(2)} \p_y^{k+1}\mathbb{V}_1} dy$ can be treated as in 
 \cref{9999}. Thus combining \cref{V1yy}-\cref{cvcvcv}, we have
	\begin{align}\label{twotwo}
			&\frac{d}{dt}\int_{\R}\bar{\mu}\p^{k}_y\mathbb{V}_2\p_y^{k+1}\mathbb{V}_1+\frac{\bar{\mu}^2}{2\bar{c}}\abs{\p_y^{k+1}\mathbb{V}_1}^2 + \frac{a\bar{\mu}^2}{2\bar{c}}\frac{\mathcal{C}_1-\mathbb{V}_{1y}-\mathcal{C}_2}{\mathbb{V}_{1y}+\mathcal{C}_2-\mathcal{C}_1+1}(\p_y^{k+1}\mathbb{V}_1)^2dy+	\bar{c}\bar{\mu}\norm{\p_y^{k+1}\mathbb{V}_1}^2\\ \nonumber
			\lesssim& \bar{\mu}\norm{\p_y^{k+1}\mathbb{V}_2}^2+{\mu}^{2\alpha+3}\left[\bar{\mu}(1+t)\right]^{-\frac{3}{2}-k}+\mu^{4\alpha-\frac{13}{3}-2\varepsilon} e^{-\frac{1}{32}\mu^{\frac{1}{3}}t}. 
	\end{align}
	
Therefore,  we obtain from  \cref{oneone,twotwo} that
	\begin{align}\label{1111}
		\begin{aligned}
			&\frac{d}{dt}\mathcal{E}_k+\bar{\mu}\norm{\p_y^{k+1}\mathbb{V}_2,\p_y^{k+1}\mathbb{V}_1}^2
			\lesssim{\mu}^{2\alpha-2-\varepsilon}\left[\bar{\mu}(1+t)\right]^{-1}\norm{\p_y^k(\mathbb{V}_1,\mathbb{V}_2)}^2\\
			&+(k-1){\mu}^{2\alpha-1}\left[\bar{\mu}(1+t)\right]^{-2}\norm{\p_y^{k-1}(\mathbb{V}_{1},\mathbb{V}_2)}^2+{\mu}^{2\alpha+\frac{3}{2}-k}(1+t)^{-\frac{3}{2}-k},
		\end{aligned}
	\end{align}
	where
	\begin{align*}
\mathcal{E}_k:=\int_{\R}\bar{\mu}\p^{k}_y\mathbb{V}_2\p_y^{k+1}\mathbb{V}_1+\frac{\bar{\mu}^2}{2\bar{c}}\abs{\p_y^{k+1}\mathbb{V}_1}^2+\frac{a\bar{\mu}^2}{2\bar{c}}\frac{\mathcal{C}_1-\mathbb{V}_{1y}-\mathcal{C}_2}{\mathbb{V}_{1y}+\mathcal{C}_2-\mathcal{C}_1+1}(\p_y^{k+1}\mathbb{V}_1)^2+\hat{C}^2	\norm{\p^k_y\mathbb{V}_1(\cdot,t),\p^k_y\mathbb{V}_2(\cdot,t)}^2,
	\end{align*}
	and $\hat{C}>0$  is a suitably large constant.
	Integrating \cref{1111} over $(0,T)$ and using \cref{1or} for $i,k=1,2$, one has 
	\begin{align}
	\norm{\p_y^k \mathbb{V}_i(\cdot,t)}+\bar{\mu}\int_0^T \norm{\p_y^{k+1} \mathbb{V}_i} dt \lesssim \mu^{2\alpha+\frac{1-k}{2}}. 
	\end{align}
	Moreover, multiplying \cref{1111} by $1+t$ for $k=1$,   \cref{1111} by $\left[\bar{\mu}(1+t)\right]^2$ for $k=2$ and then integrating the resulting equations, one has
	\begin{align}
			&(1+t)\norm{\p_y\mathbb{V}_1(\cdot,t),\p_y\mathbb{V}_2(\cdot,t)}^2+\int_0^T\bar{\mu}(1+t)\norm{\p_y^{2}\mathbb{V}_1,\p_y^{2}\mathbb{V}_2}^2dt\\ \nonumber
			\lesssim&\norm{\p_y\mathbb{V}_1(\cdot,0),\p_y\mathbb{V}_2(\cdot,0)}^2+\int_0^{T}\norm{\p_y(\mathbb{V}_1,\mathbb{V}_2)}^2dt+\mu^{2\alpha}\bar{\mu}^\frac{1}{2},\\ 
			&\left[\bar{\mu}(1+t)\right]^2\norm{\p^2_y\mathbb{V}_1(\cdot,t),\p_y^2\mathbb{V}_2(\cdot,t)}^2+\bar{\mu}\int_0^T\left[\bar{\mu}(1+t)\right]^2\norm{\p_y^{3}\mathbb{V}_1,\p_y^{3}\mathbb{V}_2}^2dt\\ \nonumber
			\lesssim&\bar{\mu}^2\norm{\p_y\mathbb{V}_1(\cdot,0),\p_y\mathbb{V}_2(\cdot,0)}^2+{\mu}^{2\alpha-1}\int_0^T\norm{\p_y(\mathbb{V}_{1},\mathbb{V}_2)}^2dt\\ \nonumber
			&+\bar{\mu} \int_0^{T}\bar{\mu}(1+t)\norm{\p^2_y(\mathbb{V}_1,\mathbb{V}_2)}^2dt+{\mu}^{2\alpha+\frac{3}{2}}.
	\end{align}
	In terms of \cref{1or}, we have
	\begin{align}
		&\norm{\p_y^k\tilde{V}_1(\cdot,t),\p_y^k\tilde{V}_2(\cdot,t)}^2\lesssim \mu^{2\alpha}[\mu(1+t)]^{-k},  \quad k=1,2,	\\ \nonumber
		&\norm{\p_y^k\tilde{V}_1(\cdot,t),\p_y^k\tilde{V}_2(\cdot,t)}^2+  \bar{\mu}\int_{0}^T\norm{\p_y^{k+1}\tilde{V}_1,\p_y^{k+1}\tilde{V}_2}
		^2dt\lesssim \mu^{2\alpha+\frac{1-k}{2}}.
	\end{align}
	which completes the proof of \cref{2or}.
\end{proof}
\subsection{Estimate on $\theta_A$}
This subsection aims to estimate the main part of $\psim_1$ (i.e. $\theta_A$). By Duhamel's principle and \cref{thetaA}, one has 
\begin{align}\label{Theta-A}
		{\theta_A(y, t)}=\int_0^t \int_{-\infty}^{\infty} &H(0,\mu) \bigg(a\bar{c}(\Xi_1+\Xi_2-\mathcal{C}_1-\mathcal{C}_2)-\frac{a\lambda}{2}\left(\Xi_2-\Xi_1\right)_y+\frac{a}{2}(\theta_1^2-\theta_2^2)\\\nonumber
		&\qquad-a\bar{c}(\mathcal{C}_1+\mathcal{C}_2)\p_y\theta_A+a^2\bar{c}(\mathcal{C}_1+\mathcal{C}_2)(\mathcal{C}_2-\Xi_2-\mathcal{C}_1+\Xi_1)\\ \nonumber
		&\qquad-a\bar{c}(\tilde{v}_1+\tilde{v}_2)\p_y\theta_A+a^2\bar{c}(\tilde{v}_1+\tilde{v}_2)(\mathcal{C}_2-\mathcal{C}_1)\bigg) d s d \tau. \nonumber
\end{align}

\begin{Lem}\label{estimateonta}
	Under the same assumptions of \cref{MT},  it holds that
	\begin{align}
		\norm{\theta_A(y, t)}_{L^\infty} \lesssim \mu^{\alpha} ,\quad \norm{\p_y^k\theta_{A}(x, t)}\lesssim\mu^{\alpha} \left[\mu(1+t)\right]^{-\frac{k}{2}+\frac{1}{4}}, \quad(k=1,2).
	\end{align}
\end{Lem}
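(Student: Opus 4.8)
The plan is to read everything off the Duhamel formula \cref{Theta-A} and to expose a cancellation that upgrades the slowly decaying leading source into an exact time derivative. \textbf{Step 1 (uncovering the cancellation).} The naive estimate $\big\|\int_0^t H(0,\mu)*(\theta_1+\theta_2)\,d\tau\big\|_{L^\infty}\le\int_0^t\|H(\cdot,t-\tau;0,\mu)\|_{L^\infty}\|\theta_1+\theta_2\|_{L^1}\,d\tau\lesssim\mu^{\alpha-\frac12}\sqrt t$ is inadmissible, so the first task is to use the Burgers structure of \cref{equ-theta}. Setting $\Theta_i(y,t):=\int_{-\infty}^y\theta_i(z,t)\,dz$ (so $\|\Theta_i\|_{L^\infty}\le\|\theta_i\|_{L^1}\lesssim\mu^{\alpha}$ by \cref{estimateontheta}) and integrating \cref{equ-theta} in $y$ gives $\bar c\,\theta_1=\Theta_{1t}+\tfrac12\theta_1^2-\tfrac{\bar\mu}{2}\theta_{1y}$ and $\bar c\,\theta_2=-\Theta_{2t}-\tfrac12\theta_2^2+\tfrac{\bar\mu}{2}\theta_{2y}$. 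Combining this with the identity $\mathcal C_1+\mathcal C_2-\Xi_1-\Xi_2=\theta_1+\theta_2+\tfrac{\bar\mu}{4\bar c}(\theta_{1y}-\theta_{2y}+\Xi_{1y}-\Xi_{2y})$ coming from \cref{tildev}, the first line of sources in \cref{Theta-A} collapses to
\[
-a\,\partial_\tau(\Theta_1-\Theta_2)+\tfrac{a\bar\mu}{4}\,\partial_y(\theta_1-\theta_2)+\tfrac{a(\lambda-2\mu)}{4}\,\partial_y(\Xi_1-\Xi_2),
\]
where — crucially — the quadratic contributions $\pm\tfrac a2\theta_i^2$ cancel exactly against the explicit term $+\tfrac a2(\theta_1^2-\theta_2^2)$ in \cref{Theta-A}, and the last two terms are $O(\mu)$ times derivatives of diffusion waves.

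\textbf{Step 2 (integration by parts in time).} Next I would integrate $-a\int_0^t\!\!\int H(0,\mu)\,\partial_\tau(\Theta_1-\Theta_2)$ by parts in $\tau$; using $\partial_\tau H(y-s,t-\tau;0,\mu)=-\mu\,\partial_s^2H(y-s,t-\tau;0,\mu)$ and then integrating by parts twice in $s$ produces
\[
-a(\Theta_1-\Theta_2)(y,t)+a\big(H(\cdot,t;0,\mu)*(\Theta_1-\Theta_2)(\cdot,0)\big)(y)-a\mu\!\int_0^t\!\!\int H(0,\mu)(\theta_{1y}-\theta_{2y})\,ds\,d\tau.
\]
The first two terms are $\lesssim\|\theta_i\|_{L^1}\lesssim\mu^{\alpha}$ in $L^\infty$ because convolution with the probability kernel $H(\cdot,t;0,\mu)$ does not increase the $L^\infty$ norm. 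The remaining term, and the $O(\mu)\partial_y(\text{diffusion wave})$ terms of Step 1, are estimated by $\|H(\cdot,t-\tau;0,\mu)\|_{L^\infty}\lesssim[\mu(t-\tau)]^{-\frac12}$, the bounds $\|\partial_y\theta_i\|_{L^1}\lesssim\mu^{\alpha}[\mu(1+\tau)]^{-\frac12}$, $\|\partial_y\Xi_i\|_{L^1}\lesssim\mu^{2\alpha}[\mu(1+\tau)]^{-\frac12}$ from \cref{estimateontheta,estimateonxi}, and the elementary inequality $\int_0^t(t-\tau)^{-\frac12}(1+\tau)^{-\frac12}\,d\tau\lesssim1$; together with the $O(\mu)$ prefactor this gives an $L^\infty$ bound $\lesssim\mu^{\alpha}$.

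\textbf{Step 3 (nonlinear sources and closure).} It then remains to treat the genuinely nonlinear sources. The products $a^2\bar c(\mathcal C_1+\mathcal C_2)(\mathcal C_2-\Xi_2-\mathcal C_1+\Xi_1)$ and $a^2\bar c(\tilde v_1+\tilde v_2)(\mathcal C_2-\mathcal C_1)$ are controlled by $\int_0^t\|H(\cdot,t-\tau;0,\mu)\|_{L^\infty}\,\|\,\cdot\,\|_{L^1_y}\,d\tau$, expanding $\mathcal C_i=\theta_i+\Xi_i+\tilde{\mathcal C}_i$ and inserting \cref{estimateontheta,estimateonxi,1or,2or} and the a priori decay $\|\tilde v_i\|_{L^2}\lesssim\mu^{\alpha-\varepsilon}[\mu(1+\tau)]^{-\frac12}$ of \cref{aps}; since $\alpha>\tfrac{11}{3}$ and $\mu$ is small these are $\lesssim\mu^{\alpha}$. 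The transport terms $-a\bar c(\mathcal C_1+\mathcal C_2)\partial_y\theta_A$ and $-a\bar c(\tilde v_1+\tilde v_2)\partial_y\theta_A$ force a self-improving argument: introduce
\[
M(t):=\sup_{0\le\tau\le t}\Big\{\|\theta_A(\cdot,\tau)\|_{L^\infty}+\sum_{k=1}^{2}[\mu(1+\tau)]^{\frac k2-\frac14}\|\partial_y^k\theta_A(\cdot,\tau)\|_{L^2}\Big\},
\]
bound these two terms by $C\mu^{\#}M(t)$ using the $L^\infty$ and $L^2$ decay of $\mathcal C_i,\tilde v_i$ and the time-integral inequality above, and absorb them for $\mu$ small; this yields $\|\theta_A\|_{L^\infty}\lesssim\mu^{\alpha}$.

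\textbf{Step 4 (derivative estimates) and main obstacle.} The bounds $\|\partial_y^k\theta_A\|_{L^2}$, $k=1,2$, follow by differentiating \cref{Theta-A}, splitting $\int_0^t=\int_0^{t/2}+\int_{t/2}^t$, placing the extra spatial derivatives on $H$ on $[0,t/2]$ (gaining $[\mu(t-\tau)]^{-k/2}$) and on the sources on $[t/2,t]$, and repeating Steps 1--3; the exact-derivative boundary term now produces $\partial_y^{k-1}(\theta_1-\theta_2)$, whose $L^2$ norm is $\lesssim\mu^{\alpha}[\mu(1+t)]^{-\frac k2+\frac14}$ by \cref{estimateontheta}, which is precisely the claimed rate, and all remaining contributions are again closed by the $M(t)$ bootstrap. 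The main obstacle is twofold: keeping track of the exact cancellation of the $\theta_i^2$ terms through the definitions of $\mathcal C_i$ and $\tilde{\mathcal C}_i$ in \cref{tildev}, so that after the time-integration by parts only $O(\mu)$-small derivative sources survive; and closing the transport terms involving $\partial_y\theta_A$, for which the gain of smallness from $\alpha>\tfrac{11}{3}$ together with the $\bar\mu$- and $O(\mu)$-prefactors is essential.
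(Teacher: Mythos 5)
Your proposal is correct in substance and closes the same bootstrap as the paper (your $M(t)$ is the paper's $\hat M(t)$, and your treatment of the transport terms $(\mathcal C_1+\mathcal C_2)\p_y\theta_A$, $(\tilde v_1+\tilde v_2)\p_y\theta_A$ and of the quadratic sources via $L^1$--$L^\infty$/$L^2$ kernel bounds is exactly what the paper does), but your handling of the dangerous slowly decaying source is genuinely different. The paper splits the source as $S_{\mathcal A}-a\bar c(\theta_1+\theta_2)$, keeps the quadratic $\tfrac a2(\theta_1^2-\theta_2^2)$ inside $S_{\mathcal A}$ (it is harmless in $L^1$), and controls the linear term $I_9=-a\bar c\int\!\!\int H(0,\mu)\,\p_s^k(\theta_1+\theta_2)$ by exploiting the nonzero relative speed between $\theta_i$ and the stationary heat kernel: the Gaussian localization \cref{semi-group} for $k=0$ and the $L_\sigma$-commutation cancellation of \cref{cancellation} (as for $I_6$ in \cref{pk1xi}) for $k=1,2$. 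You instead integrate the Burgers equations \cref{equ-theta} once in $y$ to write $\bar c(\theta_1+\theta_2)=\p_t(\Theta_1-\Theta_2)+\tfrac12(\theta_1^2-\theta_2^2)-\tfrac{\bar\mu}{2}\p_y(\theta_1-\theta_2)$, observe the exact cancellation of the quadratic part against the $\tfrac a2(\theta_1^2-\theta_2^2)$ already present in \cref{thetaA}, and convert the remaining exact time derivative by integration by parts in $\tau$ into boundary terms bounded by $\|\theta_i\|_{L^1}\lesssim\mu^\alpha$ plus an $O(\mu)$ source $\int\!\!\int H\,\p_y(\theta_1-\theta_2)$; I checked your algebra against \cref{tildev} and the coefficients $\tfrac{a\bar\mu}{4}\p_y(\theta_1-\theta_2)+\tfrac{a(\lambda-2\mu)}{4}\p_y(\Xi_1-\Xi_2)$ are right, and the resulting rates, including the boundary term $\p_y^{k-1}(\theta_1-\theta_2)$ giving exactly $\mu^\alpha[\mu(1+t)]^{-\frac k2+\frac14}$, come out as claimed. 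What each route buys: yours avoids the case-by-case Gaussian-in-$\tau$ analysis and the $L_\sigma$ identity altogether, at the cost of introducing the antiderivatives $\Theta_i$ and some care with the $\tau=0$ boundary term for small times; the paper's reuses machinery already built for $\Xi_i$ and never leaves the original Duhamel representation. One technical point to fix in your Step 4: for $k=2$ on $[t/2,t]$ you cannot put both derivatives on the source, since $\p_y^2$ of the transport terms produces $\p_y^3\theta_A$, which is not controlled by $M(t)$; do as the paper does and move only $k-1$ derivatives onto the source and one onto the kernel (via $\int_{t/2}^t\!\!\int H\,\p_s^kS=-\int_{t/2}^t\!\!\int \p_sH\,\p_s^{k-1}S$), after which your argument closes.
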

\begin{proof}
	Let
	\begin{align}\label{MMMM}
		\hat{M}(t):=\max_{0 \leq k \leq 2}\left\{\norm{\theta_A(y, t)}_{L^\infty}+\norm{\p_y^k\theta_{A}(x, t)}\left[\mu(1+t)\right]^{\frac{k}{2}-\frac{1}{4}}\right\}.
	\end{align}
A direct computation gives that 
	\begin{align}\label{thetaA1}
		\p^k_y\theta_A 
		=&\int_0^{\frac{t}{2}} \int_{-\infty}^{\infty} \p^{k}_yH(0,\mu)S_{\mathcal{A}}(s, \tau) d s d \tau+\int_{\frac{t}{2}}^t \int_{-\infty}^{\infty} H(0,\mu) \p_s^{k}S_{\mathcal{A}}d s d \tau\\ \nonumber
		&\qquad\qquad-\int_0^{t} \int_{-\infty}^{\infty} a\bar{c}H(0,\mu)\p^k_s(\theta_1+\theta_2)d s d \tau:=I_7+I_8+I_9,
\end{align}
	where $S_\mathcal{A}$ is defined as
	\begin{align*}
		S_{\mathcal{A}}:=&\frac{a\bar{\mu}}{4}(\theta_{2y}-\theta_{1y})-\frac{a}{4}(\lambda-2\mu)\left(\Xi_2-\Xi_1\right)_y+\frac{a}{2}(\theta_1^2-\theta_2^2)\\
		&-a\bar{c}(\mathcal{C}_1+\mathcal{C}_2)\p_y\theta_A+a^2\bar{c}(\mathcal{C}_1+\mathcal{C}_2)(\mathcal{C}_2-\Xi_2-\mathcal{C}_1+\Xi_1)\\
	&-a\bar{c}(\tilde{v}_1+\tilde{v}_2)\p_y\theta_A+a^2\bar{c}(\tilde{v}_1+\tilde{v}_2)(\mathcal{C}_2-\mathcal{C}_1) d s d \tau,\\
\mbox{and}\quad			\abs{S_{\mathcal{A}}}=&O(1)\sum_{i,j=1,2}\left[\bar{\mu}\abs{\theta_{iy}}+\theta_i^2+\left(\abs{\theta_i}+\abs{\p_y\tilde{V}_j}\right)\abs{\p_y\theta_A}+\abs{\p_y\tilde{V}_i\theta_j}\right].
	\end{align*}
	For $k=0,1$, by \cref{estimateontheta}-\ref{1or}, \ref{2or} and \cref{MMMM}, one has,
	\begin{align}
		\norm{\p_y^kS_{\mathcal{A}}}_{L^1 }\lesssim \mu^{\alpha}\left[\mu(1+t)\right]^{-\frac{1}{2}-\frac{k}{2}}\hat{M}(t)+\mu^{\alpha}\bar{\mu}\left[\bar{\mu}(1+t)\right]^{-\frac{1}{2}-\frac{k}{2}}.
	\end{align}
	Thus, for $k=0,1,2$, there holds 
	\begin{align}\label{I7I8}
			\norm{I_7}+\norm{I_8}\lesssim \mu^{\alpha-1}\left[\bar\mu(1+t)\right]^{-\frac{k}{2}+\frac{1}{4}}\hat{M}(t)+\mu^{\alpha}\left[\bar\mu(1+t)\right]^{-\frac{k}{2}+\frac{1}{4}},
	\end{align}
    where we have used the fact that
	\begin{align*}
		\int_{\frac{t}{2}}^t \int_{-\infty}^{\infty} H(0,\mu) \p_s^{k}S_{\mathcal{A}}d s d \tau=-\int_{\frac{t}{2}}^t \int_{-\infty}^{\infty} \p_s H(0,\mu) \p_s^{k-1}S_{\mathcal{A}}d s d \tau,\quad(k=1,2).
	\end{align*}
	Since the propagation speeds of $\theta_i$ and $H(0,\mu)$ are different, applying \cref{semi-group} and using the same method as $I_1$ in \cref{I123} and $I_6$ in  \cref{pk1xi}, we obtain
	\begin{align}\label{I9}
		\norm{I_9}_{L^\infty}\lesssim \mu^{\alpha}\quad(k=0), \qquad \norm{I_9}\lesssim  \mu^{\alpha}\left[\mu(1+t)\right]^{-\frac{k}{2}+\frac{1}{4}}\quad(k=1,2).
	\end{align}
Combining \cref{thetaA1,I7I8,I9}, we deduce that $\hat{M}(t)\lesssim \mu^{\alpha}+\mu^{\alpha-1}\hat{M}(t)$.  Note that $\alpha>11/3$, one has $\hat{M}(t)\lesssim \mu^{2\alpha}$ as $\mu$ is small. Substituting $\hat{M}(t)\lesssim \mu^{2\alpha}$ into \cref{MMMM} yields Lemma \ref{estimateonta}.
\end{proof}

\subsection{Estimate on $\mathcal{A}$}
This subsection is devoted to the decay part of $\psim_1$ (i.e. $\mathcal{A}$). Recall
\begin{align}\label{equ-AA}
\p_t{\mathcal{A}}- \mu \p_y^2{\mathcal{A}}-a\mu\p_y(\tilde{v}_2-\tilde{v}_1)=-\psim_{2}\p_y\mathcal{A}-\mu\frac{\phim}{\phim+1}\p_y^2\mathcal{A}+F_{A},
\end{align}
where
\begin{align*}
\abs{F_A}=&O(1)\sum_{i,j=1,2}\Big[\abs{\p_y\tilde{V}_i\Xi_j}+\mu\abs{\p_y\tilde{V}_i\p_y^2\tilde{V}_j}\\
+&\mu\abs{\p_y^2\theta_A}\left(\abs{\theta_j}+\abs{\p_y\tilde{V}_i}\right)+\abs{\Do(\nabla^2\psi_{i\neq}\phi_\neq)}+\bar{\mu}^2\abs{\p_y^2\theta_i} \Big],
\end{align*}
using \cref{estimateontheta}-\ref{1or}, \cref{2or} and \ref{estimateonta}, a direct calculation yields that 
\begin{align}	&\norm{F_A}_{L^1}\lesssim\left(\mu^{2\alpha}+\mu^{\alpha+2} \right)\left[\bar{\mu}(1+t)\right]^{-1}+\mu^{2\alpha-\frac{4}{3}-\varepsilon}e^{-\frac{1}{64}\mu^{\frac{1}{3}}t},\\[2mm]
	&\norm{F_A}\lesssim \left(\mu^{2\alpha}+\mu^{\alpha+2} \right)[\bar{\mu}(1+t)]^{-\frac{5}{4}}+\mu^{2\alpha-\frac{4}{3}-\varepsilon}e^{-\frac{1}{64}\mu^\frac{1}{3}t}.
\end{align}
Then, we have
\begin{Lem}\label{Aor}
	Under the same assumptions of \cref{MT}, it holds that
	\begin{align}
		&\norm{\mathcal{A}(\cdot,t)}^2\lesssim\mu^{2\alpha},\qquad \norm{\p_y\mathcal{A}(\cdot,t)}^2\lesssim\mu^{2\alpha} [\mu(1+t)]^{-1},\\ \nonumber
        &\norm{\p_y^k\mathcal{A}(\cdot,t)}^2+\mu \int_0^T \norm{\p_y^{k+1}\mathcal{A}}^2 dt \lesssim \mu^{2\alpha} , \quad k=0,1.
	\end{align}
\end{Lem}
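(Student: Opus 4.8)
The plan is to run a weighted energy/Gronwall argument for the scalar parabolic equation \eqref{equ-AA}, closely following the scheme of \cref{2or}: a basic $L^2$ estimate (which also yields the $k=0$ case of the last assertion), a first-order estimate obtained by pairing the $\p_y$-differentiated equation with $\p_y\mathcal{A}$ (the $k=1$ case), and a time-weighted version of the latter (the decay of $\p_y\mathcal{A}$). In all three, the variable-coefficient term $-\mu\tfrac{\phim}{\phim+1}\p_y^2\mathcal{A}$ is treated as a small perturbation of the diffusion: after integration by parts it produces $\mu\!\int\!\tfrac{\phim}{\phim+1}|\p_y^{\ell+1}\mathcal{A}|^2$ together with terms carrying $\p_y\phim$, and since $\inf(1+\phim)\geq\tfrac12\bar{N}$ and $\norm{\phim}_{L^\infty},\norm{\p_y\phim}_{L^\infty}$ are small (by \eqref{aps}, \cref{1or}--\cref{2or} and Gagliardo--Nirenberg), these are absorbed into $\delta\mu\norm{\p_y^{\ell+1}\mathcal{A}}^2+C\mu^{2\alpha-\frac12}\norm{\p_y^{\ell}\mathcal{A}}^2$.

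For the basic estimate, pairing \eqref{equ-AA} with $\mathcal{A}$ gives $\tfrac12\tfrac{d}{dt}\norm{\mathcal{A}}^2+\mu\norm{\p_y\mathcal{A}}^2$ on the left. The weakened lift-up source $a\mu\p_y(\tilde{v}_2-\tilde{v}_1)=a\mu\p_y^2(\tilde{V}_2-\tilde{V}_1)$ is integrated by parts to $-a\mu\!\int(\tilde{v}_2-\tilde{v}_1)\p_y\mathcal{A}\leq\delta\mu\norm{\p_y\mathcal{A}}^2+C\mu\norm{\p_y(\tilde{V}_2-\tilde{V}_1)}^2$, and $\mu\!\int_0^T\!\norm{\p_y(\tilde{V}_2-\tilde{V}_1)}^2\,dt\lesssim\mu^{2\alpha}$ by \cref{1or}. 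The forcing is handled as in the proof of \cref{1or}, via $\int F_A\mathcal{A}\leq\norm{F_A}_{L^1}\norm{\mathcal{A}}_{L^\infty}\leq\delta\mu\norm{\p_y\mathcal{A}}^2+C\mu^{-1/3}\norm{F_A}_{L^1}^{4/3}\norm{\mathcal{A}}^{2/3}$; inserting the $L^1$ bound on $F_A$ established above and the a priori bound $\norm{\mathcal{A}}\leq C_M\mu^{\alpha-\varepsilon}$ from \eqref{aps}, the time integral of this is $\lesssim\mu^{2\alpha}$ thanks to $\alpha>\tfrac{11}{3}$ and small $\mu$. The only remaining term, the transport term $-\!\int\psim_2\,\p_y\mathcal{A}\,\mathcal{A}=\tfrac12\!\int(\p_y\psim_2)\mathcal{A}^2$, is the delicate one.

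By \eqref{phi2}, $\p_y\psim_2$ is, modulo genuinely quadratic remainders bounded by the enhanced dissipation \eqref{enhanced dissipationa} and the zero-mode decay, the sum of (i) $a\bar{c}\,\p_y^2(\tilde{V}_1+\tilde{V}_2)$ and (ii) derivatives $\p_y\theta_i,\p_y\Xi_i,\p_y\tilde{\mathcal{C}}_i$, each pointwise of the form $\mu^{\alpha}[\bar{\mu}(1+t)]^{-1}e^{-(y\pm\bar{c}(1+t))^2/(C\bar{\mu}(1+t))}$ by \cref{estimateontheta}--\cref{estimateonxi}. Contribution (i) is rewritten as $-a\bar{c}\!\int\p_y(\tilde{V}_1+\tilde{V}_2)\,\mathcal{A}\p_y\mathcal{A}$ after one more integration by parts and closed via $\norm{\p_y(\tilde{V}_1+\tilde{V}_2)}_{L^\infty}\lesssim\mu^{\alpha}[\mu(1+t)]^{-3/4}$ (from \cref{2or} and Gagliardo--Nirenberg), Young's inequality and \eqref{aps}. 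For (ii), the key point is that $\mathcal{A}$ itself carries no transport (the only advection in \eqref{equ-AA} is the small nonlinear $-\psim_2\p_y\mathcal{A}$), whereas these Gaussian weights travel with speed $\pm\bar{c}$; hence I would mimic the separation-of-speeds argument of \cref{1or}, multiplying \eqref{equ-AA} by $\eta\,\mathcal{A}$ with $\eta=\exp(\int_{-\infty}^y h)$ and $h$ the heat kernel moving with the opposite speed, to obtain the integrated weighted bound $\int_0^T\!\!\int h\,\mathcal{A}^2\,dy\,dt\lesssim\mu^{2\alpha}$, which dominates $\int_0^T\!\!\int(1+t)^{-1}e^{-(y\pm\bar{c}(1+t))^2/(C\bar{\mu}(1+t))}\mathcal{A}^2$. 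Combining everything and applying Gronwall yields $\norm{\mathcal{A}(t)}^2+\mu\!\int_0^t\!\norm{\p_y\mathcal{A}}^2\,ds\lesssim\mu^{2\alpha}$.

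The first-order estimate is obtained by applying $\p_y$ to \eqref{equ-AA} and pairing with $\p_y\mathcal{A}$: the commutators $(\p_y\psim_2)\p_y\mathcal{A}$ and $[\p_y,\tfrac{\phim}{\phim+1}\p_y^2]\mathcal{A}$ are treated exactly as above (their smallness avoids any loss of a derivative), the source becomes $a\mu\p_y^3(\tilde{V}_2-\tilde{V}_1)$, for which the time-weighted $L^2$-in-time bound on $\p_y^3\tilde{V}$ from the proof of \cref{2or} is used, and $\int\p_yF_A\,\p_y\mathcal{A}$ is handled through the $L^2$ bound on $F_A$; integrating in time gives the $k=1$ case, $\norm{\p_y\mathcal{A}(t)}^2+\mu\!\int_0^T\!\norm{\p_y^2\mathcal{A}}^2\,dt\lesssim\mu^{2\alpha}$. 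Multiplying this differential inequality by $\mu(1+t)$ and integrating, feeding in $\mu\!\int_0^T\!\norm{\p_y\mathcal{A}}^2\,dt\lesssim\mu^{2\alpha}$ from the basic estimate and using that the $(1+t)^{-5/4}$-decay of $\norm{F_A}$ and the time-weighted zero-mode estimates make every source term integrable after the extra factor $(1+t)$, yields $\mu(1+t)\norm{\p_y\mathcal{A}(t)}^2\lesssim\mu^{2\alpha}$. The main obstacle is precisely the transport term $-\psim_2\p_y\mathcal{A}$: because $\psim_2$ inherits the slowly decaying $\theta_i$-part and only satisfies $\norm{\psim_2}_{L^\infty}\lesssim\mu^{\alpha}[\mu(1+t)]^{-1/2}$, the quantity $\int_0^\infty\norm{\p_y\psim_2}_{L^\infty}\,dt$ is not uniformly bounded (it grows logarithmically in time), so a naive Gronwall fails; the weighted energy estimate above, exploiting that $\mathcal{A}$ and the Gaussian weight propagate at different speeds (together with one extra integration by parts for the $\p_y^2\tilde{V}$ piece), is what restores the uniform-in-time bound.
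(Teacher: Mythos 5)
Your overall scheme is the one the paper uses: a basic $L^2$ energy estimate for \cref{equ-AA} in which the variable-viscosity term is absorbed using the smallness of $\phim$, the forcing is handled through $\norm{F_A}_{L^1}$ together with Gagliardo--Nirenberg and Young, and the dangerous transport contribution of $\psim_2$ is reduced to a Gaussian-weighted space-time integral of $\mathcal{A}^2$ that is controlled by multiplying the equation by $\eta_1\mathcal{A}$ with $\eta_1=\exp(\int_{-\infty}^y h)$, exactly as in \cref{q}--\cref{new1}, exploiting that $\mathcal{A}$ carries no transport while the weight travels at speed $\pm\bar{c}$; the decay of $\p_y\mathcal{A}$ then comes from a $(1+t)$-weighted version of the first-order estimate. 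The only stylistic difference in the basic step (integrating the transport term by parts to $\tfrac12\int(\p_y\psim_2)\mathcal{A}^2$ and decomposing $\p_y\psim_2$, versus the paper's direct Cauchy--Schwarz with $|\psim_2|\lesssim\sum(|\theta_j|+|\tilde{V}_{iy}|)$) is harmless; both close for $\alpha>\tfrac{11}{3}$.

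There is, however, one step that fails quantitatively: in the first-order estimate you keep the lift-up source as $a\mu\,\p_y^3(\tilde{V}_2-\tilde{V}_1)$ paired against $\p_y\mathcal{A}$ and propose to estimate it with the (time-weighted) bounds on $\p_y^3\tilde{V}$ from the proof of \cref{2or}. Those bounds are $\bar{\mu}\int_0^T\norm{\p_y^3\tilde{V}}^2dt\lesssim\mu^{2\alpha-\frac12}$ and $\bar{\mu}\int_0^T[\bar{\mu}(1+t)]^2\norm{\p_y^3\tilde{V}}^2dt\lesssim\mu^{2\alpha+\frac32}$, so Young's inequality yields at best a contribution of size $\mu^{2\alpha-\frac12}$ and Cauchy--Schwarz in time (against $\int_0^T\norm{\mathcal{A}_y}^2dt\lesssim\mu^{2\alpha-1}$ or against the a priori decay of $\norm{\mathcal{A}_y}$) yields $\mu^{2\alpha-\frac14-\varepsilon}$; neither reaches the required $\mu^{2\alpha}$, and since the bootstrap must return $C_M\mu^{\alpha}$ strictly improving the assumed $C_M\mu^{\alpha-\varepsilon}$, any fixed loss of a power of $\mu$ breaks the closure (the situation is worse in the $(1+t)$-weighted step, where the companion term $\mu\int(1+t)\norm{\mathcal{A}_y}^2dt$ is not summable). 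The paper avoids this by integrating by parts once more, i.e.\ testing $a\mu\,\p_y(\tilde v_2-\tilde v_1)$ against $\mathcal{A}_{yy}$, which gives $\delta\mu\norm{\mathcal{A}_{yy}}^2+C\mu\norm{\p_y^2(\tilde{V}_2-\tilde{V}_1)}^2$ and uses the $k=1$ bound $\bar{\mu}\int_0^T\norm{\p_y^2\tilde{V}}^2dt\lesssim\mu^{2\alpha}$ (and, for the decay estimate, $\bar{\mu}^2\int_0^T(1+t)\norm{\tilde v_{iy}}^2dt\lesssim\mu^{2\alpha}$). With that one-line modification -- which is the same move you already make for this term in the basic estimate -- your argument coincides with the paper's proof.
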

\begin{proof}
	Since $\abs{\phim,\psim_2}=O(1)\sum_{i,j=1}^2\left[\abs{\tilde{V}_{iy}}+\abs{\theta_j}\right]$, multiplying \cref{equ-A} by $\mathcal{A}$ and integrating the resulting equation, by the apriori assumption \cref{aps}, \cref{estimateontheta}-\ref{1or}, \ref{2or} and \ref{estimateonta}, one has,
	\begin{align}
			&\norm{\mathcal{A}(\cdot,T)}^2+\mu\int_0^T\norm{\mathcal{A}_y}^2dt\\ \nonumber
			= &\norm{\mathcal{A}(\cdot,0)}^2+ \int_0^T\int_\R F_A \mathcal{A} - \psim_2 \mathcal{A}\mathcal{A}_y-\mu \frac{\phim}{\phim+1} \p_y^2 \mathcal{A} \mathcal{A} + a\mu(\tilde{v}_2-\tilde{v}_1)\p_y \mathcal{A}  dydt\\ \nonumber
			\lesssim&\norm{\mathcal{A}(\cdot,0)}^2+\mu^{2\alpha-1}\int_0^T\int_{\R}\left[\bar{\mu}(1+t)\right]^{-1}e^{-\frac{(y\pm\bar{c}(1+t))^2}{\bar{\mu}(1+t)}}\mathcal{A}^2dydt+\mu\int_0^T \norm{\tilde{v}_2-\tilde{v}_1}^2 dt+\delta\mu\int_0^T\norm{\mathcal{A}_y}^2dt\\ \nonumber
			&+\mu^{-1}\norm{\mathcal{A}}_{L^\infty L^2}\norm{\p_y\mathcal{A}}_{L^\infty L^2}\sum_{i,j,k=1}^2\int_0^T \norm{\p_y\theta_i}^2+\norm{\p_y^k\tilde{V}_j}^2dt+\int_0^T\norm{F_A}_{L^1}\norm{\mathcal{A}}^{\frac{1}{2}}\norm{\mathcal{A}_{y}}^{\frac{1}{2}}dt\\
			\lesssim&\mu^{2\alpha}+\int_0^T\delta\mu\norm{\mathcal{A}_y}^2dt+\mu^{2\alpha-1}\int_0^T\int_{\R}\left[\bar{\mu}(1+t)\right]^{-1}e^{-\frac{(y\pm\bar{c}(1+t))^2}{\bar{\mu}(1+t)}}\mathcal{A}^2dydt,\nonumber
	\end{align}
	where we have used the fact that
	\begin{align*}
	&\int_0^T \int_{\R} \psim_2 \mathcal{A} \mathcal{A}_y dydt \leq \delta \mu \int_0^T \norm{\mathcal{A}_y}^2 dt + C \mu^{-1} \sum_{i,j=1}^2 \int_0^T \int_{\R} \left(\theta_j^2 + \tilde{V}_{iy}^2 \right) \mathcal{A}^2 dydt\\
\quad\lesssim&\mu^{2\alpha-1}\int_0^T\int_{\R}\left[\bar{\mu}(1+t)\right]^{-1}e^{-\frac{(y\pm\bar{c}(1+t))^2}{\bar{\mu}(1+t)}}\mathcal{A}^2dydt+\delta \mu \int_0^T \norm{\mathcal{A}_y}^2 dt\\
&+\mu^{-1}\norm{\mathcal{A}}_{L^\infty L^2}\norm{\p_y\mathcal{A}}_{L^\infty L^2}\sum_{i=1}^2\int_0^T \norm{\p_y\tilde{V}_i}^2dt.
	\end{align*}
	Similar to \cref{q}-\cref{new1}, one has
	\begin{align}
			\bar{\mu}\int_0^T\int_{R}\left[\bar{\mu}(1+t)\right]^{-1}&e^{-\frac{(y\pm\bar{c}(1+t))^2}{\bar{\mu}(1+t)}}\mathcal{A}^2 dydt \lesssim  \bar{\mu}^{\frac{1}{2}}\int_0^T\int_\R\eta_1h\mathcal{A}^2dydt\\
			\lesssim& \mu^{\frac{1}{2}}\norm{\mathcal{A}(\cdot,0)}^2+{\mu}^{\frac{3}{2}}\int_0^T\norm{\mathcal{A}_y}^2dt\nonumber.
	\end{align}
Then we arrive at
\begin{align}
	&\norm{\mathcal{A}(\cdot,T)}^2+\mu\int_0^T\norm{\mathcal{A}_y}^2dt
	\lesssim \mu^{2\alpha},
\end{align}
due to $\norm{\mathcal{A}(\cdot,0)}\lesssim \mu^{\alpha}$. For the higher-order estimate, one has
	\begin{align}\label{asss}
			\frac{d}{dt}\norm{\mathcal{A}_y(\cdot,t)}^2&+\mu\norm{\mathcal{A}_{yy}}^2\lesssim \abs{\int_{\R}\psim_2 \mathcal{A}_y \mathcal{A}_{yy}+F_A\mathcal{A}_{yy}dy}+\mu\norm{\p_y(\tilde{v}_2-\tilde{v}_1)}^2+\delta\mu\norm{\mathcal{A}_{yy}}^2 \\ \nonumber
			\lesssim& \mu^{\alpha-1}(1+t)^{-1}\norm{\mathcal{A}_{y}}^2+\mu^{-1}\norm{F_A}^2+\mu\norm{\tilde{v}_{iy}}^2+\delta\mu\norm{\mathcal{A}_{yy}}^2.
	\end{align}
	 Multiplying \cref{asss} by $(1+t)$, and integrating the resulting equation on $(0,T)$, we arrive at
	\begin{align}
			(1+t)\norm{\mathcal{A}_y}^2+\mu\int_0^T(1+t)\norm{\mathcal{A}_{yy}}^2dt
			\leq \norm{\mathcal{A}_{y}(\cdot,0)}^2+\int_0^T\norm{\mathcal{A}_y}^2dt+\mu^{2\alpha-1},
		\end{align}
		where  we have used the fact from \cref{2or} that $\bar{\mu}^2\int_0^T (1+t)\norm{\tilde{v}_{iy}}^2dt\lesssim \mu^{2\alpha}$.  Integrating \cref{asss} over $(0,T)$, one has
	\begin{align}
		\norm{\mathcal{A}_y}^2+\mu\int_0^T\norm{\mathcal{A}_{yy}}^2dt \lesssim \mu^{2\alpha}.
	\end{align}
	Then it holds that 
	\begin{align}
		\norm{\mathcal{A}_y}^2\lesssim  \mu^{2\alpha}[\mu(1+t)]^{-1},\quad \norm{\mathcal{A}_y}^2+\mu\int_0^T\norm{\mathcal{A}_{yy}}^2dt \lesssim \mu^{2\alpha}.
	\end{align}
Therefore \cref{Aor} is completed.
\end{proof}

\section{Estimates on non-zero modes}

By the Helmholtz decomposition, the velocity $\psi$ can be decomposed as 
$$ \psi=(\psi_1,\psi_2)=\nabla\lap^{-1} \dv\psi +\nabla^{\bot}\lap^{-1}(\nabla^{\bot}\cdot \psi). $$

\begin{Thm}\label{ed}
	Under the same assumptions of \cref{MT},  it holds that
	\begin{align}
		\norm{\nabla^{1+n} \phi_\neq}^2+\norm{\nabla^{1+n} \psi_\neq}^2\lesssim  \mu^{2\alpha-\frac{2n+3}{3}} e^{-\frac{1}{64}\mu^{\frac{1}{3}}t}, \quad n=0,1.
	\end{align}
\end{Thm}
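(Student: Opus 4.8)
The plan is to run a Fourier-multiplier energy estimate on the non-zero mode system, following the strategy of \cite{ADM2021,BGM2017ann,ZZZ2022} but adapted to the compressible structure. First I would write down the equations satisfied by $(\phi_\neq,\psi_{1\neq},\psi_{2\neq})$ by subtracting the zero-mode equations \cref{perturbation1,psi1,equ-rhou2} from \cref{perturbation1}. After passing to the variables moving with the Couette flow (i.e., using the change of variables $X=x-ty$, or equivalently working with the Fourier side where $\partial_t$ acquires the shift $k\partial_\eta$), the key linear features are: the transport term $y\partial_x$ becomes the shear $-k\partial_\eta$ in frequency, the viscous term gives $\mu(k^2+(\eta-kt)^2)$ dissipation, and the lift-up coupling $\psi_{2\neq}\mathbf{e}_1$ in the $\psi_{1\neq}$-equation together with the acoustic coupling through $\frac{1}{M^2}\nabla\phi$ must be handled. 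The enhanced dissipation rate $\mu^{1/3}$ comes, as usual, from the competition between the transport-induced frequency growth $|\eta-kt|$ and the viscosity: on the time scale $t\sim\mu^{-1/3}$ the quantity $\int_0^t\mu(\eta-k\tau)^2\,d\tau$ becomes order one for $k\neq 0$.

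The core of the argument is to construct Fourier multipliers $m_1(t,k,\eta),m_2(t,k,\eta)$ (one for the "compressible/curl-free" part, one to absorb the lift-up) with the properties that $\partial_t m_i/m_i \lesssim \mu^{1/3}$, that $m_i$ are bounded above and below by $\mu$-independent constants times each other up to the enhanced-dissipation factor, and that the time-derivative of the weighted energy $\sum_i \|m_i \widehat{(\cdot)_\neq}\|^2$ controls the dangerous cross terms. Concretely I would:
\begin{itemize}
\item[(i)] set up the weighted energy $E_\neq(t):=\|m(t,\nabla)\nabla^{n+1}(\phi_\neq,\psi_\neq)\|_{L^2}^2$ for $n=0,1$, with $m$ chosen so that $-\dot m/m \gtrsim \mu^{1/3}$ plus the good viscous term $\mu|\nabla|^2$ dominates;
\item[(ii)] differentiate in time, use the equations, and isolate the linear "reaction" terms (lift-up and acoustics) — these are absorbed by the multiplier design, exactly as in the incompressible case but now the $1/M^2$ acoustic coupling is controlled by the hypothesis $M\le \tfrac{1}{50}\min\{(\lambda+2\mu)^{-1/2},\mu^{-1/3}\}$, which is precisely what makes $1/M^2 \lesssim \bar\mu^{-1}$ and $1/M^2\lesssim \mu^{-2/3}$ so the acoustic frequency never outpaces the dissipation;
\item[(iii)] estimate the nonlinear terms and the forcing by the zero modes using \cref{1or,2or,Aor,estimateonta,estimateontheta,estimateonxi} together with the a priori assumptions \cref{aps}; the commutator estimates for $m$ are used here to move the multiplier past the transport term and past the products, avoiding loss of derivatives;
\item[(iv)] close with Grönwall to get $E_\neq(t)\lesssim E_\neq(0)e^{-\frac{1}{64}\mu^{1/3}t}$, and convert back to the unweighted norm using $m\approx 1$.
\end{itemize}

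For the nonlinear terms, the typical contribution is a trilinear expression $\int m\nabla^{n+1}(f_\neq) \cdot m\nabla^{n+1}\big((g\,h)_\neq\big)$; splitting $g,h$ into zero and non-zero modes, the worst case is "zero mode times non-zero mode", which is handled by putting the zero-mode factor in $L^\infty$ (using the decay rates from Section 3, in particular $\|\psi_\pm\|_{L^\infty}\lesssim\mu^{\alpha-2/3}$-type bounds via Gagliardo–Nirenberg) and the two non-zero factors in $L^2$, one of them carrying the multiplier; the "non-zero times non-zero" terms are smaller by an extra power of $\mu$ thanks to \cref{aps}$_3$. The threshold exponent $\alpha>\tfrac{11}{3}$ enters here to make every nonlinear contribution strictly smaller than $\mu^{\alpha-\frac{2n+3}{6}}$ after integrating against the enhanced-dissipation factor, so the constant $C_M$ can be chosen to absorb them.

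\textbf{Main obstacle.} The hard part is the simultaneous treatment of the acoustic coupling $\tfrac{1}{M^2}\nabla\phi_\neq$ and the lift-up coupling in the same weighted energy: unlike the incompressible problem there is no divergence-free constraint to kill the lift-up growth of $\|\psi_{1\neq}\|$, and the acoustic part oscillates with frequency $\bar c|k|/$(something), so the multiplier must be engineered to dominate \emph{both} the transient lift-up growth (of polynomial type before enhanced dissipation kicks in) and the acoustic frequency, while still satisfying $-\dot m/m\lesssim\mu^{1/3}$. This is exactly where the restriction $M\le\tfrac{1}{50}\min\{(\lambda+2\mu)^{-1/2},\mu^{-1/3}\}$ is used, and designing $m$ so that the commutator $[\,m,\,\text{transport}\,]$ and $[\,m,\,\text{acoustic}\,]$ produce only lower-order errors is the technical crux of the section.
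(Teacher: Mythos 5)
Your outline (coordinate change $X=x-yt$, Fourier multipliers with $\partial_t m/m\lesssim\mu^{1/3}$, commutator estimates to avoid derivative loss, Gr\"onwall) matches the paper's strategy in broad shape, but there is a genuine gap at the point you label as the main obstacle, and the fix you propose (``engineering $m$'') cannot work. The density carries no viscosity, so in any diagonal weighted energy of the form $\|m(t,\nabla)\nabla^{n+1}(\phi_\neq,\psi_\neq)\|^2$ there is simply no negative term proportional to the density part: multipliers of the type you describe are bounded above and below and only \emph{redistribute} growth, they cannot manufacture damping for $\phi_\neq$. Moreover the acoustic coupling $\frac{1}{M^2}\nabla\phi_\neq$ is skew with respect to the natural energy, so it cancels in the diagonal estimate rather than being ``absorbed''; it is in fact the only available source of decay for the density and must be exploited, not neutralized. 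The paper does this by (a) passing to Helmholtz variables $(R_\neq,A_\neq,\Omega_\neq)$, (b) giving the density an extra half-derivative weight ($Z_1^n\sim p^{(n+1)/2}R_\neq$ versus $Z_2^n\sim p^{n/2}A_\neq$), (c) introducing the good unknown $Z_3^n\sim p^{n/2}(R_\neq+\Omega_\neq-\mu M^2A_\neq)$, and, decisively, (d) adding the off-diagonal terms $\frac{M}{4}\langle\frac{\partial_t p}{p^{3/2}}Z_1^n,Z_2^n\rangle-\gamma\langle p^{-1/2}Z_1^n,Z_2^n\rangle$ with $\gamma=\frac{M\mu^{1/3}}{4}$ to the energy functional $E^n$. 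It is the time derivative of these cross terms (\cref{damping of Z_1^n}) that produces the damping $\frac{\gamma}{2M}\|Z_1^n\|^2\sim\mu^{1/3}\|Z_1^n\|^2$, a hypocoercivity-type transfer of the divergence dissipation to the density through the acoustic coupling. Without some version of this device your step (iv) cannot close, because the differential inequality you would obtain has no decay for the density component at all.

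A second omission, more technical but still essential in the compressible setting: two multipliers are not enough. The divergence equation contains the linear term $2\partial_{XY}\tilde\Delta^{-1}A_\neq$, whose symbol is $\partial_t p/p$, and on the time interval $[\frac{\eta}{k},\frac{\eta}{k}+\vartheta\mu^{-1/3}]$ around the critical time this cannot be dominated by $\mu p$ or by $\partial_t m_1/m_1+\partial_t m_2/m_2$; the paper constructs a third multiplier $\omega$ (\cref{omega}) whose logarithmic derivative exactly matches $\partial_t p/p$ there, and the commutator bounds (\cref{com}) are proved for the full product $m_1^{-1}m_2^{-1}\omega^{-\frac{2n+3}{4}}p^{\frac{\alpha}{2}}$, including the non-differentiability of $\omega$ in $\eta$. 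Your treatment of the nonlinear terms (zero mode in $L^\infty$ via the Section~3 decay, non-zero/non-zero terms gaining a power of $\mu$ from \cref{aps}, $\alpha>\frac{11}{3}$ to close) is consistent with what the paper does, but the two structural ingredients above — the cross-term/hypocoercivity mechanism for the density and the third multiplier $\omega$ — are the actual crux of \cref{ed} and are missing from the proposal.
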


In order to remove the transport term from the equations \cref{perturbation1}, we use the following coordinates \begin{align*}
	X=x-yt,\qquad Y=y,
\end{align*}
then it holds that,
\begin{align}\label{natations-differential operators}
	\begin{cases}
		\p_x=\p_X,\qquad \pt_{XY}:=\p_X\pt_Y,& \nablat =(\p_X,\pt_Y),\\[1.5mm]
		\p_y=\pt_Y:=\p_Y-t\p_X,\qquad\qquad\qquad
		&\tilde{\dv}=\tilde{\nabla}\cdot,\\[1.5mm]
		\lap=\lapt:=\p_{XX}+(\p_Y-t\p_X)^2,&\tilde{\nabla}^{\bot}=(-\pt_Y,\p_X).
	\end{cases}
\end{align}
The functions in the new coordinates are denoted as,
\begin{align}\begin{cases}
		R(t,X,Y)=\phi(t,X+tY,Y),\\[1.5mm]
		A(t,X,Y)=\dv\psi(t,X+tY,Y),\\[1.5mm]
		\Omega(t,X,Y)=\nabla^{\bot}\cdot \psi(t,X+tY,Y).
	\end{cases}
\end{align}
We denote the symbol associated to $-\lapt$ as 
$$p(t,k,\eta)=k^2+(\eta-kt)^2.$$
Moreover 
$$\p_t p(t,k,\eta)=2k(kt-\eta)$$ 
is the symbol associated to the operator $2\pt_{XY}$. 

For convenience, we further define
\begin{align}
	\mathcal{B}:=(\mathcal{B}_1,\mathcal{B}_2)^{T}:=\bigg(\Xlap A- \Ylap \Omega ,  \Ylap A+\Xlap \Omega\bigg)^T.
\end{align}
A tedious computation on  \cref{perturbation1} by taking the Helmholtz decomposition and the above coordinate transformation yields the system of $(R_\neq,A_\neq,\Omega_\neq)$
\begin{align}\label{new variable}
	\left\{\begin{aligned}
		&\p_t R_\neq +A_\neq = -\left(AR\right)_\neq -\bigg\{ \B \cdot\nablat  R \bigg\}_\neq,\\
		&\p_t A_\neq +2\XYlap A_\neq +2\XXlap \Omega_\neq+\frac{1}{M^2}\lapt R_\neq -(\lambda+2\mu)\lapt A_\neq \\
		&\quad\qquad=-\left\{(\pt_Y \B_2)^2\right\}_\neq-\left\{(\p_X\B_1)^2\right\}_\neq -2\left\{\pt_Y\B_1 \p_X\B_{2} \right\}_\neq-\left\{ \B \cdot \nablat  A \right\}_\neq\\
		&\qquad\qquad -\tilde{\dv}\left\{\frac{R}{R+1}(\lambda+2\mu)\nablat  A+\frac{R}{R+1}\mu\nablat ^{\bot} \Omega\right\}_\neq\\
            &\qquad\qquad-\frac{1}{M^2}\tilde{\dv}\left(\frac{P'(R+1)-1}{R+1}\nablat R\right)_\neq+\frac{1}{M^2}\tilde{\dv}\left(\frac{R}{R+1}\nablat R\right)_\neq\\[1mm]
		&\p_t \Omega_\neq -\mu\lapt \Omega_\neq - A_\neq=-(A\Omega)_\neq -\left\{ \B \cdot \nablat \Omega  \right\}_\neq\\
            &\qquad\qquad-\nablat^{\bot}  \cdot \left\{\frac{R}{R+1}(\lambda+2\mu)\nablat  A + \frac{R}{R+1} \mu \nablat^{\bot}  \Omega \right\}_\neq.
	\end{aligned}\right.
\end{align}

\subsection{Fourier multipliers}
    Motivated by \cite{ADM2021,BGM2017ann,ZZZ2022}, we use $m_1$ to compensate for the transient slow-down of the enhanced dissipation near the critical times and $m_2$ to balance the growth due to the 
    linear coupling between $A_\neq$ and $\Omega_\neq$. That is, 
\begin{equation}\label{m1}
	\begin{cases}
		\p_t m_1(t,k,\eta)=2\mu^{\frac{1}{3}}\left(\mu^{\frac{2}{3}}(t-\frac{\eta}{k})^2+1\right)^{-1}m_1(t,k,\eta),\\[3mm]
		\p_t m_2(t,k,\eta)=N \frac{k^2}{p}m_2(t,k,\eta),\\[2mm]
		m_1(0,k,\eta)=\exp(2\arctan(-\mu^{\frac{1}{3}}\frac{\eta}{k})),\\[2mm]
		m_2(0,k,\eta)=\exp(N \arctan(-\frac{\eta}{k})),
	\end{cases}
\end{equation}
which are explicitly given by
\begin{equation}\label{m1ini}
	\begin{cases}
		m_1(t,k,\eta)=\exp(2\arctan(\mu^{\frac{1}{3}}(t-\frac{\eta}{k}))),\\[2mm]
		m_2(t,k,\eta)=\exp(N \arctan(t-\frac{\eta}{k})).
	\end{cases}
\end{equation}
where $N=C_0\max\{1,M^4\}$, $C_0>0$ is a suitable large constant independent of $\mu$. For convenience, we also define $\tilde{C}=C_0'\max \left\{1,M^4\right\}$ with $10\leq  C_0'\leq \frac{1}{100}C_0$.
These multipliers are introduced since they enjoy the following crucial property 
\begin{align}
	\mu p+ \frac{\p_t m_1}{m_1} +\frac{\p_t m_2}{m_2} \geqslant \mu^{\frac{1}{3}}, \qquad t\geq 0, \quad k\in\mathbb{Z} \textbackslash {\{0\}},\quad \eta\in\R.
\end{align}

We further construct the multiplier $\omega$ to balance the growth generated by the term $\frac{\p_t p}{p}$ in the equation for the divergence. 
For $t\leq\frac{\eta}{k}$, we observe that $\p_t p\leq0$.  When $|t-\frac{\eta}{k}|\geqslant \vartheta \mu^{-\frac{1}{3}}$, we have for any $\vartheta>0$ that 
\begin{align}
		\mu p(t,k,\eta)\geqslant \vartheta^2 \mu^{\frac{1}{3}},\quad 
		\frac{\p_t p}{p}(t,k,\eta)\leqslant \frac{2}{\sqrt {1+(t-\frac{\eta}{k})^2}}\leqslant 2\vartheta^{-1}\mu^{\frac{1}{3}}.
\end{align}
However, we can't expect  $\frac{\p_t p}{p}\lesssim \mu p$ for $t\in[\frac{\eta}{k},\frac{\eta}{k}+\vartheta \mu^{-\frac{1}{3}}]$.
So we construct the third Fourier multiplier for a fixed $\vartheta>1000$, that
\begin{align}\label{omega}
		\omega(t,k,\eta)=\begin{cases}
			1,\qquad \qquad \qquad  &\mathrm{if}\   \eta k > 0\   \text{and}\  0\leqslant t\leqslant \frac{\eta}{k},\\[2mm]
			1+\vartheta^2\mu^{-\frac{2}{3}}, &\mathrm{if}\ \  \eta k\leq0.\  \ |\frac{\eta}{k}|\geqslant \vartheta\mu^{-\frac{1}{3}}\ \  \mathrm{and}\ \  t\geqslant 0,\\[2mm]
			{p(t,k,\eta)}{k^{-2}}, &\mathrm{if}\ \   \frac{\eta}{k}\leqslant t\leqslant \frac{\eta}{k}+\vartheta\mu^{-\frac{1}{3}},\\[2mm]
			1+\vartheta^2\mu^{-\frac{2}{3}}, &\text{else},
		\end{cases}
\end{align}
which satisfies 
 \begin{align}\label{w}
		(\p_t \omega)(t,k,\eta)&=\begin{cases}
			0\qquad \qquad \qquad \quad  &if  \quad t\notin[\frac{\eta}{k}, \frac{\eta}{k}+\vartheta\mu^{-\frac{1}{3}}],\\[2mm]
			(\frac{\p_t p}{p}\omega)(t,k,\eta) &if \quad  t\in [\frac{\eta}{k}, \frac{\eta}{k}+\vartheta\mu^{-\frac{1}{3}}].
	\end{cases}
\end{align}
Then we have
\begin{Prop}\label{multiplier prop}
	Let $\omega$, $m_1$ and $m_2$ be the Fourier multipliers defined in \cref{m1,omega} respectively. Then, for any $t \geq 0, \eta \in \mathbb{R}$ and $k \in \mathbb{Z} \backslash\{0\}$, the following inequalities hold:
	$$
	\begin{aligned}
		1 \leq \omega(t, k, \eta) & \leq 1+\vartheta^2 \mu^{-\frac{2}{3}}, \qquad\left(\omega p^{-1}\right)(t, k, \eta)  \leq \frac{1}{k^2} .
	\end{aligned}
	$$
	In addition, for any $\vartheta$ satisfying $\vartheta_1:=\max \left\{2\left(\vartheta\left(\vartheta^2-1\right)\right)^{-1}, 4 \vartheta^{-1}\right\}< 1$, it holds that
	$$
	\begin{aligned}
		& \left(\delta_\vartheta\left(\frac{\partial_t m_1}{m_1}+\frac{\partial_t m_2}{m_2}+\mu p\right)+\frac{\partial_t \omega}{\omega}-\frac{\partial_t p}{p}\right)(t, k, \eta) \geq \delta_\vartheta \mu^{\frac{1}{3}}, \\
		& \left(\delta_\vartheta\left(\frac{\partial_t m_1}{m_1}+\frac{\partial_t m_2}{m_2}+\mu^{\frac{1}{3}}\right)+\frac{\partial_t \omega}{\omega}-\frac{\partial_t p}{p}\right)(t, k, \eta) \geq \frac{\delta_\vartheta}{2} \mu^{\frac{1}{3}},
	\end{aligned}
	$$
	where $\delta_\vartheta$ is any constant located in $(\vartheta_1,1)$.
\end{Prop}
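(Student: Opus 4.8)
The plan is to prove all four assertions by a direct case analysis organized by the piecewise definition \eqref{omega} of $\omega$ together with the sign and size of $\p_t p/p$ relative to the critical time $t=\eta/k$; no new idea beyond the design of the three multipliers is needed.

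First I would treat the two pointwise bounds. Inspecting the four branches in \eqref{omega}: on the branch $\omega\equiv1$ one has $\omega p^{-1}=p^{-1}\le k^{-2}$ since $p\ge k^2$; on the two branches with $\omega\equiv1+\vartheta^2\mu^{-2/3}$ the defining constraints force $|t-\tfrac{\eta}{k}|\ge\vartheta\mu^{-1/3}$, hence $p=k^2\left(1+(t-\tfrac{\eta}{k})^2\right)\ge k^2\left(1+\vartheta^2\mu^{-2/3}\right)$, so both $\omega\le1+\vartheta^2\mu^{-2/3}$ and $\omega p^{-1}\le k^{-2}$; and on the branch $\omega=p/k^2$ one has $0\le t-\tfrac{\eta}{k}\le\vartheta\mu^{-1/3}$, so $1\le\omega=1+(t-\tfrac{\eta}{k})^2\le1+\vartheta^2\mu^{-2/3}$ and $\omega p^{-1}=k^{-2}$. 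Since $\omega\ge1$ on every branch, this gives the first line of the proposition. Along the way one records that the branches are exhaustive for $t\ge0$ (in particular $t<\tfrac{\eta}{k}$ forces $\eta k>0$) and that their overlaps assign consistent values.

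For the two differential inequalities I would first note from \eqref{m1} that $\p_t m_1/m_1=2\mu^{1/3}\left(\mu^{2/3}(t-\tfrac{\eta}{k})^2+1\right)^{-1}\ge0$ and $\p_t m_2/m_2=Nk^2/p\ge0$, and recall the crucial property $\mu p+\p_t m_1/m_1+\p_t m_2/m_2\ge\mu^{1/3}$. The argument then splits according to \eqref{w}. If $t\in[\tfrac{\eta}{k},\tfrac{\eta}{k}+\vartheta\mu^{-1/3}]$, then $\p_t\omega/\omega=\p_t p/p$, so $\p_t\omega/\omega-\p_t p/p=0$ and, discarding the nonnegative $\p_t m_i/m_i$ where needed, the first inequality reduces to $\delta_\vartheta\left(\p_t m_1/m_1+\p_t m_2/m_2+\mu p\right)\ge\delta_\vartheta\mu^{1/3}$ (the crucial property) and the second to $\delta_\vartheta\left(\p_t m_1/m_1+\p_t m_2/m_2+\mu^{1/3}\right)\ge\frac{\delta_\vartheta}{2}\mu^{1/3}$ (trivial). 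If instead $t\notin[\tfrac{\eta}{k},\tfrac{\eta}{k}+\vartheta\mu^{-1/3}]$, then $\p_t\omega/\omega=0$; when $t<\tfrac{\eta}{k}$ one has $\p_t p=2k^2(t-\tfrac{\eta}{k})<0$, so $-\p_t p/p>0$ and both inequalities follow at once from the crucial property together with $\p_t m_i/m_i\ge0$; and when $t>\tfrac{\eta}{k}+\vartheta\mu^{-1/3}$, writing $s:=t-\tfrac{\eta}{k}>\vartheta\mu^{-1/3}$ one has $\mu p\ge\mu s^2\ge\vartheta^2\mu^{1/3}$ and $0<\p_t p/p=\frac{2s}{1+s^2}<\frac{2}{s}<2\vartheta^{-1}\mu^{1/3}$, whence
\[
\delta_\vartheta\left(\frac{\p_t m_1}{m_1}+\frac{\p_t m_2}{m_2}+\mu p\right)-\frac{\p_t p}{p}\ \ge\ \left(\delta_\vartheta\vartheta^2-2\vartheta^{-1}\right)\mu^{1/3}\ \ge\ \delta_\vartheta\mu^{1/3}
\]
by $\delta_\vartheta>\vartheta_1\ge2\left(\vartheta(\vartheta^2-1)\right)^{-1}$, and
\[
\delta_\vartheta\left(\frac{\p_t m_1}{m_1}+\frac{\p_t m_2}{m_2}+\mu^{1/3}\right)-\frac{\p_t p}{p}\ \ge\ \left(\delta_\vartheta-2\vartheta^{-1}\right)\mu^{1/3}\ \ge\ \frac{\delta_\vartheta}{2}\mu^{1/3}
\]
by $\delta_\vartheta>\vartheta_1\ge4\vartheta^{-1}$.

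The main obstacle — and the whole reason $\omega$ is introduced — is precisely the window $t\in[\tfrac{\eta}{k},\tfrac{\eta}{k}+\vartheta\mu^{-1/3}]$ around the critical time: there $\p_t p/p$ is positive of size $\sim\mu^{1/3}$ while $\mu p$ may be as small as $\mu k^2\sim\mu\ll\mu^{1/3}$, so the dissipation cannot absorb $\p_t p/p$ and one must rely on the exact cancellation $\p_t\omega/\omega=\p_t p/p$ built into \eqref{w}. Everything else is bookkeeping: checking exhaustiveness of the branches of \eqref{omega} for $t\ge0$, and noting that $\vartheta_1<1$ leaves $\delta_\vartheta\in(\vartheta_1,1)$ free to satisfy simultaneously $\delta_\vartheta(\vartheta^2-1)\ge2\vartheta^{-1}$ and $\delta_\vartheta\ge4\vartheta^{-1}$, which is immediate from $\vartheta_1=\max\{2(\vartheta(\vartheta^2-1))^{-1},4\vartheta^{-1}\}$.
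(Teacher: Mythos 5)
Your proof is correct and is the standard verification: the pointwise bounds follow branch by branch from \eqref{omega}, and the two differential inequalities follow from the exact cancellation $\p_t\omega/\omega=\p_t p/p$ on the window $[\tfrac{\eta}{k},\tfrac{\eta}{k}+\vartheta\mu^{-\frac{1}{3}}]$ together with the bounds $\mu p\ge \vartheta^2\mu^{\frac13}$ and $0<\p_t p/p\le 2\vartheta^{-1}\mu^{\frac13}$ outside it, combined with $\delta_\vartheta>\vartheta_1$ and the coercivity $\mu p+\p_t m_1/m_1+\p_t m_2/m_2\ge\mu^{\frac13}$. The paper omits this proof (deferring to the analogous result in \cite{ADM2021}), and your case analysis is precisely that omitted argument, so nothing is missing.
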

\begin{proof}
The proof is omitted since it is similar to \cite{ADM2021}.
\end{proof}

In order to control nonlinear terms, we need to obtain the estimates of the commutator for multipliers. We use the following notations,
\begin{align}
	\mathcal{T}^{\alpha}:=&\bigg(\left(m_1^{-1}m_2^{-1} \omega^{-\frac{2n+3}{4}} p^{\frac{\alpha}{2}}\right)(t,k,\eta)-\left(m_1^{-1}m_2^{-1} \omega^{-\frac{2n+3}{4}} p^{\frac{\alpha}{2}}\right)(t,k,\eta-\xi)\bigg),\\
	\mathcal{P}^{\alpha}:=&\bigg(\left(\frac{\p_t p}{p}m_1^{-1}m_2^{-1} \omega^{-\frac{2n+3}{4}} p^{\frac{\alpha}{2}}\right)(t,k,\eta)-\left(\frac{\p_t p}{p}m_1^{-1}m_2^{-1} \omega^{-\frac{2n+3}{4}} p^{\frac{\alpha}{2}}\right)(t,k,\eta-\xi)\bigg),
\end{align}
then the following lemma holds.
\begin{Lem}\label{com}
	For the multipliers defined in \cref{m1}, \cref{m1ini}, \cref{omega}, it holds that,
	\begin{align*}
		\mathbf{1)}\quad& \abs{k\mathcal{T}^{\alpha}}\lesssim \begin{cases}
			\abs{\xi},&\alpha=0,\\
			\abs{\xi}\la\xi\ra^{\alpha-1}p^{\frac{\alpha}{2}}(t,k,\eta-\xi),&\alpha\geq 1,
		\end{cases}\\[2mm]
		\mathbf{2)}\quad&\abs{\eta-\xi-kt}\abs{\mathcal{T}^{n+1}}\lesssim\abs{\xi}\la\xi\ra^{n}p^{\frac{1+n}{2}}(t,k,\eta-\xi),\\[2mm]
		\mathbf{3)}\quad& \abs{k\mathcal{P}^{\alpha}}\lesssim\begin{cases}
			\abs{\xi},&\alpha=0,\\
			\abs{\xi}\la\xi\ra^{\alpha-1}p^{\frac{\alpha}{2}}(t,k,\eta-\xi),&\alpha\geq 1.
		\end{cases}
		%		9^{0}\quad& p^{\frac{n+1}{2}}(t,k,\eta)\leqslant \min\left\{\la\xi\ra^{n+1} p^{\frac{n+1}{2}}(t,k,\eta-\xi),p^{\frac{n+1}{2}}(t ,k-l ,\eta-\xi) p^{\frac{n+1}{2}}(t ,l ,\xi)\right\}
	\end{align*}
\end{Lem}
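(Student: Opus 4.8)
The plan is to reduce all three bounds to one–dimensional difference estimates in $\eta$, built on uniform \emph{symbol bounds} for the scalar factors. Write $h:=m_1^{-1}m_2^{-1}\omega^{-\frac{2n+3}{4}}$ and $\widetilde h:=\tfrac{\p_t p}{p}\,h$. Since $\arctan$ is bounded, $m_1^{\pm1}\approx1$ and $m_2^{\pm1}\approx1$ (with constants depending on $M$ through $N$), and $0<\omega^{-\frac{2n+3}{4}}\le1$, $\left|\tfrac{\p_t p}{p}\right|=\tfrac{2|k|\,|\eta-kt|}{p}\le\tfrac{2|k|}{\sqrt p}\le2$; hence $|h|,|\widetilde h|\lesssim1$. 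Differentiating in $\eta$ and using $|\eta-kt|\le\sqrt p$, $|k|\le\sqrt p$, one checks $|\p_\eta\log m_1|=\tfrac{2\mu^{1/3}|k|}{k^2+\mu^{2/3}(\eta-kt)^2}\le\tfrac{2\mu^{1/3}}{|k|}$, $|\p_\eta\log m_2|=\tfrac{N|k|}{p}\le\tfrac{N}{|k|}$, and — since $\p_\eta\omega$ is supported in the interpolation region $\{\eta/k\le t\le\eta/k+\vartheta\mu^{-1/3}\}$, where $\omega=p/k^2\ge1$ — $|\p_\eta\omega^{-\frac{2n+3}{4}}|\lesssim\tfrac1{|k|}$; the same elementary estimates give $|\p_\eta(\p_tp/p)|\lesssim\tfrac1{|k|}$, so $|\p_\eta h|,|\p_\eta\widetilde h|\lesssim\tfrac1{|k|}$ almost everywhere (the functions are only Lipschitz, because of the corner of $\omega$ at $\eta/k=t-\vartheta\mu^{-1/3}$). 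The key refinement I will need is the \emph{weighted} bound $|\eta-kt|\,|\p_\eta h|\lesssim1$ and $|\eta-kt|\,|\p_\eta\widetilde h|\lesssim1$: for the $m_1$ factor this is exactly AM–GM applied to $\tfrac{2\mu^{1/3}|k|\,|\eta-kt|}{k^2+\mu^{2/3}(\eta-kt)^2}\le1$, and the $m_2$, $\omega$, $\p_tp/p$ factors are handled by $|\eta-kt|\le\sqrt p$. I also record $|\p_\eta p^{\frac\alpha2}|=\alpha p^{\frac\alpha2-1}|\eta-kt|\le\alpha p^{\frac{\alpha-1}2}$ and $p^{\frac12}(t,k,\eta)\le|k|+|\eta-\xi-kt|+|\xi|$.

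With these in hand, write $\mathcal T^\alpha=\int_0^1\xi\,\p_\eta\!\big(h\,p^{\frac\alpha2}\big)(t,k,\eta-s\xi)\,ds$ and $\mathcal P^\alpha$ with $h$ replaced by $\widetilde h$. For $\alpha=0$ one has $g=h$ bounded with $|\p_\eta h|\lesssim\tfrac1{|k|}$, so $|k\mathcal T^0|\le|k\xi|\sup_s|\p_\eta h|\lesssim|\xi|$, and likewise $|k\mathcal P^0|\lesssim|\xi|$. For $\alpha\ge1$ I dichotomize. In the regime $|\xi|\ge\tfrac12 p^{\frac12}(t,k,\eta-\xi)$ (so $|k|\le2|\xi|$) I bound the two terms of the difference separately: $|k|\,|h\,p^{\frac\alpha2}|(t,k,\eta-\xi)\lesssim|\xi|\,p^{\frac\alpha2}(t,k,\eta-\xi)$, while $p^{\frac12}(t,k,\eta)\lesssim|\xi|$ gives $|k|\,|h\,p^{\frac\alpha2}|(t,k,\eta)\lesssim|k|\,|\xi|^\alpha$; combining with $|\xi|\le\la\xi\ra$, $|k|\ge1$ and $p^{\frac\alpha2}(t,k,\eta-\xi)\ge|k|^\alpha$ yields $|k\mathcal T^\alpha|\lesssim|\xi|\la\xi\ra^{\alpha-1}p^{\frac\alpha2}(t,k,\eta-\xi)$. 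In the regime $|\xi|<\tfrac12p^{\frac12}(t,k,\eta-\xi)$ one has $p(t,k,\eta-s\xi)\approx p(t,k,\eta-\xi)$ for $s\in[0,1]$; the product rule with the symbol bounds and $|k|\le\sqrt p$ gives $|k|\,|\p_\eta(hp^{\frac\alpha2})|\lesssim p^{\frac\alpha2}+|k|p^{\frac{\alpha-1}2}\lesssim p^{\frac\alpha2}(t,k,\eta-\xi)$, hence $|k\mathcal T^\alpha|\lesssim|\xi|\,p^{\frac\alpha2}(t,k,\eta-\xi)$. This proves \textbf{1)}, and the same dichotomy with $\widetilde h$ in place of $h$ proves \textbf{3)}, since $\widetilde h$ obeys the same bounds as $h$.

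For \textbf{2)} I run the identical dichotomy for $\mathcal T^{n+1}$, now with the extra factor $|\eta-\xi-kt|$. In the regime $|\xi|\ge\tfrac12p^{\frac12}(t,k,\eta-\xi)$ one has $|\eta-\xi-kt|\le p^{\frac12}(t,k,\eta-\xi)\lesssim|\xi|$, and bounding the two terms as in \textbf{1)} gives $|\eta-\xi-kt|\,|\mathcal T^{n+1}|\lesssim|\xi|\la\xi\ra^{n}p^{\frac{1+n}2}(t,k,\eta-\xi)$. In the regime $|\xi|<\tfrac12p^{\frac12}(t,k,\eta-\xi)$, $p$ is comparable along the segment and $|\eta-\xi-kt|\lesssim p^{\frac12}(t,k,\eta-s\xi)$; splitting $\p_\eta(hp^{\frac{n+1}2})=(\p_\eta h)p^{\frac{n+1}2}+(n+1)h\,p^{\frac{n-1}2}(\eta-kt)$, the second term contributes $|\eta-\xi-kt|\cdot p^{\frac12}\cdot p^{\frac{n-1}2}\cdot p^{\frac12}\lesssim p^{\frac{n+1}2}$, and the first contributes $|\eta-\xi-kt|\,|\p_\eta h(t,k,\eta-s\xi)|\,p^{\frac{n+1}2}$, where $|\eta-\xi-kt|\,|\p_\eta h(t,k,\eta-s\xi)|\lesssim1$ follows from the weighted bound $|\eta-s\xi-kt|\,|\p_\eta h|\lesssim1$ together with $|\xi|\,|\p_\eta h(t,k,\eta-s\xi)|\lesssim1$; the latter I obtain by treating $|\xi|\le|k|$ and $|\xi|>|k|$ separately, the second case using that $|\xi|<\tfrac12p^{\frac12}(t,k,\eta-\xi)$ forces $|\eta-\xi-kt|\gtrsim|\xi|$. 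Altogether $|\eta-\xi-kt|\,|\mathcal T^{n+1}|\lesssim|\xi|\,p^{\frac{1+n}2}(t,k,\eta-\xi)\le|\xi|\la\xi\ra^{n}p^{\frac{1+n}2}(t,k,\eta-\xi)$.

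I expect the main obstacle to be the term $(\p_\eta h)p^{\frac{n+1}2}$ in \textbf{2)}: the naive bound $|\p_\eta h|\lesssim\tfrac1{|k|}$ produces an uncontrolled factor $\tfrac{\sqrt p}{|k|}=\sqrt{1+(t-\eta/k)^2}$ after multiplication by $|\eta-\xi-kt|$, and the whole point is that the weighted symbol bound $|\eta-kt|\,|\p_\eta\log m_1|\lesssim1$ (AM–GM) exactly absorbs this growth. The remaining care is in keeping the two transport frequencies $\eta$ and $\eta-\xi$ consistent across the dichotomy, and in verifying the symbol bounds through all four pieces of $\omega$, in particular at the corner where only Lipschitz regularity is available.
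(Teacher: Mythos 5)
Your proposal is correct, but it takes a genuinely different route from the paper. The paper telescopes the commutator into the four single-factor differences $\mathcal{Q}_1,\dots,\mathcal{Q}_4$ (for $m_1^{-1}$, $m_2^{-1}$, $p^{\frac{n+1}{2}}$, $\omega^{-\frac{2n+3}{4}}$), applies the mean value theorem to the first three, and handles the $\omega$-factor by an explicit four-case positional analysis in $\eta,\eta-\xi$ relative to $kt$ and $\eta_0=kt-k\vartheta\mu^{-\frac13}$, precisely to avoid differentiating $\omega$ at its corner. You instead treat the whole symbol $h\,p^{\frac{\alpha}{2}}$ (resp.\ $\tilde h\,p^{\frac{\alpha}{2}}$) at once, represent the difference as an integral of the a.e.\ $\eta$-derivative (legitimate, since $h$ is Lipschitz and the corner of $\omega$ is harmless for absolute continuity), and close via the uniform bounds $|h|\lesssim1$, $|\p_\eta h|\lesssim|k|^{-1}$, the weighted bound $|\eta-kt|\,|\p_\eta h|\lesssim1$ (AM--GM on the $m_1$ factor, with $|\eta-kt|\le\sqrt p$ absorbing $m_2$, $\omega$, $\p_t p/p$), and the dichotomy $|\xi|\gtrless\tfrac12 p^{\frac12}(t,k,\eta-\xi)$. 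This is a clean and in fact slightly sharper argument: your estimates yield exactly the stated $|\xi|\langle\xi\rangle^{n}p^{\frac{n+1}{2}}(t,k,\eta-\xi)$ uniformly (including $n=0$), whereas the paper's intermediate bounds for $\mathcal{Q}_1,\mathcal{Q}_2,\mathcal{Q}_4$ carry $(|\xi|+|\xi|^2)$ factors. What the paper's factor-by-factor split buys is that no derivative bound for $\omega$ (or weighted bound for $m_1$) is ever needed, at the cost of the case bookkeeping; what your symbol-calculus route buys is brevity and the correct identification of the mechanism in part 2), namely that the AM--GM bound $|\eta-kt|\,|\p_\eta\log m_1|\le1$ is exactly what absorbs the extra factor $|\eta-\xi-kt|$. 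One point worth writing out explicitly when you expand the sketch: in the regime $|\xi|\ge\tfrac12p^{\frac12}(t,k,\eta-\xi)$ of part 2), the term evaluated at frequency $\eta$ must be paired with $|\eta-\xi-kt|\le p^{\frac12}(t,k,\eta-\xi)\le p^{\frac{n+1}{2}}(t,k,\eta-\xi)$ (and $p^{\frac{n+1}{2}}(t,k,\eta)\lesssim|\xi|^{n+1}$), not with $|\eta-\xi-kt|\lesssim|\xi|$, since the latter would give $|\xi|^{n+2}$, which exceeds the target when $p(t,k,\eta-\xi)$ is of order one; your parenthetical contains both inequalities, so the argument closes, but the pairing should be made explicit.
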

\begin{proof}
	We only calculate $\mathbf{2)}$ since the others are similar. We denote the commutator in $\mathbf{2)}$ as
	\begin{align*}
	    &|\eta-\xi-kt|\big(m_1^{-1}m_2^{-1} \omega^{-\frac{2n+3}{4}} p^{\frac{n+1}{2}}(t,k,\eta)-m_1^{-1}m_2^{-1}\omega^{-\frac{2n+3}{4}} p^{\frac{n+1}{2}}(t,k,\eta-\xi)\big)\\
     &\qquad\qquad=\mathcal{Q}_1+\mathcal{Q}_2+\mathcal{Q}_3+\mathcal{Q}_4, 
     \end{align*} 
	where
	\begin{align*}
		&\mathcal{Q}_1=|\eta-\xi-kt|(m_1^{-1}(t,k,\eta)-m_1^{-1}(t,k,\eta-\xi))(m_2^{-1}\omega^{-\frac{2n+3}{4}})(t,k,\eta)p^{\frac{n+1}{2}}(t,k,\eta-\xi),\\
		&\mathcal{Q}_2=|\eta-\xi-kt|(m_2^{-1}(t,k,\eta)-m_2^{-1}(t,k,\eta-\xi))(m_1^{-1}p^{\frac{n+1}{2}})(t,k,\eta-\xi)\omega^{-\frac{2n+3}{4}}(t,k,\eta),\\
		&\mathcal{Q}_3=|\eta-\xi-kt|(p^{\frac{n+1}{2}}(t,k,\eta)-p^{\frac{n+1}{2}}(t,k,\eta-\xi))(m_1^{-1}m_2^{-1}\omega^{-\frac{2n+3}{4}})(t,k,\eta),\\
		&\mathcal{Q}_4=|\eta-\xi-kt|(\omega^{-\frac{2n+3}{4}}(t,k,\eta)-\omega^{-\frac{2n+3}{4}}(t,k,\eta-\xi))(m_1^{-1}m_2^{-1}p^{\frac{n+1}{2}})(t,k,\eta-\xi).
	\end{align*}
	For $\mathcal{Q}_1$ and $\mathcal{Q}_2$,  we obtain directly from the mean value theorem that
%	\begin{align*}
%	     &m_1^{-1}(t,k,\eta)-m_1^{-1}(t,k,\eta-\xi) \\
%      =&\mu^{\frac{1}{3}} \frac{\xi}{k} \exp(-2\arctan(\mu^{\frac{1}{3}}(t-\frac{\eta-\theta_0\xi}{k}))) \frac{1}{1+\mu^{\frac{2}{3}}(t-\frac{\eta-\theta_0\xi}{k})^2}, 
%      \end{align*}
%	where $0<\theta_0<1$. Then, one has 
%	$$ \abs{\mathcal{Q}_1} \lesssim   (|\xi|+|\xi|^2) p^{\frac{n+1}{2}}(t,k,\eta-\xi). $$
%	Similarly, $\mathcal{Q}_2$ can be bounded as 
	$$\abs{\mathcal{Q}_1}+\abs{\mathcal{Q}_2} \lesssim   (|\xi|+|\xi|^2) p^{\frac{n+1}{2}}(t,k,\eta-\xi). $$
	For $\mathcal{Q}_3$, we have 
	$$ p^{\frac{n+1}{2}}(t,k,\eta)-p^{\frac{n+1}{2}}(t,k,\eta-\xi)=\frac{n+1}{2}p^{\frac{n-1}{2}}(t,k,\eta-\theta_1\xi)p'_\eta(t,k,\eta-\theta_1\xi)\xi, $$ 
	where $0<\theta_1<1$. Since $p^{\frac{n-1}{2}}(t,k,\eta-\theta_1\xi)p'_\eta(t,k,\eta-\theta_1\xi)\lesssim \la \xi\ra^n p^{\frac{n}{2}}(t,k,\eta-\xi)$, we control $\mathcal{Q}_3$ as
	$$ \abs{\mathcal{Q}_3} \lesssim    |\xi| \la \xi\ra^n p^{\frac{n+1}{2}}(t,k,\eta-\xi). $$
	
	For $\mathcal{Q}_4$, we only calculate the case of $k>0,\xi>0$, and the other scenarios are similar. To estimate the commutator for $\omega(t,k,\eta)-\omega(t,k,\eta-\xi)$, we consider $\omega(t,k,\zeta)$ as a function of $\zeta$,
   then
   \begin{align}\label{omega11}
		\omega(t,k,\zeta)=\begin{cases}
			1,\qquad \qquad \qquad  &\mathrm{if}\   \zeta  > kt,\\[1mm]
			p(t,k,\zeta)k^{-2}, &\mathrm{if}\ kt-k\vartheta\mu^{-\frac{1}{3}}\leq \zeta \leq kt,\\[1mm]
			1+\vartheta^2\mu^{-\frac{2}{3}}, &\mathrm{if}\ \zeta \leq kt-k\vartheta \mu^{-\frac{1}{3}}.
		\end{cases}
\end{align}
	Since $\omega(t,k,\zeta)$ is not differentiable at $\zeta=\eta_0:= kt-k\vartheta \mu^{-\frac{1}{3}}$, the mean value theorem cannot be directly applied. We divide the proof into four cases by the values of $\eta$ and $\eta-\xi$.
	\begin{flushleft}
		\textbf{Case 1: $\eta>\eta-\xi>\eta_0$.}
	\end{flushleft}
	Because of $\eta>\zeta>\eta-\xi>\eta_0$, $\omega(t,k,\zeta)$ is differentiable. From the mean value theorem, there exists $0<\theta_2<1$ such that
\begin{align}
&\abs{\eta-kt-\xi} \abs{\omega^{-\frac{2n+3}{4}}(t,k,\eta)-\omega^{-\frac{2n+3}{4}}(t,k,\eta-\xi)} \nonumber\\ 
=&\abs{\eta-kt-\xi}\abs{\p_\eta \omega(t,k,\eta-\theta_2\xi) \omega^{-\frac{2n+7}{4}}(t,k,\eta-\theta_2\xi)\xi}\\ 
\lesssim& \big(\abs{\eta-kt-\theta_2\xi}+\abs{\xi} \big)\abs{\frac{\p_\eta p}{k^2}}(t,k,\eta-\theta_2\xi)\left(\frac{k^2}{p} \right)^{\frac{2n+7}{4}}(t,k,\eta-\theta_2\xi)\abs{\xi} \nonumber\\
\lesssim& \abs{\xi}+\abs{\xi}^2.\nonumber
\end{align}
\begin{flushleft}
	\textbf{Case 2: $\eta>kt>\eta_0>\eta-\xi$.}
\end{flushleft}
In this case, $\omega(t,k,\eta)=1$ and $\omega(t,k,\eta-\xi)=1+\vartheta^2 \mu^{-\frac{2}{3}}$.
Since $\eta-\xi-kt \leq -k\vartheta \mu^{-\frac{1}{3}}<0$ and $kt-\eta<0$, we deduce that 
$$\abs{\eta-\xi-kt}=kt-\eta+\xi \leq \xi. $$
Then, there holds
$$\abs{\eta-\xi-kt}\abs{\omega^{-\frac{2n+3}{4}}(t,k,\eta)-\omega^{-\frac{2n+3}{4}}(t,k,\eta-\xi)}\leq \abs{\eta-kt-\xi} \leq \abs{\xi}. $$ 
\begin{flushleft}
	\textbf{Case 3: $kt>\eta>\eta_0>\eta-\xi$.}
\end{flushleft}
In this case, $\omega(t,k,\eta)=\frac{p}{k^2}$ and $\omega(t,k,\eta-\xi)=1+\vartheta^2\mu^{-\frac{2}{3}}.$ By \cref{multiplier prop}, there holds $\omega(t,k,\eta)<1+\vartheta^2\mu^{-\frac{2}{3}}$ and $\omega(t,k,\eta-\xi)\leq\frac{p}{k^2}(t,k,\eta-\xi)$. So we obtain that
\begin{align}
&\abs{\omega^{-\frac{2n+3}{4}}(t,k,\eta)-\omega^{-\frac{2n+3}{4}}(t,k,\eta-\xi)}=\omega^{-\frac{2n+3}{4}}(t,k,\eta)-\omega^{-\frac{2n+3}{4}}(t,k,\eta-\xi)\\ \nonumber
&\qquad \qquad \qquad \lesssim \left(\frac{p}{k^2}\right)^{-\frac{2n+3}{4}}(t,k,\eta)-\left(\frac{p}{k^2}\right)^{-\frac{2n+3}{4}}(t,k,\eta-\xi),
\end{align}
which implies that
$$\abs{\eta-kt-\xi}\abs{\omega^{-\frac{2n+3}{4}}(t,k,\eta)-\omega^{-\frac{2n+3}{4}}(t,k,\eta-\xi)}\lesssim \abs{\xi}+\abs{\xi}^2. $$
\begin{flushleft}
	\textbf{Case 4: $\eta_0>\eta>\eta-\xi$.}
\end{flushleft}
It is straightforward to check that  $\abs{\omega^{-\frac{2n+3}{4}}(t,k,\eta)-\omega^{-\frac{2n+3}{4}}(t,k,\eta-\xi)}=0.$

Combining all of the above cases, we  prove $ \abs{\mathcal{Q}_4} \lesssim   (|\xi|+|\xi|^2) p^{\frac{n+1}{2}}(t,k,\eta-\xi). $
Then we can show $\mathbf{2)}$ from the estimates of $\mathcal{Q}_1$, $\mathcal{Q}_2$, $\mathcal{Q}_3$ and $\mathcal{Q}_4$. And the proof of \cref{com} is completed. 
\end{proof}

\subsection{Weighted variables}
In what follows we will use the following shorthand $Q^n:=m_1^{-1}m_2^{-1}\omega^{-\frac{3+2n}{4}}p^{\frac{n}{2}}$. Motivated by \cite{ADM2021}, we introduce the following new variables as,
\begin{align}
	\begin{aligned}
		&Z_1^n=\frac{1}{M}m_1^{-1}m_2^{-1}\omega^{-\frac{3+2n}{4}}p^{\frac{1+n}{2}}R_\neq,\qquad\qquad\qquad Z_2^n=m_1^{-1}m_2^{-1}\omega^{-\frac{3+2n}{4}}p^{\frac{n}{2}}A_\neq,\\
		&Z_3^n=m_1^{-1}m_2^{-1}\omega^{-\frac{3+2n}{4}}p^{\frac{n}{2}}(R_\neq+\Omega_\neq-\mu M^2 A_\neq).
	\end{aligned}
\end{align}
From \cref{new variable} and complicated computations, $Z_1^n, Z_2^n, Z_3^n$ satisfy
\begin{equation}\label{non-zero n-th}
	\begin{cases}
		\p_t Z_1^n=&-\Big(\frac{\p_t m_1}{m_1}+\frac{\p_t m_2}{m_2} +\frac{3+2n}{4}(\frac{\p_t \omega}{\omega}-\frac{\p_t p}{p})+\frac{1}{4}\frac{\p_t p}{p}\Big)Z_1^n -\frac{1}{M}p^{\frac{1}{2}}Z_2^n+\mathcal{Z}_1^n\\[2mm]
		\p_t Z_2^n=&-\Big(\frac{\p_t m_1}{m_1}+\frac{\p_t m_2}{m_2} +\frac{3+2n}{4}(\frac{\p_t \omega}{\omega}-\frac{\p_t p}{p})+(\lambda+2\mu)p\Big)Z_2^n+\frac{1}{4}\frac{\p_t p}{p}Z_2^n\\[2mm]
		&+\frac{1}{M} p^{\frac{1}{2}}Z_1^n-2M \p_X^2 p^{-\frac{3}{2}}Z_1^n + 2 \p_X^2 p^{-1} Z_3^n + 2\mu M^2 \p_X^2 p^{-1} Z_2^n+\mathcal{Z}_2^n\\[2mm]
		\p_t Z_3=&-\Big(\frac{\p_t m_1}{m_1}+\frac{\p_t m_2}{m_2}  +\mu p\Big) Z_3^n - \frac{3+2n}{4}\frac{\p_t\omega}{\omega}Z_3^n +\mu(\lambda+\mu)M^2 p Z_2^n+\mathcal{Z}_3^n\\[2mm]
		&-\mu M^2 \frac{\p_t p}{p}Z_2^n+2\mu M^3 \p_X^2 p^{-\frac{3}{2}}  Z_1^n - 2\mu M^2 \p_X^2 p^{-1}  (Z_3^n + \mu M^2  Z_2^n),
	\end{cases}
\end{equation}
where
\begin{align}
	\mathcal{Z}_1^n:=    &-\frac{1}{M}Q^{n} p^{\frac{1}{2}}\left\{ AR+ \B \cdot \nablat  R \right\}_\neq,\\[1mm] \nonumber
	\mathcal{Z}_2^n:=    &-Q^{n} \left\{(\p_X\B_1 )^2\right\}_\neq-Q^{n} \left\{(\pt_Y\B_2 )^2\right\}_\neq -2Q^{n} \left\{\pt_Y\B_1 \p_X\B_{2} \right\}_\neq -Q^{n} \left\{ \B \cdot \nablat  A \right\}_\neq\\\nonumber
	& -Q^{n} \tilde{\dv}\left\{\frac{R}{R+1}(\lambda+2\mu)\nablat  A+\frac{R}{R+1}\mu\nablat ^{\bot} \Omega\right\}_\neq+\frac{1}{M^2}Q^{n} \tilde{\dv}\bigg(\frac{R}{R+1}\nablat R\bigg)_\neq\\
	& -\frac{1}{M^2}Q^{n} \p_X\bigg(\frac{P'(R+1)-1}{R+1}\p_X R\bigg)_\neq -\frac{1}{M^2}Q^{n} \pt_Y\bigg(\frac{P'(R+1)-1}{R+1}\pt_YR\bigg)_\neq,\\\nonumber
	\mathcal{Z}_3^n:=& - Q^{n}  (AR)_\neq-Q^{n} (A\Omega)_\neq+2\mu M^2 Q^{n} \left\{\pt_Y\B_1 \p_X\B_{2} \right\}_\neq\\
	& +\mu M^2 Q^{n} \tilde{\dv}\left\{\frac{R}{R+1}(\lambda+2\mu)\nablat  A+\frac{R}{R+1}\mu\nablat ^{\bot} \Omega\right\}_\neq -\mu Q^{n} \tilde{\dv}\left(\frac{R}{R+1}\nablat R\right)_\neq\\[2mm] \nonumber
	&+\mu Q^{n} \tilde{\dv}\left(\frac{P'(R+1)-1}{R+1}\nablat R\right)_\neq-Q^{n} \nablat^{\bot}  \cdot \left\{\frac{R}{R+1}(\lambda+2\mu)\nablat  A + \frac{R}{R+1} \mu\nablat^{\bot}\Omega \right\}_\neq\\[2mm] \nonumber
	&+\mu M^2 Q^{n} \left\{(\pt_Y\B_2)^2\right\}_\neq + \mu M^2 Q^{n} \left\{(\p_X\B_1)^2\right\}_\neq- Q^{n} \left\{ \B \cdot \nablat  (R+\Omega-\mu M^2 A) \right\}_\neq. \nonumber
\end{align}
Let $\gamma=\frac{M\mu^{\frac{1}{3}}}{4} \leqslant \frac{1}{200}$ and consider the following energy functional,
\begin{align}
	E^n(t)=&\frac{1}{2}\norm{\left(1+M^2 \frac{(\p_t p)^2}{{p}^3}\right)^{\frac{1}{2}}Z_1^n}^2+\frac{1}{2}\norm{Z_2^n}^2+\frac{1}{2}\norm{Z_3^n}^2\nonumber\\
	&\qquad\qquad\qquad\qquad+\frac{M}{4}\bigg\langle \frac{\p_t p}{p^{\frac{3}{2}}}Z_1^n,Z_2^n\bigg\rangle-\gamma \bigg\langle p^{-\frac{1}{2}}Z_1^n,Z_2^n\bigg\rangle.
\end{align}
It is easy to verify that $E^n(t)$ is coercive, namely 
$$E^n(t)\thickapprox\norm{\left(1+M^2 \frac{(\p_t p)^2}{{p}^3}\right)^{\frac{1}{2}}Z_1^n}^2+\norm{Z_2^n}^2+\norm{Z_3^n}^2. $$

Before proving \cref{ed}, we will establish the following two inequalities.
\begin{Lem}\label{L1}
	The following inequalities hold for $f\in H^s(\Omega)$, $s\geq 2$,
	\begin{align}
		\norm{\hat{f}_\neq}_{L^1}\lesssim \norm{\lapt f_\neq} \text{\qquad and\qquad} \norm{\hat{\mathring{f}}}_{L^1}\lesssim \norm{\mathring{f}}_{H^1}.
	\end{align}
\end{Lem}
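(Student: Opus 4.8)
The plan is to pass to the Fourier side and estimate both quantities by Cauchy--Schwarz against an explicit integrable weight; the decisive feature is that a non-zero mode carries frequency $\abs{k}\ge 1$, which renders an inverse power of the elliptic symbol integrable.

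First I would handle the non-zero part. Recall that the symbol of $-\lapt$ is $p(t,k,\eta)=k^2+(\eta-kt)^2$, so $\widehat{\lapt f_\neq}(k,\eta)=-p(t,k,\eta)\hat{f}_\neq(k,\eta)$ and $p(t,k,\eta)\ge k^2\ge 1$ whenever $k\neq0$. Writing $\abs{\hat{f}_\neq}=p^{-1}\cdot\big(p\abs{\hat{f}_\neq}\big)$ and applying the Cauchy--Schwarz inequality in $\eta$ and then in $k$,
\begin{align*}
	\norm{\hat{f}_\neq}_{L^1}=\sum_{k\neq0}\int_{\R}\abs{\hat{f}_\neq(k,\eta)}\,d\eta
	\le\Big(\sum_{k\neq0}\int_{\R}\frac{d\eta}{p(t,k,\eta)^2}\Big)^{\frac12}\Big(\sum_{k\neq0}\int_{\R}p(t,k,\eta)^2\abs{\hat{f}_\neq(k,\eta)}^2\,d\eta\Big)^{\frac12}.
\end{align*}
The second factor equals $\norm{\lapt f_\neq}$ up to a fixed constant by Plancherel. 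For the first factor, the substitution $\zeta=\eta-kt$ eliminates the $t$-dependence and the rescaling $\zeta=\abs{k}s$ gives $\int_{\R}p(t,k,\eta)^{-2}\,d\eta=\abs{k}^{-3}\int_{\R}(1+s^2)^{-2}\,ds$, whence $\sum_{k\neq0}\int_{\R}p^{-2}\,d\eta\lesssim\sum_{k\neq0}\abs{k}^{-3}<\infty$. This yields the first inequality with a constant independent of $t$.

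For the zero mode, $\mathring{f}$ is a function of $y$ alone, so $\hat{\mathring{f}}$ is supported at $k=0$ and $\norm{\hat{\mathring{f}}}_{L^1}=\int_{\R}\abs{\hat{\mathring{f}}(\eta)}\,d\eta$. Splitting $\abs{\hat{\mathring{f}}}=\la\eta\ra^{-1}\cdot\big(\la\eta\ra\abs{\hat{\mathring{f}}}\big)$ and applying Cauchy--Schwarz,
\begin{align*}
	\norm{\hat{\mathring{f}}}_{L^1}\le\Big(\int_{\R}\frac{d\eta}{\la\eta\ra^{2}}\Big)^{\frac12}\Big(\int_{\R}\la\eta\ra^{2}\abs{\hat{\mathring{f}}(\eta)}^2\,d\eta\Big)^{\frac12}\lesssim\norm{\mathring{f}}_{H^1},
\end{align*}
since $\int_{\R}\la\eta\ra^{-2}\,d\eta=\pi$ and the last factor is $\norm{\mathring{f}}_{H^1}$ (up to a fixed constant) by the definition of the Sobolev norm on the $k=0$ slice.

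The argument is entirely elementary, so there is no genuine obstacle. The one point deserving attention is that the non-zero mode restriction is exactly what makes $\sum_{k\neq0}\int_{\R}p^{-2}\,d\eta$ finite --- for $k=0$ the integral $\int_{\R}\eta^{-4}\,d\eta$ diverges at the origin --- and that, since $\int_{\R}p^{-2}\,d\eta$ is invariant under $\eta\mapsto\eta+kt$, all constants are uniform in time; the assumption $s\ge2$ enters only to ensure $\lapt f_\neq\in L^2$.
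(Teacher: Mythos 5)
Your proof is correct and follows essentially the same route as the paper: Cauchy--Schwarz on the Fourier side against the weight $p^{-2}$ (integrable precisely because $\abs{k}\ge 1$, summing like $\sum_{k\neq0}\abs{k}^{-3}$, uniformly in $t$) for the non-zero modes, and against $\la\eta\ra^{-2}$ for the zero mode. Nothing further is needed.
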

\begin{proof}
	We prove $\norm{\hat{f}_\neq}_{L^1}\lesssim \norm{\lapt f_\neq}$ at first.
	By direct calculations, one has
	\begin{align*}
		\norm{\hat{f}_\neq}_{L^1}&=\sum_{k\neq0}\int_{\R}\abs{\hat{f}_\neq}(t, k, \eta)d\eta\\
		&=\sum_{k\neq0}\int_{\R}\frac{1}{k^2+(kt-\eta)^2} (k^2+(kt-\eta)^2)\abs{\hat{f}_\neq}(t, k, \eta)d\eta\\
		&\lesssim \Big(\sum_{k\neq0}\frac{1}{k^4}\int_{\R}\frac{d\eta}{(1+(\frac{\eta}{k}-t)^2)^2}\Big)^{\frac{1}{2}}\Big(\sum_{k\neq0}\int_{\R}\abs{p \hat{f}_\neq}^2 (t, k, \eta)d\eta\Big)^{\frac{1}{2}}
		\lesssim \norm{\lapt f_\neq}.
	\end{align*}
	The second inequality can be proved similarly,
	\begin{align*}
		\norm{\hat{\mathring{f}}}_{L^1}&=\int_{\R}\abs{\hat{\mathring{f}}}(t, \eta)d\eta=\int_{\R}\la\eta\ra^{-1}\la\eta\ra\abs{\hat{\mathring{f}}}(t, \eta)d\eta\\
		&\lesssim (\int_{\R}\la\eta\ra^{-2}d\eta)^{\frac{1}{2}}\Big(\int_{\R}|\la\eta\ra \hat{\mathring{f}}|^2 d\eta\Big)^{\frac{1}{2}}\lesssim \norm{\mathring{f}}_{H^1}.
	\end{align*}
	Then  \cref{L1} is proved.
\end{proof}

\subsection{Estimate on $\frac{d}{dt}\norm{Z_1^n,Z_2^n,Z_3^n}^2$}
In this section, we give a basic estimate of $\norm{Z_1^n},\norm{Z_2^n}$ and $\norm{Z_3^n}$.
\begin{Lem}\label{Z_1^n+Z_2^n+Z_3^n}
	Under the same assumptions of \cref{MT}, for $n=0,1$, it holds that
	\begin{align*}
		&\frac{1}{2}\frac{d}{dt} \bigg(\norm{\left(1+M^2\frac{(\p_t p)^2}{{p}^3}\right)^{\frac{1}{2}}Z_1^n}^2+\norm{Z_2^n}^2+\norm{Z_3^n}^2 \bigg)\\
		\leq & -\norm{\sqrt{\frac{63}{64}\left(\frac{\p_t m_1}{m_1}+\frac{\p_t m_2}{m_2}\right)+\frac{2n+3}{4}\left(\frac{\p_t \omega}{\omega}-\frac{\p_t p}{p}\right)}Z_1^n}^2 \\
		&-\norm{\sqrt{\frac{31}{64}\left(\frac{\p_t m_1}{m_1}+\frac{\p_t m_2}{m_2}+(\lambda+2\mu)p\right)+\frac{2n+3}{4} \left(\frac{\p_t \omega}{\omega}-\frac{\p_t p}{p} \right)}Z_2^n}^2\\
		&-\frac{15}{64}\norm{\sqrt{\frac{\p_t m_1}{m_1}+\frac{\p_t m_2}{m_2}+\mu p}Z_3^n}^2+\frac{1}{4}\bigg\la \frac{\p_t p}{p} Z_2^n , Z_2^n\bigg\ra- \frac{1}{4}\bigg\la \frac{\p_t p}{p} Z_1^n , Z_1^n\bigg\ra\\
		&+C_M\mu^{-\frac{2n+3}{6}} \Big(\norm{\nabla\psi}_{H^{n+3}}+\norm{\psi_2}+\norm{\nabla\phi}_{H^{n+3}}\Big) \Big(\norm{Z_1^n}^2+\norm{Z_2^n}^2+\norm{Z_3^n}^2 \Big)\\
		&+C_M\mu^{-\frac{2n-3}{6}} \Big(\norm{\nabla\psi}_{H^{n+2}}+\norm{\phi}_{H^{n+3}}\Big) \Big(\norm{Z_1^n}^2+\norm{p^{\frac{1}{2}}Z_2^n}^2+\norm{p^{\frac{1}{2}}Z_3^n}^2 \Big)\\
		&+C_M \mu^{-2-\frac{2n}{3}}\left(\norm{\phi}_{H^{n+2}}^2+\norm{\nabla\psi}_{H^{n+1}}^2+\norm{\psi_2}^2\right) \left(\norm{Z_1^n}^2+\norm{Z_2^n}^2+\norm{Z_3^n}^2\right).      
	\end{align*}
\end{Lem}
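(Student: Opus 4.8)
The plan is to differentiate the energy functional $E^n(t)$ using the evolution equations \cref{non-zero n-th}, and to track carefully which terms produce good (negative) contributions and which must be absorbed into the error terms on the right-hand side. First I would compute $\frac{1}{2}\frac{d}{dt}\big(\|(1+M^2(\p_tp)^2/p^3)^{1/2}Z_1^n\|^2+\|Z_2^n\|^2+\|Z_3^n\|^2\big)$ directly. The diagonal damping terms $-(\frac{\p_tm_1}{m_1}+\frac{\p_tm_2}{m_2}+\cdots)$ in each equation of \cref{non-zero n-th} give the principal negative contributions; the coupling term $\pm\frac{1}{M}p^{1/2}Z_1^nZ_2^n$ between the $Z_1^n$ and $Z_2^n$ equations cancels in the sum (this is why the variables were chosen); the term $\frac14\frac{\p_tp}{p}Z_2^n$ in the $Z_2^n$ equation and $-\frac14\frac{\p_tp}{p}Z_1^n$ that arises from the $Z_1^n$ equation (after accounting for the $\frac14\frac{\p_tp}{p}$ split in its damping coefficient) are retained explicitly as the indefinite bracket terms $\frac14\la\frac{\p_tp}{p}Z_2^n,Z_2^n\ra-\frac14\la\frac{\p_tp}{p}Z_1^n,Z_1^n\ra$ in the statement. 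The reason these are not absorbed is that $\frac{\p_tp}{p}$ changes sign and is not controlled pointwise by $\mu p+\frac{\p_tm_i}{m_i}$ near critical times; they will be handled later via the multiplier $\omega$ and the cross terms in $E^n(t)$ that were dropped in passing to the coercive quadratic form.

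Next I would treat the lower-order linear coupling terms in \cref{non-zero n-th}: the factors $\p_X^2p^{-3/2}$, $\p_X^2p^{-1}$, etc., multiplying $Z_1^n$, $Z_2^n$, $Z_3^n$. Using $|\p_X^2p^{-1}|\le 1$ and $|\p_X^2p^{-3/2}|\lesssim p^{-1/2}\le\langle k\rangle^{-1}$ (or the bound $(\omega p^{-1})\le k^{-2}$ from \cref{multiplier prop}), each such term is bounded by $O(1)$ or $O(\mu M^2)$ times products of the $Z_j^n$'s, and these are small relative to the $\mu^{1/3}$-sized damping for $\mu$ small, hence absorbable into the negative terms — except for the ones proportional to $\mu p$, like $\mu(\lambda+\mu)M^2pZ_2^n$ in the $Z_3^n$ equation, which pair against the $\mu p$-damping of $Z_3^n$; with $M^2\mu$ small and Cauchy–Schwarz these fit under the $\frac{15}{64}\|\sqrt{\cdots+\mu p}Z_3^n\|^2$ term (the coefficient $\frac{15}{64}$ rather than $\frac12$ precisely leaves room for this). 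The coefficients $\frac{63}{64}$, $\frac{31}{64}$, $\frac{15}{64}$ in the statement are chosen so that all these absorptions go through with fixed universal fractions to spare.

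The genuinely substantial part — and the main obstacle — is estimating the nonlinear contributions $\la\mathcal{Z}_1^n,Z_1^n\ra+\la\mathcal{Z}_2^n,Z_2^n\ra+\la\mathcal{Z}_3^n,Z_3^n\ra$ (together with the extra piece coming from differentiating the weight $(1+M^2(\p_tp)^2/p^3)$, which contributes the commutator-type symbol $\mathcal{P}^\alpha$). Here I would expand each term of $\mathcal{Z}_j^n$ in Fourier variables, insert the multiplier $Q^n=m_1^{-1}m_2^{-1}\omega^{-(3+2n)/4}p^{n/2}$, and split it across the convolution factors using the commutators $\mathcal{T}^\alpha$ and $\mathcal{P}^\alpha$, whose bounds are supplied by \cref{com}. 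Schematically, for a bilinear term $Q^n\{fg\}_\neq$ one writes $Q^n(\eta)=[Q^n(\eta)-Q^n(\eta-\xi)]+Q^n(\eta-\xi)$: the second piece moves the full weight onto one factor (giving a $Z_j^n$-type quantity) while the bare $\hat f_\neq$ is controlled in $L^1$ by $\|\lapt f_\neq\|$ via \cref{L1}; the commutator piece gains a factor $|\xi|\langle\xi\rangle^{\alpha-1}$ which is paid for by extra derivatives on the low-frequency factor, i.e. by Sobolev norms $\|\nabla\phi\|_{H^{n+3}}$, $\|\nabla\psi\|_{H^{n+3}}$ and so on. The terms with $\frac{1}{R+1}$-type nonlinearities are handled by expanding $\frac{R}{R+1}=R-R^2+\cdots$ (using the a priori lower bound $1+\phi\ge\bar N$) and repeating; the zero-mode factors $\psi_2$, $\mathring\phi$ appearing through $\B$ and the interactions with zero modes contribute the $\|\psi_2\|$ terms. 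Bookkeeping the powers of $\mu$: each extra $p^{1/2}$ or derivative on a $Z$-factor costs a $\mu^{-1/3}$ (since $p\gtrsim\mu^{-2/3}$ is not available, but we pay in the dissipation $\mu p$), yielding the three distinct error groups in the statement with prefactors $C_M\mu^{-(2n+3)/6}$, $C_M\mu^{-(2n-3)/6}$ and $C_M\mu^{-2-2n/3}$ — the last, quadratic-in-Sobolev, one coming from terms where no $Z$-factor is available to absorb a derivative and one must instead Young-split into a pure Sobolev square. The delicate point throughout is ensuring that whenever a factor $p^{1/2}Z_2^n$ or $p^{1/2}Z_3^n$ is produced (which is not part of the coercive energy), it is multiplied only by small-in-$\mu$ Sobolev coefficients so that it can later be absorbed against the dissipation integral; that is exactly the content of the second error group, and getting its $\mu$-power right is where most of the care is required.
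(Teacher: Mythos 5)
Your overall skeleton is the paper's: separate energy identities for $Z_1^n,Z_2^n,Z_3^n$ and for the weighted piece $\frac{\p_t p}{p^{3/2}}Z_1^n$, cancellation of the $\pm\frac1M\la p^{1/2}Z_1^n,Z_2^n\ra$ coupling, retention of the indefinite brackets $\frac14\la\frac{\p_t p}{p}Z_i^n,Z_i^n\ra$, and commutator ($\mathcal T^\alpha,\mathcal P^\alpha$) plus Fourier--$L^1$ (\cref{com}, \cref{L1}) estimates for the nonlinearities. But there are two concrete gaps. First, your mechanism for the remaining \emph{linear} couplings is wrong: terms such as $-2\la\frac{k^2}{p}Z_3^n,Z_2^n\ra$, $2M\la\frac{k^2}{p^{3/2}}Z_1^n,Z_2^n\ra$, $-M\la\frac{(\p_t p)^2}{p^{5/2}}Z_1^n,Z_2^n\ra$ and the terms produced by differentiating the weight $M^2\frac{(\p_t p)^2}{p^3}$ have symbols of size $O(1)$ (or $O(M^2)$) at the critical times (e.g.\ $\frac{k^2}{p}=1$ at $t=\eta/k$), so they cannot be ``absorbed into the $\mu^{1/3}$-sized damping for $\mu$ small''. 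The paper absorbs them by using $|\p_t p|\le 2|k|p^{1/2}$ to reduce every such symbol to $\frac{k^2}{p}$ and then invoking the multiplier $m_2$ in \cref{m1}, for which $\frac{\p_t m_2}{m_2}=N\frac{k^2}{p}$ with $N=C_0\max\{1,M^4\}$ large: each coupling is then at most $\frac{\tilde C}{N}$ of the $m_2$-dissipation (this is exactly what the coefficients $\frac{63}{64},\frac{31}{64},\frac{15}{64}$ quantify). Your plan never uses $m_2$ at this step, and without it the absorption fails.

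Second, your generic recipe for bilinear terms (``move the full weight onto one factor, put the bare factor in Fourier $L^1$'') breaks down precisely on the transport terms involving the zero mode $\psim_1$, e.g.\ $\la Q^n p^{1/2}(\p_X R_\neq\,\psim_1),Q^n p^{1/2}R_\neq\ra$ and the analogous terms with $A_\neq$: applied as stated it produces a factor $\norm{\psim_1}_{H^{n+1}}$ (via \cref{L1}) together with a loss of an $X$-derivative on $Z_1^n$, and $\norm{\psim_1}$ is neither small nor even bounded uniformly in time (it grows like $(1+t)^{1/4}$), nor does it appear among the admissible error coefficients of the lemma, which only involve $\norm{\nabla\psi}$, $\norm{\psi_2}$, $\norm{\phi}$, $\norm{\nabla\phi}$. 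The paper's fix is structural: write $Q^np^{1/2}(\p_X R_\neq\,\psim_1)=\p_X(Q^np^{1/2}R_\neq)\,\psim_1+\mathcal T^{n+1}(\p_X R_\neq\,\psim_1)$ and observe that the first piece pairs to exactly zero against $Q^np^{1/2}R_\neq$ (integration by parts in $x$, since $\psim_1$ is $x$-independent), while the commutator piece, by \cref{com}, gains a factor $|\xi|$ that converts $\psim_1$ into $\p_y\psim_1$, giving only $\norm{\p_y\psim_1}_{H^{n+1}}\lesssim\norm{\nabla\psi}_{H^{n+1}}$; the same cancellation (or an integration by parts yielding $\p_y\psim_2$) is needed for the $\pt_Y$-transport terms and for the $\frac{\p_t p}{p}$-weighted versions via $\mathcal P^n$. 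Without identifying this cancellation your estimate cannot close with the error terms stated in the lemma.
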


\begin{proof}
	{\bf{$L^2$ estimate on $Z_1^n$}}. An energy estimate gives
	\begin{align}\label{Z_1^n}
			\frac{1}{2}\frac{d}{dt}\norm{Z_1^n}^2=&-\norm{\sqrt{\frac{\p_t m_1}{m_1}+\frac{\p_t m_2}{m_2} +\frac{3+2n}{4}\left(\frac{\p_t \omega}{\omega}-\frac{\p_t p}{p} \right)}Z_1^n}^2\\
			&-\frac{1}{M}\bigg\la p^{\frac{1}{2}} Z_1^n , Z_2^n \bigg\ra - \frac{1}{4}\bigg\la \frac{\p_t p}{p}Z_1^n , Z_1^n\bigg\ra  +\bigg\la \mathcal{Z}_1^n, Z_1^n\bigg\ra,\nonumber
	\end{align}
	where
	\begin{align*}
		\bigg\la \mathcal{Z}_1^n ,  Z_1^n\bigg\ra=&-\frac{1}{M} \bigg\la Q^n p^{\frac{1}{2}}(A R)_\neq ,  Z_1^n \bigg\ra- \frac{1}{M^2} \bigg\la Q^n p^{\frac{1}{2}}\big(\p_X R \B_1\big)_\neq ,  Q^n p^{\frac{1}{2}} R_\neq \bigg\ra\\
		&-\frac{1}{M^2} \bigg\la Q^n p^{\frac{1}{2}}\big(\pt_Y R \B_2\big)_\neq ,  Q^n p^{\frac{1}{2}} R_\neq \bigg\ra\\
		:=&NL_{1a}+NL_{1b}+NL_{1c}.
	\end{align*}
	\cref{L1} implies
	\begin{align}\label{NL_11}
			&\abs{NL_{1a}}=\frac{1}{M}\abs{\bigg\la Z_1^n , Q^n p^{\frac{1}{2}}(A_\neq R_\neq)_\neq +Q^n p^{\frac{1}{2}}(\mathring{A}R_\neq)+ Q^n p^{\frac{1}{2}}(A_\neq \mathring{R})\bigg\ra} \notag\\
			&\;\lesssim \Big(\left\|Q^n p^{\frac{1}{2}}(A_\neq R_\neq)_\neq\right\|+\norm{Q^n p^{\frac{1}{2}}(\mathring{A}R_\neq)}+\norm{Q^n p^{\frac{1}{2}}(A_\neq \mathring{R})}\Big)\norm{Z_1^n}  \notag \\
			&\;\lesssim \Big(\norm{p^{\frac{n+1}{2}}\hat{A}_\neq \ast p^{\frac{n+1}{2}}\hat{R}_\neq} +\norm{\la\eta\ra^{n+1} \hat{\mathring{A}} \ast p^{\frac{n+1}{2}}\hat{R}_\neq}+\norm{\la\eta\ra^{n+1} \hat{\mathring{R}} \ast p^{\frac{n+1}{2}} \hat{A}_\neq }\Big)\norm{Z_1^n}   \\
			&\;\lesssim \mu^{-\frac{2n+3}{6}}\Big( \norm{p^{\frac{n+1}{2}}\hat{A}_\neq}_{L^1}+\norm{\la \eta\ra^{n+1} \hat{\mathring{A}}}_{L^1}\Big)\norm{Z_1^n}^2+\mu^{-\frac{2n+3}{6}}\norm{\la \eta\ra^{n+1} \hat{\mathring{R}}}_{L^1}\norm{Z_1^n}\norm{p^{\frac{1}{2}}Z_2^n}\notag\\
			&\;\lesssim \mu^{-\frac{2n+3}{6}}\Big( \norm{p^{\frac{n+3}{2}}\hat{A}_\neq}+\norm{\hat{\mathring{A}}}_{H^{n+2}}\Big)\norm{Z_1^n}^2+\mu^{-\frac{2n+3}{6}}\norm{\hat{\mathring{\phi}}}_{H^{n+2}}\norm{Z_1^n}\norm{p^{\frac{1}{2}}Z_2^n}  \notag\\
			&\;\leq C_M\left(\mu^{-\frac{2n+3}{6}}\norm{\nabla \psi}_{H^{n+3}}+\mu^{-2-\frac{2n}{3}}\norm{\phi}^2_{H^{n+2}} \right)\norm{Z_1^n}^2+\frac{\mu}{10000}\norm{p^{\frac{1}{2}}Z_2^n}^2,\notag
	\end{align}
	where we have used the fact that
	$$ p^{\frac{n+1}{2}}(t,k,\eta)\lesssim \min\left\{p^{\frac{n+1}{2}}(t,\;k-l,\;\eta-\xi)p^{\frac{n+1}{2}}(t,\;l,\;\xi)\;,\;\la\xi\ra^{n+1} p^{\frac{n+1}{2}}(t,k,\eta-\xi) \right\}.$$
	$NL_{1b}$ can be split into
	\begin{align*}
		NL_{1b}=-\frac{1}{M^2}\bigg\la Q^n p^{\frac{1}{2}}(\p_X R_\neq \mathring{\psi}_1) , Q^n p^{\frac{1}{2}}R_\neq \bigg\ra-\frac{1}{M} \bigg\la Q^n p^{\frac{1}{2}} \big(\p_X R_\neq\mathcal{B}_{1\neq}\big)_\neq ,  Z_1^n \bigg\ra.
	\end{align*}
	Then we estimate these terms item by item. For  $NL_{1b}$, by \cref{com} and \ref{L1}, we deduce that
	\begin{align}\label{NL_1211}
			& \frac{1}{M^2}\bigg\la Q^n p^{\frac{1}{2}}(\p_X R_\neq \mathring{\psi} _1) , Q^n p^{\frac{1}{2}}R_\neq\bigg\ra\\ \nonumber
			=& \frac{1}{M^2}\bigg\la \p_X(Q^n p^{\frac{1}{2}} R_\neq) \mathring{\psi} _1 , Q^n p^{\frac{1}{2}}R_\neq \bigg\ra+ \frac{1}{M^2}\bigg\la \mathcal{T}^{n+1}(\p_X R_\neq \mathring{\psi} _1) , Q^n p^{\frac{1}{2}}R_\neq\bigg\ra \\ \nonumber
			\lesssim  &   \bigg\la \abs{p^{\frac{n+1}{2}} \hat{R}_\neq} \ast \abs{\eta \la\eta\ra^{n} \hat{\mathring{\psi}} _1} , \abs{Q^n p^{\frac{1}{2}}\hat{R}_\neq} \bigg\ra \lesssim  \mu^{-\frac{2n+3}{6}}\norm{\p_y \mathring{\psi} _1}_{H^{n+1}}\norm{Z_1^n}^2, \nonumber
	\end{align}	
	where we have used the fact that $\bigg\la \p_X(Q^n p^{\frac{1}{2}} R_\neq) \mathring{\psi} _1 , Q^n p^{\frac{1}{2}}R_\neq \bigg\ra=0$.
	The other term in $NL_{1b}$ can be treated by using \cref{L1} as
	\begin{align}\label{NL_122}
			\frac{1}{M}\bigg\la Q^n p^{\frac{1}{2}} \big(\p_X R_\neq \mathcal{B}_{1\neq}\big)_\neq ,  Z_1^n\bigg\ra \lesssim \mu^{-\frac{2n+3}{6}}\|\nabla \phi\|_{H^{n+3}}\left(\norm{Z_1^n}^2+\norm{Z_2^n}^2+\norm{Z_3^n}^2\right).
	\end{align}
	Decompose $NL_{1c}$ by frequency:
	\begin{align*}
		NL_{1c}=&-\frac{1}{M^2}\bigg\la Q^n p^{\frac{1}{2}}(\pt_Y R_\neq \mathring{\psi} _2) , Q^n p^{\frac{1}{2}}R_\neq\bigg\ra -  \frac{1}{M}\bigg\la Q^n p^{\frac{1}{2}}\big(\p_y\mathring{\phi} \mathcal{B}_{2\neq} \big),  Z_1^n\bigg\ra\\
		 &- \frac{1}{M}\bigg\la Q^n p^{\frac{1}{2}}\big(\pt_Y R_\neq \mathcal{B}_{2\neq} \big)_\neq , Z_1^n\bigg\ra.
	\end{align*}
	We handle these terms one by one. We have from \cref{com} and \ref{L1} that 
	\begin{align}\label{NL_1311}
			& \frac{1}{M^2}\bigg\la Q^n p^{\frac{1}{2}}(\pt_Y R_\neq \mathring{\psi} _2) , Q^n p^{\frac{1}{2}}R_\neq\bigg\ra\\\nonumber
			=& \frac{1}{M^2}\bigg\la \pt_Y(Q^n p^{\frac{1}{2}} R_\neq) \mathring{\psi} _2 , Q^n p^{\frac{1}{2}}R_\neq \bigg\ra+ \frac{1}{M^2}\bigg\la \mathcal{T}^{n+1}(\pt_Y R_\neq \mathring{\psi} _2) , Q^n p^{\frac{1}{2}}R_\neq\bigg\ra \\\nonumber
			\lesssim  & \bigg\la \abs{ Q^n p^{\frac{1}{2}} R_\neq \p_y\mathring{\psi} _2} , \abs{Q^n p^{\frac{1}{2}}R_\neq} \bigg\ra+  \bigg\la \abs{p^{\frac{n+1}{2}} \hat{R}_\neq} \ast \abs{\eta \la\eta\ra^{n} \hat{\mathring{\psi}} _2} , \abs{Q^n p^{\frac{1}{2}}\hat{R}_\neq} \bigg\ra\\\nonumber
			\lesssim & \mu^{-\frac{2n+3}{6}}\norm{\p_y \mathring{\psi} _2}_{H^{n+1}}\norm{Z_1^n}^2.\nonumber
	\end{align}
	The latter two terms can be directly bounded as           
	\begin{align}\label{NL_132}
			&\frac{1}{M} \bigg\la Q^n p^{\frac{1}{2}}\big(\p_y \mathring{\phi} \mathcal{B}_{2\neq}\big) ,  Z_1^n\bigg\ra \lesssim \mu^{-\frac{2n+3}{6}}\norm{\p_y \mathring{\phi} }_{H^{n+2}}\Big(\norm{Z_1^n}^2+\norm{Z_2^n}^2+\norm{Z_3^n}^2\Big),\\
			&\frac{1}{M} \bigg\la Q^n p^{\frac{1}{2}}\big(\pt_Y R_\neq \mathcal{B}_{2\neq} \big)_\neq ,  Z_1^n\bigg\ra \lesssim \mu^{-\frac{2n+3}{6}}\norm{ \nabla \phi}_{H^{n+3}}\Big(\norm{Z_1^n}^2+\norm{Z_2^n}^2+\norm{Z_3^n}^2\Big). \nonumber
	\end{align}
	Combining \cref{NL_11}-\cref{NL_132}, one can verify
	\begin{align}\label{NL_1}
		\bigg\la \mathcal{Z}_1^n ,  Z_1^n\bigg\ra\leq &C_M \mu^{-\frac{2n+3}{6}}\big(\norm{ \nabla \phi}_{H^{n+3}}+\norm{\nabla \psi}_{H^{n+3}}\big)\Big(\norm{Z_1^n}^2+\norm{Z_2^n}^2+\norm{Z_3^n}^2\Big)\\
         &\qquad+ C_M\mu^{-2-\frac{2n}{3}}\norm{\phi}^2_{H^{n+2}} \norm{Z_1^n}^2+\frac{\mu}{10000}\norm{p^{\frac{1}{2}}Z_2^n}^2. \nonumber
	\end{align}
	
	\noindent {\bf{$L^2$ estimate on $Z_2^n$}}. An energy estimate gives
	\begin{align}\label{Z_2^n}
			\frac{1}{2}&\frac{d}{dt}\norm{Z_2^n}^2=-\norm{ \sqrt{ \frac{\p_t m_1}{m_1}+\frac{\p_t m_2}{m_2} +\frac{3+2n}{4}\left(\frac{\p_t \omega}{\omega}-\frac{\p_t p}{p}\right)+(\lambda+2\mu)p}Z_2^n}^2+NL_{2a}+NL_{2b}\\
			&+NL_{2c}+\bigg\la \left(\frac{1}{M}p^{\frac{1}{2}}+\frac{2M k^2}{p^{\frac{3}{2}}} \right)Z_1^n , Z_2^n\bigg\ra -2\bigg\la\frac{k^2}{p}Z_3^n , Z_2^n\bigg\ra -2\mu M^2 \norm{\frac{k}{p^{\frac{1}{2}}}Z_2^n}^2+\frac{1}{4}\bigg\la \frac{\p_t p}{p}Z_2^n , Z_2^n \bigg\ra ,\nonumber
	\end{align}
	where
	\begin{align*}
		NL_{2a}=&-\bigg\la Q^{n}  \left\{(\p_X\B_{1})^2\right\}_\neq ,  Z_2^n\bigg\ra-\bigg\la Q^{n}  \left\{(\pt_Y\B_{2})^2\right\}_\neq ,  Z_2^n\bigg\ra-2\bigg\la Q^{n}  \left(\pt_Y\B_1 \p_X \B_2\right)_\neq ,  Z_2^n\bigg\ra,\\
		:=&NL_{2a1}+NL_{2a2}+NL_{2a3}\\
		NL_{2b}=&\frac{1}{M^2}\bigg\la Q^{n} \tilde{\dv}\left(\frac{R}{R+1} \nablat R\right)_\neq ,  Z_2^n\bigg\ra-\frac{1}{M^2}\bigg\la Q^{n} \tilde{\dv}\left(\frac{P'(R+1)-1}{R+1} \nablat  R\right)_\neq ,  Z_2^n\bigg\ra\\
		&-\bigg\la Q^{n} \tilde{\dv}\left((\lambda+2\mu)\frac{R}{R+1}\nablat A+\mu\frac{R}{R+1}\nablat ^{\bot}\Omega\right)_\neq ,  Z_2^n\bigg\ra,\\
		:=&NL_{2b1}+NL_{2b2}+NL_{2b3}\\
		NL_{2c}=&-\bigg\la Q^{n}  \left\{\B \cdot \nablat  A \right\}_\neq ,  Q^{n}  A_\neq\bigg\ra.
	\end{align*}
	 We consider $NL_{2a}$ at first. Using \cref{L1}, one has 
	\begin{align}\label{NL_211}
			NL_{2a3}\lesssim &\abs{\bigg\la Q^{n}  \left\{\pt_Y\mathcal{B}_{1\neq} \p_X \B_{2\neq}\right\}_\neq ,  Z_2^n\bigg\ra}  +\abs{\bigg\la Q^{n}  \left\{ \p_X \B_{2\neq} \p_y\mathring{\psi_{1}}\right\} , Z_2^n\bigg\ra}\\
			\lesssim& \mu^{-\frac{2n+3}{6}}\big(\norm{\nabla\psi}_{H^{n+2}}+\norm{\nabla\phi}_{H^{n+2}}\big)\Big(\norm{Z_1^n}^2+\norm{Z_2^n}^2+\norm{Z_3^n}^2\Big).\nonumber
	\end{align}
	$NL_{2a1}$ and $NL_{2a2}$ can be treated in the same way as in  \cref{NL_211}, then we obtain
	\begin{align}\label{NL_21}
		NL_{2a}\lesssim \mu^{-\frac{2n+3}{6}}\big(\norm{\nabla\psi}_{H^{n+2}}+\norm{\nabla\phi}_{H^{n+2}}\big)\Big(\norm{Z_1^n}^2+\norm{Z_2^n}^2+\norm{Z_3^n}^2\Big). 
	\end{align}
	Turning to $NL_{2b}$,  \cref{L1} yields 
	\begin{align}\label{NL_221}
			&NL_{2b1}\lesssim  \bigg|\bigg\la Q^{n} \left(\nablat R_\neq \Do \Big(\frac{R}{R+1}\Big)  \right) , \nablat Z_2^n\bigg\ra\bigg|+ \bigg|\bigg\la Q^{n}\pt_Y \left(\p_y \mathring{\phi} \Big(\frac{R}{R+1}\Big)_\neq\right) ,  Z_2^n\bigg\ra\bigg| \nonumber\\ 
			&\qquad\qquad+ \bigg|\bigg\la Q^{n} \left(\nablat R_\neq\Big(\frac{R}{R+1}\Big)_\neq\right)_\neq , \nablat Z_2^n\bigg\ra\bigg|\\ \nonumber
			& \quad\leq  C_M\left[\mu^{-2-\frac{2n}{3}}\norm{\phi}^2_{H^{n+2}} \norm{Z_1^n}^2+ \mu^{-\frac{2n+3}{6}}\norm{\nabla\phi}_{H^{n+2}}\left( \norm{Z_{1}^n}^2+\norm{Z_{2}^n}^2 \right)\right]+\frac{\mu}{60000}\norm{p^{\frac{1}{2}}Z_2^n}^2.
	\end{align}       
	$NL_{2b2}$ is treated similarly. $NL_{2b3}$ can be bounded by  
	\begin{align}\label{NL2211}
    NL_{2b3}\lesssim \mu^{-\frac{2n-3}{6}} \left( \norm{\phi}_{H^{n+2}} + \norm{\nabla \psi}_{H^{n+2}} \right)\left( \norm{Z_1^n}^2+\norm{p^{\frac{1}{2}}Z_2^n}^2+\norm{p^{\frac{1}{2}}Z_3^n}^2\right).
    \end{align}
	 Combining \cref{NL_221,NL2211}, we obtain
	\begin{align}\label{NL_22}
		&NL_{2b}\leq C_M\mu^{-\frac{2n-3}{6}}\big(\norm{\nabla\psi}_{H^{n+2}}+\norm{\phi}_{H^{n+3}}\big)\Big(\norm{Z_1^n}^2+\norm{p^{\frac{1}{2}}Z_2^n}^2+\norm{p^{\frac{1}{2}}Z_3^n}^2\Big)\\
        &\quad+C_M\left[\mu^{-2-\frac{2n}{3}}\norm{\phi}^2_{H^{n+2}} \norm{Z_1^n}^2+\mu^{-\frac{2n+3}{6}}\norm{\nabla\phi}_{H^{n+2}}\left( \norm{Z_{1}^n}^2+\norm{Z_{2}^n}^2 \right)\right]+\frac{\mu}{30000}\norm{p^{\frac{1}{2}}Z_2^n}^2.\nonumber
	\end{align}
	Finally we consider $NL_{2c}$. $NL_{2c}$ can be split into
	\begin{align*}
		NL_{2c}&=-\bigg\la Q^{n} (\p_X A_\neq \mathring{\psi_{1}}) , Q^{n} A_\neq \bigg\ra-\bigg\la Q^{n} (\p_X A_\neq \B_{1\neq})_\neq , Q^{n} A_\neq \bigg\ra-\bigg\la Q^{n} (\pt_YA \B_2)_\neq , Q^{n} A_\neq \bigg\ra\\
		&:=NL_{2c1}+NL_{2c2}+NL_{2c3}.
	\end{align*}
	Because of $\|\mathring{\psi_{1}}\|\lesssim \mu^{\alpha} (1+t)^{\frac{1}{4}}$, we consider the bad term $NL_{2c1}$ at first.
	 We have
	\begin{align}\label{NL_2311}
		\bigg\la Q^{n} (\p_X A_\neq \mathring{\psi_{1}}) , Q^{n} A_\neq \bigg\ra&=\bigg\la \p_X \left(Q^{n} A_\neq\right) \mathring{\psi_{1}} , Q^{n} A_\neq \bigg\ra+\bigg\la \mathcal{T}^n (\p_X A_\neq \mathring{\psi_{1}}) , Q^{n} A_\neq \bigg\ra \\ \nonumber
		&\lesssim \mu^{-\frac{2n+3}{6}}\norm{\nabla\psi}_{H^{n+1}}\norm{Z_2^n}^2.
	\end{align}
    And $NL_{2c2}$ can be bounded by
    \begin{align}\label{NL_23111}
    NL_{2c2}\leq\frac{\mu}{30000}\norm{p^{\frac{1}{2}}Z_2^n}^2+C_M \mu^{-\frac{2n}{3}-2}\norm{\nabla\psi}^2_{H^{n+1}}\norm{Z_2^n}^2.
	\end{align}
	$NL_{2c3}$ can be estimated in the same way as in  \cref{NL_1311,NL_211,NL_221,NL_23111}, thus we obtain
	\begin{align}\label{NL_23}
		NL_{2c}\leq& C_M\mu^{-\frac{2n+3}{6}}\big(\norm{\nabla\psi}_{H^{n+2}}+\norm{\nabla\phi}_{H^{n+2}}\big)\Big(\norm{Z_1^n}^2+\norm{Z_2^n}^2+\norm{Z_3^n}^2\Big)\\
        & +\frac{\mu}{15000}\norm{p^{\frac{1}{2}}Z_2^n}^2+C_M \mu^{-\frac{2n}{3}-2}\left(\norm{\nabla\psi}^2_{H^{n+1}}+\norm{\psi_2}^2\right)\norm{Z_2^n}^2.\nonumber
	\end{align}
	Combining  \cref{NL_21,NL_22,NL_23}, one can verify
	\begin{align}\label{NL_2}
		 N&L_{2a}+NL_{2b}+NL_{2c}\leq C_M \mu^{-\frac{2n+3}{6}} \big(\norm{\nabla\psi}_{H^{n+2}}+\norm{\nabla\phi}_{H^{n+2}}\big)\Big(\norm{Z_1^n}^2+\norm{Z_2^n}^2+\norm{Z_3^n}^2\Big)\\ \nonumber
		 &+C_M\mu^{-\frac{2n-3}{6}}\big(\norm{\nabla\psi}_{H^{n+2}}+\norm{\phi}_{H^{n+3}}\big)\Big(\norm{Z_1^n}^2+\norm{p^{\frac{1}{2}}Z_2^n}^2+\norm{p^{\frac{1}{2}}Z_3^n}^2\Big)\\\nonumber
		 &+C_M\mu^{-2-\frac{2n}{3}}\left[\norm{\phi}^2_{H^{n+2}}\norm{Z_1^n}^2+\left(\norm{\nabla\psi}_{H^{n+1}}^2+\norm{\psi_2}^2\right)\norm{Z_2^n}^2\right] +\frac{\mu}{10000}\norm{p^{\frac{1}{2}}Z_2^n}^2.\nonumber
	\end{align}
	
\noindent	{\bf{$L^2$ estimate on $Z_3^n$}}. An energy estimate gives
	\begin{align}\label{Z_3^n} 
		\frac{1}{2}\frac{d}{dt}&\|Z_3^n\|^2=-\norm{\sqrt{\frac{\p_t m_1}{m_1}+\frac{\p_t m_2}{m_2}  +\mu p} Z_3^n}^2 - \frac{3+2n}{4}\norm{\sqrt{\frac{\p_t\omega}{\omega}}Z_3^n}^2 \\ \nonumber
		&+\mu(\lambda+\mu) M^2 \bigg\la p  Z_2^n , Z_3^n\bigg\ra  
		-\mu M^2 \bigg\la \frac{\p_t p}{p}\ Z_2^n , Z_3^n\bigg\ra -2\mu M^3 \bigg\la \frac{k^2}{p^{\frac{3}{2}}} Z_1^n , Z_3^n\bigg\ra \\  \nonumber
		&+2\mu M^2 \norm{\frac{k}{p^{\frac{1}{2}}}Z_3^n}^2 +2\mu^2 M^4 \bigg\la \frac{k^2}{p} Z_2^n , Z_3^n\bigg\ra +NL_{3},  \nonumber
	\end{align}
	where
	\begin{align*}
		NL_{3}=&2\mu M^2 \bigg\la Q^{n} \left\{\pt_{Y}\B_1\p_X\B_2 \right\}_\neq ,  Z_3^n\bigg\ra -\bigg\la Q^{n} (A\Omega)_\neq ,  Z_3^n\bigg\ra-\bigg\la Q^{n} (AR)_\neq ,  Z_3^n\bigg\ra\\
		&+\mu M^2 \bigg\la Q^{n} \left\{(\pt_Y\B_2)^2\right\}_\neq ,  Z_3^n\bigg\ra+\mu M^2 \bigg\la Q^{n} \left\{(\p_X\B_1)^2\right\}_\neq ,  Z_3^n\bigg\ra,\\
		&+\mu M^2 \bigg\la Q^{n} \tilde{\dv}\left\{\frac{R}{R+1}(\lambda+2\mu)\nablat  A+\frac{R}{R+1}\mu\nablat ^{\bot} \Omega\right\}_\neq ,  Z_3^n\bigg\ra\\
		&-\mu \bigg\la Q^{n} \tilde{\dv}\left(\frac{R}{R+1}\nablat R\right)_\neq ,  Z_3^n\bigg\ra+\mu\bigg\la Q^{n} \tilde{\dv}\left(\frac{P'(R+1)-1}{R+1}\nablat  R\right)_\neq ,  Z_3^n \bigg\ra\\
		&-\bigg\la Q^{n} \nablat^{\bot}  \cdot \left\{\frac{R}{R+1}(\lambda+2\mu)\nablat  A + \frac{R}{R+1} \mu\nablat^{\bot}\Omega \right\}_\neq ,  Z_3^n\bigg\ra,\\
		&-\bigg\la Q^{n} \left\{ \B \cdot \nablat  (R+\Omega-\mu M^2 A) \right\}_\neq , Q^{n} (R+\Omega-\mu M^2 A)_\neq \bigg\ra.
	\end{align*}        
	$NL_{3}$ can be treated in the same way as in  \cref{NL_211,NL_221,NL_2311}, so we have
	\begin{align}\label{NL_3}
			N&L_{3}\leq C_M \mu^{-\frac{2n+3}{6}} \big(\norm{\nabla\psi}_{H^{n+2}}+\norm{\nabla\phi}_{H^{n+2}}\big)\Big(\norm{Z_1^n}^2+\norm{Z_2^n}^2+\norm{Z_3^n}^2\Big)\nonumber\\ 
			&+C_M\mu^{-\frac{2n-3}{6}}\big(\norm{\nabla\psi}_{H^{n+2}}+\norm{\phi}_{H^{n+3}}\big)\Big(\norm{Z_1^n}^2+\norm{p^{\frac{1}{2}}Z_2^n}^2+\norm{p^{\frac{1}{2}}Z_3^n}^2\Big)\\
			&+C_M\mu^{-2-\frac{2n}{3}}\left[\norm{\phi}^2_{H^{n+2}}\norm{Z_1^n}^2+\left(\norm{\nabla\psi}_{H^{n+1}}^2+\norm{\psi_2}^2\right)\norm{Z_3^n}^2\right] +\frac{\mu}{10000}\norm{p^{\frac{1}{2}}Z_3^n}^2.\nonumber
	\end{align}
	
\noindent	{\bf{$L^2$ estimate on $\frac{\p_t p}{p^{\frac{3}{2}}} Z_1^n$ }}. An energy estimate gives
	\begin{align}\label{lower Z_1^n}
			\frac{M^2}{2}\frac{d}{dt}\norm{\frac{\p_t p}{p^{\frac{3}{2}}}Z_1^n}^2&=-M^2\norm{\sqrt{\left(\frac{\p_t m_1}{m_1}+\frac{\p_t m_2}{m_2}+\frac{3+2n}{4}\frac{\p_t \omega}{\omega}\right)\frac{(\p_t p)^2}{p^3}}Z_1^n}^2 \nonumber\\ 
			+&M^2(\frac{n}{2}-1)\bigg\la \frac{(\p_t p)^3}{p^4} Z_1^n , Z_1^n\bigg\ra-M \bigg\la \frac{(\p_t p)^2}{p^{\frac{5}{2}}}Z_1^n , Z_2^n\bigg\ra\\ \nonumber
			+&2M^2 \bigg\la k^2\frac{\p_t p}{p^3}Z_1^n , Z_1^n \bigg\ra+M^2 \bigg\la \frac{\p_t p}{p^{\frac{3}{2}}} \mathcal{Z}_1^n, \frac{\p_t p}{p^{\frac{3}{2}}} Z_1^n\bigg\ra,
	\end{align}
	where
	\begin{align*}
		M^2\bigg\la \frac{\p_t p}{p^{\frac{3}{2}}} \mathcal{Z}_1^n,  \frac{\p_t p}{p^{\frac{3}{2}}}Z_1^n\bigg\ra=&-M \bigg\la Q^{n} \frac{(\p_t p)^2}{p^\frac{5}{2}}(A R)_\neq ,  Z_1^n \bigg\ra- \bigg\la Q^{n}  \frac{\p_t p}{p}\big(\pt_Y R \B_2\big)_\neq ,  Q^{n}  \frac{\p_t p}{p} R_\neq \bigg\ra\\
		&- \bigg\la Q^{n}  \frac{\p_t p}{p}\big(\p_X R \B_1\big)_\neq ,  Q^{n}  \frac{\p_t p}{p} R_\neq \bigg\ra\\
		:=&NL_{4a}+NL_{4b}.
	\end{align*}
	A direct calculation gives
	\begin{align}\label{NL_41-3}
		&NL_{4a}\lesssim \mu^{-\frac{2n+3}{6}}\big(\norm{\nabla\psi}_{H^{n+2}}+\norm{\psim_2}+\norm{\nabla\phi}_{H^{n+2}}\big)\Big(\norm{Z_1^n}^2+\norm{Z_2^n}^2+\norm{Z_3^n}^2\Big). 
	\end{align}
We calculate that 
	\begin{align*}
		NL_{4b}&=-\bigg\la Q^{n}  \frac{\p_t p}{p}\big(\p_X R_\neq \mathring{\psi_1}\big) ,  Q^{n}  \frac{\p_t p}{p} R_\neq \bigg\ra - \bigg\la Q^{n}  \frac{\p_t p}{p}\big(\p_X R_\neq \B_{1\neq}\big)_\neq ,  Q^{n}  \frac{\p_t p}{p} R_\neq \bigg\ra,\\
		&:=NL_{4b1}+NL_{4b2}.
	\end{align*}
	Due to $\norm{\mathring{\psi_{1}}}\lesssim \mu^{\alpha} (1+t)^{\frac{1}{4}}$, we consider $NL_{4b1}$ at first.
	Using \cref{com} and \ref{L1}, we have
	\begin{align}\label{NL_4211}
			& NL_{4b1}=  -\bigg\la \p_X \left(Q^{n} \frac{\p_t p}{p} R_\neq \right) \mathring{\psi} _1 , Q^{n}  \frac{\p_t p}{p}R_\neq\bigg\ra - \bigg\la \mathcal{P}^{n} (\p_X R_\neq \mathring{\psi} _1) , Q^{n}  \frac{\p_t p}{p}R_\neq\bigg\ra \\
			&\quad\lesssim      \bigg\la \abs{\eta \la\eta\ra^{n} \hat{\mathring{\psi}}_1} \ast \abs{p^{\frac{n}{2}} \hat{R}_\neq} , \abs{Q^{n}  \frac{\p_t p}{p}\hat{R}_\neq}\bigg\ra \lesssim  \mu^{-\frac{2n+3}{6}}\norm{\p_y \mathring{\psi} _1}_{H^{n+1}}\norm{Z_1^n}^2.\notag
	\end{align}
	The latter term can be treated in a straightforward manner by using \cref{L1}, namely, 
	\begin{align}\label{NL_422}
		NL_{4b2}\lesssim \mu^{-\frac{2n+3}{6}} \norm{\nabla \psi}_{H^{n+1}}\norm{Z_1^n}^2.
	\end{align}
	Thus, we have from \cref{NL_41-3}-\cref{NL_422} that 
	\begin{align}\label{NL_4}
		 NL_{4a}+NL_{4b}\lesssim  \mu^{-\frac{2n+3}{6}}\big(\norm{\nabla\psi}_{H^{n+2}}+\norm{\psim_2}+\norm{\nabla\phi}_{H^{n+2}}\big)\Big(\norm{Z_1^n}^2+\norm{Z_2^n}^2+\norm{Z_3^n}^2\Big). 
	\end{align}
	Adding \cref{Z_1^n,Z_2^n,Z_3^n,lower Z_1^n} up and using \cref{NL_1,NL_2,NL_3,NL_4}, we have
	\begin{align}\label{ongoing1}\nonumber
		&\frac{1}{2}\frac{d}{dt}\left(\norm{\left(1+M^2\frac{(\p_t p)^2}{{p}^3}\right)^{\frac{1}{2}}Z_1^n}^2+\|Z_2^n\|^2+\|Z_3^n\|^2\right)\\ \nonumber
		\leq & -\norm{\sqrt{\frac{\p_t m_1}{m_1}+\frac{\p_t m_2}{m_2}+\frac{2n+3}{4}\left(\frac{\p_t \omega}{\omega}-\frac{\p_t p}{p}\right)}Z_1^n}^2+C_M \mu^{-\frac{2n}{3}-2}\norm{\phi}_{H^{n+2}}^2\norm{Z_1^n}^2\\ 
		&-\norm{\sqrt{\frac{\p_t m_1}{m_1}+\frac{\p_t m_2}{m_2}+(\lambda+2\mu)p+\frac{2n+3}{4}\left(\frac{\p_t \omega}{\omega}-\frac{\p_t p}{p}\right)}Z_2^n}^2+\sum_{i=0}^2\mathcal{L}_{i} \\ \nonumber
		&-\norm{\sqrt{\frac{\p_t m_1}{m_1}+\frac{\p_t m_2}{m_2}+\mu p}Z_3^n}^2 +C_M \mu^{-2-\frac{2n}{3}}\left(\norm{\nabla\psi}_{H^{n+1}}^2+\norm{\psi_2}^2\right)\left(\norm{Z_2^n}^2+\norm{Z_3^n}^2\right)\\ \nonumber
		&+C_M \mu^{-\frac{2n+3}{6}}\big(\norm{\nabla\psi}_{H^{n+3}}+\norm{\mathring{\psi} _2}+\norm{\nabla\phi}_{H^{n+3}}\big)\Big(\norm{Z_1^n}^2+\norm{Z_2^n}^2+\norm{Z_3^n}^2\Big) \\\nonumber  
		&+C_M\mu^{-\frac{2n-3}{6}} \big(\norm{\nabla\psi}_{H^{n+2}}+\norm{\phi}_{H^{n+3}}\big) \Big(\norm{Z_1^n}^2+\norm{p^{\frac{1}{2}}Z_2^n}^2+\norm{p^{\frac{1}{2}}Z_3^n}^2 \Big),   
	\end{align}
	where
	\begin{align*}
		\mathcal{L}_0=&\frac{1}{4}\bigg\la \frac{\p_t p}{p} Z_2^n , Z_2^n\bigg\ra - \frac{1}{4}\bigg\la \frac{\p_t p}{p} Z_1^n , Z_1^n\bigg\ra, \\
		\mathcal{L}_1=& M^2 (\frac{n}{2}-1)\bigg\la \frac{(\p_t p)^3}{p^4} Z_1^n , Z_1^n\bigg\ra +2M^2\bigg\la k^2\frac{\p_t p}{p^3}Z_1^n , Z_1^n \bigg\ra\\
		&+ M \bigg\la \left(2\frac{k^2}{p^{\frac{3}{2}}}-\frac{(\p_t p)^2}{p^{\frac{5}{2}}}\right)Z_1^n , Z_2^n\bigg\ra  -\frac{2n+3}{4}\norm{\sqrt{\frac{\p_t \omega}{\omega}}Z_3^n}^2+2 M^2 \mu\norm{\frac{k}{p^{\frac{1}{2}}}Z_3^n}^2\\
		&-2 M^3 \mu\bigg\la \frac{k^2}{p^{\frac{3}{2}}}Z_1^n , Z_3^n\bigg\ra-2M^2\mu\norm{\frac{k}{p^{\frac{1}{2}}}Z_2^n}^2-M^2 \norm{\sqrt{\left(\frac{\p_t m_1}{m_1}+\frac{\p_t m_2}{m_2}+\frac{3+2n}{4}\frac{\p_t \omega}{\omega}\right)\frac{(\p_t p)^2}{p^3}}Z_1^n}^2,\\
		\mathcal{L}_2=&2(M^4\mu^2-1)\bigg\la \frac{k^2}{p}Z_2^n , Z_3^n\bigg\ra -\mu M^2 \bigg\la \frac{\p_t p}{p} Z_2^n , Z_3^n\bigg\ra +\mu (\lambda+\mu) M^2 \bigg\la p Z_2^n , Z_3^n\bigg\ra.
	\end{align*}
	Using $|\p_t p|\leqslant 2|k| p^{\frac{1}{2}}$ and  \cref{m1ini},we obtain
	\begin{align}\label{L_1-6}
		\mathcal{L}_1\leqslant \frac{\tilde{C}}{4N}\bigg(\norm{\sqrt{\frac{\p_t m_2}{m_2}}Z_1^n}^2+\norm{\sqrt{\frac{\p_t m_2}{m_2}}Z_2^n}^2+\norm{\sqrt{\frac{\p_t m_2}{m_2}}Z_3^n}^2\bigg). 
	\end{align}
	In terms of  $(\lambda+\mu)M^2<\frac{1}{2500}$, $\mu<\delta_0$, $\lambda+\mu<\delta_0$ and $|\p_t p|\leqslant 2|k| p^{\frac{1}{2}}$, we bound $\mathcal{L}_2$ as
	\begin{align}\label{L_5}
		\mathcal{L}_2\leqslant \frac{\lambda+\mu}{2}\norm{p^{\frac{1}{2}}Z_2^n}^2+\frac{3\mu}{4}\norm{p^{\frac{1}{2}}Z_3^n}^2 + \frac{\tilde{C}}{4N} \bigg(\norm{\sqrt{\frac{\p_t m_2}{m_2}}Z_2^n}^2+\norm{\sqrt{\frac{\p_t m_2}{m_2}}Z_3^n}^2\bigg).  
	\end{align}
	Substituting \cref{L_1-6,L_5} into \cref{ongoing1}, there holds
	\begin{align*}
		&\frac{1}{2}\frac{d}{dt} \bigg(\norm{\left(1+M^2\frac{(\p_t p)^2}{{p}^3}\right)^{\frac{1}{2}}Z_1^n}^2+\norm{Z_2^n}^2+\norm{Z_3^n}^2 \bigg)\\
		\leq & -\norm{\sqrt{\frac{63}{64}\left(\frac{\p_t m_1}{m_1}+\frac{\p_t m_2}{m_2}\right)+\frac{2n+3}{4}\left(\frac{\p_t \omega}{\omega}-\frac{\p_t p}{p}\right)}Z_1^n}^2 \\
		&-\norm{\sqrt{\frac{31}{64}\left(\frac{\p_t m_1}{m_1}+\frac{\p_t m_2}{m_2}+(\lambda+2\mu)p\right)+\frac{2n+3}{4} \left(\frac{\p_t \omega}{\omega}-\frac{\p_t p}{p} \right)}Z_2^n}^2\\
		&-\frac{15}{64}\norm{\sqrt{\frac{\p_t m_1}{m_1}+\frac{\p_t m_2}{m_2}+\mu p}Z_3^n}^2+\frac{1}{4}\bigg\la \frac{\p_t p}{p} Z_2^n , Z_2^n\bigg\ra- \frac{1}{4}\bigg\la \frac{\p_t p}{p} Z_1^n , Z_1^n\bigg\ra\\
		&+C_M\mu^{-\frac{2n+3}{6}} \Big(\norm{\nabla\psi}_{H^{n+3}}+\norm{\psi_2}+\norm{\nabla\phi}_{H^{n+3}}\Big) \Big(\norm{Z_1^n}^2+\norm{Z_2^n}^2+\norm{Z_3^n}^2 \Big)\\
		&+C_M\mu^{-\frac{2n-3}{6}} \Big(\norm{\nabla\psi}_{H^{n+2}}+\norm{\phi}_{H^{n+3}}\Big) \Big(\norm{Z_1^n}^2+\norm{p^{\frac{1}{2}}Z_2^n}^2+\norm{p^{\frac{1}{2}}Z_3^n}^2 \Big)\\
		&+C_M \mu^{-2-\frac{2n}{3}}\left(\norm{\phi}_{H^{n+2}}^2+\norm{\psi_2}^2+\norm{\nabla\psi}_{H^{n+1}}^2\right) \left(\norm{Z_1^n}^2+\norm{Z_2^n}^2+\norm{Z_3^n}^2\right).      
	\end{align*}     
This finishes the proof of \cref{Z_1^n+Z_2^n+Z_3^n}.
\end{proof}

\subsection{Estimate on $\frac{M}{4}\frac{d}{dt}\bigg\la \frac{\p_t p}{p^{\frac{3}{2}}}Z_1^n , Z_2^n\bigg\ra-\gamma\frac{d}{dt}\bigg\la p^{-\frac{1}{2}}Z_1^n , Z_2^n\bigg\ra$}
This section is devoted to the enhanced dissipation for density.
\begin{Lem}\label{damping of Z_1^n}
	Under the same assumptions of \cref{MT},  it holds for $n=0,1$ and  $\gamma=\frac{M\mu^{\frac{1}{3}}}{4}$ that
	\begin{align*} 
		&\frac{M}{4}\frac{d}{dt}\bigg\la \frac{\p_t p}{p^{\frac{3}{2}}}Z_1^n , Z_2^n\bigg\ra-\gamma\frac{d}{dt}\bigg\la p^{-\frac{1}{2}}Z_1^n , Z_2^n\bigg\ra + \frac{\gamma}{2M} \norm{Z_1^n}^2 + 2\gamma M\norm{\frac{k}{p}Z_1^n}^2\\
		\leq  &\frac{1}{4}\norm{\sqrt{\frac{\p_t m_1}{m_1}+\frac{\p_t m_2}{m_2}+\frac{3+2n}{4}\left(\frac{\p_t \omega}{\omega}-\frac{\p_t p}{p} \right)}Z_1^n}^2 +\frac{\tilde{C}}{2N}\sum_{j=1}^3\norm{\sqrt{\frac{\p_t m_2}{m_2}}Z_j^n}^2\\
		&-\frac{1}{4}\bigg\la \frac{\p_t p}{p} Z_2^n , Z_2^n\bigg\ra + \frac{1}{4}\bigg\la \frac{\p_t p}{p} Z_1^n , Z_1^n\bigg\ra+\frac{\mu^{\frac{1}{3}}}{4}\norm{Z_2^n}^2+\frac{3(\lambda+2\mu)}{16}\norm{p^{\frac{1}{2}}Z_2^n}^2\\
		&+C_M \mu^{-\frac{2n+3}{6}}\Big(\norm{\nabla\psi}_{H^{n+2}}+\norm{\psi_2}+\norm{\nabla\phi}_{H^{n+2}}\Big)\Big(\norm{Z_1^n}^2+\norm{Z_2^n}^2+\norm{Z_3^n}^2\Big)\\
	    &+C_M\mu^{-\frac{2n-3}{6}} \Big(\norm{\nabla\psi}_{H^{n+2}}+\norm{\phi}_{H^{n+3}}\Big) \Big(\norm{Z_1^n}^2+\norm{p^{\frac{1}{2}}Z_2^n}^2+\norm{p^{\frac{1}{2}}Z_3^n}^2 \Big).
	\end{align*}
\end{Lem}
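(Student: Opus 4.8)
The plan is to differentiate in time the two scalar products, substitute the evolution equations \cref{non-zero n-th} for $\p_t Z_1^n$ and $\p_t Z_2^n$, and then sort the outcome into three groups: (I) the contributions in which the time derivative falls on the external multipliers, using $\p_t\big(\tfrac{\p_t p}{p^{3/2}}\big)=\tfrac{2k^2}{p^{3/2}}-\tfrac32\tfrac{(\p_t p)^2}{p^{5/2}}$ and $\p_t(p^{-1/2})=-\tfrac12\tfrac{\p_t p}{p^{3/2}}$; (II) the contributions coming from the \emph{linear} part of \cref{non-zero n-th}; and (III) the nonlinear contributions $\la\tfrac{\p_t p}{p^{3/2}}\mathcal Z_1^n,Z_2^n\ra$, $\la p^{-1/2}\mathcal Z_1^n,Z_2^n\ra$, $\la\tfrac{\p_t p}{p^{3/2}}Z_1^n,\mathcal Z_2^n\ra$, $\la p^{-1/2}Z_1^n,\mathcal Z_2^n\ra$ (up to the harmless factors $M/4$, $\gamma$). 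I would dispatch these in this order.

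For group (I), $|\p_t p|\le 2|k|p^{1/2}$ gives $\big|\p_t\big(\tfrac{\p_t p}{p^{3/2}}\big)\big|\lesssim\tfrac{k^2}{p^{3/2}}$ and $|\p_t(p^{-1/2})|\lesssim\tfrac{|k|}{p}$, so after Cauchy--Schwarz and $\tfrac{k^2}{p}\le1$ every such term is a multiple of a pairing $\la\tfrac{|k|}{p}|Z_1^n|,|Z_2^n|\ra$, which is split by Young's inequality between $\tfrac14\big\|\sqrt{\tfrac{\p_t m_1}{m_1}+\tfrac{\p_t m_2}{m_2}+\tfrac{3+2n}{4}(\tfrac{\p_t\omega}{\omega}-\tfrac{\p_t p}{p})}\,Z_1^n\big\|^2$ and $\tfrac{\mu^{1/3}}{4}\|Z_2^n\|^2$; here I use \cref{multiplier prop} to dominate $\tfrac{\p_t p}{p}$ (on the interval where it is positive) by $\tfrac{\p_t\omega}{\omega}+\mu^{1/3}$, the explicit formula \cref{m1ini} for the bound $\tfrac{|\p_t p|}{p}\le2\mu^{1/3}+\tfrac{\p_t m_1}{m_1}$ elsewhere, and $\tfrac{\p_t m_2}{m_2}=N\tfrac{k^2}{p}$ (with $N$ large) to soak up any leftover $\tfrac{k^2}{p}$-weighted quantity.

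Group (II) is the core of the lemma and is where the choice $\gamma=\tfrac{M\mu^{1/3}}{4}$ enters. Inserting into $-\gamma\tfrac{d}{dt}\la p^{-1/2}Z_1^n,Z_2^n\ra$ the linear coupling $\p_t Z_2^n\supset\tfrac1Mp^{1/2}Z_1^n-2M\p_X^2p^{-3/2}Z_1^n$ produces the genuine damping $-\tfrac\gamma M\|Z_1^n\|^2-2\gamma M\|\tfrac kpZ_1^n\|^2$, of which I move $\tfrac{\gamma}{2M}\|Z_1^n\|^2+2\gamma M\|\tfrac kpZ_1^n\|^2$ to the left, keeping $\tfrac{\gamma}{2M}\|Z_1^n\|^2$ in reserve; inserting $\p_t Z_1^n\supset-\tfrac1Mp^{1/2}Z_2^n$ into the same pairing yields exactly the transient loss $\tfrac\gamma M\|Z_2^n\|^2=\tfrac{\mu^{1/3}}{4}\|Z_2^n\|^2$. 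From $\tfrac M4\tfrac{d}{dt}\la\tfrac{\p_t p}{p^{3/2}}Z_1^n,Z_2^n\ra$, the couplings $\p_t Z_2^n\supset\tfrac1Mp^{1/2}Z_1^n$ and $\p_t Z_1^n\supset-\tfrac1Mp^{1/2}Z_2^n$ generate $\tfrac14\la\tfrac{\p_t p}{p}Z_1^n,Z_1^n\ra$ and $-\tfrac14\la\tfrac{\p_t p}{p}Z_2^n,Z_2^n\ra$; these two indefinite terms are precisely the ones on the right of the statement and are designed to cancel the opposite-sign terms $\tfrac14\la\tfrac{\p_t p}{p}Z_2^n,Z_2^n\ra-\tfrac14\la\tfrac{\p_t p}{p}Z_1^n,Z_1^n\ra$ produced in \cref{Z_1^n+Z_2^n+Z_3^n}, once the two estimates are summed. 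All the remaining linear pieces — the self-damping factors $-(\tfrac{\p_t m_1}{m_1}+\tfrac{\p_t m_2}{m_2}+\cdots)Z_1^n$ and $-(\cdots+(\lambda+2\mu)p)Z_2^n$, the residual $\tfrac14\tfrac{\p_t p}{p}Z_2^n$ in the $Z_2^n$ equation, the $Z_3^n$-couplings $2\p_X^2p^{-1}Z_3^n$, the $\mu M^2\p_X^2p^{-1}$ corrections, and the $\p_t\big(\tfrac{\p_t p}{p^{3/2}}\big)$-piece paired against $Z_1^n$ — all carry weights $\lesssim\tfrac{k^2}{p}$, $\tfrac{|k|}{p^{1/2}}$ or $p^{1/2}\mathfrak m_i$ and are distributed, via Young's inequality and $\tfrac{k^2}{p}\le1$, $\tfrac{|\p_t p|}{p^{1/2}}\le2|k|$, among the reserved $\tfrac{\gamma}{2M}\|Z_1^n\|^2$, the retained $2\gamma M\|\tfrac kpZ_1^n\|^2$, the dissipation $\tfrac14\|\sqrt{\cdots}\,Z_1^n\|^2$, the term $\tfrac{3(\lambda+2\mu)}{16}\|p^{1/2}Z_2^n\|^2$, and $\tfrac{\tilde C}{2N}\sum_{j=1}^3\|\sqrt{\p_t m_2/m_2}\,Z_j^n\|^2$ — this last absorption being available because $\tfrac{\p_t m_2}{m_2}=N\tfrac{k^2}{p}$ with $N=C_0\max\{1,M^4\}$ large, while the $p^{1/2}Z_2^n$-absorptions use $\lambda+\mu,\mu$ small and $(\lambda+2\mu)M^2<\tfrac1{2500}$. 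The main obstacle is exactly this simultaneous bookkeeping: ensuring that nothing beyond $\tfrac{\mu^{1/3}}{4}$ survives on $\|Z_2^n\|^2$, that the coefficients of the surviving positive $Z_1^n$-dissipation and of $\|p^{1/2}Z_2^n\|^2$ stay below $\tfrac14$ and $\tfrac{3(\lambda+2\mu)}{16}$, and that every $Z_3^n$-coupling is paired off against $2\gamma M\|\tfrac kpZ_1^n\|^2$ together with a $\tfrac{k^2}{p}$-weight absorbed by the $m_2$-terms, so that no $\mu^{1/3}$-size loss on $\|Z_3^n\|^2$ is created.

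Finally, group (III) is routine: each nonlinear pairing is treated exactly as the terms $\la\mathcal Z_1^n,Z_1^n\ra$, $\la\tfrac{\p_t p}{p^{3/2}}\mathcal Z_1^n,\tfrac{\p_t p}{p^{3/2}}Z_1^n\ra$ and $NL_{2a}$--$NL_{2c}$ in the proof of \cref{Z_1^n+Z_2^n+Z_3^n}: split every product into zero- and non-zero-frequency factors, move the multipliers $Q^n$ inside the convolutions using the commutator bounds \cref{com}, convert $L^1$-norms of Fourier coefficients into $\|\lapt(\cdot)_\neq\|$ and $H^1$-norms via \cref{L1}, and close with the a priori assumptions \cref{aps}; since the extra weights $\tfrac{\p_t p}{p^{3/2}}$, $p^{-1/2}$, $\tfrac kp$, $p^{1/2}$ appearing here are all bounded by powers of $p$ already controlled there (using $\tfrac{|\p_t p|}{p}\le\tfrac{2|k|}{p^{1/2}}\le2$), one obtains exactly the advertised $C_M\mu^{-(2n+3)/6}\big(\|\nabla\psi\|_{H^{n+2}}+\|\psi_2\|+\|\nabla\phi\|_{H^{n+2}}\big)\big(\|Z_1^n\|^2+\|Z_2^n\|^2+\|Z_3^n\|^2\big)$ and $C_M\mu^{-(2n-3)/6}\big(\|\nabla\psi\|_{H^{n+2}}+\|\phi\|_{H^{n+3}}\big)\big(\|Z_1^n\|^2+\|p^{1/2}Z_2^n\|^2+\|p^{1/2}Z_3^n\|^2\big)$ error terms, with no new structure. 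Adding the bounds for (I), (II) and (III) yields the claimed inequality.
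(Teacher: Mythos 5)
Your plan follows essentially the same route as the paper's proof: differentiate the two cross terms, insert the system \cref{non-zero n-th}, extract the damping $-\frac{\gamma}{M}\norm{Z_1^n}^2-2\gamma M\norm{\frac{k}{p}Z_1^n}^2$ and the transient loss $\frac{\gamma}{M}\norm{Z_2^n}^2=\frac{\mu^{1/3}}{4}\norm{Z_2^n}^2$ from the linear coupling $\pm\frac1M p^{1/2}$, keep the indefinite terms $\pm\frac14\la\frac{\p_t p}{p}Z_i^n,Z_i^n\ra$ for cancellation against \cref{Z_1^n+Z_2^n+Z_3^n}, absorb the remaining linear pieces into the $m_2$-multiplier and the $(\lambda+2\mu)$-dissipation using $|\p_t p|\le 2|k|p^{1/2}$ and the smallness of $M^2(\lambda+2\mu)$, and treat the nonlinear pairings with \cref{L1} and \cref{com} exactly as the paper's $NL_{5a}$, $NL_{5b}$. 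Two bookkeeping points to tighten in execution: the group-(I) terms (where $\p_t$ hits $\frac{\p_t p}{p^{3/2}}$ or $p^{-1/2}$) carry a $\frac{|k|}{p}$ weight and should be absorbed into $\frac{\tilde C}{2N}\sum_j\norm{\sqrt{\p_t m_2/m_2}\,Z_j^n}^2$ rather than into $\frac{\mu^{1/3}}{4}\norm{Z_2^n}^2$, which is already fully spent on the transient term, and in $NL_{5b}$ the two $\mathring{\psi}_1$-transport cross terms must be paired so that their leading parts cancel by $x$-independence of the zero mode (an integration by parts in $X$) before invoking \cref{com}, since bounding either term alone would bring in the growing norm $\norm{\mathring{\psi}_1}$, which is not allowed on the stated right-hand side.
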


\begin{proof} 
	An energy estimate gives
	\begin{align}\label{ongoing2}
		\frac{M}{4}\frac{d}{dt}\bigg\la \frac{\p_t p}{p^{\frac{3}{2}}}Z_1^n , Z_2^n\bigg\ra-\gamma\frac{d}{dt}\bigg\la p^{-\frac{1}{2}}Z_1 , Z_2\bigg\ra=\sum_{i=0}^2\mathcal{I}_i+NL_{5a}+NL_{5b},
	\end{align}
	where  
	\begin{align*}
		\mathcal{I}_0=&-\frac{1}{4}\bigg\la \frac{\p_t p}{p} Z_2^n , Z_2^n\bigg\ra + \frac{1}{4}\bigg\la \frac{\p_t p}{p} Z_1^n , Z_1^n\bigg\ra,\\
		\mathcal{I}_1=&-\frac{M}{2}\bigg\la \frac{k^2 \p_t p}{p^{\frac{5}{2}}} Z_1^n , Z_3^n\bigg\ra +2\gamma \bigg\la \frac{k^2}{p^{\frac{3}{2}}} Z_1^n , Z_3^n\bigg\ra\\
		&+\frac{M^2}{2} \bigg\la \frac{k^2 \p_t p}{p^3} Z_1^n , Z_1^n\bigg\ra  -\frac{\gamma}{M}\norm{Z_1^n}^2-2\gamma M \bigg\la\frac{k^2}{p^2}Z_1^n , Z_1^n\bigg\ra\\
		&+\frac{M}{4}\bigg\la \left(\frac{2k^2}{p^{\frac{3}{2}}}-\frac{3}{2}\frac{(\p_t p)^2}{p^{\frac{5}{2}}}\right) Z_1^n , Z_2^n\bigg\ra -\mu M^3 \bigg\la \frac{k^2 \p_t p}{2p^{\frac{5}{2}}} Z_1^n , Z_2^n\bigg\ra  \\
		&-\frac{M}{2}\bigg\la \left(\frac{\p_t m_1}{m_1}+\frac{\p_t m_2}{m_2} +\frac{2n+3}{4}\left(\frac{\p_t \omega}{\omega}-\frac{\p_t p}{p}\right)\right)\frac{\p_t p}{p^{\frac{3}{2}}} Z_1^n , Z_2^n\bigg\ra+ \frac{\gamma}{2}\bigg\la \frac{\p_t p}{p^{\frac{3}{2}}} Z_1^n , Z_2^n\bigg\ra\\
		&+2\gamma\bigg\la \left(\frac{\p_t m_1}{m_1}+\frac{\p_t m_2}{m_2} +\frac{2n+3}{4}\left(\frac{\p_t \omega}{\omega}-\frac{\p_t p}{p}\right)\right)p^{-\frac{1}{2}} Z_1^n , Z_2^n\bigg\ra+2\gamma\mu M^2 \bigg\la \frac{k^2}{p^{\frac{3}{2}}} Z_1^n , Z_2^n\bigg\ra,\\
		\mathcal{I}_2=&-\frac{M}{4}(\lambda+2\mu)\bigg\la \frac{\p_t p}{p^{\frac{1}{2}}} Z_1^n , Z_2^n\bigg\ra+\gamma(\lambda+2\mu)\bigg\la p^{\frac{1}{2}} Z_1^n , Z_2^n\bigg\ra+\frac{\gamma}{M} \|Z_2^n\|^2,\\
		NL_{5a}=&-\frac{1}{4}\bigg\la Q^{n} \frac{\p_t p}{p}(A R)_\neq ,  Z_2^n\bigg\ra+\frac{\gamma}{M}\bigg\la Q^{n} (A R)_\neq ,  Z_2^n\bigg\ra \notag\\
		&-\frac{M}{4}\bigg\la \frac{\p_t p}{p^{\frac{3}{2}}}Q^{n} \left[ \left\{(\pt_Y\B_2)^2\right\}_\neq+\left\{(\p_X\B_1)^2\right\}_\neq+2\left(\pt_{Y}\B_1\p_X \B_2 \right)_\neq\right] ,  Z_1^n\bigg\ra \notag\\   
		&+\gamma\bigg\la p^{-\frac{1}{2}}Q^{n}  \left[ \left\{(\pt_Y\B_2)^2\right\}_\neq+\left\{(\p_X\B_1)^2\right\}_\neq+2\left(\pt_{Y}\B_1\p_X \B_2 \right)_\neq\right] ,  Z_1^n\bigg\ra \notag\\    
		&+\frac{1}{4M}\bigg\la \frac{\p_t p}{p^{\frac{3}{2}}}Q^{n}  \tilde{\dv}\left(\frac{R}{R+1} \nablat R\right)_\neq ,  Z_1^n\bigg\ra -\frac{1}{4M}\bigg\la \frac{\p_t p}{p^{\frac{3}{2}}}Q^{n}  \tilde{\dv}\left(\frac{P'(R+1)-1}{R+1} \nablat  R\right)_\neq ,  Z_1^n\bigg\ra \notag\\
		&-\frac{M}{4}\bigg\la \frac{\p_t p}{p^{\frac{3}{2}}}Q^{n}  \tilde{\dv}\left((\lambda+2\mu)\frac{R}{R+1}\nablat A+\mu\frac{R}{R+1}\nablat ^{\bot}\Omega\right)_\neq ,  Z_1^n\bigg\ra \notag\\   
		&-\frac{\gamma}{M^2}\bigg\la p^{-\frac{1}{2}}Q^{n}  \tilde{\dv}\left(\frac{R}{R+1} \nablat R\right)_\neq ,  Z_1^n\bigg\ra +\frac{\gamma}{M^2} \bigg\la p^{-\frac{1}{2}}Q^{n}  \tilde{\dv}\left(\frac{P'(R+1)-1}{R+1} \nablat  R\right)_\neq ,  Z_1^n\bigg\ra \notag\\
		&+\gamma\bigg\la p^{-\frac{1}{2}}Q^{n}  \tilde{\dv}\left((\lambda+2\mu)\frac{R}{R+1}\nablat A+\mu\frac{R}{R+1}\nablat ^{\bot}\Omega\right)_\neq ,  Z_1^n\bigg\ra,\notag\\   
		NL_{5b}=&-\frac{1}{4}\bigg\la Q^{n}  \frac{\p_t p}{p}\left\{ \B \cdot \nablat  R\right\}_\neq ,  Q^{n}  A_\neq\bigg\ra
		-\frac{1}{4}\bigg\la Q^{n}  \left\{\B \cdot \nablat  A\right\}_\neq ,  Q^{n}  \frac{\p_t p}{p} R_\neq\bigg\ra \\
		&+\frac{\gamma}{M}\bigg\la Q^{n}  \left\{\B \cdot \nablat  R\right\}_\neq ,  Q^{n}  A_\neq\bigg\ra
		+\frac{\gamma}{M}\bigg\la Q^{n}  \left\{\B \cdot \nablat  A\right\}_\neq ,  Q^{n}  R_\neq\bigg\ra:=NL_{5b1}+NL_{5b2}.\notag
	\end{align*}
	A direct calculation gives
	\begin{align}\label{I_1-3}
			\mathcal{I}_1\leqslant& \frac{1}{4}\norm{\sqrt{\frac{\p_t m_1}{m_1}+\frac{\p_t m_2}{m_2}+\frac{3+2n}{4}\left(\frac{\p_t \omega}{\omega}-\frac{\p_t p}{p}\right)}Z_1^n}^2 \\ \nonumber
			&\qquad\qquad+\frac{\tilde{C}}{4N}\sum_{j=1}^3\norm{\sqrt{\frac{\p_t m_2}{m_2}}Z_j^n}^2-\frac{\gamma}{M} \norm{Z_1^n}^2 - 2\gamma M\norm{\frac{k}{p}Z_1^n}^2. 
	\end{align}
 According to $\mu<\delta_0$, $\lambda+\mu<\delta_0$ and the restrition of $M$, we bound $\mathcal{I}_2$ as
	\begin{align}\label{I_4}
		\mathcal{I}_2\leqslant \frac{\mu^{\frac{1}{3}}}{8}\norm{Z_1^n}^2+\frac{\mu^{\frac{1}{3}}}{4}\|Z_2^n\|^2+\frac{3(\lambda+2\mu)}{16}\norm{p^{\frac{1}{2}}Z_2^n}^2+\frac{\tilde{C}}{4N}\norm{\sqrt{\frac{\p_t m_2}{m_2}}Z_3^n}^2. 
	\end{align}
	Using \cref{L1}, one has
	\begin{align}\label{NL_51-2}
		NL_{5a}&\lesssim \mu^{-\frac{2n+3}{6}}\big(\norm{\nabla\psi}_{H^{n+2}}+\norm{\nabla\phi}_{H^{n+2}}\big)\Big(\norm{Z_1^n}^2+\norm{Z_2^n}^2+\norm{Z_3^n}^2\Big)\\
		&\quad+\mu^{-\frac{2n-3}{6}} \Big(\norm{\nabla\psi}_{H^{n+2}}+\norm{\phi}_{H^{n+3}}\Big) \Big(\norm{Z_1^n}^2+\norm{p^{\frac{1}{2}}Z_2^n}^2+\norm{p^{\frac{1}{2}}Z_3^n}^2 \Big).\nonumber
	\end{align}
	Turning to $NL_{5b}$, we obtain
	\begin{align}\label{NL_531}
		\begin{aligned}
			NL_{5b1}=&-\frac{1}{4}\bigg\la Q^{n}  \frac{\p_t p}{p}\big(\pt_{Y}R\B_2 \big)_\neq , Q^{n}  A_\neq\bigg\ra-\frac{1}{4}\bigg\la  Q^{n}  \big(\pt_{Y}A\B_2 \big)_\neq , Q^{n} \frac{\p_t p}{p} R_\neq\bigg\ra\\
			&-\frac{1}{4}\bigg\la \frac{\p_t p}{p} Q^{n} \big(\p_X R_\neq \mathcal{B}_{1\neq}\big)_\neq , Q^{n}  A_\neq \bigg\ra
			-\frac{1}{4}\bigg\la Q^{n}  \big(\p_X A_\neq \mathcal{B}_{1\neq}\big)_\neq , Q^{n} \frac{\p_t p}{p} R_\neq \bigg\ra\\
			&-\frac{1}{4}\bigg\la \frac{\p_t p}{p} Q^{n}  (\p_X R_\neq \mathring{\psi_{1}}) , Q^{n}  A_\neq\bigg\ra-\frac{1}{4}\bigg\la Q^{n}  (\p_X A_\neq \mathring{\psi_{1}}) , \frac{\p_t p}{p} Q^{n}  R_\neq\bigg\ra\\
			\lesssim &\mu^{-\frac{2n+3}{6}}\left(\norm{\nabla\psi}_{H^{n+2}}+\norm{\nabla\phi}_{H^{n+2}}+\norm{\psim_2}\right)\Big(\norm{Z_1^n}^2+\norm{Z_2^n}^2+\norm{Z_3^n}^2\Big),
		\end{aligned}
	\end{align}
	where we have used that
	\begin{align*}
	&\bigg\la \frac{\p_t p}{p} Q^{n}  (\p_X R_\neq \mathring{\psi_{1}}) , Q^{n}  A_\neq\bigg\ra+\bigg\la Q^{n}  (\p_X A_\neq \mathring{\psi_{1}}) , \frac{\p_t p}{p} Q^{n}  R_\neq\bigg\ra\\
    &\;=\bigg\la  \p_X \left(\frac{\p_t p}{p} Q^{n} R_\neq\right) \mathring{\psi_{1}} , Q^{n}  A_\neq\bigg\ra+  \bigg\la \p_X \left(Q^{n} A_\neq\right) \mathring{\psi_{1}} , \frac{\p_t p}{p} Q^{n}  R_\neq\bigg\ra\\
	&\quad+\bigg\la \mathcal{P}^n (\p_X R_\neq \mathring{\psi_{1}}) , Q^{n}  A_\neq\bigg\ra +  \bigg\la \mathcal{T}^n  (\p_X A_\neq \mathring{\psi_{1}}) , \frac{\p_t p}{p} Q^{n}  R_\neq\bigg\ra\\
    &\lesssim \mu^{-\frac{2n+3}{6}}\left(\norm{\nabla\psi}_{H^{n+2}}+\norm{\nabla\phi}_{H^{n+2}}\right)\Big(\norm{Z_1^n}^2+\norm{Z_2^n}^2+\norm{Z_3^n}^2\Big).
\end{align*}
$NL_{5b2}$ can be treated in a similar way.
	Combining \cref{ongoing2}-\cref{NL_531}, it holds that
	\begin{align}
			&\frac{M}{4}\frac{d}{dt}\bigg\la \frac{\p_t p}{p^{\frac{3}{2}}}Z_1^n , Z_2^n\bigg\ra-\gamma\frac{d}{dt}\bigg\la p^{-\frac{1}{2}}Z_1 , Z_2\bigg\ra + \frac{\gamma}{2M} \norm{Z_1^n}^2 + 2\gamma M\norm{\frac{k}{p}Z_1^n}^2 \nonumber\\ 
			\leq  &\frac{1}{4}\norm{\sqrt{\frac{\p_t m_1}{m_1}+\frac{\p_t m_2}{m_2}+\frac{3+2n}{4}\left(\frac{\p_t \omega}{\omega}-\frac{\p_t p}{p}\right)}Z_1^n}^2 +\frac{\tilde{C}}{2N}\sum_{j=1}^3\norm{\sqrt{\frac{\p_t m_2}{m_2}}Z_j^n}^2\nonumber\\
			&-\frac{1}{4}\bigg\la \frac{\p_t p}{p} Z_2^n , Z_2^n\bigg\ra + \frac{1}{4}\bigg\la \frac{\p_t p}{p} Z_1^n , Z_1^n\bigg\ra+\frac{\mu^{\frac{1}{3}}}{4}\norm{Z_2^n}^2+\frac{3(\lambda+2\mu)}{16}\norm{p^{\frac{1}{2}}Z_2^n}^2\\
			&+C_M \mu^{-\frac{2n+3}{6}}\big(\norm{\nabla\psi}_{H^{n+2}}+\norm{\psi_2}+\norm{\nabla\phi}_{H^{n+2}}\big)\Big(\norm{Z_1^n}^2+\norm{Z_2^n}^2+\norm{Z_3^n}^2\Big)\nonumber\\
	        &+C_M\mu^{-\frac{2n-3}{6}} \Big(\norm{\nabla\psi}_{H^{n+2}}+\norm{\phi}_{H^{n+3}}\Big) \Big(\norm{Z_1^n}^2+\norm{p^{\frac{1}{2}}Z_2^n}^2+\norm{p^{\frac{1}{2}}Z_3^n}^2 \Big).\nonumber
		\end{align}
	This completes the proof of \cref{damping of Z_1^n}.
\end{proof}

\subsection{Enhanced dissipation for non-zero modes}
We are ready to prove \cref{ed}. Note that $\alpha>\frac{11}{3}$,   \cref{aps} gives 
\begin{align}\label{0909090909}
\norm{\nabla \psi}_{H^4}+\norm{\nabla \phi}_{H^4} \lesssim \mu^{\frac{7}{6}+},\qquad  \norm{\nabla\psi}_{H^{2}}+ \norm{\phi}_{H^3}\lesssim \mu^{\frac{13}{6}+},
\end{align}	
where $\frac76+$ means $\frac76+\varepsilon_0$ for some $\varepsilon_0>0$.
Combining \cref{multiplier prop}, \cref{Z_1^n+Z_2^n+Z_3^n}, \ref{damping of Z_1^n} and \cref{0909090909}, we obtain  for $n=0,1$ that 
\begin{align*}
	\frac{d}{dt}E^n(t)&\leq  -\norm{\sqrt{\frac{3}{64}\left(\frac{\p_t m_1}{m_1}+\frac{\p_t m_2}{m_2}+\mu^{\frac{1}{3}}\right)+\frac{2n+3}{4}\left(\frac{\p_t \omega}{\omega}-\frac{\p_t p}{p}\right)}Z_1^n}^2\\
	&-\norm{\sqrt{\frac{3}{64}\left(\frac{\p_t m_1}{m_1}+\frac{\p_t m_2}{m_2}+(\lambda+2\mu)p\right)+\frac{2n+3}{4}\left(\frac{\p_t \omega}{\omega}-\frac{\p_t p}{p}\right)}Z_2^n}^2\\
	&-\frac{3}{64}\norm{\sqrt{\frac{\p_t m_1}{m_1}+\frac{\p_t m_2}{m_2}+\mu p}Z_3^n}^2-2\gamma M \bigg\la \frac{k^2}{p^2}Z_1^n , Z_1^n\bigg\ra \\
	&+C_M \mu^{-\frac{2n+3}{6}}\Big(\norm{\nabla\psi}_{H^{n+3}}+\norm{\psi_2}+\norm{\nabla\phi}_{H^{n+3}}\Big)\Big(\norm{Z_1^n}^2+\norm{Z_2^n}^2+\norm{Z_3^n}^2\Big)\\      
	&+C_M\mu^{-\frac{2n-3}{6}} \Big(\norm{\nabla\psi}_{H^{n+2}}+\norm{\phi}_{H^{n+3}}\Big) \Big(\norm{Z_1^n}^2+\norm{p^{\frac{1}{2}}Z_2^n}^2+\norm{p^{\frac{1}{2}}Z_3^n}^2 \Big)\\
	&+C_M \mu^{-2-\frac{2n}{3}}\left(\norm{\phi}_{H^{n+2}}^2+\norm{\psi_2}^2+\norm{\nabla\psi}_{H^{n+1}}^2\right) \left(\norm{Z_1^n}^2+\norm{Z_2^n}^2+\norm{Z_3^n}^2\right)\\
	\leq & -\frac{1}{64}\mu^{\frac{1}{3}}E^n(t),
\end{align*}
where we have used the fact that $-\gamma M \bigg\la \frac{k^2}{p^2}Z_1^n , Z_1^n \bigg\ra \lesssim -\mu^{\frac{1}{3}}M^2 \bigg\la \frac{(\p_t p)^2}{p^3} Z_1^n,Z_1^n \bigg\ra$. 
Hence, the Gr{\"o}nwall's inequality implies
\begin{align*}
	E^n(t)\leq E^n(0) e^{-\frac{1}{64}\mu^{\frac{1}{3}}t} \lesssim \mu^{2\alpha}e^{-\frac{1}{64}\mu^{\frac{1}{3}}t}. 
\end{align*}
%Similar to $E^n(t)$, we obtain
%\begin{align}
%	E^n (t)+\frac{1}{32}\int_{0}^{t}(\norm{Z_1^n}^2+(\lambda+2\mu)\norm{p^{\frac{1}{2}}Z_2^n}^2+\mu\|p^\frac{1}{2}Z_3^n\|^2)d\tau\lesssim \mu^{-\frac{1}{3}}E^n(0) e^{-\frac{1}{128}\mu^{\frac{1}{3}}t}
%\end{align}
By the Helmholtz decomposition and the change of variable, we find 
\begin{align*}
	\frac{1}{M^2}\|\nabla^{1+n} &\phi_\neq\|^2+\norm{\nabla^{1+n} \psi_\neq}^2\lesssim  \norm{p^{\frac{n}{2}}A_\neq}^2+\norm{p^{\frac{n}{2}}\Omega_\neq}^2+\norm{p^{\frac{n+1}{2}}R_\neq}^2\\
	&\lesssim \mu^{-\frac{2n+3}{3}}\Big(\norm{\omega^{-\frac{2n+3}{4}} p^{\frac{n}{2}}A_\neq}^2+\norm{\omega^{-\frac{2n+3}{4}} p^{\frac{n}{2}}\Omega_\neq}^2+\norm{\omega^{-\frac{2n+3}{4}} p^{\frac{n+1}{2}} R_\neq}^2\Big)\\
	&\lesssim \mu^{2\alpha-\frac{2n+3}{3}} e^{-\frac{1}{64}\mu^{\frac{1}{3}}t},
\end{align*}
where we have used $\norm{\Omega_\neq}\leqslant \norm{\Omega_\neq+ R_\neq - \mu M^2 A_\neq}+\norm{R_\neq} + \mu M^2 \norm{A_\neq}$.  Therefore \cref{ed} is completed. 

%We also have 
%\begin{equation}
%	\begin{split}
	%		\frac{1}{M^2}\|\nabla^{n+1} \phi_\neq\|^2&+\|\nabla^{n+1} \psi_\neq\|^2+\frac{1}{M^2}\int_0^t\|\nabla^{n+1} \phi_\neq\|^2 d\tau+(\lambda+2\mu)\int_0^t \|\nabla^{n+2} \psi_\neq\|^2d\tau\\
	%		&\lesssim \bar{C} \Big(E^n (t)+\int_{0}^{t}\norm{Z_1^n}^2+(\lambda+2\mu)\norm{p^{\frac{1}{2}}Z_2^n}^2+\mu\norm{p^\frac{1}{2}Z_3^n}^2d\tau \Big)\\
	%		&\lesssim \mu^{-\frac{1}{3}} \bar{C}^2(\|\phi_0\|_{H^{n+1}}+\|\psi_0\|_{H^{n+1}}) e^{-\frac{1}{128}\mu^{\frac{1}{3}}t}.
	%	\end{split}
%\end{equation}

\section{Estimates on higher-order derivatives}
%To close the a priori assumptions \cref{aps}$_4$, for $\alpha>\frac{11}{3}$ and $1\leq k\leq 4$, recalling
%$$\norm{\nabla^{k+1}\psi}+\norm{\nabla^{k+1}\phi} \lesssim   \mu^{\alpha-\frac{1}{2}-\frac{k}{2}},$$ 
We return back the original perturbation system,
\begin{equation}\label{perturbation}
	\begin{cases}
		&\p_t \phi +y\p_x \phi + \dv \psi=N_1,\\
		&\p_t \psi+y\p_x\psi+\psi_{2}\mathbf{e}_1-\mu\lap\psi-(\lambda+\mu) \nabla \dv  \psi +\frac{1}{M^2} \nabla\phi=N_2.
	\end{cases}
\end{equation}
where
\begin{equation}
	\begin{split}
		&N_1=-\psi\cdot\nabla\phi-\phi\dv\psi,\\
		&N_2=-\psi\cdot\nabla\psi-\frac{\phi}{\phi+1}\big(\mu\lap\psi+(\lambda+\mu)\nabla\dv\psi\big)+\frac{1}{M^2}\frac{\phi}{\phi+1}\nabla\phi-\frac{1}{M^2}\frac{(P'(\rho)-1)\nabla\phi}{\phi+1}.
	\end{split}
\end{equation}      
Firstly, we give a basic energy estimate of $\norm{\p_y\phim(\cdot,t),\p_y \psim_2(\cdot,t)}$.
\begin{Lem}\label{77777}
Under the same assumptions of \cref{MT}, it holds that 
\begin{align}
\frac{1}{M^2}\norm{\p_y \phim}^2+\norm{\p_y \psim_2}^2+\mu \int_0^T \norm{\p_y^2 \psim_2}^2 dt \lesssim \mu^{2\alpha}.
\end{align}
\end{Lem}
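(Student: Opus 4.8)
The plan is to run a weighted $H^1$ energy estimate directly on the zero-mode system, exploiting the cancellation between the pressure-gradient and the divergence terms already visible in \cref{equ-rhou2}. Since the reduced continuity equation $\p_t\phim+\p_y\varphim_2=0$ carries no source, I would work with the conserved pair $(\phim,\varphim_2)$ rather than $(\phim,\psim_2)$, and translate back only at the end. Writing $\bar{\mu}=2\mu+\lambda$, differentiating \cref{equ-rhou2} in $y$, pairing the $\phim$-equation with $\frac{1}{M^2}\p_y\phim$ and the $\varphim_2$-equation with $\p_y\varphim_2$, and adding, the cross terms $\frac{1}{M^2}\la\p_y^2\varphim_2,\p_y\phim\ra$ and $\frac{1}{M^2}\la\p_y^2\phim,\p_y\varphim_2\ra$ cancel after one integration by parts, leaving
\begin{align*}
\frac{d}{dt}\Big(\frac{1}{2M^2}\norm{\p_y\phim}^2+\frac12\norm{\p_y\varphim_2}^2\Big)+\bar{\mu}\norm{\p_y^2\varphim_2}^2=-\la\p_y E,\p_y^2\varphim_2\ra .
\end{align*}
A Young inequality absorbs half of the dissipation, and integration in $t$ gives
\begin{align*}
\frac{1}{M^2}\norm{\p_y\phim(\cdot,T)}^2+\norm{\p_y\varphim_2(\cdot,T)}^2+\bar{\mu}\int_0^T\norm{\p_y^2\varphim_2}^2dt\lesssim \norm{(\p_y\phim,\p_y\varphim_2)(\cdot,0)}^2+\bar{\mu}^{-1}\int_0^T\norm{\p_y E}^2dt .
\end{align*}

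The initial-data terms are $\lesssim\norm{\phi(\cdot,0)}_{H^5}^2+\norm{\varphi_2(\cdot,0)}_{H^5}^2\lesssim\mu^{2\alpha}$ by \eqref{initial}; this is exactly where the gain over a naive substitution $\phim=a(v_2-v_1)$ comes from, since individually $\tilde v_i$ and $\mathcal{C}_i$ are only $O(\mu^{\alpha-1/4})$ at $t=0$ but their sum inherits the $O(\mu^\alpha)$ smallness of the data, and the energy identity propagates it. It then remains to show $\bar{\mu}^{-1}\int_0^T\norm{\p_y E}^2dt\lesssim\mu^{2\alpha}$. Here $E=E_1+E_2+E_3+E_4$ is precisely the collection of error terms of \eqref{E}, whose pieces have already been estimated in the proofs of \cref{1or,2or} (as $E_2$ in \eqref{E2}, and $\tilde E^{(1)},\tilde E^{(2)},\tilde N$ in \eqref{e1}, \eqref{ee}, \eqref{e2}, \eqref{ENL1}, \eqref{ENL2}); combining those with the zero-mode decay estimates of \cref{1or,2or,estimateonta,Aor}, the enhanced-dissipation bound \cref{ed} for the non-zero modes, the a priori assumptions \eqref{aps}, and the one-dimensional inequality $\norm{\mathring f}_{L^\infty}\lesssim\norm{\mathring f}^{1/2}\norm{\p_y\mathring f}^{1/2}$, every contribution to $\p_y E$ is either a product of a decaying zero-mode quantity with one of its derivatives, or an exponentially small non-zero-mode term, or a $\bar{\mu}$-weighted second derivative of a diffusion wave, so that $\norm{\p_y E}^2$ is integrable in $t$ with total size $\lesssim\bar{\mu}\,\mu^{2\alpha}$ once $\alpha>\frac{11}{3}$; any term carrying $\p_y^2\varphim_2$ (including those created by integration by parts) is instead absorbed directly into $\bar{\mu}\norm{\p_y^2\varphim_2}^2$.

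Finally I would return to $\psim_2$: by \eqref{phi2} one may write $\psim_2=\varphim_2+\mathcal{R}$, where $\mathcal{R}$ collects the quadratic-and-higher terms $\phim\varphim_2$, $\phim^2\varphim_2/\rhom$, $\Do(\phi_\neq\psi_{2\neq})$ and $\phim\Do(\tfrac{\varphi_2}{\rho}-\tfrac{\varphim_2}{\rhom})$; hence $\norm{\p_y\psim_2}\le\norm{\p_y\varphim_2}+\norm{\p_y\mathcal{R}}$ and $\mu\int_0^T\norm{\p_y^2\psim_2}^2dt\lesssim\bar{\mu}\int_0^T\norm{\p_y^2\varphim_2}^2dt+\mu\int_0^T\norm{\p_y^2\mathcal{R}}^2dt$, and using $\norm{\phim}_{L^\infty}\lesssim\mu^{1/2}$ (again by Gagliardo–Nirenberg together with the established decay, valid since $\alpha>\frac{11}{3}$) the correction $\mathcal{R}$ also contributes $\lesssim\mu^{2\alpha}$, while $\frac{1}{M^2}\norm{\p_y\phim}^2$ needs no conversion. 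The main obstacle is the bookkeeping in the second step: the continuity equation provides no dissipation for $\phim$, so every occurrence of $\p_y E$ must be arranged — by the choice of where to integrate by parts — so that the dangerous highest-order factor falls either on $\p_y^2\varphim_2$ (absorbable) or on a quantity already controlled with a time-integrable rate; tracking which of the many terms of $E$ requires which treatment, and checking that the resulting powers of $\mu$ all beat $\mu^{2\alpha}$ under $\alpha>\frac{11}{3}$, is the delicate part.
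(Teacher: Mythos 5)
Your proposal is correct and follows essentially the same route as the paper: a first-order weighted energy estimate on the zero-mode system in which the pairing of $\frac{1}{M^2}\p_y\phim$ with the differentiated continuity equation and of the momentum equation with the second derivative of the parabolic component cancels the acoustic cross terms, the viscous term absorbs the highest-order contributions, and the remaining sources are controlled by the earlier zero-mode lemmas, the a priori assumptions \eqref{aps} and the enhanced dissipation of \cref{ed}. The only real difference is the choice of variables: the paper runs the estimate directly on $(\phim,\psim_2)$ via the system \eqref{777777} (so no conversion step is needed, at the price of sources $N_1^{(1)},N_1^{(2)}$ in the continuity equation), whereas you work with the conserved pair $(\phim,\varphim_2)$ with source $E_y$ and translate back through \eqref{phi2}; both versions close under $\alpha>\frac{11}{3}$, with the bookkeeping you flag (e.g.\ the $\varphim_2\,\p_y^2\phim$ piece of $\p_y E_2$, which needs the time-integrated bounds of \cref{2or} rather than the pointwise a priori bound) being routine.
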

\begin{proof}
	Applying $\Do$ to \cref{perturbation}$_1$ and \cref{perturbation}$_3$, one has
\begin{align}\label{777777}
\left\{\begin{aligned}
&\p_t \phim+ \p_y \psim_2=N_1^{(1)}+N_1^{(2)}\\
&\p_t \psim_2 -(\lambda+2\mu)\p_y^2 \psim_2 + \frac{1}{M^2}\p_y \phim = N_2^{(1)}+N_2^{(2)},
\end{aligned} \right.
\end{align}
where
\begin{align}
&N_1^{(2)}=-\Do(\psi_{2\neq}\p_y \phi_{\neq}+\phi_{\neq}\p_y \psi_{2\neq}),\\ \nonumber
&N_2^{(2)}=-\Do\left(\psi_\neq \cdot \nabla \psi_\neq + \mu \phi_\neq \lap \psi_{2\neq}+(\lambda+\mu)\phi_\neq \p_y \dv \psi_\neq + \phi_\neq \p_y\phi_\neq \right),\\
&N_1^{(1)}=-\psim_2\p_y\phim-\phim\p_y\psim_2, \quad N_2^{(1)}=-(\lambda+2\mu)\phim\p_y^2\psim_2-\psim_2\p_y\psim_2-\phim\p_y\phim.\nonumber
\end{align}
 By \cref{ed}, there holds 
\begin{align}\label{7777777}
\norm{(\p_y N_1^{(2)},N_2^{(2)})}\lesssim \mu^{2\alpha-\frac{5}{3}-\varepsilon}e^{-\frac{1}{64}\mu^{\frac{1}{3}}t} . 
\end{align}
Multiplying $\p_y$\cref{777777}$_1$ by $\frac{1}{M^2}\p_y \phim$ and \cref{777777}$_2$ by $-\p_y^2 \psim_2$, adding them up and integrating the results on $\R$, one has
\begin{align}\label{777}
\frac{1}{2} \frac{d}{dt}& \left( \frac{1}{M^2}\norm{\p_y \phim}^2+\norm{\p_y\psim_2}^2 \right) + (\lambda+2\mu)\norm{\p_y^2 \psim_2}^2 \\ \nonumber
&= \frac{1}{M^2} \int_\R \p_y \phim (\p_y N_1^{(1)}+\p_y N_1^{(2)}) dy+ \int_\R \p_y^2\psim_2 ( N_2^{(1)}+N_2^{(2)})dy.
\end{align}
By the a priori assumption \cref{aps}, \cref{estimateontheta}-\ref{1or}, \ref{2or} and \ref{7777777}, the nonlinear terms can be controlled as
\begin{align}\label{7777}
&\int_\R \frac{1}{M^2} \p_y \phim \p_y N_1^{(1)}+ \p_y^2 \psim_2 N_2^{(1)}dy \lesssim \delta \mu \norm{\p_y^2 \psim_2}^2 + \mu^{2\alpha+\varepsilon}(1+t)^{-\frac{3}{2}},\\
&\int_\R \frac{1}{M^2} \p_y \phim \p_y N_1^{(2)} + \p_y^2 \psim_2 N_2^{(2)} dy \lesssim \delta\mu \norm{\p_y^2 \psim_2}^2  +\mu^{2\alpha+\varepsilon}(1+t)^{-\frac{3}{2}}+\mu^{4\alpha-5-\varepsilon}e^{-\frac{1}{64}\mu^{\frac{1}{3}}t}. \nonumber
\end{align}
Substituting \cref{7777} into \cref{777} and integrating over $(0,T)$, we have
\begin{align}
	\frac{1}{M^2}\norm{\p_y \phim}^2+\norm{\p_y \psim_2}^2+\mu \int_0^T \norm{\p_y^2 \psim_2}^2 dt \lesssim \mu^{2\alpha}.
	\end{align}
This completes the proof of \cref{77777}.
\end{proof}
Thus, from \cref{estimateonxi}, \ref{2or}-\ref{Aor}, \cref{ed} and \cref{77777},  we deduce that 
\begin{align}\label{stepone}
	\frac{1}{M^2}\norm{\nabla \phi}^2+\norm{\nabla\psi}^2+\frac{\mu}{M^2}\int_{0}^T \norm{\nabla \phi}^2 dt+ \mu\int_0^T\norm{\nabla^2 \psi}^2 dt \lesssim \mu^{2\alpha-1}.
\end{align}
Furthermore, we have
\begin{Thm}\label{000000}
	Under the same assumptions of \cref{MT}, it holds that 
	\begin{align}\label{energy}
			\frac{1}{M^2}\norm{\nabla^{|\beta|+1} \phi}^2+\norm{\nabla^{|\beta|+1} \psi}^2+\frac{1}{M^2}\int_0^T \norm{\nabla^{|\beta|+1} \phi}^2dt+\mu\int_0^T\norm{\nabla^{|\beta|+2} \psi}^2 dt \lesssim  \mu^{2\alpha-\abs{\beta}-1},
	\end{align}
 where $|\beta|=|(\beta_1 , \beta_2)|=k\geqslant 1 .$
\end{Thm}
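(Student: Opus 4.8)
\textbf{Proof strategy for Theorem \ref{000000}.}
The plan is to run a standard high-order energy estimate on the original perturbation system \eqref{perturbation}, proceeding by induction on $|\beta|=k$ from $k=1$ (the base case $k=0$ being \eqref{stepone}) up to $k=4$, and closing each step using the decay/dissipation information already established: \cref{estimateonxi,1or,2or,Aor} for the zero modes, \cref{ed} for the non-zero modes, the $\theta_A$ bounds from \cref{estimateonta}, and the a priori assumptions \cref{aps}. First I would apply $\nabla^\beta$ to \cref{perturbation}$_1$ and \cref{perturbation}$_2$, pair the density equation with $\frac{1}{M^2}\nabla^{\beta+1}\phi$ and the velocity equation with $-\nabla^{\beta+2}\psi$ (so that the pressure term produces the cross-cancellation $\frac{1}{M^2}\langle \nabla^{\beta+1}\nabla\phi,\nabla^{\beta+1}\psi\rangle$ up to integration by parts, matching the term coming from $\dv\psi$ in the density equation), and integrate over $\Torus\times\R$. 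The transport term $y\p_x$ is handled as usual: the zeroth-order commutator vanishes by antisymmetry, and $[\nabla^\beta, y\p_x]$ produces only terms with $\beta_2$ lowered by one and an extra $\p_x$, which are absorbed into the dissipation $\mu\norm{\nabla^{|\beta|+2}\psi}^2$ after a Young inequality (this is where one uses that $\p_x$ acting on a non-zero mode gains a factor, and that the whole transport structure is why one could not avoid going through the $(X,Y)$ coordinates for the non-zero modes in Section 4). The viscous terms give the good terms $\mu\norm{\nabla^{|\beta|+2}\psi}^2$ and $(\lambda+\mu)\norm{\nabla^{|\beta|+1}\dv\psi}^2$; the lift-up term $\psi_2\E_1$ contributes $\int \nabla^\beta\psi_2\,\nabla^{\beta}(\cdots)$, bounded using the decay of $\norm{\nabla\psi_2}$ from \cref{MT2}/\cref{2or} together with $\norm{\psim_1}_{L^\infty}\lesssim\mu^{\alpha-2/3}$.

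The nonlinear terms $N_1,N_2$ are estimated term by term by splitting each factor into zero and non-zero modes, exactly as in \cref{7777} but at general order. The generic bound I expect: after integration by parts to keep derivatives off the highest-order factor, each cubic term is controlled by $\delta$ times the dissipation $\mu\norm{\nabla^{|\beta|+2}\psi}^2+\tfrac{1}{M^2}\mu\norm{\nabla^{|\beta|+1}\phi}^2$ plus either a term of the form $\mu^{2\alpha+\varepsilon}(1+t)^{-3/2}$ (when all factors are zero modes, using \cref{estimateontheta,1or,2or,Aor,77777}) or a term of the form $\mu^{4\alpha-c-\varepsilon}e^{-\frac1{64}\mu^{1/3}t}$ (when at least one factor is a non-zero mode, using \cref{ed}), both of which are integrable in time and, for $\alpha>\frac{11}{3}$ and $\mu$ small, dominated by $\mu^{2\alpha-|\beta|-1}$. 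Here one must be careful with the terms $\tfrac{\phi}{\phi+1}\mu\lap\psi$ and $\tfrac{\phi}{\phi+1}\nabla\phi$ etc.: using $\inf(1+\phi)\geq \bar N$ from the solution space $X_{\bar N,N}$ and Moser-type product/composition estimates, the denominators are harmless, and the worst term $\tfrac{\phi}{\phi+1}\mu\nabla^{|\beta|+2}\psi$ paired with $\nabla^{|\beta|+2}\psi$ is absorbed because $\norm{\phi}_{L^\infty}\lesssim\mu^{\alpha-1/2}\ll 1$. After summing the differential inequalities over all multi-indices of length $|\beta|+1$ and integrating in $t$, I get $\tfrac{1}{M^2}\norm{\nabla^{|\beta|+1}\phi}^2+\norm{\nabla^{|\beta|+1}\psi}^2+\mu\int_0^T\norm{\nabla^{|\beta|+2}\psi}^2\lesssim \mu^{2\alpha-|\beta|-1}$; the density dissipation $\tfrac{1}{M^2}\int_0^T\norm{\nabla^{|\beta|+1}\phi}^2$ is then recovered by the usual trick of testing the momentum equation against $\nabla^{\beta}\nabla\phi$ (a Bogovskii/compensating-functional type argument: $\int\nabla^{\beta}\nabla\phi\cdot\nabla^\beta\p_t\psi$ generates $\tfrac{1}{M^2}\norm{\nabla^{|\beta|+1}\phi}^2$ and exchanges a time derivative onto $\p_t\phi=-\dv\psi+N_1$, controlled by the already-bounded $\psi$-dissipation), exactly as in the compressible Navier--Stokes literature.

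The main obstacle I anticipate is bookkeeping the powers of $\mu$ so that every error term genuinely sits below $\mu^{2\alpha-|\beta|-1}$: the non-zero-mode contributions come with negative powers $\mu^{-c}$ from the weighted-variable conversions (as in \cref{ed}, where one loses $\mu^{-(2n+3)/6}$ per commutator), so one needs the quadratic gain $\mu^{4\alpha-\cdots}$ from \cref{ed} to beat the threshold, which is precisely what forces $\alpha>\frac{11}{3}$; and the zero-mode cubic terms must be arranged so that at most one low-decaying factor ($\theta_i$ at rate $(1+t)^{-1/4}$, or $\psim_1\approx\theta_A$ which does \emph{not} decay) multiplies higher-decaying factors, using the structure that $\psim_1$ only appears linearly in the relevant products and always against $\p_y(\text{something decaying})$. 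A secondary technical point is that the induction must be run with the correct time weights — multiplying the order-$k$ inequality by suitable powers of $(1+t)$ is \emph{not} needed here since Theorem \ref{000000} only claims uniform-in-time bounds, so one just integrates the bare differential inequality, which simplifies matters considerably compared to \cref{2or}. Once \eqref{energy} holds for $1\le|\beta|\le 4$, combined with \cref{decaya,enhanced dissipationa} it closes the last line of the a priori assumptions \cref{aps} with room to spare (the constant improves from $2C_M$ to $(1+\tfrac{k}{10})C_M$), completing \cref{ape}.
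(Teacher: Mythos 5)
Your strategy matches the paper's proof in all essentials: the same pairing of the $\nabla^\beta$-differentiated mass and momentum equations with $\frac{1}{M^2}\nabla^{\beta+1}\phi$ and $-\Delta\nabla^\beta\psi$ to produce the main energy and viscous dissipation, the same cross estimate testing the momentum equation against $\nabla^{\beta+1}\phi$ to recover the time-integrated density dissipation, the same zero/non-zero-mode splitting of the cubic terms using the already-established decay and enhanced-dissipation bounds, and the same closure via the smallness conditions on $M$ and $\alpha>\frac{11}{3}$. The only imprecision is minor: the transport commutator in the density equation yields terms of size $\frac{1}{M^2}\norm{\nabla^{|\beta|+1}\phi}^2$ that are controlled by the recovered density dissipation (as in the paper), not absorbed into $\mu\norm{\nabla^{|\beta|+2}\psi}^2$, but your scheme already contains exactly the ingredient needed for this.
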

\begin{proof}
	We estimate $\norm{\nabla^{|\beta|+1}\phi}$ at first.
	Applying $\p_i\p^\beta(i=1,2)$ to the mass equation \cref{perturbation}$_1$ and $\p^\beta$ to the  momentum equation \cref{perturbation}$_2$ with $\p^\beta=\p^{\beta_1}_x\p^{\beta_2}_y$, we have
	\begin{equation}\label{ori}
		\begin{cases}
			&\p_t \p_i\p^\beta\phi +\p_i\p^\beta(y\p_x \phi) + \p_i\p^\beta\dv \psi=\p_i\p^\beta N_1,\\[2mm]
			&\p_t \p^\beta\psi+\p^\beta (y\p_x\psi) +\p^\beta\psi_{2}e_1-\mu\lap\p^\beta\psi-(\lambda+\mu) \nabla \dv \p^\beta \psi +\frac{1}{M^2}\nabla\p^\beta\phi=\p^\beta N_2.
		\end{cases}
	\end{equation}
Multiplying the equation \cref{ori}$_2$ by $\nabla\p^\beta\phi$ and integrating the results on $\mathbb{T} \times\mathbb{R}$ lead to
	\begin{align}\label{phi1}
			&\frac{d}{dt}\int_{\mathbb{T} \times\mathbb{R}} \p^\beta\psi_i \p_i\p^\beta\phi   dxdy+ \frac{1}{M^2} \int_{\mathbb{T} \times\mathbb{R}} \abs{\p_i \p^\beta \phi}^2   dxdy\nonumber\\ 
			=&-\int_{\mathbb{T} \times\mathbb{R}} \left[\p_i\p^\beta \phi \p^\beta (y\p_x \psi_i)  + \p^\beta \psi_i \p_i\p^\beta (y\p_x \phi)  + \p^\beta \psi_i \p_i \p^\beta \dv\psi  + \p_x \p^\beta \phi \p^\beta \psi_2 \right] dxdy\\
			&+\int_{\mathbb{T} \times\mathbb{R}} \p^\beta N_{2i} \p_i \p^\beta \phi   +  \p^\beta \psi_i \p_i \p^\beta N_1    + (\lambda+2\mu) \p_i \p^\beta \phi \p_i \p^\beta \dv \psi   dxdy. \nonumber
	\end{align}
	where we have used
	\begin{align*}
		&\int_{\mathbb{T} \times\mathbb{R}}\p_t \p^\beta \psi_i \p_i \p^\beta \phi dxdy=\frac{d}{dt} \int_{\mathbb{T} \times\mathbb{R}} \p^\beta \psi_i \p_i \p^\beta \phi   dxdy - \int_{\mathbb{T} \times\mathbb{R}} \p^\beta \psi_i \p_i \p^\beta \p_t \phi   dxdy\\
		=&\frac{d}{dt} \int_{\mathbb{T} \times\mathbb{R}} \p^\beta \psi_i \p_i \p^\beta \phi   dxdy + \int_{\mathbb{T} \times\mathbb{R}} \p^\beta \psi_i \p_i\p^\beta (y\p_x \phi)   +  \p^\beta \psi_i \p_i \p^\beta \dv\psi   -  \p^\beta \psi_i \p_i \p^\beta N_1   dxdy,
	\end{align*}
	and
	$$ \mu \int_{\mathbb{T} \times\mathbb{R}}  \p_i \p^\beta \phi \lap \p^\beta \psi_i   dxdy= \mu \int_{\mathbb{T} \times\mathbb{R}} \p_i \p^\beta \phi \p_i \p^\beta \dv\psi   dxdy. $$
	Next, multiplying \cref{ori}$_1$ by $\p_i \p^\beta\phi$ and integrating respect to $(x,y)$ yield
	\begin{align}\label{phi}
		\frac{d}{dt}\int_{\mathbb{T} \times\mathbb{R}} |\p_i \p^\beta \phi|^2   dxdy=\int_{\mathbb{T} \times\mathbb{R}} \p_i \p^\beta \phi \p_i \p^\beta N_1-\p_i \p^\beta \phi \p_i \p^\beta (y \p_x \phi)  - \p_i \p^\beta \phi \p_i \p^\beta \dv\psi dxdy .
	\end{align}
	Then, adding \cref{phi1}$\times(\lambda+2\mu)$ and \cref{phi}$\times(\lambda+2\mu)^2$ up, summing $i$  from 1 to 2 gives
	\begin{align}\label{a11}
			\frac{d}{dt}&\Big(\int_{\mathbb{T} \times\mathbb{R}}  \bar{\mu}^2 |\nabla \p^\beta\phi|^2 +\bar{\mu} \nabla \p^\beta \phi \cdot \p^\beta \psi   dxdy \Big) +\frac{\bar{\mu}}{M^2} \norm{\nabla \p^\beta \phi}^2\\
			&\leq  100\bar{\mu}^2  \norm{\nabla^{|\beta|+1} \phi}^2  + C_M\bar{\mu} \left\{\norm{\nabla^{|\beta|}\phi}^2+ \norm{\nabla^{|\beta|+1}\psi}^2+\sum_{l=1}^3 \mathcal{R}^\beta_l\right\},\nonumber
	\end{align}
where
	\begin{align*}
		\mathcal{R}_1^\beta=&\int_{\mathbb{T} \times\mathbb{R}} \p^\beta N_{2i} \p_i \p^\beta \phi dxdy:=\mathcal{R}_{1,1}^\beta+\mathcal{R}_{1,2}^\beta=-\int_{\mathbb{T} \times\mathbb{R}}\p^{\beta}(\psi\cdot\nabla\psi_i)\p_i\p^\beta\phi dxdy\\
		&-\int_{\mathbb{T} \times\mathbb{R}} \p^\beta\left\{\frac{\phi}{\phi+1}\big(\mu\lap\psi_i+(\lambda+\mu)\p_i\dv\psi\big)-\frac{1}{M^2}\frac{\phi}{\phi+1}\p_i\phi+\frac{1}{M^2}\frac{(P'(\rho)-1)\p_i\phi}{\phi+1}\right\}\p_i\p^\beta \phi dxdy,\\
		\mathcal{R}_2^\beta=&\int_{\mathbb{T} \times\mathbb{R}} \p_i \p^\beta N_1  \p^\beta \psi_i dxdy:=\mathcal{R}_{2,1}^\beta+\mathcal{R}_{2,2}^\beta\\
		=&-\int_{\mathbb{T} \times\mathbb{R}}\p_i\p^\beta(\psi\cdot\nabla\phi)\p^\beta\psi_idxdy-\int_{\mathbb{T} \times\mathbb{R}}\p_i\p^\beta(\phi\dv\psi)\p^\beta\psi_i dxdy,\\     
		\mathcal{R}_3^\beta=&(\lambda+2\mu)\int_{\mathbb{T} \times\mathbb{R}} \p_i \p^\beta N_1 \p_i \p^\beta \phi dxdy:=\mathcal{R}_{3,1}^\beta+\mathcal{R}_{3,2}^\beta\\
		=&-\bar{\mu}\int_{\mathbb{T} \times\mathbb{R}}\p_i\p^\beta(\psi\cdot\nabla\phi)\p_i\p^\beta\phi dxdy-\bar{\mu}\int_{\mathbb{T} \times\mathbb{R}}\p_i\p^\beta(\phi\dv\psi)\p_i\p^\beta\phi dxdy.
	\end{align*}
	Multiplying \cref{ori}$_1$ by $\frac{1}{M^2}(\p_i \p^\beta \phi)$ and \cref{ori}$_2$ by $(-\lap\p^\beta \psi)$, adding them up, one has
	\begin{align}\label{a2}
			\frac{d}{dt} \Big( \frac{1}{M^2}&\norm{\nabla \p^\beta \phi}^2+\norm{\nabla \p^\beta \psi}^2 \Big) + \mu\norm{\lap \p^\beta \psi}^2 + (\lambda+\mu) \norm{\nabla \p^\beta \dv\psi}^2  \\
			&\quad\;\leq \frac{100}{M^2}\norm{\nabla^{|\beta|+1}\phi}^2+C_M\norm{\nabla^{|\beta|+1}\psi}^2+C_M\sum_{l=4}^5\mathcal{R}^\beta_l,\nonumber
	\end{align}
where
	\begin{align*}
		\mathcal{R}^\beta_4=&\frac{1}{M^2}\int_{\mathbb{T} \times\mathbb{R}} \p_i\p^\beta N_1 \p_i \p^\beta \phi dxdy:=\mathcal{R}_{4,1}^\beta+\mathcal{R}_{4,2}^\beta\\
		=&-\frac{1}{M^2}\int_{\mathbb{T} \times\mathbb{R}}\p_i\p^\beta(\psi\cdot\nabla\phi)\p_i\p^\beta\phi dxdy-\frac{1}{M^2}\int_{\mathbb{T} \times\mathbb{R}}\p_i\p^\beta(\phi\dv\psi)\p_i\p^\beta\phi dxdy,\\
		\mathcal{R}^\beta_5=&-\int_{\mathbb{T} \times\mathbb{R}}  \p^\beta N_{2i}  \lap\p^{\beta}\psi_idxdy=\int_{\mathbb{T} \times\mathbb{R}}\p^{\beta}(\psi\cdot\nabla\psi_i)\lap\p^{\beta}\psi_i dxdy\\
		&+\int_{\mathbb{T} \times\mathbb{R}} \p^\beta\left\{\frac{\phi}{\phi+1}\big(\mu\lap\psi_i+(\lambda+\mu)\p_i\dv\psi\big)-\frac{1}{M^2}\frac{\phi}{\phi+1}\p_i\phi+\frac{1}{M^2}\frac{(P'(\rho)-1)\p_i\phi}{\phi+1}\right\}\lap\p^{\beta}\psi_i dxdy.
	\end{align*}
	For $\abs{\beta}=1$,  we deduce that
	\begin{align}
	&\abs{\mathcal{R}_{1,1}^\beta+\mathcal{R}_{2,1}^\beta}\lesssim \norm{\nabla\psi}_{H^1}\norm{\nabla\phi}_{H^1}\norm{\nabla^2\psi}, \nonumber\\[0.3mm] 
	&\abs{\mathcal{R}_{3,1}^\beta}\lesssim\bar{\mu}\norm{\nabla\psi}_{L^\infty}\norm{\nabla^2\phi}^2+\bar{\mu}\norm{\nabla\phi}_{H^1}\norm{\nabla^2\phi}\norm{\nabla^2\psi}_{H^1},\nonumber \\[0.3mm] 
	&\abs{\mathcal{R}_{1,2}^\beta}\lesssim\norm{\nabla\phi}_{H^1}\norm{\nabla^2\phi}(\norm{\nabla\phi}+\norm{\nabla^2\psi})+\norm{\phi}_{H^2}\norm{\nabla^2\phi}(\norm{\nabla^2\phi}+\norm{\nabla^2\psi}),\notag\\[0.3mm]  
	&\abs{\mathcal{R}_{2,2}^\beta}\lesssim\norm{\nabla\phi}_{H^1}\norm{\nabla\psi}\norm{\nabla^2\psi}+\norm{\phi}_{L^\infty}\norm{\nabla^2\psi}^2, \notag\\[0.3mm]
    &\abs{\mathcal{R}_{3,2}^\beta}\lesssim\bar{\mu}(\norm{\nabla\psi}_{L^\infty}\norm{\nabla^2\phi}^2+\norm{\phi}_{L^\infty}\norm{\nabla^3\psi}\norm{\nabla^2\phi}+\norm{\nabla\phi}_{H^1}\norm{\nabla^2\psi}_{H^1}\norm{\nabla^2\phi}),\\[0.3mm]
	&\abs{\mathcal{R}_{4,1}^\beta}\lesssim\norm{\nabla\psi}_{L^\infty}\norm{\nabla^2\phi}^2+\norm{\nabla\phi}_{H^1}\norm{\nabla^2\phi}\norm{\nabla^2\psi}_{H^1},\nonumber\\[0.3mm]  
	&\abs{\mathcal{R}_{4,2}^\beta}\lesssim\norm{\nabla\psi}_{L^\infty}\norm{\nabla^2\phi}^2+\norm{\phi}_{L^\infty}\norm{\nabla^3\psi}\norm{\nabla^2\phi}+\norm{\nabla\phi}_{H^1}\norm{\nabla^2\psi}_{H^1}\norm{\nabla^2\phi},\nonumber\\[0.3mm] 
	&\abs{\mathcal{R}_5^\beta}\lesssim\norm{\nabla^3\psi}^2(\norm{\nabla\psi}_{H^1}+\norm{\phi}_{H^2})+\norm{\nabla^3\psi}\norm{\nabla\phi}_{H^1}\norm{\phi}_{H^2} \notag \\[0.3mm]
	&\qquad\quad+\norm{\nabla^3\psi}\norm{\nabla^2\psi}(\norm{\nabla\phi}_{H^1}+\norm{\psi}_{L^\infty}+\norm{\nabla\psi}_{H^1}),\nonumber
	\end{align}
	where we have used the fact that
	\begin{align*}
	\int_{\mathbb{T} \times\mathbb{R}} \psi \cdot \nabla \p_i \p^\beta \phi \p_i \p^\beta \phi dxdy = -\int_{\mathbb{T} \times\mathbb{R}} \dv\psi \abs{\p_i\p^\beta \phi}^2dxdy, 
	\end{align*}
	and
	\begin{align*}
	\int_{\mathbb{T} \times\mathbb{R}} \psi \cdot \nabla \p^\beta \psi_i \p_i \p^\beta \phi + \psi \cdot \nabla \p_i \p^\beta \phi \p^\beta \psi_i dxdy = -\int_{\mathbb{T} \times\mathbb{R}} \dv \psi \p^\beta \psi \cdot \nabla \p^\beta \phi dxdy. 
	\end{align*}
	By the a priori assupmtions \cref{aps} and \cref{77777},  we have
    \begin{align}\label{R}
	&\sum_{l=1}^3\bar{\mu}\int_0^T \abs{\mathcal{R}_l^\beta}dt\lesssim \mu^{3\alpha-2}+\mu^{\alpha-\frac{1}{2}}\int_0^T \norm{\nabla^3\psi}^2dt,\\ \nonumber
    &\sum_{l=4}^5\int_0^T \abs{\mathcal{R}_l^\beta}dt \lesssim \mu^{3\alpha-3}+\mu^{\alpha-\frac{3}{2}}\int_0^T \norm{\nabla^2\phi}^2dt. \nonumber
	\end{align}
	Since $M<\frac{1}{50}(\lambda+2\mu)^{-\frac{1}{2}}$, integrating \cref{a11,a2} over $(0,t)$ and combining \cref{estimateonxi}, \ref{2or}-\ref{Aor}, \cref{ed}, \cref{77777} and \cref{R}, we have
	\begin{align*}
			&\mu^2\norm{\nabla^2\phi}^2+\frac{\mu}{M^2}\int_0^T \norm{\nabla^2\phi}^2dt \lesssim\mu^{2\alpha+1}+\norm{\nabla\psi}^2+\bar{\mu}\int_0^T \norm{\nabla\phi}^2+\norm{\nabla^2\psi}^2dt+\mu^{\alpha-\frac{1}{2}}\int_0^T \norm{\nabla^3\psi}^2dt\\
			&\qquad\qquad\qquad\qquad\qquad\lesssim \mu^{2\alpha-1}+\mu^{\alpha-\frac{1}{2}}\int_0^T \norm{\nabla^3\psi}^2 dt,\\
            &\frac{1}{M^2}\norm{\nabla^2\phi}^2+\norm{\nabla^2\psi}^2+\mu\int_0^T \norm{\nabla^3\psi}^2dt \leq C_M \left(\mu^{2\alpha} + \int_0^T \norm{\nabla^2\psi}^2dt\right)+\frac{500}{M^2}\int_0^T\norm{\nabla^2\phi}^2dt\\
			&\qquad\qquad\qquad\qquad\qquad\leq C_M\mu^{2\alpha-2}+\frac{500}{M^2}\int_0^T \norm{\nabla^2\phi}^2dt. 
	\end{align*}
	Then we obtain
	\begin{align*}
	\frac{1}{M^2}\norm{\nabla^2\phi}^2+\norm{\nabla^2\psi}^2+\frac{1}{M^2}\int_0^T\norm{\nabla^2\phi}dt+\mu\int_0^T\norm{\nabla^3\psi}^2dt\lesssim\mu^{2\alpha-2}.
	\end{align*}
	For $\abs{\beta}\geqslant 2$, the same argument gives
	\begin{align*}
			\frac{1}{M^2}\norm{\nabla^{|\beta|+1} \phi}^2+\norm{\nabla^{|\beta|+1} \psi}^2+\frac{1}{M^2}\int_0^T \norm{\nabla^{|\beta|+1} \phi}^2dt+\mu\int_0^T\norm{\nabla^{|\beta|+2} \psi}^2 dt \lesssim  \mu^{2\alpha-\abs{\beta}-1}.
	\end{align*}
Therefore the proof of \cref{000000} is completed. 
\end{proof}
Finally, combining \cref{1or}, \ref{2or}, \ref{Aor} and \cref{ed}-\ref{000000}, we prove \cref{ape}.

\

\noindent{\bf Acknowledgments.}
Feimin Huang is partially supported by the National Key R\&D Program of China, grant No. 2021YFA1000800, and the National Natural Sciences Foundation of China, grant No. 12288201.

\end{document}